\newcommand{\C}{\mathbb{C}}
\newcommand{\N}{\mathbb{N}}
\newcommand{\R}{\mathbb{R}}
\newcommand{\Sp}{\mathbb{S}}
\newcommand{\Z}{\mathbb{Z}}
\newcommand{\calF}{{\mathcal {F}}}
\newcommand{\supp}{\operatorname{supp}}
\newcommand{\ov}{\overline}
\newcommand{\ch}{\mathbf 1}
\newcommand{\ve}{\varepsilon}
\newcommand{\vp}{\varphi}
\newcommand{\vr}{\varrho}
\newcommand{\lan}{\langle}
\newcommand{\ran}{\rangle}
\newcommand{\di}{\mathrm d}
\newcommand{\interior}{\operatorname{int}}
\newcommand{\grad}{\operatorname{grad}}
\newcommand{\diam}{\operatorname{diam}}
\def\Koniec{\hbox to\hsize{\hfil$\diamond$}}
\def\th{\vartheta}
\def\1{\mathbf 1}
\newtheorem{theorem}{Theorem}[section]
\newtheorem{corollary}[theorem]{Corollary}
\newtheorem{lemma}[theorem]{Lemma}
\newtheorem{proposition}[theorem]{Proposition}
\theoremstyle{definition}
\newtheorem{definition}{Definition}[section]
\theoremstyle{remark}
\newtheorem{remark}{Remark}[section]
\numberwithin{equation}{section}
\begin{document}

\title{Smooth orthogonal projections on Riemannian manifold}

\author{Marcin Bownik}
\address{Department of Mathematics, University of Oregon, Eugene, OR 97403--1222, USA
and Institute of Mathematics, Polish Academy of Sciences, ul. Wita Stwosza 57,
80--952 Gda\'nsk, Poland}
\email{mbownik@uoregon.edu}

\author{Karol Dziedziul}
\address{
Faculty of Applied Mathematics,
Gda\'nsk University of Technology,
ul. G. Narutowicza 11/12,
80-952 Gda\'nsk, Poland}

\email{kdz@mifgate.pg.gda.pl}

\author{Anna Kamont}
\address{
Institute of Mathematics, Polish Academy of Sciences, ul. Wita Stwosza 57, 80--952 Gda\'nsk, Poland}

\email{anna.kamont@impan.pl}

\keywords{Riemannian manifold, Hestenes operator, smooth orthogonal projection, latitudinal projection,  smooth decomposition of identity, Morse function, Sobolev space}

\thanks{The first author was partially supported by the NSF grant DMS-1665056 and by a grant from the Simons Foundation \#426295.}

\subjclass[2000]{42C40, 46E30, 46E35, 58C35}

\date{\today}

\begin{abstract}
We construct a decomposition of the identity operator on a Riemannian manifold $M$ as a sum of smooth orthogonal projections subordinate to an open cover of $M$. This extends a decomposition of the real line by smooth orthogonal projection due to Coifman, Meyer \cite{CM} and Auscher, Weiss, Wickerhauser \cite{AWW}, and a similar decomposition when $M$ is the sphere by the first two authors  \cite{BD}.
\end{abstract}

\maketitle


\section{Introduction and main result}

The goal of the paper is to construct a decomposition of the identity operator on the Riemannian manifold $M$ as a sum of smooth orthogonal projections with desired localization properties. This can be thought as an operator analogue of the ubiquitous smooth partition of unity subordinate to an open cover of a manifold. However, smooth partitions of unity do not give rise in any obvious way to orthogonal projections and much more complicated constructions are needed to achieve this goal.

Smooth projections on the real line have appeared implicitly in the construction of local sine and cosine bases by Coifman and Meyer \cite{CM}. They were systematically studied by Auscher, Weiss, and Wickerhauser \cite{AWW} in the construction of smooth wavelet bases in $L^2(\R)$. For a detailed exposition on smooth projections on the real line we refer to the book by Hern\'andez and Weiss \cite{HW}. A standard tensoring procedure can be used to extend smooth projections to the Euclidean space $\R^d$. However, an extension of smooth projections to the sphere $\Sp^d$ is already far less trivial. This was shown recently by the first two authors in \cite{BD}. 
In this paper we show a general construction of smooth orthogonal projections on Riemannian manifolds, which is based in part on Morse theory. In order to formulate our main result we need to define the class of Hestenes operators.

Let $(M,g)$ be a smooth connected Riemannian manifold with a Riemannian metric $g$ on $M$. We consider for simplicity that the manifold is without boundary. The metric $g$ induces a Riemannian measure $\nu=\nu_M$ on $M$.
For any $1\le p\le \infty$, let $L^p(M)$ be the real Lebesgue space on the measure space $(M,\nu)$. In the special case $p=2$, $L^2(M)$ is a Hilbert space with the inner product
\[
\lan f,h \ran=\int_{M} f(x) h(x)  d\nu(x).
\]
We will employ a class of Hestenes \cite{Hes} operators, which was originally introduced in the work of Ciesielski and Figiel \cite[Section 5]{CF2}. However, we shall use a simplified variant of the class of $H$-operators introduced in \cite{BD}. 
 
\begin{definition}\label{H}
Let $M$ be a smooth connected Riemannian manifold (without boundary). Let $\Phi:V \to V'$ be a $C^\infty$ diffeomorphism between two open subsets $V, V' \subset M$. Let $\vp: M \to \R$ be a 
compactly supported 
$C^\infty$ function such that 
\[
\supp \vp =\ov{\{x\in M : \vp(x) \ne 0\}} \subset V.
\]
We define a simple $H$-operator $H_{\vp,\Phi,V}$ acting on a  function $f:M\to\R$ by
\begin{equation}\label{H2}
H_{\vp,\Phi,V}f(x) = \begin{cases} \vp(x) f(\Phi(x)) & x\in V
\\
0 & x\in M \setminus V.
\end{cases}
\end{equation}
Let $C_0(M)$ be the space of continuous functions on $M$ that are vanishing at infinity, which is equipped with the supremum norm. Clearly, a simple $H$-operator induces a continuous linear map of the space $C_0(M)$ into itself. We define an $H$-operator to be an operator $T:C_0(M) \to C_0(M)$ which is a finite combination of such simple $H$-operators. The space of all $H$-operators is denoted by $\mathcal H(M)$.
\end{definition}

It can be checked  that $H$-operators map the space of smooth functions $C^\infty(M)$ into itself. Our main result is a generalization of the result of the first two authors \cite{BD} from the setting of the sphere $\Sp^d$ to a  Riemannian manifold $M$.

\begin{theorem}\label{Main}
Let $M$ be a smooth connected Riemannian manifold (without boundary).
Suppose $\mathcal U$ is an open and precompact cover of $M$. Then, there exists a family of operators $\{P_U\}_{U\in \mathcal U}$ defined on $C_0(M)$ such that:
\begin{enumerate}[(i)]
\item family $\{P_U\}_{U\in \mathcal U}$ is locally finite, i.e., for any compact $K\subset M$, all but finitely many operators $P_{U}$ such that $U \cap K \ne \emptyset$, are zero, 
\item each $P_U \in \mathcal H(M)$ is localized on an open set $U \in\mathcal U$; in particular, for any $f\in C_0(M)$,
\begin{align}
&\supp P_U f \subset U, \label{locz1}
\\
&\supp f \cap U=\emptyset \implies P_Uf=0, \label{locz2}
\end{align}
\item each $P_U$ has a unique extension to an operator $P_U:L^2(M) \to L^2(M)$ that is an orthogonal projection,
\item the projections $\{P_U\}_{U\in \mathcal U}$ are mutually orthogonal and they form a decomposition of the identity operator $\mathbf I$ on $L^2(M)$,
\begin{equation}\label{sum}
P_U \circ P_{U'}=0 \quad\text{for }U\ne U' 
\qquad\text{and}\qquad\sum_{U \in \mathcal U} P_U = \mathbf I.
\end{equation}
\end{enumerate}
\end{theorem}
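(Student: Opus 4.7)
The plan is a two-stage construction: first a coarse ``latitudinal'' decomposition of $\mathbf I$ obtained from a Morse function on $M$, then a local refinement within each latitudinal strip to match the given cover $\mathcal U$. The basic one-dimensional building block throughout is the Coifman--Meyer/AWW family of smooth orthogonal projections on $\R$: given a sequence of real numbers $\{c_k\}$, the AWW folding recipe produces $\pi_k \in \mathcal H(\R)$ that are mutually orthogonal projections on $L^2(\R)$, sum to $\mathbf I_{\R}$, and satisfy $\supp \pi_k u \subset [c_{k-1} - \ve, c_{k+1} + \ve]$. My aim is to transfer these, via Hestenes operators, to the manifold setting.

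For Stage 1, I would choose a proper Morse function $f: M \to \R$ (taking $f$ proper is possible since $\mathcal U$ being precompact controls the parts of $M$ we need to cover), and pick regular values $c_k$ separating consecutive critical values of $f$. On a strip $S_k = f^{-1}((c_{k-1}, c_{k+1}))$ that contains no critical point, the gradient flow of $f$ trivializes $S_k$ as a product $(c_{k-1}, c_{k+1}) \times L_k$ with $L_k$ a level set. On such a product strip, I would lift $\pi_k$ to an operator $Q_k$ on $M$ acting in the $f$-direction via an $H$-operator, extending by zero outside $S_k$. The orthogonality $Q_k \circ Q_{k'} = 0$ and $\sum_k Q_k = \mathbf I$ would then be inherited from the corresponding identities for $\{\pi_k\}$ applied fiberwise.

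The main obstacle is the treatment of strips containing a critical point of $f$, where the level-set topology changes by handle attachment and there is no global product structure. I would cover a neighborhood of each critical point with a Morse chart in which $f$ is a quadratic form, build a tensor product of 1D AWW projections in Morse coordinates inside the chart, and then glue this to the gradient-flow construction on the complement of the chart within the strip. The delicate point is that gluing smooth partition-of-unity style does not preserve idempotency or orthogonality; the plan is to arrange the two constructions so that they already agree on their overlap up to a Hestenes conjugation, which forces the glued $Q_k$ to remain an orthogonal projection. This is where the AWW folding identities are essential.

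For Stage 2, each strip $S_k$ is precompact, so $\mathcal U$ restricted to $S_k$ admits a finite subrefinement by coordinate charts. Within each chart I would apply the Euclidean tensor product of 1D AWW projections to split $Q_k = \sum_\alpha P_\alpha^{(k)}$ with each $P_\alpha^{(k)} \in \mathcal H(M)$ localized on a single $U \in \mathcal U$. Grouping the $P_\alpha^{(k)}$ by $U$ produces the desired family $\{P_U\}_{U \in \mathcal U}$. Local finiteness in (i) follows from finiteness of the refinement in each precompact strip together with local finiteness of the latitudinal family; properties (ii)--(iv) are inherited stratum by stratum from the chart-level identities plus orthogonality across strips.
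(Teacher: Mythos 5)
Your Stage 1 matches the paper's latitudinal decomposition (Section \ref{S4}): AWW folding at regular cut levels, transported by the normalized gradient flow, does give mutually orthogonal projections $Q_k$ summing to $\mathbf I$; note that no special work at critical points is needed there, since the folding takes place only in thin collars $m^{-1}(\th_i-\delta,\th_i+\delta)$ around regular values, and a weight factor of the form $\bigl(\psi(2\th-t,x)/\psi(t,x)\bigr)^{1/2}$ must be inserted to make the folded operator orthogonal for the Riemannian measure, cf.\ \eqref{AWW0}. The genuine gap is Stage 2, which is where essentially all of the difficulty of the theorem lies. You cannot split $Q_k$ by applying Euclidean tensor products of one-dimensional AWW projections chart by chart: projections built by folding in two different coordinate charts have no reason to be mutually orthogonal, to commute, or to sum to $Q_k$ on the overlaps, and even inside a single chart the folded operators are orthogonal with respect to Lebesgue measure in the coordinates, not with respect to the Riemannian measure, unless corrected by a weight. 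The same objection applies to your proposed gluing near a critical point: ``arranging the two constructions to agree up to a Hestenes conjugation'' is precisely what would need to be proved, and no mechanism is offered --- as you yourself note, smooth gluing destroys idempotency and orthogonality.

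The paper's route is different and instructive to compare. It proceeds by induction on $\dim M$ (base case: circle or line). By the inductive hypothesis the compact level submanifold $J_\th$ (dimension $d-1$) carries a smooth orthogonal decomposition of identity subordinate to a fine cover; each projection $P_U$ on $J_\th$ is lifted along the Morse flow to a global operator $\Pi^M_U$ which, thanks to the weight correction $\bigl(\psi(t,\cdot)/\psi(\th,\cdot)\bigr)^{1/p}$ in Definition \ref{lift}, is a projection on $L^2(M)$ commuting with the latitudinal projections, and $P_{O(U)}:=\Pi^M_U\circ Q_{\th_1,\th_2}$ is an $H$-operator localized on a flow tube $O(U)$ (Theorem \ref{prop}). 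For a strip containing a critical value, the single projection that would have to be localized at the critical point cannot be lifted at all; it is instead defined by subtraction, $P_{O(U_0)}=Q_{\th_1,\th_2}-\sum_{U\ne U_0}P_{O(U)}$, and the substantial work (Theorem \ref{crit}) is the cancellation argument showing that this difference is nonetheless localized near the critical point and lands inside the prescribed neighborhood $U_z$. This subtraction device, together with the dimension induction and the final regrouping via a Lebesgue-number argument (Theorems \ref{Main1} and \ref{Main2}), replaces both your chart-by-chart tensor splitting and your Morse-chart gluing; without something of this kind your construction does not produce mutually orthogonal projections summing to $Q_k$.
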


A family of operators $\{P_U\}_{U\in \mathcal U}$  satisfying conclusions of Theorem \ref{Main} is said to be a {\it smooth orthogonal decomposition of identity in $L^2(M)$} subordinate to an open cover $\mathcal U$ of a manifold $M$.
 This concept should be contrasted with the ubiquitous smooth partition of unity. This is a family of smooth nonnegative  functions $\{\phi_U\}_{U\in\mathcal U}$ such that $\supp \phi_U\subset U$, $\sum_{U\in \mathcal U}\phi_U=1$, and for every $x\in M$ there is a neighborhood $O_x$ such that $\supp \phi_U \cap O_x= \emptyset$ for all but finitely many $U$. While the corresponding family of multiplication operators $S_U(f)=\phi_U f$ is localized and satisfies $\sum_{U\in\mathcal U} S_U= \mathbf I$, $S_U$ can not be a projection unless $\phi_U$ is discontinuous.
By constructing operators from a larger class of Hestenes operators $\mathcal H(M)$, we will preserve all of these properties and additionally guarantee that operators $P_U$ are orthogonal projections. 

The proof of Theorem \ref{Main} is quite long and involved as it occupies most of the paper. In Section \ref{S2} we show several properties of $H$-operators including the key concept of localization. Localized $H$-operators are shown to be bounded on the space of smooth functions $C^r(M)$ and the Sobolev Space $W^r_p(M)$, $1\le p<\infty$, $r\in\N$. We also introduce the concept of smooth decomposition of identity in $L^p(M)$, which is a natural generalization of orthogonal decomposition of identity in $L^2(M)$ as in Theorem \ref{Main}. We show that this concept behaves well under diffeomorphisms and change of weights. In Section \ref{S3} we show several results based on Morse theory, which are measure-theoretic analogues of well-known topological results, such as the regular interval lemma. In particular, we establish the critical point lemma which, outside of a small neighborhood of a critical point,  provides a convenient parametrization of the Riemann measure as a product measure of an interval and a level submanifold. In Section \ref{S4} we construct latitudinal projections which decompose a manifold $M$ along level sets of a Morse function. These are manifold analogues of smooth projections on the real line  \cite{AWW} and on the sphere \cite{BD}. 

The most technical results are contained in Section \ref{S5}, which develops the method of lifting an $H$-operator acting on a level submanifold to the whole manifold $M$. The resulting global lifting operator commutes with latitudinal projections and their composition is again an $H$-operator. As a result we show that a smooth decomposition of identity on a level submanifold can be lifted to a smooth decomposition of a latitudinal projection, which is localized on a strip between level sets of a Morse function. Though rather long and tedious, this procedure is straightforward for regular intervals. However, intervals containing critical values are very problematic since there is no direct method of lifting projections which are localized near a critical point. Fortunately, this problem affects only one projection which can lifted in a roundabout way using all other projections localized outside of this critical point. The least trivial aspect of this procedure involves showing the required localization property. 

In Section \ref{S6} we put together our results to prove the existence of a smooth decomposition of identity in $L^p(M)$, which is subordinate to an open and precompact cover of $M$. The main result of the paper is Theorem \ref{Main2}, which extends Theorem \ref{Main} to $L^p(M)$ spaces. This requires an inductive procedure which produces a smooth decomposition of a manifold $M$ of dimension $d$ based on smooth decompositions of carefully chosen level submanifolds of dimension $d-1$. In addition, we show that the overlaps of supports of the resulting projections are uniformly bounded by a constant independent of a cover of $M$ and depending only on a dimension $d$. Finally, in Section \ref{S7} we give applications of the main theorem in the study of Sobolev spaces on manifolds.

\section{Properties of Hestenes operators}\label{S2}

In this section we establish several properties of $H$-operators. Some of them were already shown in \cite{BD}. 
 For example, $\mathcal H(M)$ is an algebra of operators that is closed under tensoring operation, see \cite[Lemma 3.1]{BD}. In particular, we have the following formula for a composition of two simple $H$-operators $H_{\varphi_1,\Phi_1,V_1} \circ H_{\varphi_2,\Phi_2,V_2} = H_{\varphi,\Phi,V}$, where
\begin{equation*}\label{comp1}
\varphi(x)=\begin{cases} \varphi_1(x)\varphi_2(\Phi_1(x)) & x\in V,\\
0 & \text{otherwise,}
\end{cases}
\qquad
\Phi=\Phi_2 \circ \Phi_1|_V
\qquad
V=\Phi_1^{-1}(V_2).
\end{equation*}

\subsection{Localization of $H$-operators} We start by defining the concept of a localized $H$-operator.

\begin{definition}\label{localized}
 We say that an operator $T \in \mathcal H(M)$ is {\it localized} on an open set $U \subset M$, if it has a representation as a finite combination of simple $H$-operators $H_{\vp,\Phi,V}$ satisfying $V\subset U$ and $\Phi(V) \subset U$.
\end{definition}

The following lemma provides an intrinsic characterization of localized operators as it does not refer to a decomposition into simple $H$-operators. As an immediate consequence, we deduce that if $P_U \in \mathcal H(M)$ is localized on open and precompact $U \subset M$, then \eqref{locz1} and \eqref{locz2} hold.

\begin{lemma}\label{ilo}
Let $T \in\mathcal H(M)$ and let $U\subset M$ be open and precompact. Then, $T$ is localized on $U$ if and only if there exists a compact set $K \subset U$ such that for any $f\in C_0(M)$
\begin{align}
&\supp T f \subset K, \label{ilo1}
\\
&\supp f \cap K=\emptyset \implies Tf=0. \label{ilo2}
\end{align}
\end{lemma}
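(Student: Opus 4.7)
The plan is to prove both implications, with the converse being the substantive direction, handled by a ``cutoff-sandwich'' identity that encodes both conditions \eqref{ilo1} and \eqref{ilo2} into a single simple-$H$ decomposition.

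For the forward direction, I would start from a given representation $T=\sum_{i=1}^n H_{\vp_i,\Phi_i,V_i}$ with $V_i\subset U$ and $\Phi_i(V_i)\subset U$, and set
\[
K:=\bigcup_{i=1}^n\bigl(\supp\vp_i\cup\Phi_i(\supp\vp_i)\bigr).
\]
This is a finite union of continuous images of compact sets, so compact, and it lies in $U$ since each $\supp\vp_i\subset V_i\subset U$ and $\Phi_i(\supp\vp_i)\subset\Phi_i(V_i)\subset U$. Property \eqref{ilo1} is immediate from $\supp H_{\vp_i,\Phi_i,V_i}f\subset\supp\vp_i\subset K$. For \eqref{ilo2}, if $\supp f\cap K=\emptyset$ and $x\in\supp\vp_i$, then $\Phi_i(x)\in K$, so $f(\Phi_i(x))=0$, which forces $H_{\vp_i,\Phi_i,V_i}f\equiv0$.

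For the converse, I would fix any decomposition $T=\sum_{i=1}^n H_{\psi_i,\Psi_i,W_i}$ and choose two compactly supported $C^\infty$ functions $\eta,\zeta\colon M\to\R$ which equal $1$ on a neighborhood of $K$ and satisfy $\supp\eta,\supp\zeta\subset U$ (available by the standard smooth bump-function construction on a manifold). The crucial identity to establish is
\[
T = M_\eta\circ T\circ M_\zeta,
\]
where $M_h$ denotes multiplication by $h$. Indeed, $M_\eta\circ T=T$ because $\eta\equiv1$ on $\supp Tf\subset K$ by \eqref{ilo1}, and $T\circ M_\zeta=T$ because $(1-\zeta)f$ vanishes on a neighborhood of $K$, so $T((1-\zeta)f)=0$ by \eqref{ilo2}. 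Applying the simple-$H$ composition formula recalled at the start of Section \ref{S2}, the sandwich rewrites as $T=\sum_{i=1}^n H_{\vp_i,\Psi_i,W_i}$ with
\[
\vp_i(x)=\eta(x)\psi_i(x)\zeta(\Psi_i(x)),\qquad x\in W_i,
\]
extended by zero outside $W_i$; this extension is $C^\infty$ on $M$ because $\supp(\eta\psi_i)$ is compact in $W_i$. Now $\supp\vp_i$ lies in the open set $W_i\cap U\cap\Psi_i^{-1}(U)$, since the factor $\eta$ forces $\supp\vp_i\subset\supp\eta\subset U$ while $\zeta\circ\Psi_i$ forces $\Psi_i(\supp\vp_i)\subset\supp\zeta\subset U$. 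Picking any open $V_i$ with $\supp\vp_i\subset V_i\subset W_i\cap U\cap\Psi_i^{-1}(U)$, the restriction $\Psi_i|_{V_i}$ is still a diffeomorphism, and $V_i\subset U$, $\Psi_i(V_i)\subset U$, so $T=\sum_iH_{\vp_i,\Psi_i|_{V_i},V_i}$ exhibits $T$ as localized on $U$.

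The main obstacle will be achieving localization on the amplitude side and on the image side simultaneously. A single multiplication cutoff can only control one of these; the two-sided sandwich $M_\eta\,T\,M_\zeta$ is precisely what leverages both \eqref{ilo1} and \eqref{ilo2} at once. Once the amplitudes $\vp_i$ have compact support inside $U\cap\Psi_i^{-1}(U)$, shrinking each $W_i$ to a neighborhood of $\supp\vp_i$ is formal, using the fact that a simple $H$-operator depends on its diffeomorphism only through the germ near the support of its amplitude.
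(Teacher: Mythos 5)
Your proof is correct and follows essentially the same route as the paper: the same choice of $K$ from the supports $\supp\vp_i\cup\Phi_i(\supp\vp_i)$ in the forward direction, and the same cutoff-sandwich identity in the converse (the paper uses a single cutoff $\vp$ with $T f=\vp T(\vp f)$, while you use two cutoffs $\eta,\zeta$, a cosmetic difference), followed by the observation that each resulting simple $H$-operator has amplitude supported in $W_i\cap U\cap\Psi_i^{-1}(U)$ and image inside $U$, hence is localized on $U$.
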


\begin{proof}
Let $T=T_1+\ldots + T_m$, where each $T_i$ is a simple $H$-operator of the form $T_i=H_{\vp_i,\Phi_i,V_i}$, $i=1,\ldots,m$, where $\supp \vp_i \subset V_i$ and $\Phi_i:V_i \to V_i'$ is a diffeomorphism. Suppose first that $T$ is localized on $U$. By Definition \ref{localized}, for each $i$ we have $V_i \subset U$ and $\Phi_i(V_i) \subset U$.
Define
\[ K_0:= \bigcup_{i=1}^m \supp \vp_i \cup \bigcup_{i=1}^m \Phi_i(\supp \vp_i) \subset U.
\]
Then, a simple calculation shows that \eqref{ilo1} and \eqref{ilo2} hold for $K=K_0$.

Conversely, assume that \eqref{ilo1} and \eqref{ilo2} hold for some $K$. Pick an open set $U'$ such that $K\subset U' \subset \overline{U'} \subset U$. Let $\vp: M \to [0,1]$ be a $C^\infty$ function such that
\begin{equation}\label{ilo4}
\supp \vp \subset U \qquad\text{and}\qquad \vp(x)=1 \text{ for all }x\in \overline{U'}.
\end{equation}
We claim that
\begin{equation}\label{ilo5}
T(f) = \vp T(\vp f) \qquad\text{for all }f\in C_0(M).
\end{equation}
Indeed, since $\supp(1-\vp) \cap K =\emptyset$, by \eqref{ilo2} we have $T((1-\vp)f)=0$. Hence, $T(f)=T(\vp f)$. On the other hand, by \eqref{ilo1} and \eqref{ilo4} we have $T(\vp f)=\vp T(\vp f)$. Thus, \eqref{ilo5} is shown. 

Using \eqref{ilo4} yields
\[
\begin{aligned}
\vp(x) T_i(\vp f)(x) 
& = 
\begin{cases} \vp_i(x) \vp(x)  \vp(\Phi_i(x)) f(\Phi_i(x)) & x\in V_i,
\\
0 & \text{otherwise.}
\end{cases}
\\
& = 
\begin{cases} \vp_i(x) \vp(x) \vp(\Phi_i(x)) f(\Phi_i(x)) & x\in V_i \cap U \cap \Phi_i^{-1}(U),
\\
0 & \text{otherwise.}
\end{cases}
\end{aligned}
\]
Hence, $f \mapsto \vp T_i(\vp f)$ is a simple $H$-operator localized on $U$. Combining this with
\[
Tf= \vp T(\vp f) = \sum_{i=1}^m \vp T_i(\vp f),
\]
shows that $T$ is also localized on $U$.
\end{proof}

In case of a simple $H$-operator $H=H_{\vp,\Phi,V}$  localized on some open and precompact set there is a minimal compact set $K(H)$ with respect to the  inclusion relation satisfying \eqref{ilo1} and \eqref{ilo2}. Namely, 
\[
K(H)=K_1(H)\cup K_2(H),
\]
where $K_1(H)=\supp \vp$ and $K_2(H)=\Phi(\supp \vp)$.
This remark  and  Lemma \ref{ilo} motivate the following definition. 

\begin{definition}
Let $T \in\mathcal H(M)$  be localized on  some open and precompact set. Let
\[
K_1(T)=\bigcap \left\{K_1:   \text{$K_1$ is a compact set satisfying  \eqref{ilo1} for all $f\in C_0(M)$ } \right\} ,
\]
\[
K_2(T)=\bigcap \left\{ K_2:  \text{$K_2$ is a compact set satisfying \eqref{ilo2} for all $f\in C_0(M)$ }
\right\}.
\]
We define  a localizing set for $T$ as
\[
K(T)=K_1(T)\cup K_2(T).
\]
\end{definition}

It is clear that a set $K_1(T)$ satisfies \eqref{ilo1}. In the course of proving  Lemma \ref{K(T)} we show a set $K_2(T)$ satisfies \eqref{ilo2}. For this we need the following technical lemma.

\begin{lemma}\label{tech} Suppose that  $T \in \mathcal H(M)$ is localized on  an  open and precompact set. Let $K_2$ be compact. Then $K_2$ satisfies  \eqref{ilo2} for all $f\in C_0(M)$ if and only if for all $x\notin K_2$, there exists $U_x$ open and precompact such that $x\in U_x$, $U_x\cap K_2=\emptyset$, and
\begin{equation}\label{warunek}
  \supp f\subset U_x \implies Tf=0\qquad \text{for all } f\in C_0(M).
\end{equation}
\end{lemma}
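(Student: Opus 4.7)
For the forward direction I would proceed as follows. Given $x \notin K_2$, since $K_2$ is compact (hence closed) and $M$ is a locally compact manifold, I can choose an open precompact neighborhood $U_x$ of $x$ with $\overline{U_x} \subset M \setminus K_2$, which in particular gives $U_x \cap K_2 = \emptyset$. Any $f \in C_0(M)$ with $\supp f \subset U_x$ then satisfies $\supp f \cap K_2 = \emptyset$, so \eqref{ilo2} immediately yields $Tf = 0$. This direction is essentially a rephrasing.

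The backward direction is the substantive one. My plan is to fix an arbitrary $f \in C_0(M)$ with $\supp f \cap K_2 = \emptyset$ and deduce $Tf = 0$. The main subtlety is that $f$ need not have compact support, whereas the local hypothesis \eqref{warunek} applies only to functions supported in one of the precompact $U_x$. To overcome this, I will first reduce to the compactly supported case using the fact that $T$ itself is already localized on some open and precompact set: by Lemma \ref{ilo} there exists a compact $K \subset M$ with $\supp g \cap K = \emptyset \Rightarrow Tg = 0$. Choosing a cutoff $\chi \in C_c^\infty(M)$ with $\chi \equiv 1$ on $K$, the function $(1-\chi)f$ has support disjoint from $K$, so $T((1-\chi)f)=0$ and hence $Tf = T(\chi f)$. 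The function $\chi f$ now has compact support, and its support is still disjoint from $K_2$.

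With compact support in hand the gluing step is standard. The compact set $\supp(\chi f)$ lies in the open set $M \setminus K_2$, so by hypothesis it is covered by finitely many neighborhoods $U_{x_1}, \ldots, U_{x_n}$, each of which kills its test functions under $T$. Take a smooth partition of unity $\{\psi_i\}_{i=1}^n$ subordinate to $\{U_{x_i}\}$ on some open neighborhood of $\supp(\chi f)$, so that $\chi f = \sum_{i=1}^n \psi_i \chi f$ with $\psi_i \chi f \in C_c(M)$ and $\supp(\psi_i \chi f) \subset U_{x_i}$. Applying \eqref{warunek} to each summand gives $T(\psi_i \chi f) = 0$, and by linearity $Tf = T(\chi f) = \sum_{i=1}^n T(\psi_i \chi f) = 0$.

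The main obstacle I foresee is purely the compact-support reduction: without it, the partition-of-unity trick does not directly apply since $\supp f$ could be non-compact and require infinitely many $U_{x_i}$'s. Exploiting the a priori localization of $T$ via Lemma \ref{ilo} cleanly removes this difficulty, and everything else is routine.
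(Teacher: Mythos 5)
Your argument is correct, and the overall mechanism (finite partition of unity plus the a priori localization of $T$) is the same as the paper's; the difference lies in how the compactness reduction is done. The paper never cuts off $f$: it observes directly from Definition \ref{localized} that $Tf$ depends only on $f|_U$, where $U$ is a precompact set on which $T$ is localized, covers $\overline U$ by the sets $U_x$ together with a precompact neighborhood $V$ of $K_2$ disjoint from $\supp f$, and then uses that $f=\sum_i f\alpha_{x_i}$ \emph{on $U$} to conclude $Tf=\sum_i T(f\alpha_{x_i})=0$. You instead invoke the intrinsic characterization of Lemma \ref{ilo} to produce a compact $K$ with property \eqref{ilo2}, replace $f$ by $\chi f$ via a cutoff, and then cover the compact set $\supp(\chi f)$. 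Both routes are legitimate; yours leans on Lemma \ref{ilo} (already proved, so no circularity), while the paper's avoids the cutoff entirely by working only with the restriction of $f$ to $U$. One small point in your reduction: to get $T((1-\chi)f)=0$ from \eqref{ilo2} you need $\supp((1-\chi)f)\cap K=\emptyset$, so $\chi$ must equal $1$ on an open neighborhood of $K$, not merely on $K$ itself (otherwise the support of $(1-\chi)f$ may still meet $K$); this is the standard construction and is a one-word fix, not a gap.
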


\begin{proof}
The implication \eqref{ilo2} $\implies$ \eqref{warunek} is clear. To check the converse, let $U$ be some open and precompact set on which $T$ is localized. It follows by Definition \ref{localized} that if $f,g\in C_0(M)$ and $f=g$ on $U$ then $Tf=Tg$ on $M$. 
Now let $f\in C_0(M)$ be such that $\supp f\cap K_2=\emptyset$.
For each $x\notin K_2$ let $U_x$ be as in \eqref{warunek}. Moreover, let $V$ be an open and precompact set such that $K_2\subset V$ and $\overline{V} \cap \supp f=\emptyset$. Let $\{\alpha_x:x\in M\setminus K_2\}\cup \{\alpha_V\}$ be a partition of unity subordinate to the open cover
$\{U_x:x\in M\setminus K_2 \}\cup \{V\}$. By compactness there are $x_1,\ldots,x_n$ such that $\alpha_V+\sum_{i=1}^n \alpha_{x_i}=1$ on $\overline{U}$. Since $\supp \alpha_V \cap \supp f=\emptyset$, we have
\[
f=f \bigg( \alpha_V+\sum_{i=1}^n \alpha_{x_i}\bigg) =\sum_{i=1}^n f \alpha_{x_i}\qquad \text{on $U$.}
\]
 Since $\supp (f\alpha_{x_i}) \subset U_{x_i}$, we have  $T(f\alpha_{x_i})=0$ by \eqref{warunek}.
Therefore 
\[
Tf=\sum_{i=1}^n T(f\alpha_{x_i})=0.
\]
This proves Lemma \ref{tech}.
\end{proof}

\begin{lemma}\label{K(T)}
Suppose that  $T \in \mathcal H(M)$ is localized on  an  open and precompact set. 
Let $U\subset M$ be open and precompact. 
Then $T$ is localized on $U$ if and only if $K(T)\subset U$.
\end{lemma}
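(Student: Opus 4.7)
The plan is to prove both implications by reducing to Lemma \ref{ilo}, which characterizes localization in terms of a single compact set satisfying \eqref{ilo1} and \eqref{ilo2}. The key additional content of this lemma is that the \emph{minimal} such set, $K(T)$, actually does the job.

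For the easy direction ($\Rightarrow$), I would assume $T$ is localized on $U$ and apply Lemma \ref{ilo} to produce a compact $K\subset U$ satisfying \eqref{ilo1} and \eqref{ilo2}. Since $K_1(T)$ and $K_2(T)$ are defined as intersections over all such admissible sets, we have $K_1(T)\subset K$ and $K_2(T)\subset K$, hence $K(T)\subset K\subset U$. This direction needs no further work.

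For the converse ($\Leftarrow$), the natural strategy is to verify that $K(T)$ itself is a compact subset of $U$ satisfying both \eqref{ilo1} and \eqref{ilo2}, and then invoke Lemma \ref{ilo}. Compactness of $K_1(T)$ and $K_2(T)$ follows since, by hypothesis, $T$ is localized on some open and precompact set, so by Lemma \ref{ilo} there exists at least one compact admissible $K$; then $K_1(T),K_2(T)\subset K$, and being closed intersections they are compact. Property \eqref{ilo1} for $K_1(T)$ is the remark already stated in the text: $\supp Tf$ is closed and contained in every admissible $K_1$, hence in their intersection. The property \eqref{ilo1} then automatically holds for the larger set $K(T)$.

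The one nontrivial step, and the main obstacle, is verifying that $K_2(T)$ (and hence $K(T)$) satisfies \eqref{ilo2}; the difficulty is that \eqref{ilo2} does not pass to intersections in an obvious way, since $\supp f\cap \bigcap K_2 = \emptyset$ is strictly weaker than $\supp f \cap K_2 = \emptyset$ for each admissible $K_2$. This is exactly what Lemma \ref{tech} is designed to handle. I would fix $x\notin K_2(T)$; then by definition of the intersection there exists an admissible compact $K_2$ with $x\notin K_2$. Applying Lemma \ref{tech} in its forward direction to $K_2$ yields an open precompact neighborhood $U_x$ of $x$ with $U_x\cap K_2=\emptyset$ (so a fortiori $U_x\cap K_2(T)=\emptyset$) such that $\supp f\subset U_x$ implies $Tf=0$. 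This is precisely the hypothesis of the converse direction of Lemma \ref{tech} applied to $K_2(T)$, which yields \eqref{ilo2} for $K_2(T)$. Finally, if $\supp f\cap K(T)=\emptyset$ then $\supp f\cap K_2(T)=\emptyset$ so $Tf=0$, giving \eqref{ilo2} for $K(T)$. With $K(T)\subset U$ compact and both \eqref{ilo1} and \eqref{ilo2} verified, Lemma \ref{ilo} concludes that $T$ is localized on $U$.
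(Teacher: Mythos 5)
Your proposal is correct and follows essentially the same route as the paper: the forward direction via the intersection definitions of $K_1(T)$, $K_2(T)$, and the converse by verifying \eqref{ilo1} for $K_1(T)$ directly and \eqref{ilo2} for $K_2(T)$ through both directions of Lemma \ref{tech}, then invoking Lemma \ref{ilo}. Your write-up is in fact slightly more explicit than the paper's (compactness of $K(T)$ and the passage from $K_2(T)$ to the larger set $K(T)$), but the argument is the same.
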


\begin{proof}
It follows by Lemma \ref{ilo} that if $T$ is localized on $U$ then $K(T) \subset U$. To prove the converse we need to show that $K(T)$ satisfies \eqref{ilo1} and \eqref{ilo2}. Clearly \eqref{ilo1}  is satisfied by
$K_1(T)$ and hence by $K(T)$. It remains to show that $K_2(T)$ satisfies \eqref{ilo2}. 

Take any $x\notin K_2(T)$. Then there exists $K_2$ satisfying \eqref{ilo2} and $x\notin K_2$. By Lemma \ref{tech} we find $U_x$ such that $U_x\cap K_2=\emptyset$ and \eqref{warunek} is satisfied. 
Note that  $U_x\cap K_2=\emptyset$ implies 
$U_x\cap K_2(T)=\emptyset$. Hence, by Lemma \ref{tech} $K_2(T)$  satisfies \eqref{ilo2}.
\end{proof}

An immediate consequence of Lemma \ref{K(T)} is the following.

\begin{lemma}\label{il}

Suppose that $T \in \mathcal H(M)$ is localized at the same time on two open and precompact sets $U \subset M$ and $\tilde U \subset M$. Then, $T$ is localized on $U \cap \tilde U$.
\end{lemma}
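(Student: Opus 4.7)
The plan is to derive Lemma \ref{il} directly from the intrinsic characterization of localization given by Lemma \ref{K(T)}, using the localizing set $K(T)$ as the bridge between the two localization hypotheses.

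First I would verify that $U \cap \tilde U$ is itself a legitimate set on which one may speak of localization: it is open as the intersection of two open sets, and precompact because it is contained in the precompact set $U$ (its closure is contained in $\overline{U}$, which is compact). Thus the conclusion of the lemma is well-posed in the language of Definition \ref{localized} and Lemma \ref{K(T)}.

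Next, since $T \in \mathcal H(M)$ is localized on the open precompact set $U$, Lemma \ref{K(T)} gives $K(T) \subset U$. Applying Lemma \ref{K(T)} again to the hypothesis that $T$ is localized on $\tilde U$ yields $K(T) \subset \tilde U$. Intersecting these two inclusions produces
\[
K(T) \subset U \cap \tilde U.
\]
Now the converse direction of Lemma \ref{K(T)} applies to the open precompact set $U \cap \tilde U$: once we know $K(T) \subset U \cap \tilde U$, we may conclude that $T$ is localized on $U \cap \tilde U$, which is exactly the statement of the lemma.

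Since the entire argument is a two-line consequence of the already-established characterization, there is no substantive obstacle; the only point one has to be careful about is checking that the hypotheses of Lemma \ref{K(T)} apply, namely that $U \cap \tilde U$ is open and precompact, which is immediate. All of the real work has already been done in proving that $K(T)$ itself satisfies \eqref{ilo1} and \eqref{ilo2} (Lemma \ref{K(T)}, via the auxiliary Lemma \ref{tech}).
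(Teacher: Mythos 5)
Your proof is correct and is precisely the argument the paper intends: it states Lemma \ref{il} as an immediate consequence of Lemma \ref{K(T)}, namely $K(T)\subset U$ and $K(T)\subset \tilde U$ give $K(T)\subset U\cap \tilde U$, and the converse direction of Lemma \ref{K(T)} applied to the open precompact set $U\cap\tilde U$ finishes it. Nothing further is needed.
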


In the sequel we need the following lemma.

\begin{lemma}\label{lco}
Let $T_1$, $T_2$ be two commuting $H$-operators 
localized on open and precompact sets $U_1,U_2$, respectively. Then, their composition $T_1 \circ T_2 =T_2 \circ T_1$ is localized on $U_1\cap U_2$. \end{lemma}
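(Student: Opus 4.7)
The plan is to reduce to the two single-set localizations and invoke Lemma~\ref{il}. First, observe that $T := T_1 \circ T_2$ lies in $\mathcal H(M)$ since $\mathcal H(M)$ is an algebra. It then suffices to show that $T$ is localized on $U_1$ and, independently, on $U_2$; Lemma~\ref{il} applied to the two open precompact sets $U_1$ and $U_2$ will then give localization on $U_1 \cap U_2$.

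To see that $T$ is localized on $U_1$, I would apply Lemma~\ref{ilo} to $T_1$ to obtain a compact set $K \subset U_1$ satisfying \eqref{ilo1} and \eqref{ilo2} for $T_1$, and show that the same $K$ witnesses localization of $T$ on $U_1$. Indeed, for every $f \in C_0(M)$ one has $\supp T f = \supp T_1(T_2 f) \subset K$ by \eqref{ilo1} for $T_1$ applied to $g = T_2 f$. For the vanishing condition, if $\supp f \cap K = \emptyset$ then $T_1 f = 0$ by \eqref{ilo2} for $T_1$, and hence the commutativity hypothesis gives
\[
Tf = T_1 T_2 f = T_2 T_1 f = T_2(0) = 0.
\]
The analogous argument works for $U_2$, with the role of commutativity swapped: choose a compact $K' \subset U_2$ witnessing localization of $T_2$ via Lemma~\ref{ilo}, and note that $\supp T f = \supp T_2 T_1 f \subset K'$ by commutativity together with \eqref{ilo1} for $T_2$, while $\supp f \cap K' = \emptyset$ forces $T_2 f = 0$, hence $Tf = T_1(T_2 f) = 0$ without needing commutativity this time.

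With both localizations in hand, Lemma~\ref{il} completes the proof. The only nontrivial ingredient is the commutativity hypothesis, which is used precisely to rewrite $T_1 T_2 = T_2 T_1$ so that whichever factor has $f$ outside its localizing set can be applied first and annihilate the input; beyond this swap, the argument is a direct bookkeeping with Lemma~\ref{ilo}, so I do not anticipate any real obstacle.
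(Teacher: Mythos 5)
Your proof is correct and follows essentially the same route as the paper: obtain compact sets inside $U_1$ and $U_2$ via Lemma \ref{ilo}, use the commutativity $T_1T_2=T_2T_1$ to apply the annihilating factor first for the vanishing condition, and conclude with Lemma \ref{il}. The only (harmless) addition is your explicit remark that $T_1\circ T_2\in\mathcal H(M)$ because $\mathcal H(M)$ is an algebra, which the paper leaves implicit.
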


\begin{proof}
By Lemma \ref{ilo}, there exists compact subsets $K_i \subset U_i$ such that for all $f\in C_0(M)$ and $i=1,2$,
\begin{align}
&\supp T_i f \subset K_i, \label{lco1}
\\
&\supp f \cap K_i=\emptyset \implies T_if=0. \label{lco2}
\end{align}
Let $T=T_1 \circ T_2=T_2 \circ T_1$. By \eqref{lco1} we have
\[
\supp Tf = \supp T_1(T_2f) \subset K_1.
\]
Moreover, if $f\in C_0(M)$ is such that $\supp f \cap K_1=\emptyset$, then by \eqref{lco2}
\[
Tf=T_2(T_1f)=0.
\]
This implies that $T$ is localized on $U_1$ by Lemma \ref{ilo}. The same argument shows that $T$ is also localized on $U_2$. Hence, $T$ is localized on $U_1 \cap U_2$ by Lemma \ref{il}.
\end{proof}

\subsection{Boundedness of $H$-operators} We start by reminding the definition of  Sobolev spaces on Riemannian manifolds \cite{Au, He}.

\begin{definition}\label{sob}
Let $(M,g)$ be a smooth Riemannian manifold. There exists a unique torsion-free connection $\nabla$ on $M$ having the property that $\nabla g=0$, known as the Levi-Civita connection. 
Let $f:M\to \R$ be a smooth function. Let
 $\nabla^k f(x)$  be a covariant derivative of $f$ of order $k\in \N$ at point $x\in M$ in a local chart $(U,\psi)$; the coordinates of $\nabla^k f(x)$  are denoted by
$\nabla^k f(x)_{i_1\ldots i_k}$ (see \eqref{kowariantna}). Then
 the norm $|\nabla^k f|$  is independent of a choice of chart  $(U,\psi)$ and is given by
 \begin{equation}\label{norma}
  |\nabla^k f|^2(x)=\sum_{i_1,\ldots,i_k=1}^d \sum_{j_1,\ldots,j_k=1}^d
 g^{i_1 j_1}(x)\cdots g^{i_k j_k}(x) \nabla^k f(x)_{i_1\ldots i_k} \nabla^k f(x)_{j_1\ldots j_k},
  \end{equation}
where we write $g$ in the local chart as $g(x)=(g_{ij}(x))_{1\leq i,j\leq d}$ and  $g^{ij}(x)$ are such that
 \[
 \sum_{m=1}^d g_{im}(x)g^{jm}(x)=\delta_{ij}.
 \]
 Recall that $\nabla^k f(x)$ is $(k,0)$-tensor on $T_x(M)$.
 The Banach space $C^r(M)$ consists of all $C^r$ functions $f:M \to \R$ with the norm
\[
||f||_{C^{r}(M)} = \sum_{k=0}^r \sup_{x\in M} |\nabla^k f(x)| <\infty.
\] 
Let $\nu$ be the Riemannian measure on $M$. Given $1\le p< \infty$ we define the norm 
\begin{equation}\label{wrp}
||f||_{W^r_p} = \sum_{k=0}^r \bigg( \int_M |\nabla^k f(x)|^p d\nu(x) \bigg)^{1/p}<\infty.
\end{equation}
Let
\[
C^p_r(M)=\left\{f\in C^\infty(M) : ||f||_{W^r_p} <\infty \right\}.
\]
The Sobolev space $W^r_p(M)$ is the completion of $C^p_k(M)$ 
 with respect to the norm $||\cdot||_{W^r_p}$, see \cite{He}.
\end{definition}

The following analogue of \cite[Lemma 5.38 and Corollary 5.39]{CF2}, see \cite[Lemma 3.2]{BD}, plays a crucial role in our considerations.
 
\begin{theorem}\label{crm}
Suppose that $H \in \mathcal H(M)$ is localized on open and precompact set  $U \subset M$. Then, for any $r=0,1,\ldots$, the operator $H$ induces a bounded linear operator
\begin{align}\label{crm1}
&H: C^r(M) \to C^r(M), \qquad\text{where }r=0,1,\ldots,
\\
\label{crm2}
&H:W^r_p(M)\to W^r_p(M), \qquad\text{where }1\le p < \infty,\  r=0,1,\ldots.
\end{align}
\end{theorem}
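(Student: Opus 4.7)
The plan is to reduce the estimate to a pointwise bound on covariant derivatives of a single simple $H$-operator written in coordinate charts, and then either take suprema or integrate against the Riemannian volume.

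First I would invoke Definition \ref{localized} to write $H = \sum_{i=1}^m H_{\varphi_i,\Phi_i,V_i}$ with $V_i \subset U$ and $\Phi_i(V_i) \subset U$. By linearity, it suffices to prove \eqref{crm1} and \eqref{crm2} for one such simple $H$-operator, with constants depending only on $\overline{U}$ and $r$. Next I would cover the compact set $\overline{U}$ by finitely many precompact coordinate charts $(W_\alpha,\psi_\alpha)$, and further split each $\varphi_i$ via a smooth partition of unity into a finite sum of functions $\eta_{i,\alpha,\beta}$ with $\supp \eta_{i,\alpha,\beta} \subset V_i \cap W_\alpha$ and $\Phi_i(\supp \eta_{i,\alpha,\beta}) \subset W_\beta$. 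This reduces matters to the model situation of an operator $Tf(x)=\eta(x) f(\Phi(x))$ with $\eta \in C^\infty_c(W_\alpha)$ and $\Phi:W_\alpha \to W_\beta$ smooth on $\supp\eta$, whose derivatives of all orders are uniformly bounded on $\supp\eta$.

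For the $C^r$ bound, I would use the fact that in a fixed chart, on a compact subset, the intrinsic norm $|\nabla^k f|$ in \eqref{norma} is comparable to $\sum_{|\gamma|\le k}|\partial^\gamma(f\circ\psi^{-1})|$, because the coefficients $g^{ij}$ and the Christoffel symbols entering the iterated covariant derivative are smooth and the chart is precompact. Applying the Leibniz rule together with the Faà di Bruno formula to $\eta(x)\,f(\Phi(x))$ in coordinates and bounding all factors involving derivatives of $\eta$, $\Phi$, $g_{ij}$, $g^{ij}$ by constants depending only on $\overline{U}$ and $r$, yields the pointwise estimate
\begin{equation*}
|\nabla^k(Tf)(x)| \le C_{k} \sum_{j=0}^{k} |\nabla^{j} f(\Phi(x))|\,\mathbf{1}_{\supp\eta}(x),\qquad 0\le k \le r.
\end{equation*}
Taking suprema over $x\in M$ gives \eqref{crm1}.

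For \eqref{crm2}, I would raise the same pointwise bound to the $p$-th power, integrate, and apply the change of variables $y=\Phi(x)$. Since $\Phi$ is a $C^\infty$ diffeomorphism between precompact open sets, its Jacobian with respect to the Riemannian volume $\nu$ is bounded above and below on $\supp\eta$, so
\begin{equation*}
\int_M |\nabla^k(Tf)|^p\,d\nu \;\le\; C_k^p \sum_{j=0}^k \int_{\supp\eta} |\nabla^j f(\Phi(x))|^p\,d\nu(x) \;\le\; C \sum_{j=0}^k \int_{M} |\nabla^j f|^p\,d\nu.
\end{equation*}
Summing over $0\le k \le r$ proves boundedness on the dense subspace $C^p_r(M)\subset W^r_p(M)$, which then extends uniquely by continuity.

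The main obstacle I anticipate is Step~3: carefully keeping track of how covariant derivatives transform under the composition with $\Phi$ across distinct charts, and controlling the combinatorial Faà di Bruno expansion so that the final constants depend only on bounds for finitely many derivatives of $\eta$, $\Phi$, $g$, $g^{-1}$ over the compact set $\overline{U}$. Once this bookkeeping is done, the $L^p$ step is a routine change of variables, and the extension from $C^p_r(M)$ to $W^r_p(M)$ is automatic.
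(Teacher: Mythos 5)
Your proposal is correct and follows essentially the same route as the paper: a partition-of-unity reduction to simple $H$-operators whose domain and image lie in coordinate charts (the paper's Lemma \ref{Hsimple}), the local equivalence of $\sum_k|\nabla^k f|$ and coordinate derivatives on compact subsets of a chart (Lemma \ref{technical}), and then a Leibniz/chain-rule estimate with a change of variables. The only cosmetic difference is that you state a pointwise bound $|\nabla^k(Tf)(x)|\le C_k\sum_{j\le k}|\nabla^j f(\Phi(x))|$ on the manifold directly, whereas the paper transfers the operator to an operator $\tilde H$ on $\R^d$ and uses a cutoff function together with the norm equivalence for compactly supported functions.
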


For completeness  we present all arguments used in the proof of Theorem \ref{crm}. The technical
 lemma below is usually given without a proof, see \cite[Theorem 2.20]{Au}, \cite[Proposition 2.2]{He}, \cite[Theorem 7.4.5]{Tr}.

\begin{lemma}\label{technical} Let  $(U,\psi)$ be a chart of $M$. Let $K\subset U$ be compact and $r\in \N$. Then, there is a constant $C>0$ such that:
\begin{enumerate}[(i)]
\item for all $f\in C^r(M)$ and all $x\in K$
\begin{equation}\label{rownowaznosc1}
1/C \sum_{k=0}^r |\nabla^k f(x)|\leq  \sum_{|\beta|\leq r} |D^\beta (f\circ \psi^{-1})(\psi(x))| \leq C  \sum_{k=0}^r |\nabla^k f(x)|,
\end{equation}
\item
for all $f\in W^r_p(M)$, $1\leq p<\infty$, with $\supp f\subset K$,
\begin{equation}\label{rownowaznosc2}
1/C\|f\circ \psi^{-1} \|_{W^r_p(\R^d)}\leq \|f\|_{W^r_p(M)}\leq C\|f\circ \psi^{-1} \|_{W^r_p(\R^d)}.
\end{equation}
\end{enumerate}
\end{lemma}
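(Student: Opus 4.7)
The plan is to reduce both assertions to the standard coordinate expression for the Levi-Civita covariant derivative. In the chart $(U,\psi)$ with coordinates $x^1,\ldots,x^d$, the tensor $\nabla^k f$ has components of the form
\[
\nabla^k f(x)_{i_1\ldots i_k} = \partial_{i_1}\cdots\partial_{i_k}(f\circ\psi^{-1})(\psi(x)) + R_{i_1\ldots i_k}(x),
\]
where $R_{i_1\ldots i_k}(x)$ is a linear combination, with coefficients that are polynomials in the Christoffel symbols $\Ga^l_{jm}$ and their partial derivatives of order at most $k-2$, of partial derivatives $\partial^\beta(f\circ\psi^{-1})(\psi(x))$ with $|\beta|\le k-1$. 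I would prove this by induction on $k$, starting from the tensorial formula
\[
\nabla_j T_{i_1\ldots i_m}=\partial_j T_{i_1\ldots i_m}-\sum_{l=1}^m \Ga^s_{ji_l} T_{i_1\ldots s \ldots i_m}
\]
applied to the tensor $T=\nabla^{k-1}f$, and simply collecting terms.

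Next, since $K$ is compact and sits inside the chart domain $U$, the functions $g_{ij}$, $g^{ij}$, $\Ga^l_{ij}$ together with all their partial derivatives up to order $r$ are uniformly bounded on $\psi(K)$, and the symmetric matrix $(g^{ij}(x))$ satisfies $c|\xi|^2\le \sum_{i,j}g^{ij}(x)\xi_i\xi_j \le C|\xi|^2$ for some constants $0<c\le C$ and all $x\in K$, $\xi\in\R^d$. Plugging the coordinate expansion of $\nabla^k f$ into \eqref{norma} and using these bounds shows that $|\nabla^k f(x)|$ is comparable to $\sum_{i_1,\ldots,i_k}|\nabla^k f(x)_{i_1\ldots i_k}|$, which in turn is bounded above by a constant times $\sum_{|\beta|\le k}|D^\beta(f\circ\psi^{-1})(\psi(x))|$. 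The reverse inequality uses the triangular structure of the identity above in the order of differentiation: at each step the top-order derivative $\partial_{i_1}\cdots\partial_{i_k}(f\circ\psi^{-1})$ is solved for in terms of $\nabla^k f$ and partial derivatives of $f$ of order at most $k-1$ already controlled by induction, with coefficients again uniformly bounded on $K$. Summing over $k\le r$ gives \eqref{rownowaznosc1}.

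For part (ii), recall that in the chart $(U,\psi)$ the Riemannian measure is $d\nu=\sqrt{\det(g_{ij}(x))}\,dx$, and on the compact set $K$ the density $\sqrt{\det(g_{ij})}$ is bounded above and below by positive constants. For $f\in W^r_p(M)$ with $\supp f\subset K$, integrating the $p$-th power of \eqref{rownowaznosc1} against $d\nu$, pulling the integral back by $\psi^{-1}$ onto $\R^d$, and using these density bounds yields the norm equivalence \eqref{rownowaznosc2}.

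The main technical obstacle is the inductive derivation of the coordinate expansion of $\nabla^k f$: one has to keep careful track of how partial derivatives of the Christoffel symbols accumulate and, crucially, verify that the remainder $R_{i_1\ldots i_k}$ never involves $f$ through a derivative of order $k$, so that \eqref{norma} really does admit a triangular inversion. Once this structural identity and the uniform boundedness of $g_{ij}$, $g^{ij}$, $\Ga^l_{ij}$ and their partial derivatives on $\psi(K)$ are in place, both \eqref{rownowaznosc1} and \eqref{rownowaznosc2} reduce to routine estimation.
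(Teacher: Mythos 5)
Your proposal follows essentially the same route as the paper: the same inductive coordinate expansion of $\nabla^k f$ into the top-order partial derivative of $f\circ\psi^{-1}$ plus lower-order terms whose coefficients are built from the Christoffel symbols and their derivatives, the same compactness bounds on $g$, $g^{-1}$ and these coefficients over $K$, the same triangular inversion for the reverse inequality in \eqref{rownowaznosc1}, and the same integration against the bounded density $\sqrt{|g|}$ for \eqref{rownowaznosc2}. The only step the paper makes explicit that you gloss over is the concluding density argument passing from smooth $f$ supported in $K$ to general $f\in W^r_p(M)$ with $\supp f\subset K$, which is needed because $W^r_p(M)$ is defined as a completion of smooth functions.
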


\begin{proof} A remark is needed to explain the precise meaning of $f\circ \psi^{-1}$ in \eqref{rownowaznosc2}. We extend the domain of this function to the entire $\R^d$ by
\[
f\circ \psi^{-1}(y)=\begin{cases} f\circ \psi^{-1}(y) & y\in\psi(U),\\
0 & \text{otherwise.}
\end{cases}
\]
We start with the proof of $(i)$. Take $f\in C^r(M)$.
In the local chart  $(U,\psi)$ the covariant derivative of order $k$ at $x\in U$ is defined recursively by
\begin{equation}\label{kowariantna}
\nabla^k f(x)_{i_1\ldots i_k}=(\nabla_{i_1} \nabla^{k-1} f)(x)_{i_2\ldots i_k}
\end{equation}
\[
=\frac{\partial}{\partial x_{i_1}}(\nabla^{k-1} f)(x)_{i_2\ldots i_k}-
\sum_{\alpha=1}^d \sum_{l=2}^k \Gamma^\alpha_{i_1i_l}(x)\nabla^{k-1} f(x)_{i_2\ldots i_{l-1}\alpha i_{l+1}\ldots i_k},
\]
where $ \Gamma^\alpha_{i_1i_l}$ are Christoffel symbols given by
\[
\nabla_i \left( \frac{\partial}{\partial x_j}\right)(x)=\sum_{\alpha=1}^d
\Gamma^\alpha_{ij}(x) \left( \frac{\partial}{\partial x_\alpha}\right)_x,
\]
see  \cite[page 6]{He}.
Now by an induction argument there are smooth functions $\Lambda^\beta_{i_1\ldots i_k}$ defined on $U$ such that for all $x\in U$
\begin{equation}\label{pochodna1}
\nabla^k f(x)_{i_1\ldots i_k}=\frac{\partial^k}{\partial x_{i_1}\ldots \partial x_{i_k} }f(x)-\sum_{|\beta|\leq k-1} \Lambda^\beta_{i_1\ldots i_k}(x)\frac{\partial^{|\beta|}}{\partial x^\beta} f(x).
\end{equation}
Recall that 
for a multi-index  $\beta=(\beta_1,\ldots,\beta_d)\in \N_0^d$ we have 
\begin{equation}\label{pochodna2}
\frac{\partial^{|\beta|}}{\partial x^\beta} f(x)
=D^\beta (f\circ \psi^{-1})(\psi(x)).
\end{equation}
Note that functions $\Lambda^\beta_{i_1\ldots i_k}$ in \eqref{pochodna1} are some products of derivatives of Christoffel symbols. Since $K$ is compact, there exists a positive constant $C$ such that 
\[
1/C \delta \leq g\leq C\delta \qquad\text{on }K,
\]
where $\delta=(\delta_{i,j})_{1\leq ij\leq d}$ is identity $(2,0)$-tensor and the above inequalities are understood in the sense of bilinear symmetric forms. Consequently, 
\[
1/C \delta \leq g^{-1}\leq C\delta \qquad\text{on }K.
\]
Combining the above inequalities with \eqref{norma} we get
\begin{equation}\label{geodezyjna1}
 1/C^{k} \sum_{i_1,\ldots,i_k=1}^d |\nabla^k f(x)_{i_1\ldots i_k}|^2 \leq |\nabla^k f(x)|^2 \leq C^{k} \sum_{i_1,\ldots,i_k=1}^d |\nabla^k f(x)_{i_1\ldots i_k}|^2.
\end{equation}
By \eqref{pochodna1}, \eqref{geodezyjna1}, and the equivalence of $\ell^2$ and $\ell^1$ norms in finitely dimensional spaces
\begin{equation}\label{geodezyjna2}
 |\nabla^k f(x)| \asymp  \sum_{i_1,\ldots,i_k=1}^d \left|\frac{\partial^k}{\partial x_{i_1}\ldots \partial x_{i_k} }f(x)-\sum_{|\beta|\leq k-1} \Lambda^\beta_{i_1\ldots i_k}(x)\frac{\partial^{|\beta|}}{\partial x^\beta} f(x)\right|.
\end{equation}
Using \eqref{pochodna2} we note that \eqref{rownowaznosc1} is equivalent with the following claim for all $x\in K$
\begin{equation}\label{claim}
\sum_{k=0}^r \sum_{i_1,\ldots,i_k=1}^d \left|\frac{\partial^k}{\partial x_{i_1}\ldots \partial x_{i_k} }f(x)-\sum_{|\beta|\leq k-1} \Lambda^\beta_{i_1\ldots i_k}(x)\frac{\partial^{|\beta|}}{\partial x^\beta} f(x)\right|\asymp \sum_{|\beta|\leq r} \left|\frac{\partial^{|\beta|}}{\partial x^\beta} f(x)\right|.
\end{equation}
To prove \eqref{claim} observe that for $x\in K$ 
\[
\left|\frac{\partial^k}{\partial x_{i_1}\ldots \partial x_{i_k} }f(x)-\sum_{|\beta|\leq k-1} \Lambda^\beta_{i_1\ldots i_k}(x)\frac{\partial^{|\beta|}}{\partial x^\beta} f(x)\right|\leq \left|\frac{\partial^k}{\partial x_{i_1}\ldots \partial x_{i_k} }f(x)\right|+
C \sum_{|\beta|\leq k-1} \left|\frac{\partial^{|\beta|}}{\partial x^\beta} f(x)\right|.
\]
Hence, the left side of \eqref{claim} is dominated by the right side of \eqref{claim}. We will show the converse inequality by an induction argument. The base case $r=0$ is obvious. Next, we observe
the following inequality
\begin{equation}\label{indukcja}
 \left|\frac{\partial^k}{\partial x_{i_1}\ldots \partial x_{i_k} }f(x)\right| \leq \left|\frac{\partial^k}{\partial x_{i_1}\ldots \partial x_{i_k} }f(x)-\sum_{|\beta|\leq k-1} \Lambda^\beta_{i_1\ldots i_k}(x)\frac{\partial^{|\beta|}}{\partial x^\beta} f(x)\right|+
C  \sum_{|\beta|\leq k-1} \left|\frac{\partial^{|\beta|}}{\partial x^\beta} f(x)\right|.
\end{equation}
 By the induction hypothesis assume that \eqref{claim} is true for $r-1$,
\begin{equation}\label{indukcja1}
 \sum_{|\beta|\leq r-1} \left|\frac{\partial^{|\beta|}}{\partial x^\beta} f(x)\right| \leq \sum_{k=0}^{r-1} \sum_{i_1,\ldots,i_k=1}^d \left|\frac{\partial^k}{\partial x_{i_1}\ldots \partial x_{i_k} }f(x)-\sum_{|\beta|\leq k-1} \Lambda^\beta_{i_1\ldots i_k}(x)\frac{\partial^{|\beta|}}{\partial x^\beta} f(x)\right|.
\end{equation}
Then, the inequalities \eqref{indukcja} and \eqref{indukcja1} yield the remaining part of \eqref{claim}.

It remains to show $(ii)$. Take $f\in C^r(M)$ with $\supp f\subset K$. By  \eqref{rownowaznosc1} and the equivalence of $\ell^1$ and $\ell^p$ norms in finitely dimensional spaces we have
\begin{equation}\label{rowz}
\bigg(\sum_{k=0}^r |\nabla^k f(\psi^{-1}(y))|^p \bigg)^{1/p} \asymp  \bigg(\sum_{|\beta|\leq r} |D^\beta (f\circ \psi^{-1})(y)|^p \bigg)^{1/p} \qquad \text{for }y\in \psi(U).
\end{equation}
By the definition of the Riemannian measure $\nu$ and by \eqref{wrp} we have
\[
\|f\|^p_{W^r_p(M)}=\sum_{k=0}^r \int_{\psi(U)} \left(|\nabla^k  f|^p\sqrt{|g|} \right) \circ \psi^{-1}(y)\, dy.
\]
By the compactness of $K$, there are constant $C_1,C_2>0$ such that 
\[
C_1\leq \sqrt{|g|(x)}\leq C_2 \qquad\text{for all }x\in K.
\]
Thus, integrating \eqref{rowz} over $\psi(U)$ implies \eqref{rownowaznosc2} for $f\in C^r(M)$. By a density argument we obtain \eqref{rownowaznosc2} for all $f\in W^r_p(M)$ with $\supp f \subset K$.
\end{proof}

In the proof of Theorem \ref{crm} we also need following lemma.

\begin{lemma}\label{Hsimple}
Let $H_{\vp,\Phi,V}$ be a simple $H$-operator localized on a precompact set $U$. Then there is a finite collection  of simple $H$-operators $\{H_{\vp_s,\Phi_s,V_s}:s\in S\}$ localized on the set $U$ such that for every $s\in S$:
\begin{enumerate}[(i)]
\item $\vp_s \in C^\infty(M)$ satisfies $\supp \vp_s \subset V_s$ and
$\Phi_s: V_s \to \tilde{V_s}$,
\item open sets $V_s$ and $\Phi_s(V_s)=\tilde{V_s}$ are contained in domains of some charts on $M$, and
\item
$
H_{\vp,\Phi,V}=\sum_{s\in S} H_{\vp_s,\Phi_s,V_s}.
$
\end{enumerate}
\end{lemma}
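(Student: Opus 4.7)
The plan is to split $H_{\vp,\Phi,V}$ via a smooth partition of unity subordinate to a cover of $\supp \vp$ fine enough that each cover element, together with its image under $\Phi$, fits inside a single coordinate chart. No deep tool is needed, just careful bookkeeping of charts on the source and target sides simultaneously.

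First I would record that $K_1 := \supp \vp$ is compact in $V$ and that, because $H_{\vp,\Phi,V}$ is localized on the precompact set $U$, Lemma \ref{ilo} together with the discussion preceding Lemma \ref{K(T)} forces $K_1 \subset U$ and $\Phi(K_1) \subset U$. Replacing $V$ by $V \cap U \cap \Phi^{-1}(U)$ does not alter the operator, since $\supp \vp$ still lies in this smaller open set, so I may assume $V \subset U$ and $\Phi(V) \subset U$ from the start.

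Next, for each $x \in K_1$ I would choose a chart $(U_x,\psi_x)$ around $x$ and a chart $(U'_x,\psi'_x)$ around $\Phi(x)$, and set $W_x := V \cap U_x \cap \Phi^{-1}(U'_x)$. Then $W_x$ is open, contains $x$, sits inside the chart domain $U_x$, and satisfies $\Phi(W_x) \subset U'_x$. Compactness of $K_1$ yields a finite subcover $\{W_{x_s}\}_{s\in S}$, and a standard smooth partition of unity on $M$ produces $\{\alpha_s\}_{s\in S} \subset C^\infty(M)$ with $\supp \alpha_s$ a compact subset of $W_{x_s}$ and $\sum_{s\in S}\alpha_s \equiv 1$ on an open neighborhood of $K_1$. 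I would then define $\vp_s := \vp \cdot \alpha_s$, $V_s := W_{x_s}$, and $\Phi_s := \Phi|_{V_s}$.

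By construction each $\vp_s \in C^\infty(M)$ satisfies $\supp \vp_s \subset V_s$, and both $V_s \subset U_{x_s}$ and $\tilde V_s = \Phi_s(V_s) \subset U'_{x_s}$ lie in chart domains, so (i) and (ii) hold. Each $H_{\vp_s,\Phi_s,V_s}$ is localized on $U$ because $V_s \subset V \subset U$ and $\Phi_s(V_s) \subset U$. Identity (iii) reduces to observing that both $H_{\vp,\Phi,V}f$ and $\sum_{s} H_{\vp_s,\Phi_s,V_s}f$ vanish off $\supp \vp$, while on $\supp \vp$ they agree because $\sum_{s}\alpha_s(x) = 1$ there. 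The only genuine technical point is enforcing source and target chart compatibility at once; this is precisely what the definition of $W_x$ accomplishes by intersecting the source chart with the $\Phi$-preimage of the target chart, after which everything reduces to routine partition-of-unity bookkeeping.
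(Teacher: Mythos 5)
Your proof is correct and follows essentially the same route as the paper: refine the cover so that each piece lies in a source chart and has $\Phi$-image in a target chart (via intersecting with $\Phi$-preimages of charts), then split $\vp$ by a partition of unity over $\supp\vp$ and restrict $\Phi$ to each piece. The paper organizes this with a double-indexed family $\Omega_{i,j}=\Omega_j\cap\Phi^{-1}(\Omega_i\cap V')$ from one finite chart cover of $V\cup\Phi(V)$ rather than pointwise chart choices, but this is only a bookkeeping difference.
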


\begin{proof}
Since $V$ and $V'$ 
are contained in $U$, there exists a finite collection of charts $(\Omega_j,\psi_j)$, $j\in F$ such that 
\[
V\cup V' \subset \bigcup_{j\in F} \Omega_j.
\]
Define for $i,j\in F$
\[
\Omega_{i,j}=\Omega_j\cap \Phi^{-1}(\Omega_i\cap V').
\]
Let $K=\supp \vp\subset V$. Consider a collection of sets $\Omega_{i,j}$, $i,j \in F$, which together with $M\setminus K$ form an open cover of $M$. Let $\alpha_{i,j}$, $i,j \in F$ be functions from a partition of unity subordinate to this open cover. From the above definition we have
\[
H_{\vp,\Phi,V}(f)(x)=\sum_{i,j\in F} \vp(x)\alpha_{i,j}(x) f(\Phi(x)) \qquad \text{for all }x\in V.
\]
Consequently
\[
H_{\vp,\Phi,V}=\sum_{i,j\in F} H_{\vp_{i,j},\Phi_{i,j},\Omega_{i,j}},
\]
where $\vp_{i,j}=\vp\alpha_{i,j}$ and  $\Phi_{i,j}$ is a restriction of the diffeomorphism $\Phi$ to $\Omega_{i,j}$. Observe that $\Omega_{i,j}\subset \Omega_j$  and $\Phi_{i,j}(\Omega_{i,j})\subset \Omega_i$. Taking
$V_{i,j}=\Omega_{i,j}$, where $(i,j)\in S:=F\times F$ concludes the proof.
\end{proof}

\begin{proof}[Proof of Theorem \ref{crm}] 
Without loss of generality we may assume that $H=H_{\vp,\Phi,V}$ is an $H$-operator satisfying conditions of Lemma \ref{Hsimple}. That is,
$H$ is localized on a precompact set $U$ such that
\[
\Phi: V \to V',
\]
where charts $(V,\psi)$ and $(V',\psi_1)$ belong to the atlas of $M$. Introduce 
\[
\tilde{\Phi}=\psi_1 \circ \Phi \circ \psi^{-1}:W \to W_1,
\]
where $W=\psi(V)$ and $W_1=\psi_1(V')$ are open and precompact subset of $\R^d$.
 Let
\[
\tilde{\vp}(y)=\begin{cases} \vp\circ \psi^{-1}(y) & y\in W
\\
0 & y\in \R^d\setminus W.
\end{cases}
\]
Observe that $\tilde{\vp}$ is a compactly supported $C^\infty(\R^d)$ function and $\tilde{\Phi}$ is a diffeomorphism.
We define $\tilde{H}=\tilde{H}_{\tilde{\vp},\tilde{\Phi},W}$ a simple $H$-operator, i.e.,
\[
\tilde{H}(g)(y)=
\begin{cases}
\tilde{\vp}(y) g(\tilde{\Phi}(y)) & y\in W \\
0 & y\in \R^d\setminus W.
\end{cases}
\]
Note that if $f: V'\to \R$ then for 
\[
g:=f\circ \psi_1^{-1}: W_1 \to \R,
\]
$H(f)(x)=\tilde{H}(g)(\psi(x))$. Indeed, for $x\in V$,
\[
\tilde{H}(g)(\psi(x))=\tilde{\vp}(\psi(x))g(\tilde{\Phi}(\psi(x))=\vp(x)
f\circ\psi_1^{-1}\circ\psi_1\circ\Phi\circ\psi^{-1}(\psi(x))=Hf(x).
\]
Hence,
\begin{equation}\label{Hlifting}
H(f)\circ\psi^{-1}=\tilde{H}(g).
\end{equation}

By the chain rule, a change of variables, and compactness of $\supp \tilde{\vp}$, we obtain that $\tilde{H}=\tilde{H}_{\tilde{\vp},\tilde{\Phi},W}$ is a bounded linear operator 
\[
\tilde{H}_{\tilde{\vp},\tilde{\Phi},W}: \calF(\R^d)\to \calF(\R^d),
\]
where $\calF(\R^d)$ is $C^r(\R^d)$ or $W^r_p(\R^d)$.
Now let $f\in C^r(M) $ be such that $\supp f \subset K$, where $K \subset V'$ is compact. Then by following arguments: Lemma \ref{technical}, \eqref{Hlifting}, boundedness of $\tilde{H}$, and once more Lemma \ref{technical} we have
\begin{equation}\label{lk}
\|H(f)\|_{\calF(M)} \asymp \|(Hf)\circ \psi^{-1} \|_{\calF(\R^d)} =
\|\tilde{H} g \|_{\calF(\R^d)} \leq C \|g\|_{\calF(\R^d)}\asymp \|f\|_{\calF(M)}.
\end{equation}
To complete the proof of Theorem \ref{crm} take $\eta\in C^\infty(M)$ such that $\eta=1$ on $\Phi(\supp \varphi)$ and $\supp \eta \subset V'$. Then, for any $f\in C^r(M)$ we have
\[
H(f)=H(\eta f)\qquad \text{and}\qquad \|\eta f\|_{\calF(M)}\leq C \|f\|_{\calF(M)}.
\]
Applying \eqref{lk} for $\eta f$ and $K=\supp \eta$ finishes the proof of Theorem \ref{crm}.
\end{proof}

The following lemma shows that the bounded extension $H: L^p(M) \to L^p(M)$, $1\le p<\infty$, in Theorem \ref{crm}  coincides with a pointwise formula in Definition \ref{H}.

\begin{lemma}\label{rep}
Suppose that $H \in \mathcal H(M)$ is localized on open and precompact set  $U \subset M$. That is, $H:C_0(M)\to C_0(M)$ is a finite sum of simple $H$-operators
\begin{equation}\label{rep1}
H = \sum_{i=1}^k H_{\vp_i, \Phi_i, V_i},
\end{equation}
where  $\Phi_i$ are diffeomorphisms defined on open subsets $V_i \subset M$ such that $V_i, \Phi_i(V_i) \subset U$ and $\vp_i \in C^\infty(M)$ have supports $\supp \vp_i \subset V_i$, $i=1,\ldots,k$. Then, the bounded extension $H: L^p(M) \to L^p(M)$, $1\le p<\infty$, is given for all $f\in L^p(M)$ by
\begin{equation}\label{rep2}
Hf(x) = \sum_{i=1}^k  H_{\vp_i, \Phi_i, V_i}f(x) \qquad\text{for a.e. }x\in M.
\end{equation}
\end{lemma}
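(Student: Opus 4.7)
The approach is a routine density argument. I would define $\tilde H : L^p(M) \to L^p(M)$ by the right-hand side of \eqref{rep2}, show $\tilde H$ is well defined and bounded, observe that $\tilde H = H$ on the dense subspace $C_c^\infty(M)$, and invoke uniqueness of bounded extension to conclude that $\tilde H$ coincides with the $L^p$-extension of $H$ supplied by Theorem \ref{crm} (the case $r=0$).

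The one step requiring care is showing that each simple operator $H_{\vp_i,\Phi_i,V_i}$ defined pointwise by \eqref{H2} makes sense on $L^p(M)$ and is bounded. Since $\Phi_i:V_i\to\Phi_i(V_i)$ is a $C^\infty$ diffeomorphism it sends $\nu$-null sets to $\nu$-null sets, so $f\circ\Phi_i$ is a well defined equivalence class in $L^p(V_i)$ for every $f\in L^p(M)$, and the pointwise product $\vp_i\cdot(f\circ\Phi_i)$, extended by zero outside $V_i$, is an equivalence class in $L^p(M)$. The change-of-variables formula on the Riemannian manifold gives
\[
\int_{V_i}\bigl|\vp_i(x) f(\Phi_i(x))\bigr|^p\,d\nu(x) = \int_{\Phi_i(V_i)}\bigl|\vp_i(\Phi_i^{-1}(y))\bigr|^p |f(y)|^p\, j_i(y)\,d\nu(y),
\]
where $j_i$ is the Jacobian of $\Phi_i^{-1}$ with respect to $\nu$. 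Compactness of $\supp\vp_i\subset V_i$ forces $|\vp_i\circ\Phi_i^{-1}|^p\, j_i$ to be bounded and compactly supported in $\Phi_i(V_i)$, whence $\|H_{\vp_i,\Phi_i,V_i}f\|_{L^p(M)}\le C_i\|f\|_{L^p(M)}$. Summing in $i$ shows that $\tilde H$ is bounded on $L^p(M)$.

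For the final step, note that $C_c^\infty(M)\subset C_0(M)\cap L^p(M)$ is dense in $L^p(M)$ (since $\nu$ is a Radon measure on a second-countable locally compact manifold). On $C_c^\infty(M)$ the pointwise formula defining $\tilde H$ and the action of $H$ prescribed by Definition \ref{H} are literally the same expression, so $\tilde H f = Hf$ there. Since both $\tilde H$ and the bounded $L^p$-extension of $H$ agree on a dense subspace of $L^p(M)$, they coincide on all of $L^p(M)$, which yields \eqref{rep2} for a.e.\ $x\in M$. The only mildly subtle ingredient is the measure-theoretic well-definedness of $f\circ\Phi_i$; everything else is standard bookkeeping.
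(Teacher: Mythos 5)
Your proof is correct, but it is organized differently from the paper's. The paper's argument takes $f\in L^p(M)$, picks $f_n\in C_c(M)$ with $f_n\to f$ in $L^p$ and pointwise a.e., uses boundedness of the extension (Theorem \ref{crm}) to get $Hf_n\to Hf$ in $L^p$, passes to a subsequence converging a.e., and then lets $n\to\infty$ in the identity \eqref{rep2} applied to each $f_n$; the a.e.\ convergence of the right-hand sides again rests on the fact that the $\Phi_i$ are diffeomorphisms, so preimages of null sets are null. You instead show directly, via the change-of-variables formula (the paper's \eqref{cvf}) and compactness of $\supp\vp_i$, that the right-hand side of \eqref{rep2} defines a bounded operator $\tilde H$ on $L^p(M)$, check agreement with $H$ on the dense subspace $C_c^\infty(M)$, and invoke uniqueness of bounded extensions. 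What your route buys is that no subsequence extraction or a.e.\ limiting is needed, and the measure-theoretic point (well-definedness of $f\circ\Phi_i$ on equivalence classes) is made explicit rather than implicit; what it costs is re-deriving an $L^p$ bound that the paper already has from Theorem \ref{crm}. One cosmetic remark: for well-definedness of $f\circ\Phi_i$ the relevant fact is that $\Phi_i^{-1}$ (rather than $\Phi_i$) maps null sets to null sets, i.e.\ $\nu(\Phi_i^{-1}(N))=0$ whenever $\nu(N)=0$; since both $\Phi_i$ and $\Phi_i^{-1}$ are smooth diffeomorphisms this holds in both directions, so it does not affect your argument.
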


\begin{proof}
Choose a sequence of function $\{f_n\}_{n\in \N}$ in $C_c(M)$ such that $f_n \to f$ in $L^p(M)$ norm and pointwise a.e. as $n\to \infty$. Then $Hf_n \to Hf$ in $L^p$ as $n\to \infty$. By choosing a subsequence we can assume that $Hf_n \to Hf$ pointwise a.e. as $n\to \infty$. Now it suffices to apply \eqref{rep2} for each $f_n\in C_c(M)$ and take a limit as $n\to \infty$.
\end{proof}

As a consequence of Definition \ref{H} and Lemma \ref{rep} we have the following useful fact. The proof of Lemma \ref{skon} is left to the reader. Let $L^1_{loc}(M)$ be the space of locally integrable functions with respect to the Riemannian measure $\nu$ on $M$.

\begin{lemma}\label{skon} Let $P_U\in  \mathcal H(M)$  be localized on an open and precompact set $U\subset M$. Then, for any two open sets $W_1,W_2 \subset M$ of finite measure and containing $U$ we have
\[
P_U(f \ch_{W_1}) = P_U(f \ch_{W_2})  \qquad\text{for }f\in L^1_{loc}(M).
\]
Hence, we can extend the domain of $P_U$ by defining for  $f\in L^1_{loc}(M)$,
\begin{equation}\label{skon1}
P_U(f) = P_U ( f\ch_{W}) \qquad\text{where }W \supset U \text{ is open and } \nu(W)<\infty.
\end{equation}
In particular, we have 
\begin{equation}\label{skon2}
P_U f(x)=0 \qquad\text{for a.e. }x \in M \setminus U.
\end{equation}
\end{lemma}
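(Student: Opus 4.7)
The plan is to apply the pointwise representation in Lemma~\ref{rep} and exploit the containment $V_i,\Phi_i(V_i)\subset U$ to see that multiplying the input by $\ch_W$ is invisible to $P_U$ whenever $W\supset U$. Since $W_1$ and $W_2$ have finite measure and $f\in L^1_{\mathrm{loc}}(M)$, the truncations $f\ch_{W_1}$ and $f\ch_{W_2}$ belong to $L^1(M)$, and by Theorem~\ref{crm} with $r=0$ and $p=1$ the operator $P_U$ is bounded on $L^1(M)$. Hence both $P_U(f\ch_{W_1})$ and $P_U(f\ch_{W_2})$ are well-defined elements of $L^1(M)$. I would fix once and for all a representation $P_U=\sum_{i=1}^k H_{\vp_i,\Phi_i,V_i}$ with $V_i,\Phi_i(V_i)\subset U$ and $\supp\vp_i\subset V_i$, which Definition~\ref{localized} guarantees.

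The key observation is then a direct computation. For every $i$ and every $x\in V_i$, one has $\Phi_i(x)\in\Phi_i(V_i)\subset U\subset W_1\cap W_2$, so
\[
\ch_{W_1}(\Phi_i(x))=\ch_{W_2}(\Phi_i(x))=1,
\]
and consequently
\[
H_{\vp_i,\Phi_i,V_i}(f\ch_{W_1})(x)=\vp_i(x)f(\Phi_i(x))=H_{\vp_i,\Phi_i,V_i}(f\ch_{W_2})(x).
\]
For $x\notin V_i$ both sides vanish by \eqref{H2}. Summing over $i$ and invoking the a.e.\ pointwise formula \eqref{rep2} from Lemma~\ref{rep} yields $P_U(f\ch_{W_1})=P_U(f\ch_{W_2})$ a.e.\ on $M$, which is exactly what is needed to see that the extension defined by \eqref{skon1} is independent of the choice of $W$.

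For the final assertion \eqref{skon2}, the same pointwise formula \eqref{rep2} gives $P_U g(x)=0$ for a.e.\ $x\notin\bigcup_{i=1}^k V_i$; since each $V_i\subset U$, this complement contains $M\setminus U$, and specializing to $g=f\ch_W$ produces $P_U f(x)=0$ for a.e.\ $x\in M\setminus U$. The only minor point requiring a moment of thought is that $f\circ\Phi_i$ is defined almost everywhere on $V_i$, which is immediate from the fact that the smooth diffeomorphism $\Phi_i$ preserves $\nu$-null sets. There is no substantive obstacle in the argument; everything reduces to reading off the formula \eqref{rep2} and using $\Phi_i(V_i)\subset U\subset W_j$, which explains why the authors leave the verification to the reader.
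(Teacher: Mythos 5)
Your core computation is exactly the argument the authors have in mind (they leave the proof to the reader, citing Definition \ref{H} and Lemma \ref{rep}): fix a representation $P_U=\sum_i H_{\vp_i,\Phi_i,V_i}$ with $V_i,\Phi_i(V_i)\subset U$, observe that for $x\in V_i$ one has $\Phi_i(x)\in U\subset W_1\cap W_2$, so $\ch_{W_1}$ and $\ch_{W_2}$ are invisible at the points where $f$ is sampled, and conclude via the pointwise formula; the derivation of \eqref{skon2} from $\supp\vp_i\subset V_i\subset U$ is likewise correct.

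The one step that does not hold as written is the claim that $f\ch_{W_1},f\ch_{W_2}\in L^1(M)$ because $W_1,W_2$ have finite measure and $f\in L^1_{loc}(M)$. Local integrability controls integrals over compact sets only, and the lemma allows $W_j$ to be non-precompact: on $M=\R$ take $f(x)=e^{x^2}$ and $W_1=(0,1)\cup\bigcup_{n\ge 2}(n,n+e^{-n^2})$, which is open, of finite measure, and contains $U=(0,1)$, yet $\int_{W_1}f=\infty$. So you cannot in general invoke the $L^1$-boundedness from Theorem \ref{crm} nor apply Lemma \ref{rep} literally, since \eqref{rep2} is stated for $f\in L^p(M)$. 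The repair is cheap and keeps your computation intact: interpret $P_U(f\ch_{W_j})$ through the pointwise formula \eqref{H2} itself (which makes sense for any measurable input, since only the values of $f$ on the compact sets $\Phi_i(\supp\vp_i)\subset U$ enter, and there $f$ is integrable by local integrability), or else note that in all of the paper's uses (e.g.\ Lemma \ref{punktowo}) the set $W$ is precompact, in which case $f\ch_W\in L^1(M)$ and your route through Theorem \ref{crm} and Lemma \ref{rep} is valid verbatim. With that adjustment the proof of \eqref{skon1} and \eqref{skon2} is complete.
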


\subsection{Adjoints to $H$-operators} 

Ciesielski and Figiel \cite{CF2} used their version of Hestenes operators to define a decomposition of function spaces on a compact smooth manifold.
One of the steps in their proof was the fact that the adjoint to a Hestenes operator is again a Hestenes operator, see  \cite[Lemma 5.8]{CF2}. We show an analogous property for our version 
of $H$-operators.

Let $P\in  \mathcal H(M)$  be localized on an open and precompact set $U$.
Then, $P$, which is defined initially as $P:C_0(M) \to C_0(M)$, extends to a bounded linear operator
$P_p =P:L^p(M) \to L^p(M)$.
Our goal is to identify $(P_p)^* : L^{p'}(M) \to L^{p'}(M)$ as a Hestenes operator which is localized on the same set $U$. 
  
 We begin with a convenient formulation of the change of variables formula for diffeomorphisms between Riemannian manifolds.

\begin{lemma}
\label{wei}
Let $F:M\to N$ be a diffeomorphism between two smooth Riemannian manifolds $M$ and $N$ with Riemannian measures $\nu_M$ and $\nu_N$, resp.
Let $\omega_M \in C^\infty(M)$ be a weight, which defines a measure $\mu_M$ on $M$ by $d\mu_M=\omega_M d\nu_M$. Let $\omega_N \in C^\infty(N)$ be a positive weight satisfying
\begin{equation}\label{wei2}
\omega_M(x) = \omega_N(F(x)) |\det (DF(x))|, \qquad\text{for all }x\in M,
\end{equation}
where $DF(x)$ denotes the differential of $F$ at $x$.  Define $\mu_N:=F_*(\mu_M)$ as a push-forward measure of $\mu_M$, i.e., for any Borel set $B \subset N$, we have $\mu_N(B)=\mu_M(F^{-1}(B))$. Then, $d\mu_N=\omega_N d\nu_N$ and for any $f\in C_c(N)$ we have
\begin{equation}
\label{wei1}
\int_N f(y) \omega_N(y) d\nu_N(y)=\int_M f\circ F(x) \omega_M(x) d\nu_M(x).
\end{equation}
\end{lemma}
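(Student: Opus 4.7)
The plan is to reduce everything to the classical change of variables formula for diffeomorphisms between Riemannian manifolds, namely
\[
\int_N h(y)\, d\nu_N(y) = \int_M h(F(x))\, |\det DF(x)|\, d\nu_M(x) \qquad \text{for } h\in C_c(N),
\]
where $|\det DF(x)|$ is interpreted invariantly as the Jacobian of $DF(x):T_xM\to T_{F(x)}N$ with respect to the inner products induced by the metrics (equivalently, as the ratio of Riemannian volume forms, or as $\sqrt{\det(DF(x)^\ast DF(x))}$). Given that this identity is part of the standard toolkit on Riemannian manifolds, I would cite it from a reference such as \cite{He} or derive it in one line using a partition of unity to localize to charts, where both $d\nu_M$ and $d\nu_N$ become $\sqrt{|g|}\, dx$ in coordinates and the claim reduces to the Euclidean change of variables combined with $|\det DF(x)|=\tfrac{\sqrt{|g_N(F(x))|}}{\sqrt{|g_M(x)|}}|\det J_F(x)|$ where $J_F$ is the coordinate Jacobian.

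First I would verify that $d\mu_N=\omega_N\, d\nu_N$. For any Borel set $B\subset N$, by definition of the pushforward,
\[
\mu_N(B) = \mu_M(F^{-1}(B)) = \int_{F^{-1}(B)} \omega_M(x)\, d\nu_M(x).
\]
Substituting the hypothesis \eqref{wei2} gives
\[
\mu_N(B) = \int_{F^{-1}(B)} \omega_N(F(x))\,|\det DF(x)|\, d\nu_M(x) = \int_M (\omega_N\mathbf 1_B)(F(x))\,|\det DF(x)|\, d\nu_M(x),
\]
and an application of the classical change of variables formula to $h=\omega_N\mathbf 1_B$ yields $\mu_N(B)=\int_B \omega_N(y)\, d\nu_N(y)$, as desired. (A mild technical point: $\omega_N\mathbf 1_B$ is not in $C_c(N)$, so one should first approximate or invoke the standard monotone/dominated convergence extension of the formula to nonnegative Borel $h$.)

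With $d\mu_N=\omega_N\, d\nu_N$ in hand, the integral identity \eqref{wei1} is immediate from the abstract change of variables for pushforward measures: for $f\in C_c(N)$,
\[
\int_N f(y)\,\omega_N(y)\, d\nu_N(y) = \int_N f\, d\mu_N = \int_M (f\circ F)\, d\mu_M = \int_M f(F(x))\,\omega_M(x)\, d\nu_M(x).
\]
There is no real obstacle here; the entire content of the lemma is encoded in the hypothesis \eqref{wei2}, which is precisely what is needed to absorb the Jacobian factor into the weight. The only step that requires any care is the invariant meaning of $|\det DF(x)|$, and this is settled once one recognizes it as the ratio of Riemannian volume forms.
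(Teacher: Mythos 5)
Your proposal is correct and follows essentially the same route as the paper: both rest on the classical change of variables formula for diffeomorphisms of Riemannian manifolds (the paper cites it from Chavel and applies it to $g=f\,\omega_N$), with the hypothesis \eqref{wei2} absorbing the Jacobian into the weight. Your only deviation is cosmetic — you first identify $d\mu_N=\omega_N\,d\nu_N$ and then invoke the abstract pushforward identity, whereas the paper obtains \eqref{wei1} directly — and your remark about extending the formula from $C_c(N)$ to Borel functions is a reasonable point of care.
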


\begin{proof}
Lemma \ref{wei} is a consequence of the change of variables formula, see \cite[Theorem I.3.4]{Ch},
\begin{equation*}
\nu_N(B)= \int_{F^{-1}(B)}  |\det (DF(x))| d\nu_M(x) \qquad\text{for Borel sets }B \subset N.
\end{equation*}
Hence, for any integrable function $g$ on $N$ with respect to $\nu_N$ we have
\begin{equation}\label{cvf}
\int_N g(y) d\nu_N(y) = \int_M (g \circ F)(x) |\det (DF(x))| d\nu_M(x).
\end{equation}
Applying the above for $g=f \omega_N$ yields \eqref{wei}.
\end{proof}

The following lemma identifies an adjoint of a simple $H$-operator.

\begin{lemma}
\label{hest.adj}
Let $U \subset M$ be 
 an open and precompact subset of $M$.
Let
$\Phi:V \to V'$ be a $C^\infty$ diffeomorphism between two open subsets $V, V' \subset U$  and let
$\vp: M \to \R$ be a $C^\infty$ be function such that 
\[
\supp \vp =\ov{\{x\in M : \vp(x) \ne 0\}} \subset V.
\]
Consider operators $H=H_{\vp,\Phi,V}$ and 
  $ G=H_{\vp_1 ,\Phi^{-1},V'}$,
where
\[
\vp_1(y)=
\begin{cases}
\vp(\Phi^{-1}(y))\psi_1(y)& y\in V',
\\
0 & y\notin V'.
\end{cases}
\]
and $\psi_1$ is any $C^\infty(M)$ function such that
\[
\psi_1(y)=|\det D\Phi^{-1}(y)|\qquad\text{for } y\in \Phi(\supp \vp).
\]
Then, $G$  is a simple $H$-operator  localized on $U$. Moreover, treating $H$ and $G$ as operators $H_p = H :L^p(M) \to L^p(M)$ and $G_{p'} = G: L^{p'}(M) \to L^{p'}(M)$, where $1 < p < \infty$
and $1/p+1/p'=1$,
we have $(H_p)^*=G_{p'} $.
\end{lemma}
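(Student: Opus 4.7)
The plan is to verify the lemma in three stages: first check that $G$ is a legitimate simple $H$-operator localized on $U$, second establish the adjoint identity on a dense subspace via change of variables, and finally extend to all of $L^p$ by density.

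For the first stage, I would observe that by hypothesis $V, V' \subset U$ and $\Phi^{-1}$ is a $C^\infty$ diffeomorphism from $V'$ onto $V$. The only subtle point is the smoothness of $\vp_1$ on all of $M$, since the formula $\vp(\Phi^{-1}(y))\psi_1(y)$ only makes sense on $V'$. The key observation is that $\vp(\Phi^{-1}(y))$ vanishes for $y \notin \Phi(\supp \vp)$, so $\supp \vp_1 \subset \Phi(\supp \vp)$, which is a compact subset of the open set $V'$. Thus $\vp_1$ vanishes on a neighborhood of $M \setminus V'$ and is smooth on $V'$, hence $\vp_1 \in C^\infty(M)$ with compact support in $V'$. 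Since $V' \subset U$ and $\Phi^{-1}(V') = V \subset U$, the operator $G = H_{\vp_1,\Phi^{-1},V'}$ is a simple $H$-operator localized on $U$ by Definition \ref{localized}.

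For the second stage, I would verify $\lan Hf, g\ran = \lan f, Gg\ran$ for $f, g \in C_c(M)$ by the change of variables formula \eqref{cvf} applied to the diffeomorphism $\Phi: V \to V'$. Starting from
\[
\lan Hf, g\ran = \int_V \vp(x) f(\Phi(x)) g(x) \, d\nu_M(x),
\]
the substitution $y = \Phi(x)$ transforms this into
\[
\int_{V'} \vp(\Phi^{-1}(y)) f(y) g(\Phi^{-1}(y)) |\det D\Phi^{-1}(y)| \, d\nu_M(y).
\]
The key algebraic step is that $\vp \circ \Phi^{-1}$ vanishes outside $\Phi(\supp \vp)$, and on $\Phi(\supp \vp)$ we have $\psi_1(y) = |\det D\Phi^{-1}(y)|$ by definition; hence
\[
\vp(\Phi^{-1}(y)) |\det D\Phi^{-1}(y)| = \vp(\Phi^{-1}(y)) \psi_1(y) = \vp_1(y)
\]
for all $y \in V'$. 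Therefore the integral equals $\int_M f(y) (Gg)(y)\, d\nu_M(y) = \lan f, Gg\ran$.

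For the final stage, I would invoke Theorem \ref{crm} (or the extension established in Lemma \ref{rep}) to know that both $H_p$ and $G_{p'}$ are bounded linear operators on the respective Lebesgue spaces. Since $C_c(M)$ is dense in both $L^p(M)$ and $L^{p'}(M)$ for $1 < p < \infty$, the identity $\lan H_p f, g\ran = \lan f, G_{p'} g\ran$ extends from $C_c$ to all of $L^p(M) \times L^{p'}(M)$ by continuity of the dual pairing, yielding $(H_p)^* = G_{p'}$.

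The main obstacle I anticipate is not any one of the three stages individually but rather the careful bookkeeping at the junction between them: namely, justifying that the replacement of the genuine Jacobian $|\det D\Phi^{-1}|$ by its smooth global extension $\psi_1$ is legitimate inside the integral. This hinges on the support condition $\supp(\vp \circ \Phi^{-1}) \subset \Phi(\supp \vp)$ — the same observation that guarantees smoothness of $\vp_1$ — so both stages one and two rest on the same key fact.
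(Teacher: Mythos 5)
Your proposal is correct and follows essentially the same route as the paper: identify $G$ as a simple $H$-operator localized on $U$, verify $\lan Hf,g\ran=\lan f,Gg\ran$ on $C_c(M)$ by the change of variables $y=\Phi(x)$ (the paper phrases this via Lemma \ref{wei}), and conclude by density of $C_c(M)$ in $L^p(M)$ and $L^{p'}(M)$. Your extra care about $\supp\vp_1\subset\Phi(\supp\vp)$, which justifies both the smoothness of $\vp_1$ and the replacement of $|\det D\Phi^{-1}|$ by $\psi_1$, fills in details the paper leaves implicit.
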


\begin{remark}
Using a standard partition of unity argument, it is possible to construct a function $\psi_1$ appearing in Lemma \ref{hest.adj}. 
\end{remark}

 \begin{proof} The fact that $G$ is a simple Hestenes operator localized on $U$ is a consequence of 
the formula defining $G$.  
Next, 
Lemma \ref{wei} implies that  for all $f,g\in C_c(M)$

\begin{align*}
\int_M H (f ) (x) g(x) d\nu(x) & =
 \int_V \vp(x) f(\Phi(x)) g(x) d\nu(x)
\\ & =
 \int_{V'} \vp(\Phi^{-1}(y)) f(y) g(\Phi(y)) |\det D\Phi^{-1}(y)| d\nu(y)
\\ 
& = \int_M f(y) G (g)(y) d\nu(y).
\end{align*}
Since $C_c(M)$ is dense both in $L^p(M)$ and $L^{p'}(M)$, 
 this equality yields $G =(H_p)^*$.
\end{proof}

\begin{remark}
Lemma \ref{hest.adj} justifies the following notation, which we use in the sequel:
 if $H$ is a simple $H$-operator and $G$ is a simple $H$-operator defined as 
in Lemma \ref{hest.adj}, then we write $H^* = G$. Likewise, we shall use the same convention for general $H$-operators which is justified by the following consequence of Lemma \ref{hest.adj}.
\end{remark}

\begin{corollary}
\label{adjoint}
Let $P\in  \mathcal H(M)$  be localized on open and precompact set $U$. That is, $P =  \sum_{i=1}^m H_i$, where each $H_i= H_{\vp_i, \Phi_i ,V_i}$ 
is a simple $H$-operator satisfying $V_i, \Phi_i(V_i) \subset U$.
Then, $Q = \sum_{i=1}^m (H_i)^* \in  \mathcal H(M)$ is  localized on  $U$ and $(P_p)^* = Q_{p'}$ for all $1 < p < \infty$. In particular, the action of $Q$ on $C_0(M)$ does not depend 
on a representation of $P$ as a combination of simple $H$-operators.
\end{corollary}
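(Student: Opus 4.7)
My plan is to reduce the corollary to a termwise application of Lemma \ref{hest.adj} followed by a routine cutoff argument that transfers the resulting adjoint equality from $L^{p'}(M)$ back to $C_0(M)$. First I would apply Lemma \ref{hest.adj} to each summand $H_i = H_{\varphi_i,\Phi_i,V_i}$: since $V_i \subset U$ and $\Phi_i(V_i) \subset U$, that lemma produces a simple $H$-operator $(H_i)^*$ whose associated diffeomorphism is $\Phi_i^{-1}$ with domain $\Phi_i(V_i) \subset U$ and image $V_i \subset U$, so $(H_i)^*$ is localized on $U$ in the sense of Definition \ref{localized}. The finite sum $Q = \sum_{i=1}^m (H_i)^*$ is therefore an element of $\mathcal H(M)$ localized on $U$.

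Next, I would establish $(P_p)^* = Q_{p'}$ by linearity of the adjoint. For $f \in L^p(M)$ and $g \in L^{p'}(M)$,
\begin{equation*}
\langle P_p f, g \rangle = \sum_{i=1}^m \langle H_{i,p} f, g \rangle = \sum_{i=1}^m \langle f, (H_i)^*_{p'} g \rangle = \langle f, Q_{p'} g \rangle,
\end{equation*}
where the middle equality uses the identification $(H_{i,p})^* = (H_i)^*_{p'}$ supplied by Lemma \ref{hest.adj}.

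Finally, for the representation-independence claim, suppose $P = \sum_{i=1}^m H_i = \sum_{j=1}^n H'_j$ are two decompositions yielding operators $Q$ and $Q'$; by the previous step $Q_{p'} = (P_p)^* = Q'_{p'}$, so $Q$ and $Q'$ coincide on $L^{p'}(M)$. To transfer this equality to $C_0(M)$, I would pick $\eta \in C_c^\infty(M)$ equal to $1$ on $\overline{U}$, which is compact by precompactness of $U$. For any simple $H$-operator $H_{\varphi,\Phi,V}$ with $V,\Phi(V) \subset U$ the pointwise formula \eqref{H2} gives $Hg = H(\eta g)$ for every $g \in C_0(M)$, since $g(\Phi(x)) = (\eta g)(\Phi(x))$ whenever $\Phi(x) \in \Phi(V) \subset \overline{U}$. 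Hence $Qg = Q(\eta g)$ and $Q'g = Q'(\eta g)$, and since $\eta g \in C_c(M) \subset L^{p'}(M)$, the $L^{p'}$-equality forces $Qg = Q'g$ almost everywhere; as $Qg, Q'g \in C_0(M)$ by Theorem \ref{crm}, they agree pointwise. The only step requiring any real thought is this cutoff argument, and it is essentially routine because both $Q$ and $Q'$ are localized on the same fixed set $U$; everything else is bookkeeping, so I do not expect a serious obstacle.
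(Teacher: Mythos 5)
Your proposal is correct and follows essentially the same route as the paper: apply Lemma \ref{hest.adj} to each simple summand $H_i$ and use linearity of the adjoint to obtain $(P_p)^* = Q_{p'}$ with $Q$ localized on $U$. Your cutoff argument transferring the representation-independence from $L^{p'}(M)$ back to $C_0(M)$ is a correct elaboration of a point the paper's proof leaves implicit.
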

\begin{proof}
Since $ P\in  \mathcal H(M)$ is localized on $U$, so by definition there are simple $H$-operators 
$H_i = H_{\vp_i, \Phi_i ,V_i}$ with $V_i, \Phi_i(V_i) \subset U$ such that $P = \sum_{i=1}^m H_i$.
Let   $Q = \sum_{i=1}^m (H_i)^*$. Then  by Lemma \ref{hest.adj}, $Q$ is a Hestenes operator localized on $U$, and it follows 
that $(P_p)^* = Q_{p'}$ for all $1 < p < \infty$.
\end{proof}

\subsection{Smooth decomposition of identity}

 We are interested in
obtaining a version of Theorem \ref{Main} for spaces $L^p(M)$, $1\leq p<\infty$ and $C_0(M)$. For this we introduce
the concept of smooth decomposition of identity which is a generalization of smooth orthogonal decomposition of identity in $L^2(M)$ from Theorem \ref{Main}.

\begin{definition}\label{sdi}
Let $\mathcal U$ be an open and precompact cover of a Riemannian manifold $M$. We say that a family of operators $\{P_U\}_{U\in\mathcal U}$ is a {\it smooth decomposition of identity} in $L^p(M)$, $1\le p<\infty$, subordinate to an open cover $\mathcal U$ if:
\begin{enumerate}[(i)]
\item family $\{P_U\}_{U\in \mathcal U}$ is locally finite, i.e., for any compact $K\subset M$, all but finitely many operators $P_{U}$ such that $U \cap K \ne \emptyset$, are zero, 
\item each $P_U \in \mathcal H(M)$ is localized on an open set $U \in\mathcal U$,
\item each $P_U: L^p(M) \to L^p(M)$ is a projection,
\item $P_U \circ P_{U'}=0$ for any $U \neq U' \in \mathcal U$,
\item $\sum_{U\in\mathcal U}P_U= \mathbf I$, where $\mathbf I$ is the identity in $L^p(M)$ and the convergence is unconditional in strong operator topology,
\item 
there exists a constant $C>0$ such that
\begin{equation}\label{unc0}
\frac 1{C} ||f||_{p}  \le \bigg( \sum_{U \in\mathcal U} ||P_U f||_p^p \bigg)^{1/p} \le C ||f||_p \qquad\text{for all }f\in L^p(M).
\end{equation}
\end{enumerate}
\end{definition}

\begin{remark}\label{rsdi}
The above definition can be extended to $p=\infty$ by replacing $L^p(M)$ by $C_0(M)$ and \eqref{unc0} by
\[
(||P_U f||_\infty)_{U\in\mathcal U} \in c_0(\mathcal U) \quad\text{and}\quad
\frac 1{C} ||f||_{\infty} \le  \sup_{U \in\mathcal U} ||P_U f||_\infty \le C ||f||_\infty \qquad\text{for all }f\in C_0(M).
\]
The constant $C$ appearing above or in \eqref{unc0}
is called a {\it decomposition constant} for $\{P_U\}_{U\in \mathcal U}$ in $C_0(M)$ or in $L^p(M)$, $1\leq p <\infty$, respectively. 

When $p=2$, we shall require that the decomposition constant $C=1$, which forces projections $P_U$ to be orthogonal and satisfy \eqref{sum}. 
Consequently, when $p=2$, Definition \ref{sdi} is consistent with the concept of a smooth decomposition orthogonal decomposition of identity in $L^2(M)$ as in Theorem \ref{Main}.
\end{remark}

The following proposition shows that property (v) in Definition \ref{sdi} is automatically implied by the other conditions.

\begin{proposition}
Let $1\le p <\infty$. Suppose that $\{P_U\}_{U \in \mathcal U}$ is a family of operators on $L^p(M)$ satisfying conditions (iii), (iv), and (vi) in Definition \ref{sdi}. Then, (v) holds. The same holds for $p=\infty$ by replacing $L^p(M)$ by $C_0(M)$ as in Remark \ref{rsdi}.
\end{proposition}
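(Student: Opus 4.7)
The plan is to derive (v) from (iii), (iv), and (vi) by combining the orthogonality relations with the two-sided norm estimate, applied to the residual $f - S_{\mathcal V} f$, where $S_{\mathcal V} := \sum_{U \in \mathcal V} P_U$ denotes the partial sum over a finite subset $\mathcal V \subset \mathcal U$. The first step is to verify that each partial sum $S_{\mathcal V}$ is itself a bounded projection on $L^p(M)$: condition (iii) gives $P_U^2 = P_U$ and condition (iv) gives $P_U P_{U'} = 0$ for $U \ne U'$, so expanding $S_{\mathcal V}^2 = \sum_{U,U' \in \mathcal V} P_U P_{U'}$ collapses immediately to $\sum_{U \in \mathcal V} P_U = S_{\mathcal V}$.

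Next, fix $f \in L^p(M)$ and set $g := f - S_{\mathcal V} f$. For $U \in \mathcal V$ the relation $P_U P_{U'} = \delta_{U,U'} P_U$ on $\mathcal V$ yields $P_U g = P_U f - P_U^2 f = 0$, whereas for $U \notin \mathcal V$ every cross term still vanishes by (iv) and $P_U g = P_U f$. Feeding this into the lower bound of (vi) produces the key estimate
\[
\|f - S_{\mathcal V} f\|_p \le C \left(\sum_{U \notin \mathcal V} \|P_U f\|_p^p\right)^{1/p}.
\]
On the other hand, the upper bound of (vi) shows that the nonnegative unordered sum $\sum_{U \in \mathcal U} \|P_U f\|_p^p$ is finite, bounded by $C^p \|f\|_p^p$. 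Hence for every $\varepsilon > 0$ there exists a finite $\mathcal V_0 \subset \mathcal U$ with $\sum_{U \notin \mathcal V_0} \|P_U f\|_p^p < \varepsilon^p$.

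Combining these two ingredients, for every finite $\mathcal V \supset \mathcal V_0$ we obtain $\|f - S_{\mathcal V} f\|_p < C \varepsilon$, which is exactly unconditional convergence of $\sum_{U \in \mathcal U} P_U f$ to $f$ in the strong operator topology of $L^p(M)$, establishing (v). For the endpoint case $p = \infty$ (with $L^p(M)$ replaced by $C_0(M)$ as in Remark \ref{rsdi}), the same computation yields $\|f - S_{\mathcal V} f\|_\infty \le C \sup_{U \notin \mathcal V} \|P_U f\|_\infty$, and the hypothesis $(\|P_U f\|_\infty)_{U \in \mathcal U} \in c_0(\mathcal U)$ supplies the tail decay needed for the same conclusion. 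There is no serious obstacle in this argument; the only delicate point is the bookkeeping that makes $P_U(f - S_{\mathcal V} f)$ vanish exactly when $U \in \mathcal V$, which is the sole place condition (iv) is used.
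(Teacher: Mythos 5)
Your argument is correct, and it takes a genuinely different, more self-contained route than the paper. You prove (v) directly: by (iii) and (iv) every finite partial sum $S_{\mathcal V}=\sum_{U\in\mathcal V}P_U$ is a projection, the identity $P_U(f-S_{\mathcal V}f)=0$ for $U\in\mathcal V$ and $P_U(f-S_{\mathcal V}f)=P_Uf$ for $U\notin\mathcal V$ lets you feed the residual into the lower bound of (vi), and the upper bound of (vi) (respectively the $c_0$ condition of Remark \ref{rsdi} when $p=\infty$) gives the tail smallness, so the net of finite partial sums converges to $f$ for every $f$, which is exactly unconditional convergence in the strong operator topology. The paper instead argues abstractly: it sets $X_U=P_U(L^p(M))$, observes that (iii), (iv), (vi) make $\{X_U\}_{U\in\mathcal U}$ a strong unconditional basis in the sense of Nazarov and Treil for the sequence space $\ell^p(\mathcal U)$ (equivalently an unconditional Schauder decomposition, with $L^p(M)$ isomorphic to the $\ell^p$-direct sum of the $X_U$), and then invokes \cite[Proposition 4.1]{NT} to conclude (v). Your route buys an elementary, quantitative proof with no external citation and an explicit treatment of the $C_0(M)$ endpoint; the paper's route buys the extra structural picture $L^p(M)\cong\bigl(\bigoplus_{U\in\mathcal U}X_U\bigr)_p$, and the same Nazarov--Treil framework is reused later (e.g., for the duality theorem via \cite[Proposition 4.5]{NT}). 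Note that the density of the span of the $X_U$, which the paper asserts on the way to applying the cited proposition, is in effect established by precisely your tail estimate, so your argument also supplies the detail the paper delegates to the reference.
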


\begin{proof} Without loss of generality we can assume that $\mathcal U$ is at most countable and each $P_U$, $U \in \mathcal U$ is non-zero. For $U \in \mathcal U$, define a closed subspace $X_U \subset L^p(M)$ by $X_U=P_U(L^p(M))$. Then, properties (iii), (iv), and (vi) imply that the linear span of subspaces $X_U$, $U\in\mathcal U$, is dense in $L^p(M)$. Moreover, the collection $\{X_U\}_{U \in\mathcal U}$ is a strong unconditional basis of $L^p(M)$ in the sense of Nazarov and Treil \cite[Section 4]{NT} for the sequence space $Y=\ell^p(\mathcal U)$. This is a special case of an unconditional Schauder decomposition, see \cite[Section 1.g]{LT}.  Alternatively, $L^p(M)$ is isomorphic with  $\ell^p$-direct sum of subspaces $X_U$, $U\in\mathcal U$, 
\[
L^p(M) \cong \bigg(\bigoplus_{U \in\mathcal U} X_U\bigg)_p,
\]
see \cite[Section II.B.21]{Woj}.
Then, \cite[Proposition 4.1]{NT} implies the property (v). 
The case $p=\infty$ is shown the same way.
\end{proof}

We have the following duality of smooth decompositions of identity.

\begin{theorem}
Suppose that  a family of operators $\{P_U\}_{U\in\mathcal U}$ is a  smooth decomposition of identity in $L^p(M)$, $1< p<\infty$, as in Definition \ref{sdi}. Then, $\{(P_U)^*\}_{U\in\mathcal U}$ is a smooth decomposition of identity in $L^{p'}(M)$, $1/p+1/p'=1$. 
\end{theorem}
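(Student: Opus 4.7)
The plan is to verify the six properties of Definition \ref{sdi} for $\{(P_U)^*\}_{U\in\mathcal U}$ acting on $L^{p'}(M)$. Four of them come for free: Corollary \ref{adjoint} gives (ii), since each $(P_U)^*$ is an $H$-operator localized on the same open set $U$; local finiteness (i) is inherited because $P_U=0$ if and only if $(P_U)^*=0$; and the general identity $(ST)^*=T^*S^*$ yields both (iii), via $((P_U)^*)^2=(P_U^2)^*=(P_U)^*$, and (iv), via $(P_U)^*(P_{U'})^*=(P_{U'}P_U)^*=0$ for $U\ne U'$. By the Proposition immediately preceding this theorem, property (v) follows automatically once the norm equivalence (vi) is in place, so the entire proof reduces to establishing (vi) for the adjoint family.

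For the \emph{lower bound} in (vi), I would fix $g\in L^{p'}(M)$ and $f\in L^p(M)$ and use the unconditional expansion $f=\sum_U P_U f$ together with $P_U=P_U^2$ to write
\[
\langle g,f\rangle=\sum_{U\in\mathcal U}\langle g,P_U f\rangle=\sum_{U\in\mathcal U}\langle (P_U)^* g,P_U f\rangle.
\]
A coordinatewise H\"older inequality followed by the upper bound in (vi) for $\{P_U\}$ (with decomposition constant $C$) gives
\[
|\langle g,f\rangle|\le\Bigl(\sum_U\|(P_U)^* g\|_{p'}^{p'}\Bigr)^{1/p'}\Bigl(\sum_U\|P_U f\|_p^p\Bigr)^{1/p}\le C\Bigl(\sum_U\|(P_U)^* g\|_{p'}^{p'}\Bigr)^{1/p'}\|f\|_p,
\]
so taking the supremum over $\|f\|_p\le 1$ yields $\|g\|_{p'}\le C\bigl(\sum_U\|(P_U)^*g\|_{p'}^{p'}\bigr)^{1/p'}$.

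For the \emph{upper bound}, I would exploit the duality of $\ell^p$- and $\ell^{p'}$-direct sums. Given a finite set $F\subset\mathcal U$ and functions $\{h_U\}_{U\in F}\subset L^p(M)$ with $\sum_{U\in F}\|h_U\|_p^p\le 1$, I would set $f=\sum_{U\in F}P_U h_U$. Mutual orthogonality of the $P_U$ gives $P_V f=P_V h_V$ for $V\in F$ and $P_V f=0$ otherwise, so the lower bound in (vi) for $\{P_U\}$, combined with the estimate $\|P_U h_U\|_p\le C\|h_U\|_p$ (which is the upper bound in (vi) applied to $h_U$ alone), yields $\|f\|_p\le C^2$. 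Consequently
\[
\sum_{U\in F}\langle (P_U)^* g,h_U\rangle=\Bigl\langle g,\sum_{U\in F}P_U h_U\Bigr\rangle=\langle g,f\rangle\le C^2\|g\|_{p'},
\]
and taking the supremum over $F$ and $\{h_U\}$ produces the desired estimate $\bigl(\sum_U\|(P_U)^* g\|_{p'}^{p'}\bigr)^{1/p'}\le C^2\|g\|_{p'}$. The main technical obstacle is a bookkeeping one: justifying the interchange of pairing and infinite sum in the lower bound via the unconditional convergence $\sum_U P_U f=f$, which I would handle by first restricting to finite subsets $F$ and then passing to the limit, using that both sides of the resulting inequality are monotone in $F$.
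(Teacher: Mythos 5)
Your proposal is correct, but it follows a genuinely different route from the paper. The paper disposes of properties (iii)--(vi) for the adjoint family in one line by citing Nazarov--Treil \cite[Proposition 4.5]{NT} (duality for strong unconditional decompositions), and then, exactly as you do, gets (i) by inheritance and (ii) from Corollary \ref{adjoint}. You instead prove the quantitative property (vi) for $\{(P_U)^*\}$ by hand: the lower bound via the expansion $f=\sum_U P_U f$, the identity $\langle g,P_Uf\rangle=\langle (P_U)^*g,P_Uf\rangle$ and H\"older in $\ell^{p'}\times\ell^p$, and the upper bound via the duality of $\ell^p$-direct sums applied to test functions $f=\sum_{U\in F}P_Uh_U$ over finite $F$; properties (iii) and (iv) then come from pure adjoint algebra, and (v) from the paper's preceding Proposition (itself a consequence of \cite[Proposition 4.1]{NT}), whose hypotheses (iii), (iv), (vi) you have verified for the adjoints. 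Your argument is legitimate at every step (the only routine care needed is inserting absolute values and the finite-subset limiting argument you already flag for interchanging the pairing with the unconditional sum, and noting that $P_Vf=P_Vh_V$ for $V\in F$, $P_Vf=0$ otherwise, plus $\|P_Uh_U\|_p\le C\|h_U\|_p$, which you do). What it buys is a self-contained proof with explicit constants ($C$ for the lower and $C^2$ for the upper bound in \eqref{unc0}), avoiding the external machinery except through the paper's own Proposition; what the paper's route buys is brevity, since \cite[Proposition 4.5]{NT} delivers (iii)--(vi) simultaneously.
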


\begin{proof}
By \cite[Proposition 4.5]{NT}, the properties (iii)--(vi) for $\{P_U\}_{U\in\mathcal U}$ in $L^p(M)$ imply the same properties for $\{(P_U)^*\}_{U\in\mathcal U}$ in $L^{p'}(M)$. The property (i) for adjoints $\{(P_U)^*\}_{U\in\mathcal U}$ is an immediate consequence of (i) for $\{P_U\}_{U\in\mathcal U}$. Finally, Corollary \ref{adjoint} implies that if $P_U\in \mathcal H(M)$ is localized on $U$, then the same holds for $(P_U)^*$.
\end{proof}

In Section \ref{S6} we shall establish the existence of a smooth decomposition of identity in $L^p(M)$, $1\leq p< \infty$, and in $C_0(M)$, which is subordinate to any open and precompact cover of $M$, see Theorem \ref{Main2}.

\subsection{Technical Lemmas}
We need to use some standard ``abstract nonsense'' facts about weighted Lebesgue spaces on general measure spaces.
 
\begin{proposition}\label{absn}
Let $(\Omega,\mu)$ be a positive measure space and let $\kappa: \Omega \to (0,\infty)$ be a measurable weight. Let $1\le p<\infty$. Define a multiplication operator 
\[
M_{\kappa^{-1/p}}: L^p(d\mu) \to L^p(\kappa d \mu), \qquad M_{\kappa^{-1/p}}f = \kappa^{-1/p} f.
\]
Then:
\begin{enumerate}[(i)]
\item
$M_{\kappa^{-1/p}}$ is an isometric isomorphism 
\[
\begin{aligned}
(M_{\kappa^{-1/p}})^{-1} &= M_{\kappa^{1/p}}:   L^p(\kappa d \mu) \to L^p(d\mu) \qquad\text{and}
\\
(M_{\kappa^{-1/p}})^* &= M_{\kappa^{1/p'}}:  L^{p'}(\kappa d \mu) \to L^{p'}( d\mu).
\end{aligned}
\]
\item
If $T: L^p(d\mu) \to L^p(d\mu)$ is bounded linear operator, then 
\[
\tilde T = M_{\kappa^{-1/p}} \circ T \circ M_{\kappa^{1/p}} : L^p(\kappa d \mu) \to L^p(\kappa d \mu),
\]
is also a bounded linear operator with the same norm $||\tilde T||=||T||$.
\item
If $T_1, T_2: L^p(d\mu) \to L^p(d\mu)$ are bounded linear operators, then $(T_1 \circ T_2)\tilde{} = \tilde T_1 \circ \tilde T_2$.
\item 
If $T_i:  L^p(d\mu) \to L^p(d\mu)$, $i\in I$, are bounded linear operators such that
\[
||f||_p \asymp \bigg( \sum_{i\in I} ||T_i f ||_p \bigg)^{1/p} \qquad\text{for all }f \in L^p(d\mu),
\]
then 
\[
||g||_p \asymp \bigg( \sum_{i\in I} ||\tilde T_i g||_p \bigg)^{1/p} \qquad\text{for all }g \in L^p(\kappa d\mu),
\]
with the same equivalence constants.
\item
If $T: L^p(d\mu) \to L^p(d\mu)$ is bounded linear operator, then let $S=T^*: L^{p'}(d\mu) \to L^{p'}(d\mu)$, $1/p+1/p'=1$, be its adjoint. Then
\[
\tilde S = M_{\kappa^{-1/p'}} \circ S \circ M_{\kappa^{1/p'}}: L^{p'}(\kappa d\mu) \to L^{p'}(\kappa d\mu)
\]
is the adjoint of $\tilde T$, i.e., $\tilde S= (\tilde T)^*$. 
\item
If $T, T_i:  L^p(d\mu) \to L^p(d\mu)$, $i\in \N$, are bounded linear operators such that
\[
\sum_{i=1}^\infty T_i  = T \qquad\text{in strong operator topology in $L^p(d\mu)$},
\]
then
\[
\sum_{i=1}^\infty \tilde T_i  = \tilde T \qquad\text{in strong operator topology in $L^p(\kappa d\mu)$}.
\]
\end{enumerate}
\end{proposition}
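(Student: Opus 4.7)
The proof is a straightforward verification of each item, and I would organize it around the single unifying observation established in (i): the multiplication operator $M_{\kappa^{-1/p}}$ is an isometric isomorphism $L^p(d\mu) \to L^p(\kappa d\mu)$ with the explicit inverse and adjoint as stated. Once this is in hand, every remaining part is formal.

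For (i), I would simply compute $\|M_{\kappa^{-1/p}}f\|_{L^p(\kappa d\mu)}^p = \int |f(x)|^p \kappa(x)^{-1}\kappa(x)\,d\mu(x) = \|f\|_{L^p(d\mu)}^p$, which both establishes the isometry and, together with the pointwise identity $\kappa^{1/p}\cdot\kappa^{-1/p} = 1$, identifies the inverse as $M_{\kappa^{1/p}}$. For the adjoint, the pairing $\int (M_{\kappa^{-1/p}}f)(x)\, g(x)\,\kappa(x)\,d\mu(x) = \int f(x)\,\bigl(\kappa(x)^{1/p'}g(x)\bigr)\,d\mu(x)$ uses only the exponent identity $1 - 1/p = 1/p'$, confirming $(M_{\kappa^{-1/p}})^* = M_{\kappa^{1/p'}}$.

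With (i) settled, item (ii) is immediate because $\tilde T$ is conjugation of $T$ by an isometric isomorphism, and the norm of a conjugated operator equals that of the original. Item (iii) follows from associativity combined with $M_{\kappa^{1/p}}\circ M_{\kappa^{-1/p}} = \mathbf I$, which produces a cancellation in the middle of $\tilde T_1 \circ \tilde T_2$. Item (iv) is obtained by substituting $f = M_{\kappa^{1/p}}g$ in the hypothesis and applying the isometry of $M_{\kappa^{-1/p}}$ to $\|f\|_p$ and to each $\|T_i f\|_p$ simultaneously, which preserves the equivalence constants exactly. Item (v) follows by taking the adjoint of the three-fold composition defining $\tilde T$ and invoking the adjoint identities from (i), taking care to use $1/p'$ in place of $1/p$.

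Finally, item (vi) follows because conjugation by bounded operators preserves strong-operator convergence: explicitly, for each fixed $g \in L^p(\kappa d\mu)$,
\[
\tilde T g - \sum_{i=1}^n \tilde T_i g \;=\; M_{\kappa^{-1/p}}\Bigl(\bigl(T - \sum_{i=1}^n T_i\bigr)\bigl(M_{\kappa^{1/p}} g\bigr)\Bigr),
\]
and the isometry of $M_{\kappa^{-1/p}}$ converts the convergence to $0$ in $L^p(d\mu)$ into convergence to $0$ in $L^p(\kappa d\mu)$. No step poses a real obstacle; the only place requiring some care is the exponent arithmetic $1 - 1/p = 1/p'$ in (i), which then propagates cleanly through the adjoint identifications used in (v).
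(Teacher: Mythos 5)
Your proof is correct: the isometry computation, the identification of the inverse and adjoint of $M_{\kappa^{-1/p}}$, and the formal conjugation arguments for (ii)--(vi) all check out, including the exponent bookkeeping $1-1/p=1/p'$ in (i) and (v). Note that the paper itself offers no proof of Proposition \ref{absn} — it is stated as a collection of standard facts about weighted Lebesgue spaces — so your routine verification is exactly the argument the authors leave implicit, and there is nothing to reconcile between the two.
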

 
In our arguments we will consider weighted Lebesgue spaces $L^p(M,\omega)$, $1\le p<\infty$, where {\it weight} $\omega \in C^\infty(M)$ is a positive smooth function on Riemannian manifold $M$. The norm of a measurable function $f$ on $M$ is given by
\[
||f||_{L^p(M,\omega)} = \bigg(\int_M |f|^p \omega d\nu\bigg)^{1/p},
\]
where $\nu$ is the Riemannian measure on $M$. In particular, $L^2(M,\omega)$ is  a Hilbert space with the inner product
\[
\lan f,h \ran_\omega=\int_{M} f(x) h(x) \omega(x) d\nu(x).
\]

The following lemma allows transferring of smooth decompositions of identity for any weight.

\begin{lemma}
\label{weight1}
Let $M$ be a smooth Riemannian manifold. Let $\omega, \tilde{\omega}\in C^\infty(M)$ be two positive weights on $M$ and $1\le p<\infty$. 
\begin{enumerate}[(i)]
\item
Suppose that $P_U \in \mathcal H(M)$ is localized on an open set $U\subset M$, which induces a projection on $L^p(M,\omega)$. Define an operator $\tilde P_U$ by 
\begin{equation}\label{wei0}
\tilde{P}_U(f)=P_U ( f \cdot ( \tilde\omega/ \omega )^{1/p} ) \cdot ( \omega/\tilde\omega )^{1/p}
\qquad\text{for }f\in C_0(M).
\end{equation}
Then, an operator $\tilde P_U \in \mathcal H(M)$ is localized on $U$, which induces a  projection on $L^p(M,\tilde \omega)$, and
\begin{equation}\label{pup}
||\tilde P_U||_{L^p(M,\tilde \omega) \to L^p(M,\tilde \omega)} = || P_U||_{L^p(M,\omega) \to L^p(M,\omega)}.
\end{equation}

\item
Suppose $\{P_U\}_{U\in \mathcal U}$ is a smooth decomposition of identity in $L^p(M,\omega)$ in the sense of Definition \ref{sdi}. Then, the family $\{\tilde{P}_U\}_{U\in \mathcal U}$, that is defined by \eqref{wei0}, is  a smooth decomposition of identity in  $L^p(M,\tilde \omega)$ with the same decomposition constant.

\end{enumerate}
\end{lemma}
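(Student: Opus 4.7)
My plan is to reduce both parts to Proposition \ref{absn} applied with the underlying measure $d\mu = \omega\, d\nu$ and weight $\kappa = \tilde\omega/\omega$, which is a smooth positive function on $M$. Under this choice $\kappa\, d\mu = \tilde\omega\, d\nu$, so that $L^p(d\mu) = L^p(M,\omega)$ and $L^p(\kappa\, d\mu) = L^p(M,\tilde\omega)$; the isometry $M_{\kappa^{-1/p}}$ is multiplication by $(\omega/\tilde\omega)^{1/p}$, its inverse $M_{\kappa^{1/p}}$ is multiplication by $(\tilde\omega/\omega)^{1/p}$, and the conjugated operator $M_{\kappa^{-1/p}} \circ P_U \circ M_{\kappa^{1/p}}$ coincides with the $\tilde P_U$ defined in \eqref{wei0}.

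For part (i), the only nontrivial point is that $\tilde P_U$ belongs to $\mathcal H(M)$ and is localized on $U$; the projection property and the norm equality \eqref{pup} are immediate from Proposition \ref{absn}(ii). For the nontrivial point, I will invoke Definition \ref{localized} to write $P_U = \sum_{i=1}^k H_{\vp_i,\Phi_i,V_i}$ with $V_i, \Phi_i(V_i)\subset U$ and $\supp \vp_i \subset V_i$. A direct pointwise computation from \eqref{H2} shows that each conjugated summand is the simple $H$-operator $H_{\tilde\vp_i,\Phi_i,V_i}$, where
\[
\tilde\vp_i(x) = \vp_i(x)\bigl(\omega(x)/\tilde\omega(x)\bigr)^{1/p}\bigl(\tilde\omega(\Phi_i(x))/\omega(\Phi_i(x))\bigr)^{1/p} \quad\text{for } x\in V_i,
\]
and $\tilde\vp_i = 0$ on $M\setminus V_i$. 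Since $\omega$ and $\tilde\omega$ are smooth and positive and $\supp \vp_i \subset V_i$, the function $\tilde\vp_i$ extends smoothly by zero to all of $M$, with $\supp \tilde\vp_i \subset \supp \vp_i \subset V_i$. Hence $\tilde P_U = \sum_i H_{\tilde\vp_i,\Phi_i,V_i} \in \mathcal H(M)$ is localized on $U$ by Definition \ref{localized}.

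For part (ii), I will verify each of the six conditions in Definition \ref{sdi} for $\{\tilde P_U\}_{U\in\mathcal U}$. Condition (i) transfers because multiplication by $(\omega/\tilde\omega)^{\pm 1/p}$ is a bijection on $L^p$, so $\tilde P_U = 0$ iff $P_U = 0$. Conditions (ii)--(iii) are supplied by part (i). Condition (iv) follows from Proposition \ref{absn}(iii), since the conjugate of $P_U \circ P_{U'} = 0$ is $\tilde P_U \circ \tilde P_{U'} = 0$. For condition (v), a direct computation shows that the conjugate of $\mathbf I$ on $L^p(M,\omega)$ is $\mathbf I$ on $L^p(M,\tilde\omega)$; Proposition \ref{absn}(vi) then yields $\sum_U \tilde P_U = \mathbf I$ unconditionally in the strong operator topology. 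Finally, condition (vi) with the same decomposition constant is exactly Proposition \ref{absn}(iv).

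No serious obstacle is expected; the argument is essentially organized bookkeeping around the isometric conjugation supplied by Proposition \ref{absn}. The one substantive ingredient is the smoothness and positivity of both weights, which is precisely what guarantees that the modified amplitudes $\tilde\vp_i$ remain admissible as amplitudes of simple $H$-operators and that the conjugating multiplications preserve the class $\mathcal H(M)$.
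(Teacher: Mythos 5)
Your proposal is correct and follows essentially the same route as the paper: decompose $P_U$ into simple $H$-operators and compute the conjugated amplitudes $\tilde\vp_i$ pointwise to see that $\tilde P_U\in\mathcal H(M)$ remains localized on $U$, then invoke Proposition \ref{absn} with $d\mu=\omega\,d\nu$ and $\kappa=\tilde\omega/\omega$ for the projection property, the norm equality, and the transfer of the conditions in Definition \ref{sdi}. Your part (ii) merely spells out the ``direct verification'' that the paper leaves implicit.
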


\begin{proof}
Since $P_U\in \mathcal H(M)$, $P_U$ is a finite sum of simple $H$-operators
$H_{\vp,\Phi,V}$ as in Definition \ref{H}. Note that for $x\in V$ and $f\in C_0(M)$
\[
H_{\vp,\Phi,V}\left(f \bigg( \frac{\tilde\omega}{\omega}\bigg)^{1/p}  \right)(x) \bigg( \frac{\omega(x)}{\tilde\omega(x)}\bigg)^{1/p}=
\vp(x)\bigg(\frac{\omega(x)}{\tilde\omega(x)} \bigg)^{1/p} \bigg( \frac{\tilde\omega(\Phi (x))}{\omega(\Phi (x))}\bigg)^{1/p}  f(\Phi (x)).
\]
Hence, $\tilde P_U$ is also a finite sum of simple $H$-operators $H_{\tilde \vp,\Phi,V}$ with appropriately modified $\tilde \vp$'s.
This proves that $\tilde P_U\in \mathcal H(M)$ is localized on $U$ by Definition \ref{localized}. To complete the proof of (i), we consider a weight $\kappa=\frac{\tilde\omega}{\omega}$ and a measure $\mu$ on $M$ given by $d \mu = \omega d \nu$. Then, $\kappa d\mu =\tilde  \omega d \nu$ and Proposition \ref{absn}(ii)--(iii) shows that $\tilde P_U$ is a projection satisfying \eqref{pup}.
Finally, part (ii) follows by direct verification of all properties of smooth decomposition of identity in Definition \ref{sdi} using Proposition \ref{absn}. 
\end{proof}

Using Lemma \ref{wei} smooth decompositions of identity can be transferred via diffeomorphisms. 

\begin{lemma}
\label{weight2}
Let $F:M\to N$ be a diffeomorphism between two smooth Riemannian manifolds $M$ and $N$. Let $\omega_M \in C^\infty(M)$ and $\omega_N \in C^\infty(N)$ be positive weights satisfying \eqref{wei2} and $1\le p<\infty$.

\begin{enumerate}[(i)]
\item
Suppose that $P_U \in \mathcal H(M)$ is localized on an open set $U\subset M$ and it induces a projection on $L^p(M,\omega_M)$. Define an operator $Q_B$, where $B=F(U)$ by
\begin{equation}
\label{l3}
Q_B(f) =P_{U}(f\circ F)\circ F^{-1} \qquad\text{for }f\in C_0(N).
\end{equation}
Then, an operator $Q_B \in \mathcal H(N)$ is localized on $B$ and it induces a projection on $L^p(N,\omega_N)$ with the same norm as $P_U$.

\item
 Let  $\{P_U\}_{U\in \mathcal U}$ be a smooth decomposition of identity in $L^p(M,\omega_M)$. 
For every ${B \in \mathcal B}$ in the open cover $\mathcal B:=F(\mathcal U)$ of $N$, define operators $Q_B$ by \eqref{l3}.
Then $\{Q_B\}_{B\in\mathcal B}$ is a smooth decomposition of identity in $L^p(N,\omega_N)$ with the same decomposition constant.

\end{enumerate}
\end{lemma}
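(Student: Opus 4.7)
The plan is to show that pulling back through the diffeomorphism $F$ acts by conjugation with the composition operator $C_F: f \mapsto f \circ F$, so that the projection, orthogonality, and sum-to-identity properties transfer essentially for free, leaving only the Hestenes structure and the localization on $B=F(U)$ to be checked by a direct computation.

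First, I would verify that $C_F$ is an isometric isomorphism from $L^p(N,\omega_N)$ onto $L^p(M,\omega_M)$. This is immediate from Lemma \ref{wei}: taking $g = |f|^p \omega_N$ in \eqref{wei1} and using the weight compatibility \eqref{wei2} gives $\|f\|_{L^p(N,\omega_N)} = \|f \circ F\|_{L^p(M,\omega_M)}$, and $C_F^{-1}(h) = h \circ F^{-1}$. The defining formula \eqref{l3} reads $Q_B = C_F^{-1} \circ P_U \circ C_F$, so $Q_B$ is automatically a bounded projection on $L^p(N,\omega_N)$ with the same norm as $P_U$, establishing the operator-theoretic half of (i).

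Next I would check that $Q_B \in \mathcal H(N)$ and is localized on $B$. Writing $P_U = \sum_{i=1}^k H_{\vp_i, \Phi_i, V_i}$ with $V_i, \Phi_i(V_i) \subset U$, a direct calculation gives
\begin{equation*}
C_F^{-1} \circ H_{\vp_i,\Phi_i,V_i} \circ C_F = H_{\tilde \vp_i, \tilde \Phi_i, \tilde V_i},
\qquad
\tilde \vp_i = \vp_i \circ F^{-1},\ \tilde \Phi_i = F \circ \Phi_i \circ F^{-1},\ \tilde V_i = F(V_i),
\end{equation*}
which is a simple $H$-operator on $N$ since $F$ is a $C^\infty$ diffeomorphism. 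Because $V_i, \Phi_i(V_i) \subset U$ we have $\tilde V_i, \tilde \Phi_i(\tilde V_i) \subset F(U) = B$, so $Q_B$ is localized on $B$ by Definition \ref{localized}. Summing over $i$ finishes (i).

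For (ii), local finiteness transfers because $F$ and $F^{-1}$ carry compact sets to compact sets, so a compact $K \subset N$ meets only finitely many $B = F(U)$. The remaining properties follow by conjugation: for $B \ne B'$,
\begin{equation*}
Q_B \circ Q_{B'} = C_F^{-1} P_U C_F C_F^{-1} P_{U'} C_F = C_F^{-1}(P_U \circ P_{U'}) C_F = 0,
\end{equation*}
and $\sum_{B\in\mathcal B} Q_B f = C_F^{-1}\bigl(\sum_{U\in\mathcal U} P_U(C_F f)\bigr) = C_F^{-1}(C_F f) = f$, with unconditional convergence preserved by the isometry. The frame-type equivalence \eqref{unc0} transfers with the same constants because $\|Q_B f\|_{L^p(N,\omega_N)} = \|P_U(C_F f)\|_{L^p(M,\omega_M)}$. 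I do not foresee a serious obstacle here; the only mildly delicate point is bookkeeping the two diffeomorphisms ($F$ and $\Phi_i$) to confirm that the conjugated simple $H$-operator genuinely has the form required by Definition \ref{H}, which is the calculation displayed above.
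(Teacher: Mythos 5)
Your proposal is correct and follows essentially the same route as the paper: conjugation by the composition operator induced by $F$ (which Lemma \ref{wei} and \eqref{wei2} show is an isometric isomorphism between the weighted $L^p$ spaces), together with the direct computation that conjugating a simple $H$-operator yields the simple $H$-operator with data $\tilde\vp=\vp\circ F^{-1}$, $\tilde\Phi=F\circ\Phi\circ F^{-1}$, $\tilde V=F(V)$, whence the localization on $B=F(U)$ and the transfer of properties (i)--(vi) of Definition \ref{sdi}. No gaps; your part (ii) simply spells out the verification the paper leaves to the reader.
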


\begin{proof} 
Since $P_U\in \mathcal H(M)$, $P_U$ is a finite sum of simple $H$-operators $H=H_{\vp,\Phi,V}$ as in Definition \ref{H}. 
Define an operator $G$ acting on functions $f\in C_0(N)$ by
\[
G(f)(x):=H(f\circ F)\circ F^{-1}(x)
=\begin{cases} \vp(F^{-1}(x)) f( F\circ \Phi \circ F^{-1}(x)) & x\in F(V)\\
0 & \text{otherwise}.
\end{cases}
\]
Hence, $G$ is a simple $H$-operator, 
\[
H^N(f)=H_{\tilde\vp,\tilde\Phi,\tilde V}(f)
\]
 with $\tilde\vp=\vp\circ F^{-1}$, $\tilde{V}=F(V)$, $\tilde\Phi=F\circ \Phi \circ F^{-1}$. Thus, $Q_B \in \mathcal H(N)$, where $B=F(U)$. Since $P_U$ is localized on $U$, it is easy to verify that $Q_B$ is localized on $B$ by Definition \ref{localized}.
 
The diffeomorphism $F: M \to N$ is a measure preserving transformation between measure spaces $(M, \omega_M d\nu_M)$ and $(N,\omega_N d\nu_N)$. For each $p$, it induces an isometric isomorphism 
\[
T: L^p(M,\omega_M) \to L^p(N,\omega_N), \qquad T(f)= f \circ F^{-1}.
\]
By \eqref{l3}, $Q_B: L^p(N,\omega_N) \to L^p(N,\omega_N)$ satisfies $Q_B= T \circ P_U \circ T^{-1}$. Hence, $Q_B$  induces a projection on $L^p(N,\omega_N)$ with the same norm as $P_U$.
Likewise, it is a matter of a simple verification that if properties (i)-(vi) in Definition \ref{sdi} hold for family $\{P_U\}_{U\in\mathcal U}$, then they also hold for $\{Q_B\}_{B\in\mathcal B}$. 
\end{proof}

\section{Background about Morse functions}\label{S3}

In this section we will show some rudimentary facts in Morse Theory following the books of Hirsch \cite[\S 6]{Hi} and Milnor \cite[\S 6]{Mi}. Instead of studying topological properties of smooth manifolds $M$ as in \cite{Hi}, we will merely employ Morse functions to obtain a convenient local decomposition of a Riemannian measure on $M$ as a product of measures on an interval and a level surface of $M$. In the absence of critical points this is a consequence of the regular interval theorem, see \cite[Theorem 2.2 in \S 6]{Hi}. In the presence of a critical point, it is a measure-theoretic analogue of topological result on attaching cells, see \cite[Theorem 3.1 in \S 6]{Hi}.

Let $M$ be a Riemannian manifold of dimension $d$. We say that $m: M \to \R$ is a Morse function if all critical points of  $m$ are nondegenerate. That is, the $d\times d$ Hessian matrix of $m$ has rank $d$ at every critical point. The following fact can be easily deduced from well-known properties of Morse functions, see \cite{Hi} and \cite{Mi}.

\begin{theorem}\label{mfact}
Suppose that $M$ is a connected Riemannian manifold (without boundary). Then, there exists a Morse function $m: M \to [0,\infty)$ such that:
\begin {itemize}
\item preimages $m^{-1}([0,b])$, $b>0$, are compact; in particular, level sets $m^{-1}(t)$ are compact for each $t\ge 0$,
\item every critical value corresponds to exactly one critical point.
\end{itemize}
Moreover, $m$ maps $M$ onto $[0,\infty)$ or $[0,1]$, if $M$ is non-compact or compact, respectively.  
\end{theorem}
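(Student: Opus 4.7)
The plan is to build $m$ in three stages: first produce a proper smooth exhaustion of $M$, then perturb it to a Morse function while preserving properness, and finally adjust the critical values to be distinct and normalize the range.

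\emph{Stage 1 (proper exhaustion).} Since $M$ is a connected (hence second countable, paracompact) smooth manifold without boundary, I can fix a countable exhaustion $K_1\subset\interior K_2\subset K_2\subset\interior K_3\subset\cdots$ of $M$ by compact sets. Using a smooth partition of unity subordinate to the open cover $\{\interior K_{n+1}\setminus K_{n-1}\}_{n\ge 1}$, I construct a smooth function $f_0:M\to[0,\infty)$ with $f_0(x)\ge n$ whenever $x\notin K_n$. Then $f_0$ is proper and nonnegative, and if $M$ is compact one may simply take $f_0$ to be any nonnegative smooth function (and the construction collapses).

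\emph{Stage 2 (making it Morse).} I invoke the standard density of Morse functions: by the Morse--Sard theorem together with a local perturbation argument (see Hirsch \cite{Hi}, \S 6, Theorem 1.2), in any neighborhood of $f_0$ in the strong (Whitney) $C^\infty$-topology there is a Morse function $m_0$. Choosing the perturbation $m_0 - f_0$ bounded by a slowly decaying function (e.g.\ by $1$), the function $m_0$ remains proper and nonnegative (after replacing it by $m_0+C$ if necessary, and then subtracting its infimum). Because $m_0$ is proper, each sublevel set $m_0^{-1}([0,b])$ is compact, and nondegeneracy of critical points forces them to be isolated, so only finitely many critical points lie in each sublevel set.

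\emph{Stage 3 (distinct critical values and range).} Let $\{p_j\}_{j\ge 1}$ enumerate the critical points of $m_0$, with $p_j\in m_0^{-1}([0,j])$ cofinally. Around each $p_j$ choose a smooth bump $\chi_j$ supported in a small coordinate ball $U_j$ containing no other critical point and with $\chi_j\equiv 1$ near $p_j$. Define
\[
m(x) = m_0(x) + \sum_{j=1}^\infty \varepsilon_j \chi_j(x),
\]
where the constants $\varepsilon_j$ are chosen inductively so that (a) $|\varepsilon_j|$ is small enough that no new critical points are created in $U_j$ (possible since $\grad m_0$ is bounded away from $0$ on $\supp\chi_j\setminus\{p_j\}$ after shrinking $U_j$) and that $m$ remains nonnegative and proper, and (b) the critical value $m_0(p_j)+\varepsilon_j$ differs from all previously chosen critical values (a codimension-one condition, avoidable on the real line). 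The local finiteness of $\{U_j\}$ in each sublevel set ensures $m$ is smooth and still a Morse function with the same critical points as $m_0$. Finally, if $M$ is compact I rescale affinely so that $m(M)=[0,1]$; if $M$ is non-compact, properness and continuity make $m(M)$ a closed, connected, unbounded subset of $[0,\infty)$, i.e.\ $[a,\infty)$, and subtracting $a$ gives $m(M)=[0,\infty)$.

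The main obstacle is Stage 3: one must carry out the countably many perturbations so that properness, the Morse property, and the nonnegativity are all preserved simultaneously, while still realizing the combinatorial condition that distinct critical points have distinct critical values. Handling this by induction on $j$, combined with the compactness of each sublevel set $m_0^{-1}([0,j])$ (which contains only finitely many $p_k$), makes the choices of $\varepsilon_j$ into a finite avoidance problem at each step and avoids any accumulation issues.
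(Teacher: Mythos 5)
Your argument is correct and follows the same overall strategy as the paper, which simply assembles citations: the paper invokes Milnor's Corollary 6.7 for a Morse function whose sublevel sets $m^{-1}([0,b])$ are compact, and then a perturbation argument from Hirsch (using the density of Morse functions in the strong topology) to make distinct critical points have distinct critical values. Where you differ is in the first part: instead of quoting Milnor's construction (embedding $M$ in Euclidean space and taking a generic squared-distance function), you build a proper nonnegative exhaustion function by hand and then use the openness/density of Morse functions in the strong Whitney topology with a perturbation bounded by $1$, which keeps properness; this is a perfectly valid, slightly more self-contained route, and your Stage 3 is essentially the Hirsch-style local bump perturbation the paper refers to, carried out carefully as a locally finite, inductive finite-avoidance procedure, followed by the same normalization of the range. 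One small imprecision to fix: $\grad m_0$ is \emph{not} bounded away from zero on $\supp\chi_j\setminus\{p_j\}$ (it tends to $0$ near $p_j$); the correct and sufficient statement is that it is bounded away from zero on the compact set $\supp(d\chi_j)$, which avoids the neighborhood of $p_j$ where $\chi_j\equiv 1$, so that for small $\varepsilon_j$ no new critical points appear and the Hessian at $p_j$ is unchanged. You should also take the balls $U_j$ pairwise disjoint (possible since the critical set is closed and discrete) so that the perturbed critical value at $p_j$ is exactly $m_0(p_j)+\varepsilon_j$; with these routine adjustments the proof is complete.
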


\begin{proof}
By \cite[Corollary 6.7]{Mi} on any differentiable manifold there exists a Morse function with compact level sets. In fact, Milnor's argument shows that preimages $m^{-1}([0,b])$, $b>0$, are compact.
By Morse's lemma \cite[Lemma 1.1 in \S 6]{Hi}, critical values are isolated and each critical value corresponds to finitely many critical points. Moreover, by \cite[Theorem 1.2 in \S 6]{Hi} Morse functions form a dense open set in $C^\infty(M,\R)$ in a suitable strong topology \cite[Ch. 2.1]{Hi}. Hence, a perturbation argument as in \cite[p. 162]{Hi} yields a Morse function with critical values corresponding to exactly one critical point.
\end{proof}

For a regular value $t$, let $J_t= m^{-1}(t)$ be a level submanifold equipped with the Riemannian metric inherited from $M$. The following result is a variant of the regular interval theorem \cite[Theorem 2.2 in \S6]{Hi}.

\begin{theorem}\label{ril}
Let $m:M \to [0,\infty)$ be a Morse function as in Theorem \ref{mfact}. Let $I=(a,b)$ be an open interval such that $m$ has no critical values in $I$. Then there exists a family of diffeomorphisms $\{F_{\th,t}\}_{\th, t \in I}$ between level submanifolds
$
F_{\th,t}:J_\th \to J_t
$
such that:
\begin{equation}
\label{ril1}
F_{\th,t}^{-1}=F_{t,\th}: J_t \to J_\th,
\end{equation}
\begin{equation}
\label{ril2}
F_{t_2,t_3}\circ F_{t_1,t_2}=F_{t_1,t_3} \qquad\text{for }t_1,t_2,t_3\in I,
\end{equation}
and for each $\th \in  I$, the formula $F_\th(t,x)=F_{\th,t}(x)$, $(t,x) \in I \times J_\th$ defines a diffeomorphism 
\begin{equation}\label{ril3}
F_\th:I \times J_\th \to  M_I:=m^{-1}(I)=\bigcup_{t\in I} J_t\subset M.
\end{equation}
For any $\th \in I$ there exists a smooth function $\psi=\psi_\th$ on $I \times J_\th$ such that the pushforward of  the Riemannian measure $\nu_M$ under $F_\th^{-1}$ is
\begin{equation}
\label{ril4}
(F^{-1}_\th)_*(\nu_M)= \psi  (\lambda \times \nu_\th),
\end{equation}
where $\lambda$ is the Lebesgue measure on $I$ and $\nu_\th$ is the Riemannian measure on $J_\th$. Moreover, if $m$ has no critical values in $\bar I=[a,b]$, then there there exist constants $c_1,c_2>0$ such that 
\begin{equation}\label{ril5}
c_1\leq \psi(t,x)\leq c_2 \qquad\text{for all } (t,x) \in I \times J_\th.
\end{equation}
\end{theorem}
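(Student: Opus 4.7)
The plan is to construct the diffeomorphisms $F_{\th,t}$ from the normalized gradient flow of $m$ on $M_I$, which is the standard Morse-theoretic device in this setting, and then to deduce the measure decomposition from a Jacobian calculation. Since $I$ contains no critical values, the vector field
\[
X = \frac{\grad m}{g(\grad m,\grad m)}
\]
is smooth on $M_I$ and satisfies $X(m)=1$, so $m$ grows at unit rate along each integral curve. If $\phi_s$ denotes its local flow, then $m(\phi_s(x))=m(x)+s$, and the compactness of $m^{-1}([a',b'])$ for any $[a',b']\subset I$ (guaranteed by Theorem \ref{mfact}) together with standard ODE theory ensures that $\phi_s(x)$ is defined for every $x\in J_\th$ and every $s$ with $\th+s\in I$. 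The non-compactness of $M$ itself is not an obstacle here, because the sublevel sets of $m$ are compact; this is the essential point that makes the global flow argument work.

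Given this, I would set $F_{\th,t}(x):=\phi_{t-\th}(x)$ for $x\in J_\th$ and $\th,t\in I$. Then $m(F_{\th,t}(x))=t$, so $F_{\th,t}$ maps $J_\th$ smoothly into $J_t$, and the group law of the flow immediately yields \eqref{ril1} and \eqref{ril2}. The map $F_\th(t,x):=\phi_{t-\th}(x)$ is smooth with smooth inverse $y\mapsto(m(y),\phi_{\th-m(y)}(y))$, hence a diffeomorphism, which proves \eqref{ril3}.

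For the measure decomposition I would invoke the change of variables formula (Lemma \ref{wei} applied to $F_\th^{-1}$): because $F_\th$ is a diffeomorphism between smooth Riemannian manifolds, the push-forward $(F_\th^{-1})_*(\nu_M)$ has a smooth, strictly positive density $\psi$ with respect to $\la\times\nu_\th$. Equivalently, combining the coarea formula $d\nu_M=|\grad m|^{-1}\,d\nu_t\,dt$ with $m\circ F_\th(t,x)=t$ yields
\[
\psi(t,x)=\frac{J(t,x)}{|\grad m|(F_\th(t,x))},
\]
where $J(t,x)$ is the Jacobian of $F_{\th,t}:J_\th\to J_t$ with respect to the induced metrics; both factors are smooth and positive. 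Finally, when $m$ has no critical values on $\bar I=[a,b]$, the set $m^{-1}(\bar I)$ is compact, $|\grad m|$ is bounded above and away from zero on it, and $J(t,x)$ extends continuously and positively to $\bar I\times J_\th$; this yields the two-sided bound \eqref{ril5}. The only genuine technical step is verifying completeness of the flow inside $M_I$, which is handled by the compactness of sublevel sets; everything else is routine computation.
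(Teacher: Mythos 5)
Your proposal is correct and follows essentially the same route as the paper: the renormalized gradient field $X=\grad m/|\grad m|^2$, completeness of its flow on $M_I$ via compactness of the sets $m^{-1}([a',b'])$, the flow maps $F_{\th,t}$ with the group law giving \eqref{ril1}--\eqref{ril3}, the Jacobian of $F_\th$ as the density $\psi$ via the change of variables formula, and compactness after extending slightly past $\bar I$ for the bounds \eqref{ril5}. No substantive differences worth noting.
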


\begin{proof} The proof follows along the lines of the proof of  \cite[Theorem 2.2 in \S6]{Hi}. Let $\grad m$ be the gradient vector field corresponding to $m$. Let $X$ be a renormalized gradient vector field on $M$ given by
\begin{equation}\label{ril7}
X(x)=\frac{\grad m(x)}{|\grad m(x)|^2}.
\end{equation}
The vector field is well-defined and smooth everywhere on $M$ except the critical points of $m$. 

For any $\th \in I=\R$ and $x\in M$, we consider the initial value problem
\begin{equation}\label{ode}
\begin{cases}
\eta'(t)=X(\eta(t)) \\
\eta(\th)=x.
\end{cases}
\end{equation}
By the existence and uniqueness theorem for ordinary differential equations (ODEs), for every $x\in M$, which is not a critical point, there exists a maximal open interval $I(x)$ containing $\th$, such that the solution $\eta(t)$ satisfying \eqref{ode} exists for all $t\in I(x)$. 

In addition, if $\th \in I=(a,b)$ and $x\in J_\th$, then a direct calculation as in \cite{Hi} shows that $\eta(t) \in J_t$. 
In particular, for any $x\in J(\th)$, the maximal existence interval $I(x)$ contains $I$. Hence, the solution curves satisfying \eqref{ode} define a mapping $F_\th: I \times J_{\th} \to M$ given by $F_\th(t,x)=\eta(t)$. By the differentiability of solutions of ODEs, the mapping $F_\th$ is smooth and satisfies
\begin{equation}
\label{diff}
F_\th(\th,x)=x \quad\text{and}\quad F_{\th,t}(x):=F_\th(t,x) \in J_t \qquad\text{for } (t,x) \in I \times J_\th.
\end{equation}
Since $F_{t,\th}(F_{\th,t}(x))=x$, the mapping  $F_{\th,t}: J_\th \to J_t$ is a diffeomorphism satisfying \eqref{ril1}. Likewise, \eqref{ril2} follows from the uniqueness of solution curves of the vector field $X$. Since the solution curves of the gradient vector field are transverse to level submanifolds, $F_\th$ is an immersion. Hence, $F_\th$ is a diffeomorphism between $I \times J_\th$ and $M_I$.

Define a weight $\psi$ by
\begin{equation}\label{epsi}
\psi(t,x)  =|\det D F_\th(t,x)| \qquad\text{for }(t,x) \in I \times J_\th.
\end{equation}
By the change of variables formula \eqref{cvf} for any $f \in C_0(M_I)$,
\begin{equation}\label{ril6}
\int_{m^{-1}(I)} f(x) d\nu_M(x) = \int_I \int_{J_\th} (f \circ F_\th) (t,x) \psi(t,x) dt d\nu_\th(x).
\end{equation}
This implies that the pushforward measure relation $(F_\th)_*( \psi  (\lambda \times \nu_\th))=\nu_M$. Hence, \eqref{ril4} holds. If $m$ has no critical values in $\bar I$, then we can extend $F_\th$ to $\tilde I \times J_\th$, where $\tilde I$ is an open interval containing $\bar I$. 
Since $\bar I \times J_\th$ is compact, the differential of $F_\th$ is uniformly bounded and has uniformly bounded inverse on  $\bar I \times J_\th$, which implies \eqref{ril5}.
\end{proof}

We shall need an analogue of Theorem \ref{ril} that deals with critical points.

\begin{theorem}\label{cpl}
Let $m:M \to [0,\infty)$ be a Morse function as in Theorem \ref{mfact}.  Let $I=(a,b) \subset m(M)$ be an open interval and $t_z \in I$ be a unique critical value of $m$ in $\ov{I}=[a,b]$, which corresponds to a single critical point $z\in M$. Let $U_z$ be an open neighborhood of $z\in M$. Then the following holds:
\begin{enumerate}[(i)]
\item
There exist open submanifolds $\tilde J_{t} \subset J_t$, $t\in I$, and a family of diffeomorphisms $F_{\th,t}: \tilde J_{\th} \to \tilde J_{t}$ such that the analogues of \eqref{ril1}, \eqref{ril2}, and \eqref{ril3} are satisfied with $J_t$ replaced by $\tilde J_t$. 
\item
There exists an open neighborhood $V_z$ of $z\in M$  and $\delta_z>0$ such that
\begin{equation}\label{cpl1}
J_t \setminus \tilde J_t \subset V_z \subset \ov{V_z} \subset  U_z \qquad\text{for all }t \in [t_z-\delta_z,t_z+\delta_z],
\end{equation}
and
\begin{equation}\label{cpl2}
J_t \setminus F_{\th,t}(J_{\th} \setminus V_z) \subset U_z \qquad\text{for all }\th, t \in [t_z-\delta_z,t_z+\delta_z].
\end{equation}
\item
For any $\th \in I$ there exists a smooth function $\psi=\psi_\th$ on $I \times \tilde J_\th$ such that the pushforward of the Riemannian measure $\nu_M$, restricted to $\tilde M_I=\bigcup_{t\in I} \tilde J_t$, under $F_\th^{-1}$ is given by
\begin{equation}
\label{cpl4}
(F^{-1}_\th)_*(\nu_M)= \psi  (\lambda \times \nu_\th),
\end{equation}
where $\nu_\th$ is the Riemannian measure restricted to $\tilde J_\th \subset J_\th$. Moreover, \eqref{ril5} holds with $J_\th$ replaced by $\tilde J_\th$.
\end{enumerate}
\end{theorem}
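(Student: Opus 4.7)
The plan is to mimic the renormalized gradient-flow argument of Theorem~\ref{ril} while using the Morse lemma to excise a small neighborhood of $z$ together with the trajectories that accumulate on $z$. The key input is a single explicit first integral of the flow in a Morse chart, which underpins all the geometric estimates.

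By the Morse lemma, fix a chart $(\Omega,\phi)$ around $z$ with $\phi(z)=0$ and $m\circ\phi^{-1}(y)=t_z-|y'|^2+|y''|^2$ for $y=(y',y'')\in\R^k\times\R^{d-k}$. Choose radii $0<r_0<r_1<r_2$ with $\phi^{-1}(\overline{B(0,r_2)})\subset U_z\cap\Omega$, and set $V_z'=\phi^{-1}(B(0,r_0))$, $V_z=\phi^{-1}(B(0,r_1))$. Let $X=\grad m/|\grad m|^2$ as in \eqref{ril7}, smooth on $M\setminus\{z\}$, and let $F_{\th,t}(x)=\eta(t)$ with $\eta$ solving \eqref{ode} on $M\setminus\{z\}$. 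Writing $r'=|y'|$ and $r''=|y''|$, a direct computation in the chart gives $dr'/dt=-r'/(2|y|^2)$ and $dr''/dt=r''/(2|y|^2)$, so $c:=r'r''$ is flow-invariant and at $m$-level $t$ one has the explicit identity
\[
|y|^2=\sqrt{(t-t_z)^2+4c^2}.
\]
A chart trajectory meets $z$ precisely when $c=0$; every trajectory with $c>0$ stays at distance $\ge\sqrt{2c}$ from $z$.

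Define
\[
\tilde J_\th:=\{x\in J_\th : F_{\th,t}(x)\in M\setminus\overline{V_z'}\text{ for every }t\in I\},
\]
so $\tilde J_\th$ consists of points whose trajectory never enters $\overline{V_z'}$; within the chart this coincides with $\{c>r_0^2/2\}$. Continuous dependence on initial data makes $\tilde J_\th$ open in $J_\th$, and because the defining condition is flow-invariant, $F_{\th,t}$ restricts to a diffeomorphism $\tilde J_\th\to\tilde J_t$ with inverse $F_{t,\th}$; the analogues of \eqref{ril1}--\eqref{ril3} then follow verbatim from the argument of Theorem~\ref{ril}, yielding (i).

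Now choose $\delta_z>0$ with $\delta_z^2+r_0^4<r_1^4$ and $\delta_z^2+r_1^4\le r_2^4$. For \eqref{cpl1}, any point of $J_t\setminus\tilde J_t$ with $|t-t_z|\le\delta_z$ has $c\le r_0^2/2$, so the explicit identity gives $|y|^2\le\sqrt{\delta_z^2+r_0^4}<r_1^2$, and hence the point lies in $V_z$. For \eqref{cpl2}, a point of $J_t\setminus F_{\th,t}(J_\th\setminus V_z)$ is either in $J_t\setminus\tilde J_t\subset V_z\subset U_z$ or equals $F_{\th,t}(x_0)$ for some $x_0\in V_z\cap\tilde J_\th$; in the latter case $c\le|y_0|^2/2\le r_1^2/2$ and $|y|^2\le\sqrt{\delta_z^2+r_1^4}\le r_2^2$, placing the image in $\phi^{-1}(\overline{B(0,r_2)})\subset U_z$. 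For (iii), the diffeomorphism $F_\th:I\times\tilde J_\th\to\tilde M_I$ together with the change-of-variables identity \eqref{cvf} yields $\psi_\th=|\det DF_\th|$ and therefore \eqref{cpl4}; the two-sided bound analogous to \eqref{ril5} follows outside the chart from the smoothness of $X$ on the relatively compact set $\tilde M_I\setminus\Omega$, and inside $\Omega$ from a direct Jacobian computation using the explicit identity above together with the lower bound $c>r_0^2/2$ available on $\tilde J_\th$. The main obstacle is the coordinated choice of $r_0,r_1,r_2,\delta_z$ making all the chart-based estimates for (ii) and (iii) simultaneously valid; fortunately each reduces to the single identity $|y|^2=\sqrt{(t-t_z)^2+4c^2}$.
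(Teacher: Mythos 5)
There is a genuine gap at the heart of your argument: the explicit in-chart description of the flow. You take $X=\grad m/|\grad m|^2$ as in \eqref{ril7}, i.e.\ the gradient with respect to the given Riemannian metric $g$, but then compute its trajectories in a Morse chart as if $g$ were the Euclidean metric in those coordinates ($dr'/dt=-r'/(2|y|^2)$, $dr''/dt=r''/(2|y|^2)$). The Morse lemma normalizes the function $m$ to $t_z-|y'|^2+|y''|^2$, but it does not (and in general cannot) simultaneously normalize the metric; for a non-Euclidean $g$ the gradient $g^{ij}\partial_j m$ mixes the $y'$ and $y''$ directions, $c=r'r''$ is not a first integral, and the identity $|y|^2=\sqrt{(t-t_z)^2+4c^2}$ fails. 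Since every quantitative step in your proof rests on that identity --- the characterization of $\tilde J_\th$ as $\{c>r_0^2/2\}$, the choice of $\delta_z$ via $\delta_z^2+r_0^4<r_1^4$ and $\delta_z^2+r_1^4\le r_2^4$, the inclusions \eqref{cpl1} and \eqref{cpl2}, and the in-chart Jacobian bound for the analogue of \eqref{ril5} --- the proof does not go through as written. (A secondary issue: you would also need to justify that the relevant trajectory segments stay inside the chart while the identity is being applied, and that points of $J_\th\setminus V_z$ lying outside the chart cannot flow into $\overline{V_z'}$.)

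Two repairs are possible. Either replace $X$ near $z$ by a gradient-like vector field that really is the (renormalized) Euclidean gradient in the Morse chart --- the standard Morse-theory device --- but then you must patch it smoothly with $X$ outside the chart, recheck $dm(X)=1$ so that trajectories still cross $J_t$ at time $t$, and accept that your $F_{\th,t}$ no longer agree with those of Theorem \ref{ril}, a compatibility the paper later exploits in Remark \ref{cplr}. Or argue as the paper does, with no chart computation at all: the only input needed is the soft transition-time estimate of Lemma \ref{time} for the actual field $X$ across geodesic annuli $B(z,2^ir)\setminus B(z,2^{i-1}r)$, which bounds below the time any trajectory needs to cross an annulus and hence shows that trajectories starting on $J_{\th_0}\setminus \bar B(z,2r)$ with $|t-t_z|\le\delta_z$ never reach $B(z,r)$, that $J_t\setminus\tilde J_t\subset B(z,4r)$, and that \eqref{cpl2} holds with $V_z=B(z,4r)$, $U_z\supset B(z,8r)$; part (iii) then follows because $F_\th$ extends smoothly to the closure of $I\times\tilde J_\th$, away from the singularity of $X$.
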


\begin{figure}
\includegraphics{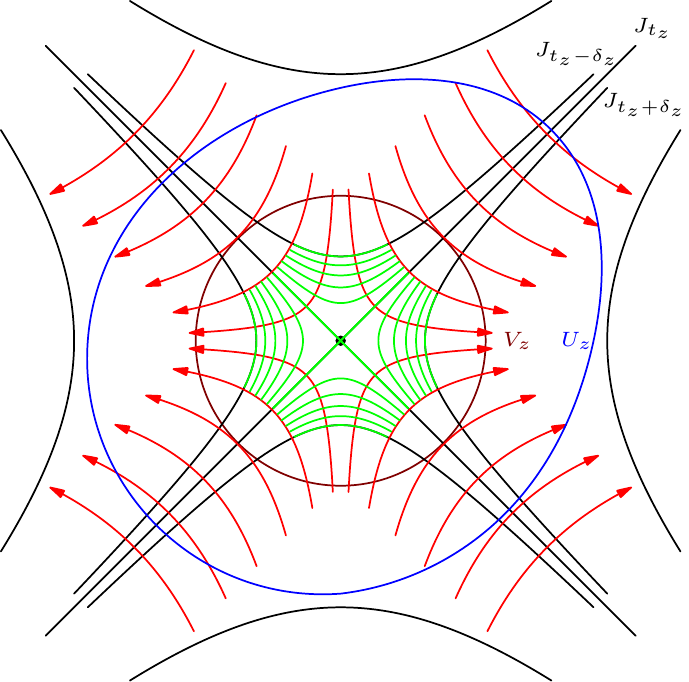}\includegraphics{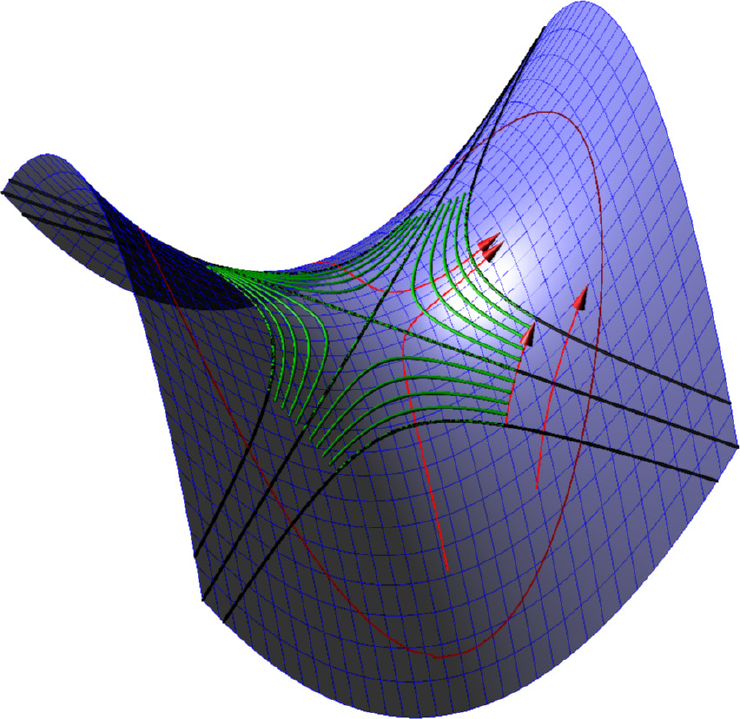}
\caption{Visualization of properties \eqref{cpl1} and \eqref{cpl2} in Theorem \ref{cpl}. Green curves represents portions of the level sets $J_t \setminus \tilde J_t$, $|t-t_z| \le \delta_z$, near the critical point $z$. The solution curves of the flow between level sets are in red.}
\end{figure}

In the proof of Theorem \ref{cpl} we shall employ the following elementary lemma about transition times of solution curves. Let $B(z,r)$ be an open ball centered at $z \in M$ and radius $r>0$ with respect to a geodesic distance $\di$ on $M$. 

\begin{lemma}\label{time}
Let $z\in M$ and $r>0$ be such that the closed ball $\bar B(z,2r)$ is compact in $M$. Let $X$ be a smooth and non-zero vector field defined on some open neighborhood of $\bar B(z,2r) \setminus B(z,r)$.
Then, there exists $\delta>0$, such that whenever $\eta: [t_0,t_1] \to M$ is a solution curve of $X$ such that
\begin{equation}\label{time1}
\begin{cases}
\eta'(t)=X(\eta(t)), \\
\eta(t_0) \in B(z,r) \quad\text{and}\quad \eta(t_1) \not \in B(z,2r),
\end{cases}
\end{equation}
then $|t_0-t_1| \ge \delta$.
\end{lemma}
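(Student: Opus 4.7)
My plan is to bound the speed of $\eta$ on a compact annular region around $z$ and compare it against the distance the curve must cover. First, I would set $A := \bar B(z,2r)\setminus B(z,r)$, which is compact as a closed subset of $\bar B(z,2r)$. Since $X$ is smooth on an open neighborhood of $A$, compactness yields a finite constant $C := \sup_{x\in A}|X(x)|_g$, where $|\cdot|_g$ denotes the norm induced by the Riemannian metric. Note that $C>0$: otherwise $X$ would vanish on $A$ and no integral curve could traverse the annulus, contradicting the assumed crossing from $B(z,r)$ to the complement of $B(z,2r)$.

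Next I would pin down the precise times at which $\eta$ crosses the two spheres. Since $t\mapsto \di(z,\eta(t))$ is continuous with value $<r$ at $t=t_0$ and $\geq 2r$ at $t=t_1$, I would set
\[
s_1 := \inf\{s\in [t_0,t_1] : \di(z,\eta(s))\geq 2r\},\qquad s_0 := \sup\{s\in [t_0,s_1] : \di(z,\eta(s))\leq r\}.
\]
By continuity, $\di(z,\eta(s_0))=r$ and $\di(z,\eta(s_1))=2r$, and by the choice of $s_0,s_1$ we have $r\leq \di(z,\eta(s))\leq 2r$ for every $s\in [s_0,s_1]$, hence $\eta(s)\in A$ on this subinterval.

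Finally, since $\eta$ is an integral curve of $X$, $|\eta'(s)|_g=|X(\eta(s))|_g\leq C$ on $[s_0,s_1]$, so the Riemannian length of $\eta|_{[s_0,s_1]}$ satisfies $\int_{s_0}^{s_1}|\eta'(s)|_g\, ds\leq C(s_1-s_0)$. On the other hand, this length bounds the geodesic distance between its endpoints, so by the reverse triangle inequality
\[
\int_{s_0}^{s_1}|\eta'(s)|_g\, ds\;\geq\; \di(\eta(s_0),\eta(s_1))\;\geq\; \di(z,\eta(s_1))-\di(z,\eta(s_0))\;=\; r.
\]
Combining these estimates yields $s_1-s_0\geq r/C$, and therefore $|t_1-t_0|\geq s_1-s_0\geq r/C$; one may take $\delta:=r/C$.

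The only real subtlety is the bookkeeping in the choice of $s_0$ and $s_1$: one must extract a subinterval on which the trajectory is genuinely confined to $A$, so that the uniform bound $|X|_g\leq C$ actually applies. Everything else reduces to the standard length-versus-geodesic-distance comparison for integral curves of a vector field.
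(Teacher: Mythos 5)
Your proposal is correct and follows essentially the same argument as the paper: bound $\|X\|$ by a constant on the compact annulus $\bar B(z,2r)\setminus B(z,r)$, extract the subinterval $[s_0,s_1]$ on which the trajectory crosses the annulus, and compare the curve's length $\le C(s_1-s_0)$ with the geodesic distance $\ge r$ between its endpoints, giving $\delta=r/C$. The only difference is that you make the choice of $s_0,s_1$ explicit via inf/sup, which the paper simply asserts.
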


\begin{proof} Since $K=\bar B(z,2r) \setminus B(z,r)$ is compact in $M$, there exists a constant $c>0$ such that
\begin{equation}\label{rest}
||X(x)||:=\sqrt{g(x)\cdot (X(x),X(x))}\le c \qquad\text{for all }x \in K.
\end{equation}
Let $[s_0,s_1] \subset [t_0,t_1]$ be a subinterval on which $\eta$ travels through $K$. That is,
\[
\di(z,\eta(s_0))=r, \qquad \di(z,\eta(s_1))=2r, \qquad\text{and } \eta(s)\in K \text{ for all }s\in [s_0,s_1].
\]
Then,
\[
r \le \di(\eta(s_0),\eta(s_1)) \le \int_{s_0}^{s_1} ||\eta'(s)|| ds =   \int_{s_0}^{s_1} ||X(\eta(s))|| ds \le (s_1-s_0)c.
\]
Hence, we conclude Lemma \ref{time} with $\delta=r/c$.
\end{proof}

\begin{proof}[Proof of Theorem  \ref{cpl}]
Choose $r>0$ such that $B(z,8r) \subset U_z$ is precompact. Let $X$ be a normalized gradient vector field of $m$ given by \eqref{ril7}. Let $\delta_i$ be the minimum transition time for the annulus $B(z,2^i r) \setminus B(z,2^{i-1}r)$, where $i=1,2$, or $3$, which is given by Lemma \ref{time}. Fix $0<\delta_z \le \frac13 \min(\delta_1,\delta_2,\delta_3)$ such that $[t_z-\delta_z,t_z+\delta_z] \subset (a,b)$.

For any $\th \in [t_z-\delta_z,t_z+\delta_z]$, let $\eta$ be the solution curve to the initial value problem 
\begin{equation}\label{ode2}
\begin{cases}
\eta'(t)=X(\eta(t)) \\
\eta(\th)=x, \qquad x\in J_\th \setminus B(z,2r),
\end{cases}
\end{equation}
We claim that the solution $\eta$ exists on the interval $I$ and it satisfies
\begin{equation}\label{ode3}
\eta(t) \in J_t \setminus B(z,r) \qquad\text{for all } t\in [t_z-\delta_z,t_z+\delta_z].
\end{equation}
On the contrary, suppose that there exists a trajectory satisfying $\eta(t) \in B(z,r)$ for some $t\in [t_z-\delta_z,t_z+\delta_z]$. Applying Lemma \ref{time} on the annulus $B(z,2 r) \setminus B(z,r)$ yields a contradiction with $2\delta_z<\delta_1$.
Hence, \eqref{ode3} is a consequence of \eqref{ril7}, which guarantees that trajectories $\eta(t)$ of the vector field $X$ travel through level submanifolds $J_t$ precisely at time $t$ as in the proof of Theorem \ref{ril}. Since the solution $\eta$ stays away from the critical point $z$, which is the only singularity of $X$ on $m^{-1}(I)$, it exists on the interval $I$.

For some fixed choice of $\th_0 \in [t_z-\delta_z,t_z+\delta_z]$, we define
\[
\tilde J_{\th_0}=J_{\th_0} \setminus \bar B(z,2r).
\]
By the above claim, for any $t\in I$, we can define
\begin{equation}\label{cpl5}
\tilde J_t = \{ \eta(t): \eta \text{ is a solution of \eqref{ode2} for some } x \in \tilde J_{\th_0} \}.
\end{equation}
By \eqref{ode3} we have
\begin{equation}\label{cpl16}
\tilde J_t \cap B(z,r) =\emptyset \qquad\text{for } t\in  [t_z-\delta_z,t_z+\delta_z].
\end{equation}
It is less obvious that
\begin{equation}\label{cpl17}
J_t \setminus \tilde J_t \subset B(z,4r)
\qquad\text{for } t\in  [t_z-\delta_z,t_z+\delta_z]. 
\end{equation}
On the contrary, suppose that for some $t$ and $y \in J_t \setminus \tilde J_t$ we have $y \not \in B(z,4r)$. Consider the solution curve $\eta$ of the vector field $X$ with the initial condition $\eta(t)=y$. Then, the solution $\eta$ exists on $I$ and by Lemma \ref{time} on the annulus $B(z,4r) \setminus B(z,2r)$, it satisfies
\[
\eta(s) \not \in B(z,2r) \qquad\text{for all } s\in [t_z-\delta_z,t_z+\delta_z].
\]
In particular, $x=\eta(\th_0) \not \in B(z,2r)$. Hence, $x\in \tilde J_{\th_0}$. By \eqref{cpl5}, this forces $y\in \tilde J_t$, which is a contradiction. This proves \eqref{cpl17}, and hence \eqref{cpl1} holds for $V_z=B(z,4r)$.

For any $\th \in I$, we define the mapping
\[
F_\th: I \times \tilde J_\th \to \tilde M_I:=\bigcup_{t\in I} \tilde J_t\subset M
\]
in the same way as in the proof of Theorem \ref{ril}. When moving along the trajectories of \eqref{ode2} between the level sets $\tilde J_t$ near the critical value $|t-t_z|\le \delta_z$, we stay away from the ball $B(z,r)$ by \eqref{cpl16}. Hence, $F_\th$ is well-defined and $F_{\th,t}: \tilde J_\th \to \tilde J_t$ given by $F_{\th,t}(x)=F_{\th}(t,x)$ satisfies \eqref{ril1} and \eqref{ril2}. Moreover, $F_\th$ is a diffeomorphism for the same reasons as in the proof of Theorem \ref{ril}. 

Note that $F_\th$ extends smoothly to the closure of $I \times \tilde J_\th$. Hence, the differential of $F_\th$ is uniformly bounded and has uniformly bounded inverse for all $(t,x) \in I \times \tilde J_\th$. As in the proof of Theorem \ref{ril} this yields \eqref{cpl4} with the weight $\psi$ given by \eqref{epsi}.

Finally, to prove \eqref{cpl2}, take any $y\in J_{t} \setminus B(z,8r)$, where $t\in  [t_z-\delta_z,t_z+\delta_z]$. Since the transition time of the vector field $X$ for annulus $B(z,8r) \setminus B(z,4r)$ is at least $3\delta_z$, we deduce that $x=F_{t,\th}(y) \not \in B(z,4r)$ for any $\th \in  [t_z-\delta_z,t_z+\delta_z]$. Hence, we have $y=F_{\th,t}(x)$ for some $x\in J_\th \setminus V_z$, where $V_z=B(z,4r)$. Since $B(z,8r) \subset U_z$, this shows \eqref{cpl2}.
\end{proof}

\begin{remark} \label{cplr}
As a corollary of \eqref{cpl1} and \eqref{cpl4}, for any $f\in C_0(M)$ such that $f$ vanishes on $U_z$, we have
\begin{equation}\label{cpl6}
\int_{m^{-1}((t_z-\delta_z,t_z+\delta_z))} f(x) d\nu_M(x) = \int_{t_z-\delta_z}^{t_z+\delta_z} \int_{\tilde J_\th} (f \circ F_\th) (t,x) \psi(t,x) dt d\nu_\th(x).
\end{equation}
Combining Theorems \ref{ril} and \ref{cpl} we can extend the above formula to a larger portion of the manifold $M$. Indeed, suppose that interval $I=(a,b)$ contains exactly one critical point $t_z$.
 Choose $\th\in I$ such that $\th>t_z$. Then, for any $t>t_z$, the diffeomorphisms $F_{\th,t}$ in Theorem \ref{cpl} are merely restrictions of diffeomorphisms in Theorem \ref{ril}. However, if $t\le t_z$, then $F_{\th}$ can be extended by 
\[
F_{\th}(t,x) = \begin{cases} F_{\th,t}(x) & t>t_z \text{ or } x\in  \tilde J_\th,
\\
z & t \le t_z \text{ and } x \in J_\th \setminus  \tilde J_\th.
\end{cases}
\]
Under this convention, $F_\th: I \times J_\th \to M$ is well-defined and its restriction to $I \times \tilde J_\th$ is a diffeomorphism onto its image. 
Likewise, we extend weight function $\psi$ to $I \times J_\th$ in any way.
Combining \eqref{ril6} and \eqref{cpl6} we have that for any $f\in C_0(M)$ that vanish on open neighborhood of $U_z$ of a critical point $z$, we have
\begin{equation}\label{cpl7}
\int_{m^{-1}((t_z-\delta_z,b))} f(x) d\nu_M(x) = \int_{t_z-\delta_z}^{b} \int_{J_\th} (f \circ F_\th) (t,x) \psi(t,x) dt d\nu_\th(x).
\end{equation}
Likewise, if $\th \in I$ is such that $\th<t_z$, then under the same assumptions we have
\begin{equation}\label{cpl8}
\int_{m^{-1}((a,t_z+\delta_z))} f(x) d\nu_M(x) = \int_{a}^{t_z+\delta_z} \int_{J_\th} (f \circ F_\th) (t,x) \psi(t,x) dt d\nu_\th(x).
\end{equation}
\end{remark}

Applying  Theorem \ref{ril} or Theorem \ref{cpl} for another regular value $s\in I$ yields another smooth function $\psi_s$ on $I \times J_s$ or respectively  on $I \times \tilde J_s$. It turns out that $\psi_\th$ and $\psi_s$ must satisfy the following relationship.

\begin{lemma}
For any $\th,s \in I$, the weight functions $\psi_\th$ and $\psi_s$ from Theorem \ref{ril} or Theorem \ref{cpl} satisfy
\begin{equation}\label{psi}
\frac{\psi_\th(t_1,F_{s,\th}(x))}{\psi_\th(t_0,F_{s,\th}(x))}=\frac{\psi_s(t_1,x)}{\psi_s(t_0,x)} \qquad\text{for all }t_0,t_1 \in I \text{and } x\in  J_s \text{ or resp. } x\in \tilde J_s.
\end{equation}
\end{lemma}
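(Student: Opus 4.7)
The plan is to compare the two pushforward representations of $\nu_M$ by $F_\th$ and $F_s$ through the composition $G := F_s^{-1} \circ F_\th$, and observe that $G$ has a product structure that leaves the interval coordinate untouched.

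First, using the semigroup property \eqref{ril2} together with the definition $F_\th(t,x') = F_{\th,t}(x')$, I compute: for $(t,x') \in I \times J_\th$ (or $I \times \tilde J_\th$), the point $y = F_{\th,t}(x')$ lies in $J_t$, so
\[
F_s^{-1}(y) = (t, F_{t,s}(y)) = (t, F_{t,s}(F_{\th,t}(x'))) = (t, F_{\th,s}(x')).
\]
Thus $G(t,x') = (t, F_{\th,s}(x'))$ is a product diffeomorphism, and in particular the first coordinate is preserved.

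Next, from \eqref{ril4} (or \eqref{cpl4}) applied both to $\th$ and to $s$, I have $\nu_M = (F_\th)_*(\psi_\th (\lambda \times \nu_\th))$ and $\nu_M = (F_s)_*(\psi_s(\lambda \times \nu_s))$, which together give
\[
\psi_s (\lambda \times \nu_s) = G_*(\psi_\th(\lambda \times \nu_\th)).
\]
Applying this to a test function and using the Riemannian change-of-variables formula for the diffeomorphism $F_{\th,s} : J_\th \to J_s$ (or its restriction to the tilde sets), with substitution $x = F_{\th,s}(x')$, I obtain for all $t \in I$ and $x$ in the relevant level set the pointwise identity
\[
\psi_s(t,x) = \psi_\th(t, F_{s,\th}(x)) \, |\det DF_{s,\th}(x)|,
\]
where the Jacobian $|\det DF_{s,\th}(x)|$ is taken with respect to the Riemannian metrics on $J_s$ and $J_\th$. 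The crucial point is that this Jacobian factor depends only on $x$ and not on $t$, which is exactly what the product structure of $G$ guarantees.

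Finally, forming the ratio at $t=t_1$ and $t=t_0$ cancels the $t$-independent Jacobian:
\[
\frac{\psi_s(t_1,x)}{\psi_s(t_0,x)} = \frac{\psi_\th(t_1, F_{s,\th}(x))\,|\det DF_{s,\th}(x)|}{\psi_\th(t_0, F_{s,\th}(x))\,|\det DF_{s,\th}(x)|} = \frac{\psi_\th(t_1, F_{s,\th}(x))}{\psi_\th(t_0, F_{s,\th}(x))},
\]
which is \eqref{psi}. The only technical care needed is the Riemannian change-of-variables step, which in the critical-value setting is restricted to the open submanifolds $\tilde J_\th, \tilde J_s$ on which $F_{\th,s}$ is a genuine diffeomorphism; this causes no difficulty since the identity is asserted pointwise on those sets.
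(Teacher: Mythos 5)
Your proof is correct and follows essentially the same route as the paper: you use the semigroup property \eqref{ril2} to see that the transition map $F_s^{-1}\circ F_\th$ is the product map $(t,x')\mapsto(t,F_{\th,s}(x'))$, compare the two pushforward representations of $\nu_M$, and conclude that $\psi_s$ and $\psi_\th\circ(\mathrm{id}\times F_{s,\th})$ differ by a $t$-independent factor that cancels in the ratio. The only cosmetic difference is that you write this factor explicitly as the Riemannian Jacobian $|\det DF_{s,\th}(x)|$, whereas the paper leaves it implicit as the density of $(F_{\th,s})_*(\nu_\th)$ with respect to $\nu_s$ in its commutative diagram \eqref{diag}.
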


\begin{proof}
As a consequence of Theorem \ref{cpl} we have
the following commutative diagram of diffeomorphisms
\begin{equation}\label{diag}
\begin{tikzcd}[column sep=small]
(I\times \tilde J_\th, \psi_\th ({\lambda}\times \nu_\th)) \arrow{rr}{id \times F_{\th,s}} \arrow[swap]{dr}{F_\th}& &(I \times \tilde J_s,\psi_s  ({\lambda}\times \nu_s)) \\
& (\tilde M_I, \nu_d) \arrow[swap]{ur}{F_s^{-1}}&\subset M.
\end{tikzcd} 
\end{equation}
Indeed, the rule \eqref{ril2} yields
\[
F_s(t,F_{\th,s}(x))=F_\th(t,x),\qquad\text{for }(t,x)\in I\times \tilde J_\th.
\]
Moreover, the pushforward measure by these diffeomorphisms are indicated in the diagram \eqref{diag}. Hence,
\[
(F_{\th,s})_*(\psi_\th(t,\cdot)\nu_\th)=\psi_\th(t,F_{s,\th}(\cdot))
(F_{\th,s})_*(\nu_\th)=\psi_s(t,\cdot) \nu_s.
\]
Since $t\in I$ is arbitrary, \eqref{psi} follows immediately. In the setting of Theorem \ref{ril} the proof is analogous.
\end{proof}

Finally, we shall describe how results of this section behave under stretching of a Morse function.

\begin{theorem}\label{qm} Suppose $m:M \to [0,\infty)$ is a Morse function as in Theorem \ref{mfact}.  Let $q:[0,\infty) \to [0,\infty)$ be an increasing $C^\infty$ function such that $q(0)=0$. Define $\hat m: M \to [0,\infty)$ by $\hat m = q\circ m$. Then $\hat m$ also satisfies conclusions of Theorem \ref{mfact}. 

More precisely, suppose that $I=(a,b)$ contains only regular values of $m$. Then, the same is true for $\hat m$ and $\hat I=(q(a),q(b))$. Level submanifolds of $m$ and $\hat m$ are the same after the change of parameter,
\begin{equation}\label{qm1}
\hat J_s = \hat m^{-1}(s)=J_{q^{-1}(s)} \qquad\text{for all }s\in \hat I.
\end{equation}
Moreover, the diffeomorphisms $\hat F_{\theta,s}: \hat J_\theta \to \hat J_s$, $\theta,s\in I$, corresponding to $\hat m$ and $\hat I$ from Theorem \ref{ril} satisfy
\begin{equation}\label{qm2}
\hat F_{\theta,s}(x) = F_{q^{-1}(\theta),q^{-1}(s)}(x) 
\qquad \text{for }x\in \hat J_\theta.
\end{equation}

Likewise, suppose that $I$ contains only one critical value $t_z$ of $m$. Then, $\hat t_z=q(t_z)$ is a unique critical value of $\hat m$ in $\hat I$ and the above conclusions hold with $J_t$ replaced by $\tilde J_t$ from Theorem \ref{cpl}. In particular, the conclusions \eqref{cpl1} and \eqref{cpl2} for the Morse function $m$ with value $\delta_z>0$ transfer to the same conclusions for $\hat m$ with critical value $\hat t_z$ and value 
\begin{equation}\label{qm3}
\hat \delta_z=\min(q(t_z)-q(t_z-\delta_z),q(t_z+\delta_z)-q(t_z))>0.
\end{equation}
\end{theorem}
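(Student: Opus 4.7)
The plan is to exploit that post-composition with a strictly increasing smooth $q$ is a reparametrization of $m$: it preserves critical points of $m$, leaves level sets unchanged as subsets of $M$ (only relabeling them), and reparametrizes the normalized gradient flow in time without altering its orbits. I will assume $q'>0$ throughout, which is needed for $\hat m$ to inherit the Morse property.

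First, I would verify that $\hat m$ satisfies the conclusions of Theorem \ref{mfact}. Since $\nabla \hat m = q'(m)\nabla m$ and $q'>0$, the critical set of $\hat m$ coincides with that of $m$. At any critical point $x$ with $m(x)=t_z$, the local-coordinate computation
\[
\partial_j\partial_i \hat m(x) = q''(t_z)\partial_j m(x)\partial_i m(x) + q'(t_z)\partial_j\partial_i m(x) = q'(t_z)\partial_j\partial_i m(x)
\]
(using $\nabla m(x)=0$) shows that $\mathrm{Hess}(\hat m)(x) = q'(t_z)\,\mathrm{Hess}(m)(x)$ is nondegenerate. The critical values of $\hat m$ are $q(t_z)$ for critical values $t_z$ of $m$, and injectivity of $q$ preserves the one-to-one correspondence between critical values and critical points. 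Finally, $\hat m^{-1}([0,b])=m^{-1}([0,q^{-1}(b)])$ is compact.

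For the regular interval case, \eqref{qm1} is immediate from $\hat m^{-1}(s)=m^{-1}(q^{-1}(s))$. For the diffeomorphism identity \eqref{qm2} I would compare the renormalized gradient fields:
\[
\hat X \;=\; \frac{\nabla \hat m}{|\nabla \hat m|^2} \;=\; \frac{q'(m)\nabla m}{q'(m)^2|\nabla m|^2} \;=\; \frac{1}{q'(m)}\,X.
\]
Fix $x\in \hat J_\theta$, and let $\xi$ solve $\xi'=X(\xi)$, $\xi(q^{-1}(\theta))=x$. Since $X$-trajectories satisfy $m(\xi(\tau))=\tau$, the chain rule gives for $\eta(t):=\xi(q^{-1}(t))$
\[
\eta'(t) \;=\; \frac{X(\eta(t))}{q'(q^{-1}(t))} \;=\; \frac{X(\eta(t))}{q'(m(\eta(t)))} \;=\; \hat X(\eta(t)), \qquad \eta(\theta)=x,
\]
and uniqueness of ODE solutions yields $\hat F_{\theta,s}(x)=\eta(s)=F_{q^{-1}(\theta),q^{-1}(s)}(x)$, which is \eqref{qm2}.

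For the critical interval case, the previous two steps apply verbatim on the regular portion, so the open submanifolds $\tilde J_t$ of Theorem \ref{cpl} (defined intrinsically from the unparametrized $X$-orbits and a neighborhood $V_z$) are the same for $m$ and for $\hat m$, up to relabeling of the parameter via $q$. The definition \eqref{qm3} of $\hat\delta_z$ is precisely the largest symmetric radius making
\[
[\hat t_z-\hat\delta_z,\hat t_z+\hat\delta_z]\subset[q(t_z-\delta_z),q(t_z+\delta_z)],
\]
so that $q^{-1}([\hat t_z-\hat\delta_z,\hat t_z+\hat\delta_z])\subset[t_z-\delta_z,t_z+\delta_z]$. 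Consequently \eqref{cpl1} for $\hat m$ at $s\in[\hat t_z-\hat\delta_z,\hat t_z+\hat\delta_z]$ reduces via $J_{q^{-1}(s)}=\hat J_s$ to \eqref{cpl1} for $m$ at the parameter $q^{-1}(s)\in[t_z-\delta_z,t_z+\delta_z]$; and \eqref{cpl2} for $\hat m$ follows from the identity $\hat F_{\theta,s}(\hat J_\theta\setminus V_z)=F_{q^{-1}(\theta),q^{-1}(s)}(J_{q^{-1}(\theta)}\setminus V_z)$, a direct consequence of \eqref{qm2}, combined with \eqref{cpl2} for $m$. The neighborhoods $U_z$ and $V_z$ need no modification. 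The main -- and essentially only -- nontrivial point is the chain-rule verification that the $\hat X$-flow is a $q$-reparametrization of the $X$-flow; everything else is bookkeeping through the definitions in Theorems \ref{ril} and \ref{cpl}.
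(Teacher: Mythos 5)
Your proof is correct and follows essentially the same route as the paper: the key step in both is the identity $\hat X = \frac{1}{q'(m)}X$ for the renormalized gradient fields together with the chain-rule/uniqueness argument showing the $\hat X$-flow is the $q$-reparametrization of the $X$-flow, from which \eqref{qm1}--\eqref{qm3} follow by relabeling. You additionally write out the verification of the Morse property for $\hat m$ and the bookkeeping for the critical interval, which the paper leaves to the reader (note the paper tacitly uses $q'>0$, as you observed).
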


\begin{proof}
Let $\hat X$ be a renormalized gradient vector field of $\hat m$ given by
\begin{equation}\label{hX}
\hat X(x) = \frac{\grad \hat m(x)}{|\grad \hat m(x)|^2} = \frac{1}{q'(m(x))} X(x).
\end{equation}
For any $\th \in I$ and $x\in M$ consider the initial value problem \eqref{ode}. Likewise, consider the same problem for $\hat X$ given by
\begin{equation}\label{odeh}
\begin{cases}
\hat \eta'(t)=\hat X(\hat \eta(t)) \\
\hat \eta(q(\th))=x.
\end{cases}
\end{equation}
We claim that $\eta(t)=\hat \eta ( q(t))$ for all $t$ in the interval of existence of solution $\eta$. Indeed, let $\tilde \eta(t)=\hat \eta ( q(t))$. By the chain rule and \eqref{hX}, we have
\[
\tilde \eta'(t)= \hat \eta'(q(t)) q'(t) = \hat X(\hat \eta (q(t)) q'(t)= \frac{q'(t)}{q'(m(\hat \eta(q(t))))} X(\tilde \eta(t))=X(\tilde \eta(t)).
\]
In the last step we used the fact that $\hat \eta(s)$ moves through level submanifolds $\hat J_s$. That is, $\hat m(\hat \eta(s))=s$. Hence, the uniqueness of solutions of \eqref{ode} yields $\tilde \eta(t)=\eta(t)$. The identity $\eta = \hat \eta \circ q$ leads to properties \eqref{qm1} and \eqref{qm2} by the proof of Theorem \ref{ril}. We leave filling the remaining details of the proof to the reader.
\end{proof}

\section{Latitudinal projections on a manifold} \label{S4}

In this section we show the existence of a family of projections dissecting a manifold $M$ along level sets of a Morse function $m$. The standing assumption is that a Morse function $m: M \to [0,\infty)$ satisfies the conclusions of Theorem \ref{mfact}. The assumption that each critical value corresponds to only one critical point can be removed. However, it becomes essential in the next section. 

\subsection{One dimensional smooth decomposition of identity}
We start by recalling smooth projections on the real line originally introduced  by Auscher, Weiss, and Wickerhauser \cite{AWW} and Coifman and Meyer \cite{CM}, see also \cite{BD}

\begin{definition}\label{def0}
Let $\delta>0$ and $\th\in \R$. Let $s: \R \to [0,1]$ be a smooth function such that
\begin{equation}\label{supps}
\supp s\subset (-\delta,+\infty)
\end{equation}
and 
\begin{equation}\label{kwadrat}
s^2(t)+s^2(-t)=1 \qquad \text{for all } t\in \R.
\end{equation}
For the construction of such function, see \cite{HW}.
We define Auscher, Weiss, and Wickerhauser (AWW) operator  $E^\pm_\th$ acting on a function $h$ on $\R$ by 
\[
E^\pm_\th (h)(t)= 
s^2(t-\th)h(t)\pm s(t-\th)s(-t+\th) h(2\th-t),\quad t\in \R.
\]
The choice of $\pm$ is referred as the polarity of $E_\th^\pm$.   If polarity is not indicated, we shall assume it is positive, i.e., $E_\th=E_\th^+$.
\end{definition}

By \cite[Proposition 2.1]{BD} we have that
\begin{equation}
E^\pm_\th (C^\infty(\R)) \subset C^\infty(\R)
\end{equation}
and $E^\pm_\th$ is an orthogonal projection as an operator
\begin{equation}
E^\pm_\th :L^2(\R) \to L^2(\R).
\end{equation}

The following lemma shows that AWW operators are uniformly bounded on $L^p(\R)$.

\begin{lemma}\label{lp}
Let $1\leq p \le \infty$. The operator $E^\pm_\th$ extends to a bounded projection
\[
E^\pm_\th:L^p(\R) \to L^p(\R)
\]
with the norm is given by
\[
\|E^\pm_\th\|_{L^p \to L^p}= \sup_{0\le \xi \le 1} ||A^\pm_\xi||_{p \to p},
\qquad\text{where }
A_\xi^\pm := \begin{bmatrix} \xi & \pm \sqrt{\xi(1-\xi)} \\ \pm \sqrt{\xi(1-\xi)} & 1-\xi \end{bmatrix}.
\]
Furthermore, the norm of operator $E^\pm_\th: C_0(\R) \to C_0(\R)$ is given by
\[
\|E^\pm_\th\|_{C_0 \to C_0}=\sup_{0 \le \xi \le 1}\left(\xi+\sqrt{\xi(1-\xi)}\right) = \frac{1+\sqrt{2}}{2}.
\]
\end{lemma}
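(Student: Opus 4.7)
The key observation is that after a change of variable $u = t - \th$, the operator $E^\pm_\th$ acts \emph{pointwise} as the $2\times 2$ matrix $A^\pm_{s^2(u)}$ on the pair of values $(h(\th+u), h(\th-u))^T$. Indeed, since $s\ge 0$ and \eqref{kwadrat} gives $s(u)s(-u) = \sqrt{s^2(u)(1-s^2(u))}$, a direct computation from Definition \ref{def0} yields
\[
\begin{pmatrix} E^\pm_\th h(\th+u) \\ E^\pm_\th h(\th-u) \end{pmatrix} = A^\pm_{s^2(u)} \begin{pmatrix} h(\th+u) \\ h(\th-u) \end{pmatrix} \qquad \text{for every } u\in\R.
\]

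For $1 \le p < \infty$, I would split the integral over $\R$ into $u>0$ and $u<0$:
\[
\|E^\pm_\th h\|_p^p = \int_0^\infty \Bigl( |E^\pm_\th h(\th+u)|^p + |E^\pm_\th h(\th-u)|^p \Bigr)\, du,
\]
recognize the integrand as $\bigl\|A^\pm_{s^2(u)}(h(\th+u), h(\th-u))^T\bigr\|_{\ell^p}^p$, dominate pointwise by $\sup_{0 \leq \xi \leq 1}\|A^\pm_\xi\|_{p\to p}^p \bigl(|h(\th+u)|^p + |h(\th-u)|^p\bigr)$, and integrate back to recover $\|h\|_p^p$. This gives the upper bound $\|E^\pm_\th\|_{p\to p} \leq \sup_\xi \|A^\pm_\xi\|_{p\to p}$. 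The same matrix representation gives the pointwise inequality $|E^\pm_\th h(t)| \le (\xi + \sqrt{\xi(1-\xi)})\|h\|_\infty$ with $\xi = s^2(t-\th)$, yielding the upper bound in the $C_0$ case.

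For the reverse inequalities, I need that $s^2$ attains every value $\xi \in [0,1]$ at some $u_0 \neq 0$. This follows from continuity together with the boundary behavior $s \equiv 0$ on $(-\infty, -\delta]$ and, by \eqref{kwadrat}, $s \equiv 1$ on $[\delta,\infty)$, combined with the intermediate value theorem. Fixing such a $u_0 \neq 0$ and an optimizing vector $(a,b)$ for $A^\pm_\xi$, I test $E^\pm_\th$ on $h = a\,\mathbf 1_{I_+} + b\,\mathbf 1_{I_-}$, where $I_\pm$ are small disjoint intervals of length $2\varepsilon$ centered at $\th\pm u_0$ (disjointness being ensured by $u_0\ne 0$ and $\varepsilon < |u_0|$). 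By continuity of $s$ the ratio $\|E^\pm_\th h\|_p/\|h\|_p$ tends to $\|A^\pm_\xi(a,b)^T\|_{\ell^p}/\|(a,b)\|_{\ell^p}$ as $\varepsilon\to 0$, giving the matching lower bound; for the $C_0$ case one regularizes the step functions to compactly supported smooth bumps without changing the limit.

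Finally, to obtain the explicit value $\tfrac{1+\sqrt 2}{2}$, I maximize $f(\xi) = \xi + \sqrt{\xi(1-\xi)}$ by elementary calculus: solving $f'(\xi)=0$ gives $8\xi^2 - 8\xi + 1 = 0$, so the critical point is $\xi_\ast = \tfrac{2+\sqrt 2}{4}$, at which $f(\xi_\ast) = \tfrac{1+\sqrt 2}{2}$. The only mild technical point in the whole argument is the choice of $u_0 \neq 0$ together with disjointness of supports in the lower-bound construction; once the range of $s^2$ is understood this is routine.
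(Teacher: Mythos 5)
Your argument is essentially the paper's proof: the same pointwise $2\times 2$ matrix representation $\bigl(E^\pm_\th h(\th+u),E^\pm_\th h(\th-u)\bigr)=A^\pm_{s^2(u)}\bigl(h(\th+u),h(\th-u)\bigr)$ gives the upper bound, concentrated test functions give the matching lower bound, and elementary calculus gives $\tfrac{1+\sqrt2}{2}$; the only cosmetic differences are that the paper first localizes to $[\th-\delta,\th+\delta]$ and treats $p=\infty$ by $L^1$--$L^\infty$ duality (self-adjointness on $L^2$) rather than by the pointwise row-sum bound you use. One small slip in your lower bound: it is not true in general that $s^2$ attains every $\xi\in[0,1]$ at some $u_0\neq 0$. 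Since $s^2(0)=\tfrac12$ by \eqref{kwadrat}, the value $\xi=\tfrac12$ may be attained only at $u_0=0$ (e.g.\ when $s$ is strictly increasing on $[-\delta,\delta]$), and then your two intervals of length $2\varepsilon$ centered at $\th\pm u_0$ cannot be made disjoint. The case is harmless -- $\|A^\pm_{1/2}\|_{p\to p}=1\le \sup_\xi\|A^\pm_\xi\|_{p\to p}$, so either the supremum equals $1$ (and the lower bound is trivial because $E^\pm_\th$ acts as the identity on functions supported in $[\th+\delta,\infty)$) or it is approached at values $\xi\neq\tfrac12$, which are attained at nonzero $u_0$ by the intermediate value theorem -- but as written your justification has this gap. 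The paper avoids the issue by testing instead on $h=a\,\ch_{[t_0,t_0+\eta]}+b\,\ch_{[-t_0-\eta,-t_0]}$ (with $\th=0$), i.e.\ one interval just to the right of $t_0$ and its mirror just to the left of $-t_0$; these are essentially disjoint even when $t_0=0$, so no restriction $u_0\neq0$ is needed.
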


\begin{proof}
Without loss of generality, we can assume
$\th=0$ and we let $E^{\pm}=E^{\pm}_0$. Let $h\in L^p(\R)$ and $1\le p< \infty$. Since 
\begin{equation}\label{0c1}
E^\pm(h)(t)=
\begin{cases} h(t) & \text{for } t>\delta, 
\\
0 & \text{for }t<-\delta,
\end{cases}
\end{equation}
we need to estimate
\[
L=\int_{-\delta}^\delta |E^\pm(h)(t)|^p dt =\int_0^\delta ( |E^\pm(h)(t)|^p + |E^\pm(h)(-t)|^p )dt.
\]
Observe that
\[
\begin{aligned}
|E^\pm(h)(t)|^p + |E^\pm(h)(-t)|^p & = \bigg\| A^\pm_{s^2(t)}\begin{bmatrix} h(t) \\ h(-t) \end{bmatrix} \bigg\|^p_p \le ||A^\pm_{s^2(t)}||^p_{p\to p} (|h(t)|^p + |h(-t)|^p) 
\\
& 
\le (B_p)^p  (|h(t)|^p + |h(-t)|^p), \qquad\text{where }B_p=\sup_{0\le \xi \le 1} ||A^\pm_\xi||_{p \to p}.
\end{aligned}
\]
Integrating over the interval $[0,\delta]$ yields 
\[
L \le (B_p)^p \int_{-\delta}^\delta |h(t)|^p dt.
\]
Combining this with \eqref{0c1} yields that
\begin{equation}\label{bp}
\|E^\pm\|_{L^p \to L^p} \leq B_p.
\end{equation}
To show equality in \eqref{bp}, we find $1/2\le \xi_0\le 1$ and $(a,b) \in \R^2 $ with $||(a,b)||_p=1$ such that
\[
B_p=||A^{\pm}_{\xi_0}||_{p\to p} = \bigg\| A^{\pm}_{\xi_0} \begin{bmatrix} a \\ b \end{bmatrix} \bigg\|_p.
\]
Let $0 \le t_0<\delta$ be such that $s^2(t_0)=\xi_0$. Take any $\ve>0$. By continuity we find $\eta>0$ such that
\[
\bigg\| A^{\pm}_{s^2(t)} \begin{bmatrix} a \\ b \end{bmatrix} \bigg\|_p \ge (1-\ve) B_p \qquad\text{for }|t-t_0| \le \eta.
\]
Define a function $h$ by
\begin{equation}\label{h}
h=  a\ch_{[t_0,t_0+\eta]} + b \ch_{[-t_0-\eta,-t_0]}.
\end{equation}
Then,
\[
\begin{aligned}
\int_{-\delta}^\delta |E^\pm(h)(t)|^p dt & = \int_0^\delta \bigg\| A^\pm_{s^2(t)}\begin{bmatrix} h(t) \\ h(-t) \end{bmatrix} \bigg\|^p_p dt
= \int_{t_0}^{t_0+\eta} \bigg\| A^\pm_{s^2(t)}\begin{bmatrix} a \\ b \end{bmatrix} \bigg\|^p_p dt
\\
& \ge (1-\ve) (B_p)^p \eta = (1-\ve) (B_p)^p ||h||_p^p.
\end{aligned}
\]
This shows equality in \eqref{bp}.

The case $p=\infty$ follows by the duality $\|E^\pm\|_{L^1 \to L^1}=\|E^\pm\|_{L^\infty \to L^\infty}$ since $E^\pm:L^2(\R) \to L^2(\R)$ is self-adjoint. 
Note that for a given $0 \le \xi \le 1$
\[
 ||A^\pm_\xi||_{\infty \to \infty}=\max ( \xi+\sqrt{\xi(1-\xi)},   1-\xi+\sqrt{\xi(1-\xi)}).
\]
Hence,
\[
B_\infty= \sup_{0\le \xi \le 1} ||A^\pm_\xi||_{\infty \to \infty}=\max_{0\le \xi \le 1}  \xi+\sqrt{\xi(1-\xi)} = \frac{1+\sqrt{2}}{2}.
\]
Finally, to treat the case of $C_0(\R)$, it suffices to take a function $h\in C_0(\R)$
such that $\|h\|_\infty=1$ and $h(t_0)=1$ and $h(-t_0)=\pm 1$, where $t_0$ is as above. This proves that
\[ \|E^\pm\|_{C_0 \to C_0}= \|E^\pm\|_{L^\infty \to L^\infty}.\]
We leave the details to the reader.
\end{proof}

\begin{remark} The proof of Lemma \ref{lp} implies that
\begin{equation}\label{lps}
\begin{aligned}
\int_{\th-\delta}^{\th+\delta} |E^\pm_\th f(t)|^p dt & \le (B_p)^p \int_{\th-\delta}^{\th+\delta} |f(t)|^p dt \qquad\text{for }f\in L^p(\R),
\\
\sup_{t\in [\th-\delta, \th+\delta]}  |E^\pm_\th f(t)|  & \le B_\infty \sup_{t\in [\th-\delta, \th+\delta]}  |f(t)| \qquad\text{for }f\in C_0(\R).
\end{aligned}
\end{equation}
Since 
\begin{equation}\label{lps1}
(\mathbf I- E^+ _\th )f = R_\th E^-_\th R_\th f, \qquad\text{where } R_\th f( t) = f(2\th - t), t\in \R,
\end{equation}
we also have
\begin{equation}
\label{lps2}
\begin{aligned}
\int_{\th-\delta}^{\th+\delta} |(\mathbf I-E^+_\th) f(t)|^p dt  & \le (B_p)^p \int_{\th-\delta}^{\th+\delta} |f(t)|^p dt \qquad\text{for }f\in L^p(\R),
\\
\sup_{t\in [\th-\delta, \th+\delta]}  |(\mathbf I-E^+_\th)  f(t)|  & \le B_\infty \sup_{t\in [\th-\delta, \th+\delta]}  |f(t)| \qquad\text{for }f\in C_0(\R).
\end{aligned}
\end{equation}
\end{remark}

\begin{remark}
Since matrix $2\times 2$ matrix $A_\xi$ has rank 1, it is possible to compute its norm as a mapping $\ell^p \to \ell^p$, where $1<p<\infty $. However, the formula is rather complicated
\[
 ||A^\pm_\xi||_{p \to p}= 
 (\xi^{p/2}+(1-\xi)^{p/2})^{1/p} \bigg(\bigg(\frac{\xi}{1-\xi}\bigg)^{\frac{p}{2-2 p}}+1\bigg)^{-1/p} \bigg(\sqrt{1-\xi} \bigg(\frac{\xi}{1-\xi}\bigg)^{\frac{1}{2-2 p}}+\sqrt{\xi}\bigg).
 \]
Consequently, the norm of the projection $E^\pm_\th$
\[
||E^\pm_\th||_{L^p \to L^p} = B_p= \sup_{0\le \xi \le 1} ||A^\pm_\xi||_{p \to p}
\]
can only be computed numerically for $p\ne 2$.
\end{remark}

\begin{definition}\label{qj}
Suppose a sequence of points $\th_j\in \R$, $j\in \Z$ is such that for all $j\in \Z$, $\th_{j+1}-\th_j>2\delta$, 
\[
\lim_{j\to \infty} \th_j=+\infty, \quad
\lim_{j\to -\infty} \th_j=-\infty.
\]
Define a family of AWW operators by
\[
Q_{\th_j,\th_{j+1}}=E_{\th_j}-E_{\th_{j+1}}, \qquad j\in \Z.
\]
\end{definition}

Note that

\begin{equation}\label{qj0}
Q_{\th_j,\th_{j+1}}f(t)= \begin{cases}
0 & t \le \th_j-\delta \text{ or } t \ge \th_{j+1}+\delta,\\
E_{\th_j}f(t) & \th_j-\delta < t< \th_j+\delta,\\
f(t) & \th_j+\delta \le t \le \th_{j+1} - \delta,\\
(\mathbf I - E_{\th_{j+1}})f(t)  & \th_{j+1}-\delta < t< \th_{j+1}+\delta.
\end{cases}
\end{equation}

\begin{lemma}\label{sdir}
Let $1\le p \le \infty$. The family of operators $\{Q_{\th_j,\th_{j+1}}\}_{j\in\Z}$ is a smooth decomposition of identity in $L^p(\R)$, or $C_0(\R)$ if $p=\infty$, subordinate to an open cover $\{ (\th_j-\delta,\th_{j+1}+\delta)\}_{j\in\Z}$ in the sense of Definition \ref{sdi}. The decomposition constant in \eqref{unc0} is independent of a partition $\{\th_j\}_{j\in \Z}$ and $\delta>0$. 
\end{lemma}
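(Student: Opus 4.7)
The plan is to verify the six conditions of Definition \ref{sdi} in order; items (i)--(v) reduce to the explicit formula \eqref{qj0} and the $L^p$ bounds of Lemma \ref{lp}, while the unconditional norm equivalence (vi) is the only step requiring a genuine estimate.

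For (i) and (ii): the formula \eqref{qj0} shows $Q_{\th_j,\th_{j+1}} f$ is supported in $[\th_j-\delta,\th_{j+1}+\delta]$, and the gap $\th_{j+1}-\th_j>2\delta$ together with $|\th_j|\to\infty$ yields local finiteness. Writing
\[
E_\th h(t) = s^2(t-\th)\,h(t) + s(t-\th)\,s(\th-t)\,h(2\th-t),
\]
the difference $E_{\th_j} - E_{\th_{j+1}}$ is a finite sum of simple $H$-operators with compactly supported coefficients: the multiplication coefficient $s^2(\cdot-\th_j)-s^2(\cdot-\th_{j+1})$ is supported in $[\th_j-\delta,\th_{j+1}+\delta]$ by \eqref{kwadrat}, and the two reflection terms, with diffeomorphisms $t\mapsto 2\th_j-t$ and $t\mapsto 2\th_{j+1}-t$ each mapping the relevant transition interval to itself, have coefficients supported in $(\th_j-\delta,\th_j+\delta)$ and $(\th_{j+1}-\delta,\th_{j+1}+\delta)$ respectively. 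Hence $Q_{\th_j,\th_{j+1}}\in\mathcal H(\R)$ is localized on $(\th_j-\delta,\th_{j+1}+\delta)$ by Lemma \ref{ilo}.

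For (iii) and (iv), the key identity is
\[
E_{\th_j}\circ E_{\th_k} = E_{\th_{\max(j,k)}}, \qquad j,k\in\Z,
\]
which follows from the disjointness of the transition zones $(\th_i-\delta,\th_i+\delta)$: on the transition zone of the larger index, the smaller-index projection acts as the identity by \eqref{supps}, while outside the zones the two projections already coincide. Expanding $Q_{\th_j,\th_{j+1}}^2$ and $Q_{\th_j,\th_{j+1}}\circ Q_{\th_k,\th_{k+1}}$ for $j\ne k$ then yields (iii) and (iv). For (v), the sums telescope: $\sum_{j=-N}^{N-1} Q_{\th_j,\th_{j+1}} = E_{\th_{-N}} - E_{\th_N}$. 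Pointwise, for every $t\in\R$ one has $E_{\th_{-N}}f(t)=f(t)$ and $E_{\th_N}f(t)=0$ for all sufficiently large $N$; since the operators are uniformly bounded by Lemma \ref{lp}, a direct estimate using absolute continuity of the integral (or, for $p=\infty$, the vanishing of $f\in C_0(\R)$ at infinity) yields strong operator convergence to $\mathbf I$.

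The substantive step is (vi). Partition $\R$ into the mutually disjoint plateaus $P_j := [\th_j+\delta,\th_{j+1}-\delta]$ and transitions $T_j := [\th_j-\delta,\th_j+\delta]$. Using \eqref{qj0} and collecting contributions around each $\th_j$,
\[
\sum_{j\in\Z}\|Q_{\th_j,\th_{j+1}}f\|_p^p = \sum_{j\in\Z}\int_{P_j}|f|^p\,dt + \sum_{j\in\Z}\int_{T_j}\bigl(|E_{\th_j}f|^p + |(\mathbf I-E_{\th_j})f|^p\bigr)\,dt.
\]
The upper bound follows by applying \eqref{lps} and \eqref{lps2} on each $T_j$, producing a constant $\max(1,2B_p^p)^{1/p}$. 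For the lower bound, the pointwise identity $f = E_{\th_j}f + (\mathbf I-E_{\th_j})f$ on $T_j$ combined with the elementary inequality $(a+b)^p\le 2^{p-1}(a^p+b^p)$ gives $\int_{T_j}|f|^p \le 2^{p-1}\int_{T_j}\bigl(|E_{\th_j}f|^p + |(\mathbf I-E_{\th_j})f|^p\bigr)$; adding the plateau identities reconstitutes $\|f\|_p^p$ with constant $2^{(p-1)/p}$. All constants depend only on $p$ via $B_p$ from Lemma \ref{lp}, not on the partition $\{\th_j\}$ or on $\delta$; the case $p=\infty$ is handled identically using the $C_0$-statements. The only real obstacle is precisely this partition-independence, and it rests on the disjointness of the transitions $T_j$ secured by the gap condition $\th_{j+1}-\th_j>2\delta$, which prevents any accumulation of cross terms.
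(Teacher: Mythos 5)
Your proof is correct and follows essentially the same route as the paper: localization read off from the explicit form \eqref{qj0}, the bounds \eqref{lps} and \eqref{lps2} for the upper estimate in (vi), and the at-most-two-nonzero-terms observation with the $2^{p-1}$ inequality for the lower estimate, yielding constants independent of $\{\th_j\}$ and $\delta$. The only difference is cosmetic: where the paper cites \cite{HW} for the projection and orthogonality properties (iii)--(v), you derive them directly from $E_{\th_j}\circ E_{\th_k}=E_{\th_{\max(j,k)}}$ and telescoping, which is the same identity the paper invokes later at the manifold level in \eqref{qth5}.
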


\begin{proof}
By Definitions \ref{def0} and \ref{qj}, $Q_{\th_j,\th_{j+1} } \in \mathcal H(\R)$ is localized on $ (\th_j-\delta,\th_{j+1}+\delta)$, $j\in\Z$. Hence, by \eqref{lps}, \eqref{lps2}, and \eqref{qj0} we have
\begin{equation}\label{lp4}
||Q_{\th_j,\th_{j+1}}f ||_p \le B_p ||f \ch_{[\th_j-\delta,\th_{j+1}+\delta]}||_p \qquad\text{for all } f\in L^p(\R).
\end{equation}
The fact that $Q_{\th_j,\th_{j+1}}: L^2(\R) \to L^2(\R)$ is an orthogonal projection can be found in \cite{HW}. Moreover, projections $Q_{\th_j,\th_{j+1}}$, $j\in\Z$, are mutually orthogonal and they form a decomposition of identity in $L^2(\R)$. We also have pointwise equality for any $f\in L^1_{loc}(\R)$,
\begin{equation}\label{cc}
f(x) = \sum_{j\in \Z} Q_{\th_j,\th_{j+1}}f(x) \qquad\text{for a.e. }x\in \R.
\end{equation}
This implies properties (iii)--(v) in Definition \ref{sdi}. It remains to show the last property (vi). By \eqref{lp4}, for any $f\in L^p(\R)$,
\[
\sum_{j\in \Z} \|Q_{\th_{j},\th_{j+1}} f \|_p^p \leq (B_p)^p
\sum_{j\in \Z} \int_{\th_j-\delta}^{\th_{j+1}+\delta}
|f|^p\leq 2 (B_p)^p \|f\|_p^p.
\]
To show the converse inequality take any $f\in L^p(\R)$. Since for each $x\in \R$ the sum in \eqref{cc} has at most two non-zero terms, we have
\[
|f(x)|^p = \bigg|\sum_{j\in \Z} Q_{\th_j,\th_{j+1}}f(x)\bigg|^p \le 2^{p-1} \sum_{j\in \Z} |Q_{\th_j,\th_{j+1}}f(x)|^p.
\]
Integrating over $\R$ yields
\[
||f||_p^p \le 2^{p-1} \sum_{j\in \Z} \|Q_{\th_{j},\th_{j+1}} f \|_p^p.
\]
This proves (vi). We use
similar arguments for $C_0(\R)$ with the supremum norm. 
\end{proof}

\begin{definition}\label{qjs}
Suppose that $0=\th_0<\th_1<\ldots<\th_n=1$ is such that $\th_{j+1}-\th_j>2\delta$ for all $j=0,\ldots,n$, with the understanding that $\th_{n+1}=1+\th_1$. This corresponds to a partition of the unit circle $\mathbb S^1=\{z\in \C: |z|=1\}$ into arcs 
\[
U(\th_j,\th_{j+1}) = \{ e^{2\pi i t}: \th_j -\delta < t < \th_{j+1}+\delta \}
\]
Define a family of AWW operators acting on functions $f\in C(\mathbb S^1)$ by
\[
Q_{U(\th_j,\th_{j+1})}f(z)= Q_{\th_j,\th_{j+1}}(f \circ e^{2\pi i \cdot})(t), \qquad \text{where } z=e^{2\pi i t}, \ \th_j-\delta<t \le 1+\th_j -\delta.
\]
\end{definition}

Lemma \ref{sdir} has an analogue for the circle in the $L^2$ case, see \cite[Theorem 2.1]{BD}. Consequently, we have the following lemma on $\mathbb S^1$. 

\begin{lemma}\label{sdis}
Let $1\le p \le \infty$. The family of operators $\{Q_{U(\th_j,\th_{j+1})}\}_{j=0}^{n-1}$ is a smooth decomposition of identity in $L^p(\mathbb S^1)$, or $C(\mathbb S^1)$ if $p=\infty$, subordinate to an open cover $\{ U(\th_j-\delta,\th_{j+1}+\delta)\}_{j=0}^{n-1}$ in the sense of Definition \ref{sdi}. Moreover, the decomposition constant in \eqref{unc0} is independent of $\{\th_j\}_{j=0}^n$ and $\delta>0$.
\end{lemma}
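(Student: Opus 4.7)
The plan is to transfer Lemma \ref{sdir} to the circle by passing through the parametrization $t \mapsto e^{2\pi i t}$ and exploiting the periodic analogue of the AWW construction. First, since each $Q_{\th_j,\th_{j+1}}$ is built from simple $H$-operators on $\R$ whose defining cut-off functions and diffeomorphisms (reflections) are supported in $(\th_j-\delta, \th_j+\delta) \cup (\th_{j+1}-\delta, \th_{j+1}+\delta)$, the corresponding pull-backs through $e^{2\pi i \cdot}$ yield simple $H$-operators on $\mathbb S^1$. In particular, $Q_{U(\th_j,\th_{j+1})} \in \mathcal H(\mathbb S^1)$ is localized on $U(\th_j-\delta,\th_{j+1}+\delta)$, verifying property (ii) of Definition \ref{sdi}. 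Property (i) is automatic since the cover is finite.

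For the $L^2$ structure (properties (iii)--(v)), I would invoke \cite[Theorem 2.1]{BD}, which gives that $\{Q_{U(\th_j,\th_{j+1})}\}_{j=0}^{n-1}$ is a family of mutually orthogonal projections on $L^2(\mathbb S^1)$ summing to the identity. For $L^p$ and $C(\mathbb S^1)$ with $p\neq 2$, the key observation is that the action of $Q_{U(\th_j,\th_{j+1})}$ on $\mathbb S^1$ is, after unwrapping to $\R$, a purely local operation: exactly as in \eqref{qj0}, on the arc $\{e^{2\pi i t}: \th_j-\delta<t<\th_{j+1}+\delta\}$ the operator equals $E_{\th_j}$, the identity, or $\mathbf I - E_{\th_{j+1}}$ on the three sub-intervals $(\th_j-\delta,\th_j+\delta)$, $[\th_j+\delta,\th_{j+1}-\delta]$, $(\th_{j+1}-\delta,\th_{j+1}+\delta)$ respectively, and zero elsewhere. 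The estimates \eqref{lps} and \eqref{lps2}, transported via the isometry $f \mapsto f\circ e^{2\pi i\cdot}$ on each arc, then yield the analogue of \eqref{lp4}:
\[
\|Q_{U(\th_j,\th_{j+1})} f\|_p \le B_p \|f \chi_{U(\th_j-\delta,\th_{j+1}+\delta)}\|_p, \qquad f\in L^p(\mathbb S^1),
\]
with $B_p$ as in Lemma \ref{lp}, independent of the partition.

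The decomposition of identity $\sum_j Q_{U(\th_j,\th_{j+1})} = \mathbf I$ holds pointwise a.e.\ since, for each $z=e^{2\pi i t}\in\mathbb S^1$, at most two consecutive terms in the sum are non-zero (the intervals $[\th_j+\delta,\th_{j+1}-\delta]$ are disjoint and the transition zones $(\th_j-\delta,\th_j+\delta)$ overlap only with the two adjacent arcs), and on these transition zones the telescoping identity $(\mathbf I - E_{\th_j}) + E_{\th_j} = \mathbf I$ applies. This gives property (v), and it also gives the pointwise bound
\[
|f(z)|^p \le 2^{p-1}\sum_{j=0}^{n-1}|Q_{U(\th_j,\th_{j+1})}f(z)|^p,
\]
yielding the lower bound in \eqref{unc0}. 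Summing the upper bound and using that the enlarged cover $\{U(\th_j-\delta,\th_{j+1}+\delta)\}$ has overlap multiplicity at most $2$, we obtain
\[
\sum_{j=0}^{n-1}\|Q_{U(\th_j,\th_{j+1})}f\|_p^p \le 2(B_p)^p \|f\|_p^p,
\]
establishing (vi) with constant independent of $n$, $\{\th_j\}$, and $\delta$. The case of $C(\mathbb S^1)$ is handled identically, using the supremum-norm bounds in \eqref{lps} and \eqref{lps2} with $B_\infty=(1+\sqrt 2)/2$ in place of $B_p$.

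The only mildly delicate point is bookkeeping near the wrap-around index $j=n-1$, where the arc $U(\th_{n-1},\th_n)$ meets $U(\th_0,\th_1)$ via the identification $\th_n=1+\th_0$; this is resolved by working on the universal cover and using the periodicity of the local AWW construction, after which all the above estimates go through verbatim.
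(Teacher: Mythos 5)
Your proposal is correct and follows exactly the route the paper takes: the paper's proof consists of citing \cite[Theorem 2.1]{BD} for the $L^2$ structure on $\mathbb S^1$ and stating that the rest is ``an easy adaptation from the real line,'' i.e., of Lemma \ref{sdir}, which is precisely the adaptation you carry out (transfer via $t\mapsto e^{2\pi i t}$, the localization and $B_p$-bounds from \eqref{lps}--\eqref{lps2}, the at-most-two-overlap telescoping argument, and the periodic wrap-around). No gaps; your write-up just makes explicit what the paper leaves to the reader.
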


The proof of Lemma \ref{sdis} is an easy adaptation from the real line.

\subsection{Smooth decomposition into latitudinal projections}
Recall that $ m : M \to [0, \infty)$ is a Morse function on a manifold $M$ satisfying the conclusions of Theorem 3.1. In what follows we shall tacitly assume that the dimension of $M$ is at least $2$.

\begin{definition}\label{def1}
Let $\th$ be a regular value of $m$ and $\delta>0$ is such that $\bar I_\delta$, where $I_\delta=(\th-\delta,\th+\delta)$, contains no critical values. 
Let $s: \R \to [0,1]$  be a smooth function satisfying \eqref{supps} and \eqref{kwadrat}.
Let $\psi$ be a smooth function on $I_\delta \times J_\th$ as in Theorem \ref{ril}. For a fixed $1\le p \le \infty$, we define Auscher, Weiss, and Wickerhauser (AWW) operator  $E_\th:C_0(M) \to C_0(M)$ with cut-off $\th$ as follows. 

Define an operator $E_{\psi,\th}$ acting on a function $h\in C_0(I_\delta \times J_\th)$ by
\begin{equation}\label{AWW0}
E_{\psi,\th} (h)(t,x)=
s^2(t-\th)h(t,x)+ s(t-\th)s(-t+\th) \bigg(\frac{\psi(2\th-t,x)}{\psi(t,x)} \bigg)^{1/p} h(2\th-t,x),
\end{equation}
where $(t,x)\in I_\delta \times J_\th$.
By the support condition \eqref{supps}, we extend the domain to $h\in C_0(\R \times J_\th)$ by setting 
\begin{equation}\label{AWW1}
E_{\psi,\th} h(t,x)=
\begin{cases} h(t,x) & t\ge \th+\delta,
\\
0 & t \le\th-\delta.
\end{cases}
\end{equation}
Finally, we define AWW operator $E_\th$ on the whole manifold $M$ by setting for $f\in C_0(M)$,
\begin{equation}\label{AWW}
E_\th(f)(y)
=\begin{cases} f(y) &   m(y) \ge \th+\delta\\
E_{\psi,\th} (f\circ F_\th)(t,x)& y=F_\th(t,x), (t,x) \in I_\delta \times J_\th,\\
0 &  m(y)\le \th-\delta. \\
\end{cases}
\end{equation}
\end{definition}

We emphasize   that operators $E_{\psi,\th}$ in  \eqref{AWW0}, \eqref{AWW1} and $E_\th$ in \eqref{AWW} depend on the fixed  value of $1\leq p\leq \infty$. But this dependence is omitted in our notation.

The following lemma is a generalization of the corresponding result on the sphere \cite[Lemma 3.3]{BD}.

\begin{lemma}\label{AWWL}
Let $m:M \to [0,\infty)$ be a Morse function as in Theorem \ref{mfact}.
Let $\th>0$ and $\delta>0$ be such that the interval $[\th-\delta,\th+\delta]$ contains only regular values of a  Morse function $m$. Fix $1\le p \le \infty$. Let $E_\th$ be an AWW operator as in Definition \ref{def1} with this value of $p$. Then, 
\begin{equation}\label{AWWL1}
E_\th:L^p(M)\to L^p(M)
\end{equation}
is a projection, which is orthogonal if $p=2$. Its norm coincides with the norm of  operator $E^{\pm}_\th$ on $L^p(\R)$ in Lemma \ref{lp}.
In the case $p=\infty$, the same holds for $E_\th: C_0(M) \to C_0(M)$.
\end{lemma}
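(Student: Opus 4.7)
The plan is to reduce the statement to the one-dimensional AWW lemma (Lemma \ref{lp}) by transferring the strip $M_{I_\delta}=m^{-1}(I_\delta)$ to a product $I_\delta\times J_\th$ using Theorem \ref{ril}, and then stripping off the weight $\psi$ using Lemma \ref{weight1}. Outside the strip the operator is trivial: on $m^{-1}([\th+\delta,\infty))$ we have $E_\th f=f$ and on $m^{-1}([0,\th-\delta])$ we have $E_\th f=0$, and by the support properties of $s$ these pieces match the strip-formula at $t=\th\pm\delta$. So it suffices to analyze $E_\th$ restricted to $M_{I_\delta}$.

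First I would invoke Theorem \ref{ril} to get the diffeomorphism $F_\th\colon I_\delta\times J_\th\to M_{I_\delta}$ together with the weight $\psi$ satisfying $(F_\th^{-1})_*\nu_M=\psi\,(\lambda\times\nu_\th)$. Pullback by $F_\th$ therefore gives an isometric isomorphism
\[
L^p(M_{I_\delta},\nu_M)\;\cong\;L^p(I_\delta\times J_\th,\psi\,d\lambda\,d\nu_\th),
\qquad f\longmapsto f\circ F_\th,
\]
and by Definition \ref{def1} conjugation of $E_\th|_{M_{I_\delta}}$ by this isomorphism is exactly $E_{\psi,\th}$. Next, apply Lemma \ref{weight1} with weights $\omega=\psi$ and $\tilde\omega=1$ to obtain a further isometry to $L^p(I_\delta\times J_\th,d\lambda\,d\nu_\th)$, under which $E_{\psi,\th}$ transforms to $\widetilde E_\th(g)=\psi^{1/p}\,E_{\psi,\th}(\psi^{-1/p}g)$. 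The key point is that the factor $(\psi(2\th-t,x)/\psi(t,x))^{1/p}$ built into the definition \eqref{AWW0} is precisely what is needed so that the weight cancels: a direct computation gives
\[
\widetilde E_\th(g)(t,x)=s^2(t-\th)\,g(t,x)+s(t-\th)s(-t+\th)\,g(2\th-t,x),
\]
that is, $\widetilde E_\th=E_\th^{\mathrm{1D}}\otimes \mathbf I_{L^p(J_\th)}$, where $E_\th^{\mathrm{1D}}$ is the one-dimensional AWW projection from Definition \ref{def0} acting in the $t$-variable.

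Finally, by Fubini--Tonelli the operator $E_\th^{\mathrm{1D}}\otimes \mathbf I_{L^p(J_\th)}$ is bounded on $L^p(\R\times J_\th,d\lambda\,d\nu_\th)$ with norm equal to $\|E_\th^{\mathrm{1D}}\|_{L^p(\R)\to L^p(\R)}$, it is a projection because $E_\th^{\mathrm{1D}}$ is, and it is orthogonal when $p=2$ because $E_\th^{\mathrm{1D}}$ is self-adjoint and idempotent in that case. Transporting back through the two isometries preserves the projection property and the norm, and adding back in the trivial pieces (identity on $m^{-1}([\th+\delta,\infty))$, zero on $m^{-1}([0,\th-\delta])$), which act on complementary subspaces of $L^p(M)$ and commute with the strip part through the support structure dictated by \eqref{supps}, yields Lemma \ref{AWWL}. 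For $p=\infty$ the same argument works with $C_0(M)$, where the weight factor in \eqref{AWW0} is interpreted as $1$ (i.e.\ $1/p=0$) and the analogous tensoring with the identity on $C_0(J_\th)$ preserves the norm computed in Lemma \ref{lp}.

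The only step I expect to require care is the algebraic cancellation showing $\widetilde E_\th=E_\th^{\mathrm{1D}}\otimes\mathbf I$; everything else is bookkeeping. The power $1/p$ in the definition of $E_{\psi,\th}$ is precisely tuned to this cancellation via Lemma \ref{weight1}, which is why the conclusion transfers the one-dimensional AWW norm verbatim.
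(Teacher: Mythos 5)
Your proposal is correct and follows essentially the same route as the paper: split $L^p(M)$ into the three pieces $m^{-1}[0,\th-\delta]$, the strip, and $m^{-1}[\th+\delta,\infty)$, transfer the strip to $I_\delta\times J_\th$ via $F_\th$, and use the exact cancellation of the factor $(\psi(2\th-t,x)/\psi(t,x))^{1/p}$ in \eqref{AWW0} to reduce to the one-dimensional operator of Lemma \ref{lp}. The only difference is presentational: the paper encodes your global conjugation $M_{\psi^{1/p}}\circ E_{\psi,\th}\circ M_{\psi^{-1/p}}=E_\th^{\mathrm{1D}}\otimes\mathbf I$ fiber-wise in $x$ as the identity \eqref{AWWL6} and then invokes Proposition \ref{absn}, which is the same computation.
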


\begin{proof} By Definition \ref{def1}, observe that $E_\th$ is a sum of two operators. By \eqref{AWW0} and \eqref{AWW}, one of these operators is a multiplication operator by the smooth function $\vp$ given by
\[
\vp(y)=\begin{cases}  1 &   m(y) \ge \th+\delta,\\
s^2(t- \th) & t = m(y) \in I_\delta,\\
0 &  m(y)\le \th-\delta.
\end{cases}
\]
By \eqref{supps}, \eqref{AWW0}, and \eqref{AWW}, the other is a simple $H$-operator localized on an open set $m^{-1}(\th-\delta,\th+\delta)$ 

To prove that $E_\th$ is a projection on $L^p(M)$, observe that $L^p(M)$ decomposes as an $\ell^p$ sum
\[
\begin{aligned}
L^p(M) &= L^p(m^{-1}[0,\th-\delta]) \oplus_p L^p(m^{-1}[\th-\delta, \th+\delta]) \oplus_p L^p(m^{-1}[\th+\delta,\infty)),
\\
\int_M |f|^p d\nu &= \int_{m^{-1}[0,\th-\delta]} |f|^p d\nu + \int_{m^{-1}[\th-\delta, \th+\delta]} |f|^p d\nu + \int_{m^{-1}[\th+\delta,\infty)} |f|^p d\nu  \qquad f\in L^p(M).
\end{aligned}
\]
Since $E_\th$ is the zero operator $\bf 0$ and the identity operator $\bf I$ on the first and the last component, respectively, we can restrict our attention to the middle subspace. We claim that
\begin{equation}\label{AWWL4}
\int_{m^{-1}(\th-\delta, \th+\delta)} |E_\th f|^p d\nu \le (B_p)^p \int_{m^{-1}(\th-\delta, \th+\delta)} |f|^p d\nu,
\end{equation}
where $B_p$ is the same constant as in the proof of Lemma \ref{lp}. By Theorem \ref{ril}, $F_{\th}: I_\delta \times J_\th \to m^{-1}(\th-\delta, \th+\delta)$ induces an isometric isomorphism between $L^p( I_\delta \times J_\th,\psi(\lambda \times \nu_\th))$ and $L^p(m^{-1}(\th-\delta, \th+\delta), \nu)$. Hence, it suffices to show that for $h\in C_0(I_\delta \times J_\th)$,
\[
\int_{I_\delta \times J_\th} |E_{\psi,\th} h(t,x)|^p \psi(t,x) dt d \nu_\th \le (B_p)^p \int_{I_\delta \times J_\th} |h(t,x)|^p \psi(t,x) dt d \nu_\th.
\]
For this, it is enough to show that
\begin{equation}\label{AWWL5}
\int_{I_\delta} |E_{\psi,\th} h(t,x)|^p \psi(t,x) dt \le (B_p)^p \int_{I_\delta} |h(t,x)|^p \psi(t,x) dt \qquad x\in J_\th.
\end{equation}
Let $\bar E_\th$ be the one dimensional AWW operator from Definition \ref{def0}. Then, by \eqref{AWW0} we have the following identity
\begin{equation}\label{AWWL6}
E_{\psi,\th}h(t,x)=[ M_{\psi^{-1/p}(\cdot,x)} \circ \bar E_\th \circ M_{\psi^{1/p}(\cdot,x)} ] (h (\cdot,x))(t) \qquad (t,x) \in I_\delta \times J_\th,
\end{equation}
where $M_\kappa$ denotes the multiplication operator by a function $\kappa$ on $I_\delta$. Combining Proposition \ref{absn}(ii), \eqref{lps}, and \eqref{AWWL6} yields \eqref{AWWL5}. Likewise, Proposition \ref{absn}(iii), Lemma \ref{lp} and  \eqref{AWWL6} implies that 
\[
(E_{\psi,\th})^2 h=E_{\psi, \th} h \qquad\text{for } h\in C_0(I_\delta\times J_\th).
\]
Hence, $E_\th: L^p(M) \to L^p(M)$. To see that $E_\th$ is an orthogonal projection when $p=2$, we apply Proposition \ref{absn}(v).
\end{proof}

\begin{remark} Using \eqref{lps2}, an analogue of \eqref{AWWL6} for $\mathbf I - E_{\psi,\th}$, and repeating the argument in the proof of \eqref{AWWL4} yields a manifold variant of \eqref{lps2}
\begin{equation}\label{AWWL7}
\int_{m^{-1}(\th-\delta, \th+\delta)} |(\mathbf I -E_\th )f|^p d\nu \le (B_p)^p \int_{m^{-1}(\th-\delta, \th+\delta)} |f|^p d\nu.
\end{equation}
\end{remark}

\begin{lemma}\label{qth} Let $m:M \to [0,\infty)$ be a Morse function as in Theorem \ref{mfact}. Suppose $\th_1,\th_2 >0$ are such that:
\begin{itemize}
\item $\th_1+\delta<\th_2-\delta$,
\item intervals $[\th_i-\delta,\th_i+\delta]$, $i=1,2$, contain only regular values of $m$.
\end{itemize}
Let $E_{\th_1}$ and $E_{\th_2}$ be AWW operators for some fixed value $1\le p \le \infty$. Then,  $E_{\th_1}$ and $E_{\th_2}$ commute
\begin{equation}\label{qth5}
E_{\th_2}E_{\th_1} = E_{\th_1} E_{\th_2} =E_{\th_2}
\end{equation}
and the operator
\begin{equation}\label{qth1}
Q_{\th_1,\th_2}:=E_{\th_1}-E_{\th_2} \in \mathcal H(M)
\end{equation}
is localized on the open subset $m^{-1}(\th_1-\delta,\th_2+\delta) \subset M$. Moreover, $Q_{\th_1,\th_2}:L^p(M) \to L^p(M)$ is a projection, which satisfies
\begin{equation}\label{qth2}
Q_{\th_1,\th_2}f(x)= \begin{cases}
0 & x \in m^{-1}([0,\th_1-\delta] \cup [\th_2+\delta,\infty)),\\
E_{\th_1}f(x) & x\in m^{-1}(\th_1-\delta,\th_1+\delta),\\
f(x) & x \in m^{-1}([\th_1+\delta,\th_2-\delta]),\\
(\mathbf I -E_{\th_2})f(x) & x\in m^{-1}(\th_2-\delta,\th_2+\delta).
\end{cases}
\end{equation}
The norm of $Q_{\th_1,\th_2}$ acting on $L^p(M)$ is the same as the norm of $E^{\pm}_\th$ on $L^p(\R)$ in Lemma \ref{lp}. In particular, if $p=2$, $Q_{\th_1,\th_2}$ is an orthogonal projection on $L^2(M)$. The analogous statement holds for $Q_{\th_1,\th_2}:C_0(M)\to C_0(M)$.
\end{lemma}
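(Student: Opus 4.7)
The strategy is to derive the piecewise formula \eqref{qth2} first, since once this is in hand, every remaining assertion of the lemma follows mechanically from it together with Lemma \ref{AWWL}. I would partition $M$ into the five regions determined by $m(y)$ relative to the thresholds $\theta_1 \pm \delta$ and $\theta_2 \pm \delta$ and compute $E_{\theta_1}f - E_{\theta_2}f$ on each region using \eqref{AWW}. The gap condition $\theta_1 + \delta < \theta_2 - \delta$ is crucial: on the intermediate slab $m^{-1}([\theta_1 + \delta, \theta_2 - \delta])$ we have $E_{\theta_1}f = f$ and $E_{\theta_2}f = 0$; on $m^{-1}(\theta_2 - \delta, \theta_2 + \delta)$, writing $y = F_{\theta_2}(t,x)$, the reflected point $F_{\theta_2}(2\theta_2 - t, x)$ still has $m$-value $2\theta_2 - t > \theta_2 - \delta > \theta_1 + \delta$, so $E_{\theta_1}$ acts as $\mathbf I$ at both points and $Qf = (\mathbf I - E_{\theta_2})f$; symmetrically on $m^{-1}(\theta_1 - \delta, \theta_1 + \delta)$ the reflection stays below $\theta_2 - \delta$, so $E_{\theta_2}f = 0$ there and $Qf = E_{\theta_1}f$. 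The extremes contribute $Qf = 0$, giving \eqref{qth2}. The same case analysis applied to $E_{\theta_2}(E_{\theta_1}f)$ and $E_{\theta_1}(E_{\theta_2}f)$ yields \eqref{qth5}, because in each AWW strip of the outer operator the inner operator acts as $\mathbf I$ or $\mathbf 0$ at both the base and the reflected points.

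Next I would verify that $Q \in \mathcal H(M)$ and localize it. Writing each $E_{\theta_i} = \varphi_i \mathbf I + H_i$ as in the proof of Lemma \ref{AWWL}, where $\varphi_i$ is the smooth multiplicative cutoff ($=1$ above $\theta_i+\delta$, $s^2(m(\cdot)-\theta_i)$ in the transition strip, $=0$ below) and $H_i$ is a simple $H$-operator localized on $m^{-1}(\theta_i - \delta, \theta_i + \delta)$, gives $Q = (\varphi_1 - \varphi_2)\mathbf I + H_1 - H_2$. A direct check shows that $\varphi_1 - \varphi_2$ equals $1$ on $m^{-1}[\theta_1 + \delta, \theta_2 - \delta]$ and vanishes outside the compact set $m^{-1}[\theta_1 - \delta, \theta_2 + \delta]$, so the multiplication piece is itself a simple $H$-operator (with $\Phi = \mathrm{id}$) localized on $U := m^{-1}(\theta_1 - \delta, \theta_2 + \delta)$. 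Hence $Q \in \mathcal H(M)$ is localized on $U$ by Definition \ref{localized}. Expanding $Q^2 = E_{\theta_1}^2 - E_{\theta_1}E_{\theta_2} - E_{\theta_2}E_{\theta_1} + E_{\theta_2}^2$ and using $E_{\theta_i}^2 = E_{\theta_i}$ together with \eqref{qth5} gives $Q^2 = Q$. When $p = 2$, Lemma \ref{AWWL} provides $E_{\theta_i}^* = E_{\theta_i}$, so $Q^* = Q$ and $Q$ is an orthogonal projection.

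For the norm identity, \eqref{qth2} presents $Qf$ on three disjoint regions, so
\[
\|Qf\|_p^p = \int_{m^{-1}[\theta_1+\delta,\theta_2-\delta]} |f|^p d\nu + \int_{m^{-1}(\theta_1-\delta,\theta_1+\delta)} |E_{\theta_1}f|^p d\nu + \int_{m^{-1}(\theta_2-\delta,\theta_2+\delta)} |(\mathbf I - E_{\theta_2})f|^p d\nu.
\]
Applying \eqref{AWWL4} to the middle integral, its companion \eqref{AWWL7} to the last, and using that $B_p := \|E^\pm_\theta\|_{L^p \to L^p} \ge 1$ (as the norm of any non-zero projection) bounds the right-hand side by $B_p^p \|f\|_p^p$, giving $\|Q\|_{p \to p} \le B_p$. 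For the matching lower bound I would restrict to test functions $f$ supported in $m^{-1}(\theta_1 - \delta, \theta_1 + \delta)$, on which $Qf = E_{\theta_1}f$ and $E_{\theta_1}f$ remains supported in the same strip; lifting a near-optimizer for $E^\pm_{\theta}$ from the proof of Lemma \ref{lp} to $M$ via the diffeomorphism $F_{\theta_1}$ (as in the proof of Lemma \ref{AWWL}) makes $\|Qf\|_p / \|f\|_p$ arbitrarily close to $B_p$. The $C_0(M)$ statement is identical with integrals replaced by suprema. The main technical obstacle is the bookkeeping in the first step, particularly tracking the reflections $t \mapsto 2\theta_i - t$ across the five-region partition under the gap condition $\theta_1 + \delta < \theta_2 - \delta$; once \eqref{qth2} and \eqref{qth5} are established, everything else is an almost mechanical consequence of Lemma \ref{AWWL}.
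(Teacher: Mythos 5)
Your proposal is correct and follows essentially the same route as the paper: verify the composition identity \eqref{qth5} and the piecewise formula \eqref{qth2} by the region-by-region analysis made possible by the gap $\th_1+\delta<\th_2-\delta$, obtain the projection property from Lemma \ref{AWWL}, exhibit $Q_{\th_1,\th_2}$ as two strip-localized simple $H$-operators plus a compactly supported multiplication operator to get membership in $\mathcal H(M)$ and localization on $m^{-1}(\th_1-\delta,\th_2+\delta)$, and deduce the norm bound from \eqref{AWWL4} and \eqref{AWWL7}. Your explicit lower-bound argument via test functions supported in the $\th_1$-strip is a welcome addition, since the paper leaves that direction to the norm statement of Lemma \ref{AWWL}.
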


\begin{proof}
Using $\th_1+\delta<\th_2-\delta$ and \eqref{AWW}, one can show \eqref{qth5}. Hence, $Q_{\th_1,\th_2}$ is a projection (orthogonal if $p=2$) by Lemma \ref{AWWL}. The localization of $Q_{\th_1,\th_2}$ is the consequence of the first part of the proof of Lemma \ref{AWWL}. That is, $Q_{\th_1,\th_2}$ is a sum of three simple $H$-operators. Two of them are localized on sets $m^{-1}(\th_i-\delta,\th_i+\delta)$, $i=1,2$. The third simple $H$-operator is a multiplication operator $H_{\vp, id, M}$ by the smooth function 
\[
\vp(y)=\begin{cases}  0 & m(y) \in [\th_2+\delta,\infty),\\
1-s^2(\th_2-t)=s^2(\th_2-t) & t = m(y) \in (\th_2-\delta,\th_2+\delta),\\
1 & m(y) \in [\th_1+\delta,\th_2-\delta],\\
s^2(t- \th_1) & t = m(y)\in (\th_1-\delta,\th_1+\delta),\\
0 &  m(y) \in [0, \th_1-\delta].
\end{cases}
\]
 Therefore, \eqref{qth2} is an immediate consequence of this observation. Finally, the conclusion on the norm of $Q_{\th_1,\th_2}$ is spelled out by the following remark.
\end{proof}

\begin{remark} Combining \eqref{AWWL4}, \eqref{AWWL7}, and \eqref{qth2} yields for $f\in L^p(M)$,
\begin{equation}\label{lpq}
\int_{m^{-1}(\th_1-\delta,\th_2+\delta)}  |Q_{\th_1,\th_2} f (x)|^p d\nu(x)  \le (B_p)^p \int_{m^{-1}(\th_1-\delta,\th_2+\delta)} |f(x)|^p d\nu(x).
\end{equation}
We also have an analogue for $p=\infty$,
\begin{equation}
\sup_{x\in m^{-1}[\th_1-\delta,\th_2+\delta]}  |Q_{\th_1,\th_2} f(x)|   \le B_\infty \sup_{x\in m^{-1}[\th_1-\delta,\th_2+\delta]}   |f(x)| \qquad\text{for }f\in C_0(M).
\end{equation}
In addition, Lemma \ref{qth} holds if $\th_1=0$ under the convention that $E_0 = \mathbf I$ and $Q_{0,\th} = \mathbf I - E_{\th}$.
\end{remark}

We shall refer to $Q_{\th_1,\th_2}$ as a {\it latitudinal projection}.  As a consequence of Lemma \ref{qth} and the telescoping argument we obtain a generalization of \cite[Lemma 3.4]{BD}.

\begin{corollary}\label{ncq}
Suppose that $M$ is a non-compact Riemannian manifold and $m:M \to [0,\infty)$ is a surjective Morse function. 
Suppose that $0=\th_0<\th_1<\th_2<\ldots$ is a partition of $[0,\infty)$ such that:
\begin{itemize}
\item $\th_i+\delta<\th_{i+1}-\delta$ for all $i\ge 0$,
\item each interval $[\th_i-\delta,\th_i+\delta]$, $i\ge 1$, contains only regular points of $m$.
\end{itemize}
Fix $1\le p\le \infty$. Then, the family of operators $\{Q_{\th_{i},\th_{i+1}}\}_{i=0}^{\infty}$ as in Lemma \ref{qth} is a smooth decomposition of identity in $L^p(M)$, $1\leq p<\infty$,  subordinate to the open cover
\[
\mathcal U = \{ m^{-1}(\th_i-\delta,\theta_{i+1}+\delta): i\in \Z\}.
\]
In particular, 
\begin{equation}\label{ncq1}
\sum_{i=0}^\infty Q_{\th_{i},\th_{i+1}} = \mathbf I,
\end{equation}
where $\mathbf I$ is the identity on $L^p(M)$ and the convergence is unconditional in strong operator topology. The decomposition in \eqref{unc0} satisfies
\begin{equation}\label{ncq2}
2^{1/p-1} ||f||_p \le \bigg(\sum_{j=0}^\infty  \|Q_{\th_{j},\th_{j+1}} f \|_p^p \bigg)^{1/p} \le 2^{1/p} B_p ||f||_p
\qquad\text{for all }f\in L^p(M).
\end{equation}
 Moreover, if $p=2$, $\{Q_{\th_{i},\th_{i+1}}\}_{i=0}^{\infty}$ forms an orthogonal decomposition of the identity operator $\mathbf I$ on $L^2(M)$. In the case of $p=\infty$ the same conclusion holds for $C_0(M)$.
\end{corollary}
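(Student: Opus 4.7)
The plan is to verify each of the six properties of a smooth decomposition of identity (Definition \ref{sdi}) by combining the single-cut information from Lemma \ref{qth} with a telescoping argument. Properties (ii) and (iii) are immediate from Lemma \ref{qth}: each $Q_{\th_i,\th_{i+1}} \in \mathcal{H}(M)$ is localized on $U_i := m^{-1}(\th_i-\delta,\th_{i+1}+\delta)$ and is a projection on $L^p(M)$ (orthogonal if $p=2$). For local finiteness (i), if $K\subset M$ is compact then $m(K)$ is compact in $[0,\infty)$, hence $m(K)\subset [0,A]$ for some $A$; since $\th_i\to\infty$, only finitely many $U_i$ can meet $K$.

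Next I would establish mutual orthogonality (iv). By Lemma \ref{qth}, if $\th<\th'$ are two cutoffs lying in regular intervals and separated as required, then $E_\th E_{\th'}=E_{\th'}E_\th=E_{\th'}$. Expanding for $i<j$,
\begin{equation*}
Q_{\th_i,\th_{i+1}} Q_{\th_j,\th_{j+1}} = (E_{\th_i}-E_{\th_{i+1}})(E_{\th_j}-E_{\th_{j+1}}) = E_{\th_j}-E_{\th_{j+1}}-E_{\th_j}+E_{\th_{j+1}} = 0,
\end{equation*}
and similarly for $i>j$. For (v), the partial sums telescope:
\begin{equation*}
\sum_{i=0}^{N} Q_{\th_i,\th_{i+1}} = E_{\th_0} - E_{\th_{N+1}} = \mathbf{I} - E_{\th_{N+1}},
\end{equation*}
using the convention $E_{\th_0} = E_0 = \mathbf I$. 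I would then show $E_{\th_N}\to 0$ in the strong operator topology. For any $f$ compactly supported in $m^{-1}[0,A]$, once $\th_N-\delta > A$, Definition \ref{def1} together with the observation that both $t$ and $2\th_N-t$ lie above $A$ in the transition zone forces $E_{\th_N} f \equiv 0$. Since such $f$ are dense in $L^p(M)$ (resp.\ $C_0(M)$) and $\|E_{\th_N}\|\le B_p$ uniformly in $N$ by Lemma \ref{AWWL}, strong convergence follows; unconditionality is automatic because the partial sums along any exhausting ordering coincide.

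For the two-sided bound (vi), the key observation is that the cover $\{U_i\}$ has overlap at most $2$: only consecutive $U_i$ and $U_{i+1}$ can intersect, at $m^{-1}(\th_{i+1}-\delta,\th_{i+1}+\delta)$. Hence, for every $x\in M$, at most two terms in $\sum_i Q_{\th_i,\th_{i+1}} f(x)$ are nonzero. The upper bound follows from \eqref{lpq}:
\begin{equation*}
\sum_{i=0}^{\infty} \|Q_{\th_i,\th_{i+1}} f\|_p^p \le (B_p)^p \sum_{i=0}^{\infty} \int_{U_i} |f|^p\, d\nu \le 2 (B_p)^p \|f\|_p^p.
\end{equation*}
For the lower bound, the pointwise identity $f = \sum_i Q_{\th_i,\th_{i+1}} f$ (with at most two nonzero summands) combined with convexity of $t\mapsto t^p$ gives $|f(x)|^p \le 2^{p-1}\sum_i |Q_{\th_i,\th_{i+1}} f(x)|^p$, and integration yields the left inequality of \eqref{ncq2}. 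The $p=\infty$ case is handled identically using the sup-norm analogues of \eqref{lpq} and the supremum form of condition (vi) from Remark \ref{rsdi}. Finally, in the case $p=2$, orthogonality (iv) and \eqref{ncq1} force the decomposition constant to equal $1$, so $\{Q_{\th_i,\th_{i+1}}\}$ is an orthogonal decomposition of $\mathbf I$ on $L^2(M)$. The main obstacle to watch is the justification of strong convergence $E_{\th_N}\to 0$, but this reduces to a uniform-boundedness-plus-density argument as indicated.
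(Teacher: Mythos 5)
Your proof is correct and follows essentially the same route as the paper: properties (ii)--(iv) come from Lemma \ref{qth} and \eqref{qth5}, the telescoping identity handles the sum, and the overlap-two structure of the cover together with \eqref{lpq} gives the two-sided bound \eqref{ncq2}, with the $p=2$ and $p=\infty$ cases as easy variants. The only minor difference is that you verify (v) directly by showing $E_{\th_{N}}\to 0$ in the strong operator topology (density of compactly supported functions plus the uniform bound $B_p$), whereas the paper obtains the a.e.\ pointwise telescoping identity via Lemma \ref{skon} and relies on the earlier proposition that (iii), (iv), (vi) imply (v); both are valid.
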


\begin{proof}
Lemma \ref{qth} shows properties (ii) and (iii) in Definition \ref{sdi}. Property (iv) 
\[
Q_{\th_{i},\th_{i+1}} \circ Q_{\th_{j},\th_{j+1}}=\mathbf 0 \qquad i \ne j,
\]
is a consequence of \eqref{qth5}. By the telescoping argument and Lemma \ref{skon} we have pointwise equality for any $f\in L^1_{loc}(M)$,
\begin{equation}\label{ccc}
f(x) = \sum_{j=0}^\infty Q_{\th_j,\th_{j+1}}f(x) \qquad\text{for a.e. }x\in M,
\end{equation}
with at most two non-zero terms for each $x$. It remains to show property (vi).

By \eqref{lpq}, for any $f\in L^p(M)$,
\[
\sum_{j=0}^\infty  \|Q_{\th_{j},\th_{j+1}} f \|_p^p \leq
(B_p)^p \sum_{j=0}^\infty \int_{m^{-1}(\th_j-\delta,\th_{j+1}+\delta)}
|f|^p d\nu \leq 2 (B_p)^p \|f\|^p_p.
\]
To show the converse inequality take any $f\in L^p(M)$, we apply \eqref{ccc} to get
\[
|f(x)|^p = \bigg|\sum_{j=0}^\infty Q_{\th_j,\th_{j+1}}f(x)\bigg|^p \le 2^{p-1} \sum_{j=0}^\infty |Q_{\th_j,\th_{j+1}}f(x)|^p.
\]
Integrating over $M$ yields
\[
||f||_p^p \le 2^{p-1} \sum_{j=0}^\infty \|Q_{\th_{j},\th_{j+1}} f \|_p^p.
\]
This proves (vi). We use
similar arguments for $C_0(M)$ with the supremum norm. 
\end{proof}

For compact manifolds we have the following variant of Corollary \ref{ncq}, where we use the convention that $E_0=\mathbf I$ and $E_1=\mathbf 0$, which implies that $Q_{0,\th_1} = \mathbf I-E_{\th_1}$ and $Q_{\th_{n-1},1}=E_{\th_{n-1}}$.

\begin{corollary}\label{cq}
Suppose that $M$ is a compact Riemannian manifold and $m:M \to [0,1]$ is a surjective Morse function. 
Suppose that $0=\th_0<\th_1<\ldots<\th_n=1$ is a partition of the interval $[0,1]$ such that:
\begin{itemize}
\item $\th_i+\delta<\th_{i+1}-\delta$ for all $i=0,\ldots,n-1$,
\item each interval $[\th_i-\delta,\th_i+\delta]$, $i=1,\ldots,n-1$, contains only regular points of $m$.
\end{itemize}
Fix $1\le p\le \infty$. Then, the family of operators $\{Q_{\th_{i},\th_{i+1}}\}_{i=0}^{n-1}$ as in Lemma \ref{qth} is a smooth decomposition of identity in $L^p(M)$, $1\leq p<\infty$,  subordinate to the open cover
\[
\mathcal U = \{ m^{-1}(\th_i-\delta,\theta_{i+1}+\delta): i=0,\ldots,n-1\}.
\]
The decomposition constant in \eqref{unc0} is universal. 
Moreover, if $p=2$, $\{Q_{\th_{i},\th_{i+1}}\}_{i=0}^{n-1}$ forms an orthogonal decomposition of the identity operator $\mathbf I$ on $L^2(M)$. 
In case of $p=\infty$ the same conclusion holds for $C(M)$.
\end{corollary}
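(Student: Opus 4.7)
The plan is to mirror almost verbatim the proof of Corollary \ref{ncq}, the only new ingredient being the treatment of the two boundary projections $Q_{0,\th_1}=\mathbf I-E_{\th_1}$ and $Q_{\th_{n-1},1}=E_{\th_{n-1}}$ under the conventions $E_0=\mathbf I$ and $E_1=\mathbf 0$. First I would verify (ii) and (iii) in Definition \ref{sdi}. For indices $1\le i\le n-2$, both $\th_i$ and $\th_{i+1}$ are regular separation points, so Lemma \ref{qth} directly gives that $Q_{\th_i,\th_{i+1}}\in\mathcal H(M)$ is localized on $m^{-1}(\th_i-\delta,\th_{i+1}+\delta)$ and is an $L^p$-projection (orthogonal if $p=2$). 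For $i=0$, observe that $\mathbf I-E_{\th_1}$ is the sum of a multiplication $H$-operator by a smooth cutoff supported in $m^{-1}[0,\th_1+\delta)$ and a simple $H$-operator localized in $m^{-1}(\th_1-\delta,\th_1+\delta)$; by Lemma \ref{AWWL}, it is a projection on $L^p(M)$ with the same norm as $E^\pm_\th$ on $L^p(\R)$. Symmetrically, $E_{\th_{n-1}}$ handles $i=n-1$ and is localized on $m^{-1}(\th_{n-1}-\delta,1]$.

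Next, property (iv), mutual orthogonality, is a consequence of the identity \eqref{qth5} $E_{\th_i}E_{\th_j}=E_{\max(\th_i,\th_j)}$, which remains valid with the conventions $E_0=\mathbf I$, $E_1=\mathbf 0$ by a direct check on the three pieces of formula \eqref{AWW}; expanding $(E_{\th_i}-E_{\th_{i+1}})(E_{\th_j}-E_{\th_{j+1}})$ then collapses to zero whenever $i\ne j$. Property (v) is then immediate by telescoping,
\[
\sum_{i=0}^{n-1} Q_{\th_i,\th_{i+1}} = \sum_{i=0}^{n-1}(E_{\th_i}-E_{\th_{i+1}}) = E_0-E_{\th_n}=\mathbf I-\mathbf 0=\mathbf I,
\]
with convergence trivial since the sum is finite. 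Local finiteness (i) is automatic for the same reason.

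It remains to establish the two-sided inequality (vi) with a universal constant. The upper bound follows by applying \eqref{lpq} (together with the endpoint analogues coming from \eqref{AWWL4} and \eqref{AWWL7}) on each strip $m^{-1}(\th_i-\delta,\th_{i+1}+\delta)$ and summing; since each point of $M$ lies in at most two such strips, one obtains
\[
\sum_{i=0}^{n-1}\|Q_{\th_i,\th_{i+1}}f\|_p^p \le 2(B_p)^p \|f\|_p^p,
\]
with $B_p$ the universal constant from Lemma \ref{lp}. For the lower bound, apply the pointwise telescoping identity (as in \eqref{ccc}, which by Lemma \ref{skon} holds a.e.\ for $f\in L^1_{loc}(M)$) noting that at each $x\in M$ at most two terms are nonzero, and invoke the elementary inequality $|\sum a_i|^p\le 2^{p-1}\sum|a_i|^p$ on these two terms, then integrate. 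The case $p=\infty$ on $C(M)$ is handled identically with the sup-norm analogue of \eqref{lpq}. The case $p=2$ yields an orthogonal decomposition because each $Q_{\th_i,\th_{i+1}}$ is orthogonal by Lemma \ref{qth} and they are mutually orthogonal by (iv).

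I do not anticipate any real obstacle: the only subtle point is making the boundary conventions $E_0=\mathbf I$, $E_1=\mathbf 0$ consistent with the localization and multiplicativity of AWW operators, which amounts to observing that no cut-off is needed past the endpoints of the range of $m$ since $m(M)=[0,1]$, so the boundary operators are genuinely localized on the half-open sets $m^{-1}[0,\th_1+\delta)$ and $m^{-1}(\th_{n-1}-\delta,1]$, both of which are open in $M$.
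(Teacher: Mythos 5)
Your proposal is correct and follows exactly the route the paper intends: Corollary \ref{cq} is stated as a variant of Corollary \ref{ncq} with the conventions $E_0=\mathbf I$, $E_1=\mathbf 0$, and your argument is precisely that adaptation, reusing Lemma \ref{qth}, the multiplicativity \eqref{qth5}, the telescoping identity, and the norm estimates \eqref{AWWL4}, \eqref{AWWL7}, \eqref{lpq} with the two-overlap bound. The handling of the boundary projections $\mathbf I-E_{\th_1}$ and $E_{\th_{n-1}}$, including their localization on $m^{-1}[0,\th_1+\delta)$ and $m^{-1}(\th_{n-1}-\delta,1]$, matches what the paper leaves implicit.
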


\section{Lifting of $H$-operators from a level submanifold}\label{S5}

In this section we introduce the method of lifting an operator acting on a level submanifold to an operator on the whole manifold. To achieve this we shall rely heavily on local parameterizations as in Theorems \ref{ril} and \ref{cpl}. Again the standing assumption is that a Morse function $m: M \to [0,\infty)$ satisfies the conclusions of Theorem \ref{mfact}. Throughout this section we fix $1\le p \leq \infty$.

\begin{definition}\label{lift}
Let $I=(a,b)$ be an interval and let 
$\th\in I$ be a regular value of $m$. Let $\tilde J_\th \subset J_\th$ be an open subset of a level submanifold manifold $J_\th = m^{-1}(\th)$. Let $\psi=\psi_\th:I \times \tilde J_\th \to (0,\infty)$ be a smooth function satisfying \eqref{ril5}.
Suppose that $P \in \mathcal H(J_\th)$ is an $H$-operator that is localized on $\tilde J_\th$. For any $t\in I$, define an operator $P_t$ by
\begin{equation}\label{lift1}
P_{t}(f)(x)=
\begin{cases}
P\big(f(\cdot)\big( \frac{\psi(t,\cdot)}{\psi(\th,\cdot)} \big)^{1/p} \big)(x) \big(\frac{\psi(\th,x)}{\psi(t,x)} \big)^{1/p}
& x\in \tilde J_\th,\\
0 & x\in J_\th \setminus \tilde J_\th,
\end{cases}
\qquad \text{for }f\in C(J_\th).
\end{equation}
Define the corresponding {\it local lifting operator} $\Pi$ by
\begin{equation}\label{lift2}
\Pi(h)(t,x)=
P_{t}(h(t,\cdot))(x),\quad h\in C_0(I\times J_\th), (t,x) \in I \times J_\th.
\end{equation}
\end{definition}

The operators $P_t$ in  \eqref{lift1}  and $\Pi$ in \eqref{lift2} depend on the fixed  value of $1\leq p\leq \infty$, but this dependence is omitted in our notation. In the sequel, when we consider simultaneously  the operators  $\Pi$ from Definition \ref{lift} and $E_{\psi,\th}$ from   Definition \ref{def1}, then they correspond to the same value of $p$.

\begin{remark} Despite that the function $(\frac{\psi(t,\cdot)}{\psi(\th,\cdot)})^{1/p}$ is defined only on $\tilde J_\th$, the formula \eqref{lift1} is well-defined since the operator $P$ is assumed to be localized on $\tilde J_\th$. Indeed, by Definition \ref{localized} and \eqref{H2}, the values of an input outside of $\tilde J_\th$ are completely irrelevant and $P_t \in\mathcal H(J_\th)$. Moreover, we have the following useful formula 
\begin{equation}\label{lift3}
\Pi(h)(t,x)=P\bigg(h(t,\cdot)\bigg(\frac{\psi(t,\cdot)}{\psi(\th,\cdot)}\bigg)^{1/p} \bigg)(x)\bigg(\frac{{\psi(\th,x)}}{\psi(t,x)}\bigg)^{1/p} \qquad\text{for }(t,x) \in I \times \tilde J_\th.
\end{equation}
Naturally, $\Pi(h)(t,x)=0$ for all $(t,x) \in I\times (J_\th \setminus \tilde J_\th)$.
\end{remark}

First we shall establish basic properties of a local lifting operator $\Pi$.

\begin{lemma} \label{PH} Let $\Pi$ be a local lifting operator as in Definition \ref{lift} corresponding to $P \in\mathcal H(J_\th)$, which is localized on $\tilde J_\th$.  The following holds:
\begin{enumerate}[(i)]
\item For $\eta\in C_c^\infty(I\times J_\th)$ we define
\[
\Pi_\eta(f)=\eta\Pi(f),\qquad \text{for } f \in C_0(I\times J_\th).
\]
Then $\Pi_\eta$ belongs to  $\mathcal H(I \times J_\th)$ and is localized on $I \times \tilde J_\th$.
\item
If $P$ induces a projection 
\begin{equation}\label{PH1}
P:L^p(\tilde J_\th,\psi(\th,\cdot)\nu_{\th}) \to L^p(\tilde J_\th,\psi(\th,\cdot)\nu_{\th}), \qquad 1\leq p<\infty,
\end{equation}
then $\Pi$ is also a projection 
\begin{equation}\label{PH2}
\Pi: L^p( I\times \tilde J_\th, \psi \lambda\times\nu_{\th}) \to  L^p( I\times \tilde J_\th, \psi \lambda\times\nu_{\th}).
\end{equation}
Here, $\lambda$ is a Lebesgue measure on $I$ and $\nu_\th$ is the Riemannian measure on $J_\th$. 
Moreover, the norms of $P$ and $\Pi$ are the same. In particular, if $p=2$ and $P$ is an orthogonal projection, then so is $\Pi$. 
\item
If $0<\delta<\min(|\th-a|, |b-\th|)$ and interval $[\th-\delta,\th+\delta]$ contains only regular values of $m$, then
 $\Pi$ commutes with AWW operators $E_{\psi,\th}$ as in Definition \ref{def1}, i.e., $\Pi E_{\psi,\th}=E_{\psi,\th} \Pi$. 
\end{enumerate}
\end{lemma}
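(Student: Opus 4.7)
The plan for \textbf{(i)} is to unwind $P$ into its simple $H$-operator components and verify directly that $\Pi_\eta$ is then a finite sum of simple $H$-operators on $I \times J_\th$. Writing $P = \sum_{i=1}^N H_{\varphi_i, \Phi_i, V_i}$ with $V_i, \Phi_i(V_i) \subset \tilde J_\th$, the explicit formula \eqref{lift3} expresses $\Pi_\eta(f)(t,x)$ as a sum over $i$ of products of $f(t,\Phi_i(x))$ with a smooth amplitude built from $\eta(t,x)\varphi_i(x)$ and a $\psi$-ratio evaluated at $x$ and $\Phi_i(x)$. The natural product diffeomorphism $\tilde\Phi_i(t,x) = (t,\Phi_i(x))$ on $\tilde V_i := I \times V_i$ together with new amplitudes $\tilde\varphi_i \in C_c^\infty(I \times J_\th)$ (compactness supplied by the cutoff $\eta$) then exhibit $\Pi_\eta = \sum_i H_{\tilde\varphi_i, \tilde\Phi_i, \tilde V_i}$, which is manifestly localized on $I \times \tilde J_\th$ in the sense of Definition~\ref{localized}.

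For \textbf{(ii)}, I would apply Proposition~\ref{absn} fiberwise. For each $t \in I$, setting $\kappa_t(x) = \psi(t,x)/\psi(\th,x)$ and $d\mu = \psi(\th,\cdot)\,d\nu_\th$, one has $\kappa_t \, d\mu = \psi(t,\cdot)\,d\nu_\th$, and Proposition~\ref{absn}(ii)--(iii) identifies $P_t$ as the conjugate of $P$ by the isometric multiplication $M_{\kappa_t^{1/p}}$. Hence $P_t$ is a projection on $L^p(\tilde J_\th, \psi(t,\cdot)\nu_\th)$ with the same norm as $P$. Fubini's theorem gives
\[
\|h\|^p_{L^p(I \times \tilde J_\th,\, \psi \lambda \times \nu_\th)} = \int_I \|h(t,\cdot)\|^p_{L^p(\tilde J_\th,\, \psi(t,\cdot)\nu_\th)}\,dt,
\]
and since $\Pi(h)(t,\cdot) = P_t(h(t,\cdot))$ by construction, the fiberwise projection identity and uniform norm bound integrate up to the stated conclusion. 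In the case $p=2$, the orthogonality of $\Pi$ follows from an analogous Fubini decomposition of the inner product together with the fiberwise self-adjointness of $P_t$, which is another instance of Proposition~\ref{absn} (part (v), with $p = p' = 2$).

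For \textbf{(iii)}, the commutation is a direct algebraic identity. Expanding $E_{\psi,\th}\Pi(h)(t,x)$ and $\Pi E_{\psi,\th}(h)(t,x)$ using \eqref{AWW0} and \eqref{lift3}, the crucial observation is the telescoping
\[
\left(\frac{\psi(2\th-t,x)}{\psi(t,x)}\right)^{1/p} \left(\frac{\psi(\th,x)}{\psi(2\th-t,x)}\right)^{1/p} = \left(\frac{\psi(\th,x)}{\psi(t,x)}\right)^{1/p},
\]
with a parallel cancellation for the $\psi$-factors appearing inside $P(\cdot\,)$. Pulling the common factor $(\psi(\th,x)/\psi(t,x))^{1/p}$ outside and using the linearity of $P$ (the scalars $s^2(t-\th)$ and $s(t-\th)s(-t+\th)$ depend only on $t$, not on $x$), both expressions collapse to the same formula on $I \times \tilde J_\th$, and both vanish on $I \times (J_\th \setminus \tilde J_\th)$. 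The only real obstacle across all three parts is bookkeeping of $\psi$-weights; no substantial analytic difficulty arises beyond Proposition~\ref{absn} and Fubini.
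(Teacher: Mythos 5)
Your proposal is correct and follows essentially the same route as the paper: for (i) you decompose $P$ into simple $H$-operators and read off $\Pi_\eta=\sum_i H_{\tilde\varphi_i,\tilde\Phi_i,I\times V_i}$ with $\tilde\Phi_i(t,x)=(t,\Phi_i(x))$ and $\eta$ supplying compact support; for (ii) you conjugate by the multiplication operator with weight $\kappa_t=\psi(t,\cdot)/\psi(\th,\cdot)$ via Proposition \ref{absn} and integrate the fiberwise statements using Fubini, exactly as in the paper; and for (iii) you perform the same telescoping of $\psi$-ratios together with linearity of $P$ in the $x$-variable. The only point left implicit (also left to the reader in the paper) is the trivial commutation on the region $t\le\th-\delta$ or $t\ge\th+\delta$, where $E_{\psi,\th}$ acts as $0$ or the identity by \eqref{AWW1}.
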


Lemma \ref{PH}(ii) also holds for $p=\infty$. That is, if
$
P:C_0(J_\th) \to C(J_\th) 
$
is a projection, 
then $
\Pi: C_0(I\times J_\th)\to C_0(I\times J_\th)
$
is also a projection and the norms of $P$ and $\Pi$ are the same.

\begin{proof} Since $P\in \mathcal H(J_\th)$ and it is localized on $\tilde J_\th$ in the sense of Definition \ref{localized}, it suffices to consider the case where $P=H_{\vp,\Phi,V}$ is as in Definition \ref{H} and $V \subset \tilde J_{\th}$ is an open subset with $\Phi(V)\subset \tilde J_\th$. A simple calculation using \eqref{lift1} and \eqref{lift2} shows that $\Pi_\eta=H_{\tilde \vp,\tilde \Phi,\tilde V}$, where
\[
\begin{aligned}
\tilde \vp(t,x)& =
\begin{cases} \eta(t,x)\vp(x) \left(\frac{\psi(\th,x)}{\psi(\th,\Phi(x))}\right)^{1/p}
\left(\frac{\psi(t,\Phi(x))}{\psi(t,x)}\right)^{1/p} & x\in V,
\\
0 & x\in J_\th \setminus V,
\end{cases}
\\
\tilde \Phi(t,x) &=(t,\Phi(x)) \qquad\text{for }(t,x)\in \tilde V=I \times V.
\end{aligned}
\]
Consequently, $\Pi_\eta \in \mathcal H(I\times J_\th)$ and $\Pi_\eta$ is localized on $I \times \tilde J_\th$. Note the presence of $\eta$ is necessary to guarantee that $\supp \tilde \vp \subset I \times \tilde J_\th$ is compact. This proves (i).

To prove (ii), observe that an operator $P$ in \eqref{PH1} has the same norm as
\begin{equation}\label{PH3}
P_t:L^p(\tilde J_\th,\psi(t,\cdot)\nu_{\th}) \to L^p(\tilde J_\th,\psi(t,\cdot)\nu_{\th}).
\end{equation}
This is a consequence of Proposition \ref{absn}(ii) for a measure $\mu$ and a weight $\kappa_t$ on $\tilde J_\th$ given by
\[
d\mu(x)= \psi(\th,x) d\nu_\th(x), \quad \kappa_t(x) =\frac{\psi(t,x)}{\psi(\th,x)} \qquad\text{for } x\in \tilde J_\th.
\]
Hence, by \eqref{lift2} for any $h\in C_0(I \times J_\th)$ we have
\begin{equation}\label{PH4}
\int_{\tilde J_\th} |\Pi h(t,x)|^p \psi(t,x) d\nu_\th(x) \le ||P||^p \int_{\tilde J_\th} |h(t,x)|^p \psi(t,x) d\nu_\th(x) \qquad t\in I.
\end{equation}
Integrating over $t\in I$ shows that the norm of $\Pi$ in \eqref{PH2} is the same as the norm of $P$ in \eqref{PH1}.

In addition, suppose that $P$ acting as in \eqref{PH1} is a projection. We shall use similar methods as in the proof of Lemma \ref{weight1} to show that $\Pi$ is a projection.
Namely, let $h\in C_0(I\times J_\th)$ and $(t,x) \in I \times \tilde J_\th$. By \eqref{lift3}
\[
\begin{aligned}
\Pi^2(h)(t,x)&=P\left(\Pi(h(t,\cdot))\left(\frac{\psi(t,\cdot)}{\psi(\th,\cdot)}\right)^{1/p}\right)(x)\left(\frac{\psi(\th,x)}{\psi(t,x)}\right)^{1/p}
\\
&=P\left(P\left(h(t,\cdot)\left(\frac{\psi(t,\cdot)}{\psi(\th,\cdot)}\right)^{1/p}\right)\right)(x)\left(\frac{\psi(\th,x)}{\psi(t,x)}\right)^{1/p}
\\
&=P\left(h(t,\cdot)\left(\frac{\psi(t,\cdot)}{\psi(\th,\cdot)}\right)^{1/p}\right)(x)\left(\frac{\psi(\th,x)}{\psi(t,x)}\right)^{1/p}=\Pi(h)(t,x).
\end{aligned}
\]
The same holds trivially for $(t,x) \in I \times (J_\th \setminus \tilde J_\th)$. 

Now, if $p=2$ and $P$ is an orthogonal projection, then by Proposition \ref{absn}(v) operators $P_{t}: L^2(\tilde J_\th,\psi(t,\cdot)\nu_{\th}) \to L^2(\tilde J_\th,\psi(t,\cdot)\nu_{\th})$ are orthogonal projections. Consequently, by Fubini's Theorem for any $f,h\in C_0(I \times J_\th)$
\[
\begin{aligned}
\lan \Pi f,h \ran& 
=\int_{I}\int_{\tilde J_\th}P_{t}(f(t,\cdot))(x)h(t,x) \psi(t,x) d\nu_\th(x)dt
\\
&=\int_{I}\int_{\tilde J_\th}f(t,x)P_t(h(t,\cdot))(t,x) \psi(t,x) d\nu_\th(x)dt=\lan f,\Pi h \ran.
\end{aligned}
\]
This shows that 
\[
\Pi: L^2( I\times \tilde J_\th, \psi \lambda\times\nu_{\th}) \to  L^2( I\times \tilde J_\th, \psi \lambda\times\nu_{\th}) 
\]
is self-adjoint, hence an orthogonal projection. 

In the case $p=\infty$ observe that
\[
|\Pi h(t,x)|\leq \|P\| \sup_{y\in J_\th} |h(t,y)|.
\]
This implies that if $h\in C_0(I\times J_\th)$, then $\Pi h\in C_0(I\times J_\th).$

It remains to show that that the operators $E_{\psi,\th}$ and $\Pi$ commute. The key part lies in the following calculation for $(t,x) \in (\th-\delta,\th+\delta) \times \tilde J_\th$,
\[
\begin{aligned}
&\Pi(E_{\psi,\th} h)(t,x)=P_{t}\left((E_{\psi,\th} h)(t,\cdot)\right)(x)=\left(\frac{\psi(\th,x)}{\psi(t,x)}\right)^{1/p} P\left((E_{\psi,\th} h)(t,\cdot)\left(\frac{\psi(t,\cdot)}{\psi(\th,\cdot)}\right)^{1/p}\right)(x)
\\
&=\left(\frac{\psi(\th,x)}{\psi(t,x)}\right)^{1/p} P\left(s^2(t-\th)h(t,\cdot) \left(\frac{\psi(t,\cdot)}{\psi(\th,\cdot)}\right)^{1/p} \right.
\\
&\hskip 2in \left. +s(t-\th)s(-t+\th)\left(\frac{\psi(2\th-t,\cdot)}{\psi(\th,\cdot)}\right)^{1/p} h(2\th -t,\cdot) \right)(x)
\\
&=s^2(t-\th)P_{t}\left(h(t,\cdot) \right)(x) +s(t-\th)s(-t+\th)P_{2\th-t}\left( h(2\th-t,\cdot) \right)(x)\left(\frac{\psi(2\th-t,x)}{\psi(t,x)}\right)^{1/p}
\\
&=s^2(t-\th)\Pi h(t,x) +s(t-\th)s(-t+\th)\Pi h(2\th-t,x) \left(\frac{\psi(2\th-t,x)}{\psi(t,x)}\right)^{1/p}=E_{\psi,\th} \Pi h(t,x).
\end{aligned}
\]
Since operator $P$ is localized on $\tilde J_\th$, we automatically have
\[
\Pi E_{\psi,\th} h(t,x) = E_{\psi,\th} \Pi h(t,x)=0
\qquad\text{for }
(t,x) \in (\th-\delta,\th+\delta) \times (J_\th \setminus \tilde J_\th).
\]
Finally, the case when $t\ge \th+\delta$ or $t\le \th-\delta$ follows from \eqref{AWW1} and is left to the reader.
\end{proof}

We are now ready to give a global definition of a lifting operator by specializing Definition \ref{lift}  to that of Theorem \ref{ril} or \ref{cpl}.

\begin{definition}\label{glift} Let $m:M \to [0,\infty)$ be a Morse function as in Theorem \ref{mfact}. Let $I=(a,b) \subset [0,\infty)$ be an interval such that $\bar I =[a,b]$ contains at most one critical value of $m$; if it exists, then we assume this critical value corresponds to a single critical point. For a regular value $\th\in I$, let $J_\th$ be the corresponding level submanifold of $M$ and  let $\tilde J_\th$ be as in Theorem \ref{cpl} or simply $\tilde J_\th=J_\th$ if $m$ has no critical values in $\bar I$. Let $\psi=\psi_\th$ be a function on $I \times \tilde J_\th$ and $F_\th$ be a diffeomorphism as in Theorem \ref{ril} or \ref{cpl}, if $m$ has either zero or one critical point, resp. 

Suppose that $P \in \mathcal H(J_\th)$ is an $H$-operator that is localized on $\tilde J_\th$. Let $\Pi$ be the local lifting operator as in Definition \ref{lift}. Define the corresponding {\it global lifting operator} $\Pi^M$ acting on a function $f: M \to \R$ by
\begin{equation}\label{glift1}
\Pi^M(f)(y)=
\begin{cases}
\Pi(f\circ F_\th)(t,x) & y=F_\th(t,x), \ (t,x)\in I\times \tilde J_\th,\\
0 & y\in M \setminus M_I, \text{where }M_I=\bigcup_{t\in I} \tilde J_t.
\end{cases}
\end{equation}
\end{definition}

Note that in general $\Pi^M$ is not an $H$-operator due to sharp cut-off at level submanifolds $J_t$, $t=a,b$. However, if $P$ is an orthogonal projection, then so is $\Pi^M$ in light of Lemma \ref{com}.
Moreover, $\Pi^M$ becomes an $H$-operator after we compose it with appropriate latitudinal projections from Lemma \ref{qth}. To prove this we need to calculate the operator $\Pi^M$ from another parametrization $F_s$. 

For a fixed regular value $s\in I$, we define $\tilde P \in \mathcal H(J_s)$ by
\begin{equation}\label{pt}
\tilde P(f)(x)=
\begin{cases}
P_{s}(f\circ F_{\th,s})(F_{s,\th}(x)) & x\in \tilde J_s,
\\
0 & x\in J_s \setminus \tilde J_s,
\end{cases}
\end{equation}
where $P_s \in \mathcal H(J_\th)$ is given by \eqref{lift1}. It turns out that the global lifting operators corresponding to $P$ and $\tilde P$ coincide.

\begin{lemma}\label{coin} Let $\th, s\in I$ be two regular values and let $P\in \mathcal H(J_\th)$ be localized on $\tilde J_\th$. Then, the operator $\tilde P$ given by \eqref{pt} belongs to $\mathcal H(J_s)$ and $\tilde P$ is localized on $\tilde J_s$. Moreover, the global lifting operators $\Pi^M$ and $\tilde \Pi^M$ corresponding to $P$ and $\tilde P$ are the same.
\end{lemma}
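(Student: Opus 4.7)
The plan is to verify both assertions by direct computation, reducing to the case of a single simple $H$-operator.

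For the first assertion, suppose $P = H_{\vp,\Phi,V}$ with $V, \Phi(V)\subset \tilde J_\th$ (the general case follows by linearity). Substituting this into \eqref{lift1}, I obtain $P_s = H_{\vp_s,\Phi,V}$, where
\[
\vp_s(y)=\vp(y)\bigg(\frac{\psi_\th(s,\Phi(y))}{\psi_\th(\th,\Phi(y))}\bigg)^{1/p}\bigg(\frac{\psi_\th(\th,y)}{\psi_\th(s,y)}\bigg)^{1/p},\qquad y\in V,
\]
is smooth and compactly supported in $V$. Next, substituting this into \eqref{pt} and using that $F_{\th,s}:\tilde J_\th\to \tilde J_s$ is a diffeomorphism, I find that $\tilde P=H_{\tilde\vp,\tilde\Phi,\tilde V}$, where $\tilde V=F_{\th,s}(V)\subset \tilde J_s$, $\tilde\Phi=F_{\th,s}\circ \Phi\circ F_{s,\th}|_{\tilde V}$, and $\tilde\vp=\vp_s\circ F_{s,\th}$ extended by zero to $J_s$. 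Since $\tilde V$ and $\tilde\Phi(\tilde V)=F_{\th,s}(\Phi(V))$ are both contained in $\tilde J_s$, the operator $\tilde P\in\mathcal H(J_s)$ is localized on $\tilde J_s$ by Definition \ref{localized}.

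For the second assertion, I must show $\Pi^M(f)(y)=\tilde\Pi^M(f)(y)$ for all $y\in M$. Both vanish on $M\setminus \tilde M_I$ by \eqref{glift1}, so fix $y\in \tilde M_I$ and write $y=F_\th(t,x)$ with $(t,x)\in I\times\tilde J_\th$. By the composition rule \eqref{ril2}, one also has $y=F_s(t,F_{\th,s}(x))$. Invoking \eqref{glift1} and \eqref{lift2}, the desired equality reduces to
\[
P_t\bigl((f\circ F_\th)(t,\cdot)\bigr)(x)=\tilde P_t\bigl((f\circ F_s)(t,\cdot)\bigr)(F_{\th,s}(x)),
\]
where $\tilde P_t$ is the analogue of \eqref{lift1} built from $\tilde P$ and $\psi_s$ on $J_s$. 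Setting $g(\cdot)=(f\circ F_\th)(t,\cdot)$, the flow identity $F_s(t,F_{\th,s}(\cdot))=F_\th(t,\cdot)$ gives $(f\circ F_s)(t,\cdot)\circ F_{\th,s}=g$ on $\tilde J_\th$. Unfolding the right-hand side by the definitions of $\tilde P_t$ and then $\tilde P$, and transferring the weight ratios $\psi_s(t,F_{\th,s}(x))/\psi_s(s,F_{\th,s}(x))$ into $\psi_\th(t,x)/\psi_\th(s,x)$ via the key identity \eqref{psi}, one finds that the product of factors $(\psi_\th(t,\cdot)/\psi_\th(s,\cdot))^{1/p}(\psi_\th(s,\cdot)/\psi_\th(\th,\cdot))^{1/p}$ telescopes to $(\psi_\th(t,\cdot)/\psi_\th(\th,\cdot))^{1/p}$, and similarly the outer factors collapse to $(\psi_\th(\th,x)/\psi_\th(t,x))^{1/p}$. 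This is exactly $P_t(g)(x)$ by \eqref{lift1}.

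The main obstacle is the bookkeeping of the four weight factors arising from the nested definitions of $P_t$, $\tilde P_t$, $P_s$, and $\tilde P$: one must verify that \eqref{psi} produces precisely the cancellations needed for the ratios to reassemble into the ones appearing in $P_t$. Once this is checked, the argument is a straightforward chain of substitutions, and the statement follows.
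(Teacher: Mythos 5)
Your proposal is correct and matches the paper's proof in its essentials: both parts reduce to the same chain of substitutions, with the weight-compatibility identity \eqref{psi} and the flow composition law \eqref{ril2} doing all the work to make the $\psi_\th$ and $\psi_s$ ratios telescope. The only cosmetic differences are that you verify localization of $\tilde P$ by direct computation on simple $H$-operators (the paper cites Lemma \ref{weight2}), and you unwind the identity starting from $\tilde P_t$ whereas the paper unwinds from $\Pi$ — same algebra, opposite direction.
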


\begin{proof} 
The property that $\tilde P\in \mathcal H(J_\th)$ is localized on $\tilde J_s$ is an immediate consequence of Lemma \ref{weight2} and the fact that $F_{\th,s}=(F_{s,\th})^{-1}: \tilde J_\th \to \tilde J_s$ is a diffeomorphism. 
By \eqref{lift3} the local lifting operator $\tilde \Pi$ of $\tilde P$ satisfies
\begin{equation}\label{reprez}
\tilde \Pi(h)(t,x)=\tilde P\left(h(t,\cdot)\left(\frac{\psi_s(t,\cdot)}{\psi_s(s,\cdot)}\right)^{1/p}\right)(x)\left(\frac{\psi_s(s,x)}{\psi_s(t,x)}\right)^{1/p} \qquad\text{for }
(t,x)\in I\times \tilde J_s,
\end{equation}
where $h \in C_0(I\times J_s)$. Also recall that $ \tilde \Pi(h)(t,x)=0$ for $(t,x)\in I\times J_s \setminus \tilde J_s$.

Our goal is to show that for any $f\in C_0(M)$,
\begin{equation}\label{coin2}
\Pi^Mf(F_s(t,x))=\tilde \Pi^M f(F_s(t,x)) \qquad\text{for }(t,x)\in I \times \tilde J_s.
\end{equation}
Since $\Pi^Mf(y)=\tilde \Pi^Mf(y)=0$ for all $y \in M \setminus M_I$, \eqref{coin2} implies that $\Pi^M$ and $\tilde \Pi^M$ coincide. By considering $h=f \circ F_s$, \eqref{glift1} implies that it suffices to show that for any $h\in C_0(I \times \tilde J_s)$ we have
\begin{equation}\label{coin4}
\Pi(h \circ  F_s^{-1}\circ F_\th)(F_\th^{-1}(F_s(t,x)))=\tilde \Pi(h)(t,x) \qquad\text{for }(t,x) \in I \times \tilde J_s.
\end{equation}
By the diagram \eqref{diag}, \eqref{lift1}, and \eqref{lift2} we have
\[
\begin{aligned}
&\Pi(h\circ F_s^{-1}\circ F_\th)(F_\th^{-1}(F_s(t,x)))
=P_{t}(h(t,F_{\th,s}(\cdot))(F_{s,\th}(x))
\\
&=P\left(h(t,F_{\th,s}(\cdot))\left(\frac{\psi_\th(t,\cdot)}{\psi_\th(\th,\cdot)}\right)^{1/p}\right)(F_{s,\th}(x))\left(\frac{\psi_\th(\th,F_{s,\th}(x))}{\psi_\th(t,F_{s,\th}(x))}\right)^{1/p}
\\
&=P_{s} \left(h(t,F_{\th,s}(\cdot))\left(\frac{\psi_\th(t,\cdot)}{\psi_\th(s,\cdot)}\right)^{1/p}\right)(F_{s,\th}(x))\left(\frac{\psi_\th(s,F_{s,\th}(x))}{\psi_\th(t,F_{s,\th}(x))}\right)^{1/p}
\\
&=P_{s}\left(h(t,F_{\th,s}(\cdot))\left(\frac{\psi_\th(t,F_{s,\th}\circ F_{\th,s}(\cdot))}{\psi_\th(s,F_{s,\th}\circ F_{\th,s}(\cdot))}\right)^{1/p}\right)(F_{s,\th}(x))\left(\frac{\psi_\th(s,F_{s,\th}(x))}{\psi_\th(t,F_{s,\th}(x))}\right)^{1/p}.
\end{aligned}
\]
Now applying \eqref{psi}, \eqref{pt}, and \eqref{reprez} yields \eqref{coin4}.
\end{proof}

The following lemma establishes the main properties of the global lifting operator $\Pi_M$. Of particular importance is the commutation of $\Pi_M$ with latitudinal projections $Q_{\th_1,\th_2}$.

\begin{lemma}\label{com} Suppose that $P \in \mathcal H(J_\th)$ is an $H$-operator that is localized on open subset $U \subset \tilde J_\th$. A global lifting operator $\Pi^M$ as in Definition \ref{glift} satisfies the following properties.
\begin{enumerate}[(i)]
\item  $\Pi^M:L^p(M,\nu_M) \to L^p(M,\nu_M)$ is a bounded linear operator with the same norm as $P$ acting as in \eqref{PH1},
\item If $P\in \mathcal H(J_\th)$ acting as in \eqref{PH1} is a projection (orthogonal if $p=2$), then so is $\Pi^M$.
\item
$\Pi^M$ commutes with all AWW projections $E_s$ as in Lemma \ref{AWWL} for any $s\in I$ such that $[s-\delta,s+\delta] \subset I$ contains only regular values of $m$. 
\item
For any two regular values $\th_1<\th_2 \in I$ such that:
\begin{itemize}
\item $\th_1+\delta<\th_2-\delta$,
\item $[\th_i-\delta,\th_i+\delta] \subset I$, $i=1,2$, contains only regular values of $m$,
\end{itemize}
the operator $\Pi^M$ commutes with $Q_{\th_1,\th_2}=E_{\th_1}-E_{\th_2} \in \mathcal H(M)$.
\item
Their composition is an $H$-operator, i.e.,
\begin{equation}\label{com0}
\Pi^M \circ Q_{\th_1,\th_2} = Q_{\th_1,\th_2} \circ \Pi^M \in \mathcal H(M),
\end{equation}
\item
The operator $\Pi^M \circ Q_{\th_1,\th_2}$ is localized on an open set
\begin{equation}\label{prop2}
O(U)=O(U,\th_1,\th_2):=  \bigcup_{s\in (\th_1-\delta,\th_2+\delta)} F_{\th,s}(U).
\end{equation}
\item Finally, $\Pi^M \circ Q_{\th_1,\th_2}: L^p(M) \to L^p(M)$ has norm bounded by $B_p ||P||$, where $B_p$ is the same as in \eqref{bp} and $P$ acts as in \eqref{PH1}. 
\end{enumerate}
\end{lemma}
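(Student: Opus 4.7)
Parts (i) and (ii) follow by transferring Lemma \ref{PH}(ii) to $M$ using the diffeomorphism $F_\theta: I \times \tilde J_\theta \to \tilde M_I$. The pushforward identity \eqref{ril4} (or \eqref{cpl4}) makes $f \mapsto f \circ F_\theta$ an isometric isomorphism between $L^p(\tilde M_I, \nu_M)$ and $L^p(I \times \tilde J_\theta, \psi_\theta(\lambda \times \nu_\theta))$, conjugating $\Pi^M|_{\tilde M_I}$ to $\Pi$. Since $\Pi^M$ vanishes on $M \setminus \tilde M_I$, extending by zero produces a bounded operator on $L^p(M)$ with the same norm and projection property as $P$.

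For (iii), I would invoke Lemma \ref{coin} to re-express $\Pi^M$ via the parameterization $F_s$ and the operator $\tilde P \in \mathcal H(J_s)$ from \eqref{pt}. Lemma \ref{PH}(iii) then delivers commutation of the corresponding local lift with $E_{\psi_s, s}$ on $I \times J_s$, which transfers through $F_s$ to $\Pi^M E_s = E_s \Pi^M$ on $\tilde M_I$. Outside $\tilde M_I$ the commutation is trivial, since $E_s$ is the identity for $m \ge s+\delta$ and zero for $m \le s-\delta$, while $\Pi^M$ vanishes. Part (iv) then follows from (iii) by linearity applied to $Q_{\theta_1,\theta_2} = E_{\theta_1} - E_{\theta_2}$.

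The central step is (v). By (iv), $\Pi^M Q_{\theta_1, \theta_2} = Q_{\theta_1, \theta_2} \Pi^M$. Lemma \ref{qth} furnishes $Q_{\theta_1, \theta_2} \in \mathcal H(M)$ localized on a compact set inside $m^{-1}(\theta_1-\delta, \theta_2+\delta)$, which lies in the regular strip. Pulling back through $F_\theta$ yields an $H$-operator $\tilde Q \in \mathcal H(I \times J_\theta)$ localized on a compact subset of $(\theta_1-\delta, \theta_2+\delta) \times J_\theta$. The composition $\Pi \tilde Q$ outputs into the fixed compact set $\pi_1(K') \times K_P \subset I \times \tilde J_\theta$, where $K'$ is the output support of $\tilde Q$ and $K_P \subset U$ is the compact localization set of $P$ provided by Lemma \ref{ilo}. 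Choosing $\eta \in C_c^\infty(I \times J_\theta)$ equal to $1$ on this set with $\supp \eta \subset I \times \tilde J_\theta$, we rewrite $\Pi \tilde Q = \Pi_\eta \tilde Q$; since $\Pi_\eta \in \mathcal H(I \times J_\theta)$ by Lemma \ref{PH}(i) and $\mathcal H$ is closed under composition, this is in $\mathcal H(I \times J_\theta)$. Transferring back through $F_\theta$ and extending by zero off $\tilde M_I$ produces $\Pi^M Q_{\theta_1, \theta_2} \in \mathcal H(M)$.

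For (vi), the output support of $\Pi^M Q_{\theta_1, \theta_2} f$ lies in $F_\theta((\theta_1-\delta, \theta_2+\delta) \times K_P) \subset O(U)$, while by the commutation its input-reach lies in $\bigcup_t F_{\theta, t}(K_P) \subset O(U)$; Lemma \ref{ilo} then yields localization on $O(U)$. Part (vii) is immediate from (i) and Lemma \ref{qth}: $\|\Pi^M Q_{\theta_1, \theta_2}\|_{p\to p} \le \|\Pi^M\| \cdot \|Q_{\theta_1, \theta_2}\| \le \|P\| \cdot B_p$. The main obstacle is (v): $\Pi^M$ alone is not an $H$-operator because of the sharp cutoff at $\partial \tilde M_I$, but $Q_{\theta_1, \theta_2}$'s localization strictly inside the regular strip allows a smooth cutoff to be inserted and reduces matters to Lemma \ref{PH}(i). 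When $I$ contains a critical value $t_z$ with $t_z \in [\theta_1+\delta, \theta_2-\delta]$, one must additionally verify that $\Pi^M$ vanishes near the critical point $z$, which follows from the compactness of $K_P$ inside $\tilde J_\theta$ combined with property \eqref{cpl2} of Theorem \ref{cpl}.
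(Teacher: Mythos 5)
Your handling of (i)--(iv) and (vii) matches the paper's argument (conjugation by $F_\th$ plus Lemma \ref{PH}, Lemma \ref{coin} for general $s$, and submultiplicativity for the norm bound), but there is a genuine gap in your central step (v). In the setting of Definition \ref{glift} the closed interval $\bar I$ is allowed to contain one critical value $t_z$, and in (iv)--(vi) nothing prevents $t_z\in(\th_1+\delta,\th_2-\delta)$; indeed Theorem \ref{crit} invokes the lemma exactly in the configuration $\th_1<t_z<\th_2$. So your assertion that the localizing set of $Q_{\th_1,\th_2}$ ``lies in the regular strip'' is false in the case that matters, and the key move --- ``pulling back $Q_{\th_1,\th_2}$ through $F_\th$ yields an $H$-operator $\tilde Q\in\mathcal H(I\times J_\th)$'' --- is not available: $F_\th$ is a diffeomorphism of $I\times\tilde J_\th$ onto $\tilde M_I$ only, the localizing set of $Q_{\th_1,\th_2}$ contains the critical point $z$ and the sets $J_t\setminus\tilde J_t$, none of which lie in $\tilde M_I$, and $J_\th\ne\tilde J_\th$. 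To restrict $Q_{\th_1,\th_2}$ to the cylinder you would at least have to show that $\tilde M_I$ is invariant under the reflections hidden in $E_{\th_1},E_{\th_2}$ (it is a union of flow lines, but this needs saying), and even then the multiplication part of $Q_{\th_1,\th_2}$ pulls back to a function of $t$ alone, which is not compactly supported in the $\tilde J_\th$-direction, hence is not a combination of simple $H$-operators on $I\times\tilde J_\th$. Your conclusion ``$\Pi_\eta\tilde Q\in\mathcal H$ since $\mathcal H$ is closed under composition'' requires $\tilde Q\in\mathcal H(I\times J_\th)$, which is precisely what has not been established; and your closing remark that ``$\Pi^M$ vanishes near $z$'' concerns the output support of $\Pi^M$ and does not produce $\tilde Q$.

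The paper avoids this by staying on $M$ and putting the cutoff into $\Pi^M$ rather than pulling $Q_{\th_1,\th_2}$ back: since $\supp s\subset[-\delta+\ve,\infty)$, $Q_{\th_1,\th_2}$ is localized on $V=m^{-1}(\th_1-\delta+\ve,\th_2+\delta-\ve)$; choosing $\eta\in C^\infty(M)$ with $\eta\equiv 1$ on $V$ and $\supp\eta\subset m^{-1}(I)$, one checks $\bar\Pi^M:=\eta\,\Pi^M\in\mathcal H(M)$ (compactness in the level-set direction comes from $\supp\vp\subset U\subset\tilde J_\th$ for the simple pieces of $P$, and in the $t$-direction from $\eta$), and then $\Pi^M\circ Q_{\th_1,\th_2}=\bar\Pi^M\circ Q_{\th_1,\th_2}=Q_{\th_1,\th_2}\circ\bar\Pi^M$ is a composition of $H$-operators on $M$; (vi) then follows from Lemma \ref{lco}, since $\bar\Pi^M$ is localized on $O(U)$ and $Q_{\th_1,\th_2}$ on $m^{-1}(\th_1-\delta,\th_2+\delta)$. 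In your sketch of (vi), note also that the ``input-reach'' set $\bigcup_{t\in I}F_{\th,t}(K_P)$ is not contained in $O(U)$; you must first use the localization of $Q_{\th_1,\th_2}$ in $t$ (not merely commutation) to cut the parameter range down to $(\th_1-\delta,\th_2+\delta)$ before invoking Lemma \ref{ilo}. In the purely regular case $\tilde J_\th=J_\th$ your cylinder route can be repaired, but as written it does not prove the lemma in the generality in which it is used later.
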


Lemma \ref{com} also hold for $p=\infty$ with the understanding that
$\Pi^M:C_0(M) \to L^\infty(M)$ in (i) and 
 $\Pi^M \circ Q_{\th_1,\th_2}: C_0(M) \to C_0(M)$ in  (vii).

\begin{proof}
Depending whether $m$ has zero or one critical point in $I$, we apply
Theorem \ref{ril} or \ref{cpl}, resp. The pushforward under the diffeomorphism $F_{\th}^{-1}:M_I \to I \times \tilde J_{\th}$ of the Riemannian measure $\mu_M$ on $M_I$ is the weighted product measure $\psi_\th (\lambda \times \nu_\th)$. The pushforward of the measures induces an isometric isomorphism 
\begin{equation}\label{com2}
T:L^p(M_I,\mu_M) \to L^p(I \times \tilde J_{\th}, \psi_\th (\lambda \times \nu_\th)), \qquad Tf=f\circ F_\th. 
\end{equation}
By \eqref{glift1} the restriction $\Pi^M|_{L^p(M_I)} =T^{-1} \circ \Pi \circ T$, where $\Pi$ is the local lifting operator of $P$.
Thus, $\Pi^M$ acts on on $L^p(M_I)$ as a conjugate of the local lifting operator $\Pi$, and otherwise it is a zero operator on $L^p(M \setminus M_I)$. Lemma \ref{PH}(ii) and $\ell^p$ decomposition
\[
L^p(M)=L^p(M_I) \oplus_p L^p(M \setminus M_I)
\]
shows that $\Pi^M$ is a bounded linear operator with the same norm as $P$. In addition, if $P$ is a projection, then so is $\Pi^M$. This shows (i) and (ii).

Assume momentarily that $[\th-\delta,\th+\delta]$ contains only regular values of $m$. Under this assumption, we shall show that $\Pi^M$ commutes with $E_\th$. Observe that $L^p(I \times \tilde J_\th)$, which is identified with a subspace of functions in $L^p(I \times J_\th)$ vanishing outside $I \times \tilde J_\th$, is an invariant subspace for both $\Pi$ and $E_{\psi,\th}$. Thus, the operators
\[
T^{-1}\circ \Pi \circ T:L^p(M_I) \to L^p(M_I),
\qquad
 T^{-1} \circ E_{\psi,\th} \circ T: L^p(M_I) \to L^p(M_I)
\]
are well-defined and commuting by Lemma \ref{PH}(iii). This shows that  $\Pi^M$ and $E_\th$ commute since $\Pi^ME_\th=E_\th \Pi^M= \mathbf 0$ on $L^p(M\setminus M_I)$. 

Let $\tilde \Pi$ and $\tilde \Pi^M$ be local and global lifting operators as in Lemma \ref{coin}. Repeating the above argument for $\tilde \Pi$ and $E_{\psi_s,s}$ shows that $\tilde \Pi^M$ and $E_s$ also commute. By Lemma \ref{coin} we have $\Pi^M=\tilde \Pi^M$, which shows (iii). (iv) follows automatically from (iii).
Next, we need to show (v).

By the support condition \eqref{supps}, there exists $\ve>0$ such that $\supp s \subset [-\delta+\ve,\infty)$. Hence, by Lemma \ref{qth} the operator $Q_{\th_1,\th_2}$ is localized on $V=m^{-1}(\th_1-\delta+\ve,\th_2+\delta-\ve)$. Let $\eta:M\to [0,1]$ be a smooth function such that $\eta(y)=1$ for $y\in V$ and $\supp \eta \subset m^{-1}(I)$. Define an operator $\bar \Pi^M$ given by $\bar \Pi^M f = \eta \Pi^M f$ for $f\in C_0(M)$. By Lemma \ref{PH} and \eqref{glift1}, we can show that $\bar \Pi^M \in \mathcal H(M)$ due to the presence of the smooth cut-off function $\eta$. In addition, the localization of $Q_{\th_1,\th_2}$ implies that for all $\text{for }f\in C_0(M)$,
\begin{equation}\label{com7}
(\bar \Pi^M \circ Q_{\th_1,\th_2}) f = (\Pi^M \circ Q_{\th_1,\th_2}) f 
= ( Q_{\th_1,\th_2}\circ \Pi^M) f 
= ( Q_{\th_1,\th_2}\circ \bar \Pi^M) f.
\end{equation}
Since the composition of two $H$-operators is again an $H$-operator, we have shown (v).

By \eqref{ril3}, $F_\th$ is a diffeomorphism and $O(U)= F_\th ((\th_1-\delta,\th_2+\delta) \times U)$ is open.
Since the local lifting operator $\Pi$ of $P$ is localized on $I\times U$, one can show that the operator $\bar \Pi^M$ is localized on $O(U)$. Recall that by Lemma \ref{qth} the latitudinal projection operator $Q_{\th_1,\th_2}$ is localized on $m^{-1}(\th_1-\delta,\th_2+\delta)$. Using (v) and \eqref{com7}, Lemma \ref{lco} shows that the composition operator \eqref{com0} is  localized $O(U) \subset m^{-1}(\th_1-\delta,\th_2+\delta)$. This proves (vi).
Finally, (vii) is a consequence of (i) and \eqref{lpq}.
\end{proof}

If there are no critical values between $\th_1$ and $\th_2$, then Lemma \ref{com} yields the following result.
 
\begin{theorem}\label{prop}
Let $m$ be a Morse function as in Theorem \ref{mfact}. Let $\th_1< \th_2$ and $\delta>0$ be such that $m$ takes only regular values in $[\th_1-\delta,\th_2+\delta]$. Let $\th\in [\th_1,\th_2]$. Then the following holds  for any $1\le p<\infty$.

\begin{enumerate}[(i)]
\item
Let $U$ be an open subset of the level submanifold $J_\th$. 
Let $P_U\in \mathcal H(J_\th)$ be an $H$-operator localized on $U$ such that the induced operator 
\begin{equation}\label{prop0}
P_U:L^p(J_\th,\psi(\th,\cdot)\nu_{\th}) \to L^p(J_\th,\psi(\th,\cdot)\nu_{\th})
\end{equation}
is a projection.
Let $\Pi_U^M$ be the corresponding global lifting operator as in Definition \ref{glift}.
Define an operator
\begin{equation}\label{prop1}
P_{O(U)}:=\Pi^M_U\circ Q_{\th_1,\th_2}=Q_{\th_1,\th_2}\circ \Pi^M_U.
\end{equation}
Then, $P_{O(U)} \in \mathcal H(M)$ is localized on \eqref{prop2}.
Moreover, $
P_{O(U)}: L^p(M) \to L^p(M)$ is a projection with the norm bounded by $B_p||P_U||$, where $B_p$ is the same as in \eqref{bp} and $||P_U||$ is the norm of \eqref{prop0}. 

\item
Let $\mathcal U$ be a finite open cover of $J_\th$ and $\{P_{U}\}_{U\in\mathcal U}$ be a smooth decomposition of identity in $L^p(J_\th,\psi(\th,\cdot)\nu_{\th})$ subordinate to $\mathcal U$. Then, $\{P_{O(U)}\}_{U\in\mathcal U}$ is a smooth decomposition of the latitudinal projection $Q_{\th_1,\th_2}: L^p(M) \to L^p(M)$ subordinate to an open cover $ \{O(U)\}_{U\in\mathcal U}$ of $m^{-1}(\th_1-\delta,\th_2+\delta)$.
 That is, $\{P_{O(U)}\}_{U\in\mathcal U}$ satisfies properties (i)--(iv) of Definition \ref{sdi} with (v) and (vi) replaced by
\begin{equation}\label{prop3}
 \sum_{U\in \mathcal U} P_{O(U)}=Q_{\th_1,\th_2},
\end{equation}
and
\begin{equation}\label{uncq}
\frac 1{C} ||Q_{\th_1,\th_2} f||_{p}  \le \bigg( \sum_{U \in\mathcal U} ||P_{O(U)} f||_p^p \bigg)^{1/p} \le C ||Q_{\th_1,\th_2} f||_p \qquad\text{for all }f\in L^p(M),
\end{equation}
where $C>0$ is the decomposition constant of $\{P_{U}\}_{U\in\mathcal U}$ in \eqref{unc0}.
In addition, if $p=2$ and projections $\{P_{U}\}_{U\in\mathcal U}$ acting on $L^2(J_\th,\psi(\th,\cdot)\nu_{\th})$ are orthogonal, then so are projections $\{P_{O(U)}\}_{U\in\mathcal U}$ on $L^2(M)$.
\end{enumerate}
\end{theorem}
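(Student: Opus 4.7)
The plan is to deduce both assertions almost entirely from Lemma \ref{com}, the change-of-measure calculus in Proposition \ref{absn}, and the fact that $\tilde J_\theta = J_\theta$ since $[\theta_1-\delta,\theta_2+\delta]$ contains only regular values. First choose an open interval $I$ with $[\theta_1-\delta,\theta_2+\delta] \subset I$ whose closure still contains no critical values of $m$; such $I$ exists because critical values are isolated (Theorem \ref{mfact}), and clearly $\theta \in I$. Thus the global lifting $\Pi^M_U$ from Definition \ref{glift} is well defined via the diffeomorphism $F_\theta:I \times J_\theta \to M_I$ from Theorem \ref{ril}.

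For part (i), Lemma \ref{com}(vi) gives $P_{O(U)} \in \mathcal H(M)$ localized on the open set $O(U)$, while Lemma \ref{com}(iii)--(v) provides the commutation $\Pi^M_U \circ Q_{\theta_1,\theta_2} = Q_{\theta_1,\theta_2} \circ \Pi^M_U$. Since by Lemma \ref{com}(ii) $\Pi^M_U$ is a projection on $L^p(M)$ and by Lemma \ref{qth} so is $Q_{\theta_1,\theta_2}$, the composition of these two commuting projections is again a projection; the norm bound $\|P_{O(U)}\| \le B_p\|P_U\|$ is exactly Lemma \ref{com}(vii). When $p=2$ and $P_U$ is orthogonal, both factors are orthogonal projections, so their composition is an orthogonal projection as well.

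For part (ii), local finiteness and localization are inherited from part (i) and the finiteness of $\mathcal U$. For mutual orthogonality, using the commutation relation and $Q_{\theta_1,\theta_2}^2 = Q_{\theta_1,\theta_2}$ we compute
\[
P_{O(U)}\circ P_{O(U')} = \Pi^M_U \circ \Pi^M_{U'} \circ Q_{\theta_1,\theta_2},
\]
so it suffices to show $\Pi^M_U \circ \Pi^M_{U'} = 0$ for $U\ne U'$. Transferring through the isometry $T:L^p(M_I) \to L^p(I\times J_\theta,\psi\lambda\times\nu_\theta)$ from \eqref{com2}, this reduces to $(P_U)_t\circ (P_{U'})_t = 0$ fiber by fiber in $t\in I$. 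Since $(P_U)_t$ is the conjugate of $P_U$ by the multiplication operator $M_{(\psi(t,\cdot)/\psi(\theta,\cdot))^{1/p}}$ (see \eqref{lift1}), Proposition \ref{absn}(iii) yields $(P_U)_t\circ (P_{U'})_t = (P_U \circ P_{U'})_t = 0$.

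To establish \eqref{prop3}, the same conjugation argument using Proposition \ref{absn}(vi) gives $\sum_{U\in\mathcal U} (P_U)_t = \mathbf I$ on $L^p(J_\theta,\psi(t,\cdot)\nu_\theta)$ for every $t\in I$, hence $\sum_U \Pi^M_U f = f$ a.e.~on $M_I$ and vanishes on $M\setminus M_I$. Since $Q_{\theta_1,\theta_2}f$ is supported in $m^{-1}[\theta_1-\delta,\theta_2+\delta] \subset M_I$ by \eqref{qth2}, we conclude $\sum_U P_{O(U)} = Q_{\theta_1,\theta_2}$. Finally, for \eqref{uncq} set $g = Q_{\theta_1,\theta_2}f$ and apply Proposition \ref{absn}(iv) slicewise in $t$ to the family $\{(P_U)_t\}_{U\in\mathcal U}$, whose equivalence constants agree with those of $\{P_U\}$ by the isometry part of Proposition \ref{absn}(i); integrating over $t\in I$ via Fubini yields
\[
\frac{1}{C^p}\|g\|_{L^p(M)}^p \le \sum_{U\in\mathcal U}\|\Pi^M_U g\|_{L^p(M)}^p \le C^p\|g\|_{L^p(M)}^p,
\]
which is \eqref{uncq} since $P_{O(U)}f = \Pi^M_U g$. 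The step requiring the most care is this last one, where one must verify that the fiberwise constants from Proposition \ref{absn}(iv) genuinely produce the global constant $C$; this works because the weighted pushforward $T$ is an exact isometry and the equivalence of Proposition \ref{absn}(iv) transfers with identical constants under the conjugation $(P_U)_t = M_{\cdot}\circ P_U \circ M_{\cdot}$ in each fiber.
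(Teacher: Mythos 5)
Your proposal is correct and follows essentially the same route as the paper: part (i) is read off from Lemma \ref{com}, and part (ii) reduces mutual orthogonality, the sum formula \eqref{prop3}, and the norm equivalence \eqref{uncq} to the corresponding properties of $\{P_U\}$ on $J_\th$ via the fiberwise conjugation in Definition \ref{lift}, Proposition \ref{absn}, and the isometry \eqref{com2}. The only differences (a slightly enlarged interval $I$ and phrasing the transfer slicewise rather than on $C_0(I\times J_\th)$ followed by density) are cosmetic.
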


\begin{proof}
Let $I=(\th_1-\delta,\th_2+\delta)$. Since $m$ takes no critical values in $\bar I$, we let $\tilde J_\th=J_\th$. Then the conclusion (i) follows by applying Lemma \ref{com}. This implies that $\{P_{O(U)}\}_{U\in\mathcal U}$ satisfies properties (i)--(iii) of Definition \ref{sdi}. To prove that
\begin{equation}\label{prop5}
P_{O(U)} \circ P_{O(U')} =0 \qquad\text{for } U \neq U'
\end{equation}
it suffices to show by Lemma \ref{com}(iii) and \eqref{prop1} that for global lifting operators
\begin{equation}\label{prop6}
\Pi^M_U \circ \Pi^M_{U'} = 0  \qquad\text{for } U \neq U'.
\end{equation}
The above is a consequence of the same property for local lifting operators $\Pi_U \circ \Pi_{U'}=0$ and Definition \ref{glift}. This, in turn, follows from $P_U \circ P_{U'}=0$ by  Proposition \ref{absn}(iii) and \eqref{lift3}.

Since
\[
\sum_{U\in\mathcal U} P_U f= f \qquad\text{for }f\in C(J_\th),
\]
then Definition \ref{lift} implies that
\[
\sum_{U\in\mathcal U} \Pi_U h= h \qquad\text{for }h\in C_0(I\times J_\th).
\]
Consequently, by \eqref{glift1} we have
\[
\sum_{U\in\mathcal U} \Pi^M_U f= f \qquad\text{for }f\in C_0(M_I),
\]
which implies \eqref{prop3}.

Finally, to prove \eqref{uncq}, observe that  Proposition \ref{absn}(iv) and \eqref{lift3} implies an analogue of \eqref{unc0} for local lifting operators
\[
\frac 1{C} ||h ||_{p}  \le \bigg( \sum_{U \in\mathcal U} ||\Pi_U h||_p^p \bigg)^{1/p} \le C ||h||_p \qquad\text{for all }h \in C_0(I \times J_\th).
\]
Using the isometric isomorphism in \eqref{com2} yields
\[
\frac 1{C} ||f||_{p}  \le \bigg( \sum_{U \in\mathcal U} ||\Pi^M_U f||_p^p \bigg)^{1/p} \le C || f||_p \qquad\text{for all }f \in C_0(M_I).
\]
By the density argument and \eqref{prop1}, this proves \eqref{uncq}.
\end{proof}

We also need a variant of Theorem \ref{prop} that deals with critical points. In the case the closed interval $\bar I=[a,b]$ from Definition \ref{glift} contains one critical value of $m$, by Theorem \ref{cpl}, a function $\psi$ is defined initially on $I \times \tilde J_\th$ . Since $\th \in I$ is not a critical value of $m$, by Theorem \ref{ril} we can extend $\psi$ to $(\th-\ve,\th+\ve) \times J_\th$ for some $\ve>0$ such that $[\th-\ve,\th+\ve]$ contains only regular values of $m$, see Remark \ref{cplr}. Hence, it is meaningful to talk about the space $L^p(J_\th,\psi(\th,\cdot)\nu_{\th})$ below.

\begin{theorem}\label{crit} 
Let $m$ be a Morse function as in Theorem \ref{mfact}. Let $t_z \in (0,\sup m)$ be a critical value of $m$, which corresponds to a single critical point $z\in M$, where $\sup m =1$ or $\sup m= \infty$ if $M$ is compact or non-compact, resp. Let $U_z$ be an open neighborhood of $z\in M$. Let  $\delta_z>0$ and $V_z \subset U_z$ be an open neighborhood from Theorem \ref{cpl}. In particular, $t_z$ is the only critical value of $m$ in the interval $[t_z-\delta_z,t_z+\delta_z]$. Let $\th, \th_1, \th_2\in (0,\sup m)$ and $\delta>0$ be such that:
\begin{itemize}
\item $t_z-\delta_z<\th_1<t_z<\th_2<t_z+\delta_z$, 
\item $\th\in [\th_1,\th_2] \setminus\{t_z\}$ and $\delta>0$ satisfies
\begin{equation}\label{crit1}
\delta<\min(|t_z-\th_1|,|t_z-\delta_z -\th_1|, |t_z-\th_2|, |t_z+\delta_z-\th_2|).
\end{equation}
\end{itemize}
Then the following holds for any $1\le p<\infty$.

\begin{enumerate}[(i)]
\item
Let $U$ be an open subset of $\tilde J_\th$. 
Let $P_U\in \mathcal H(J_\th)$ be an $H$-operator localized on $U$ such that the induced operator 
\eqref{prop0} is a projection.
Define an operator $P_{O(U)}$ by \eqref{prop1} and an open set $O(U)$ by \eqref{prop2}.
Then, $P_{O(U)} \in \mathcal H(M)$ is localized on \eqref{prop2}.
Moreover, $
P_{O(U)}: L^p(M) \to L^p(M)$ is a projection with the norm bounded by $B_p||P_U||$, where $B_p$ is the same as in \eqref{bp} and $||P_U||$ is the norm of \eqref{prop0}. 

\item
Let $\mathcal U$ be a finite open cover of $J_\th$ such that $U_0:=J_\th \cap V_z \in \mathcal U$ and for all $U_0 \ne U \in \mathcal U$ we have $U \subset \tilde J_\th$. Let $\{P_{U}\}_{U\in\mathcal U}$ be a smooth decomposition of identity in $L^p(J_\th,\psi(\th,\cdot)\nu_{\th})$ subordinate to $\mathcal U$.  Define an operator
\begin{equation}\label{crit2}
P_{O(U_0)}=Q_{\th_1,\th_2}-\sum_{U_0\neq U \in\mathcal U} P_{O(U)}.
\end{equation}
Then, $P_{O(U_0)} \in \mathcal H(M)$ is localized on an open set
\begin{equation}\label{crit3}
O(U_0)=O(U_0,\th_1,\th_2):=  \bigcup_{s\in (\th_1-\delta,\th_2+\delta)} J_s \setminus F_{\th,s}(J_\th \setminus U_0) \subset U_z.
\end{equation}
The operator 
\begin{equation}\label{crit4}
P_{O(U_0)}: L^p(M) \to L^p(M)
\end{equation}
is a projection with the norm bounded by $B_p(C^2+1)$, where $B_p$ is the same as in \eqref{bp} and  $C>0$ is the decomposition constant of $\{P_{U}\}_{U\in\mathcal U}$ in \eqref{unc0}.

\item
Moreover, $\{P_{O(U)}\}_{U\in\mathcal U}$ is a smooth decomposition of the latitudinal projection $Q_{\th_1,\th_2}: L^p(M) \to L^p(M)$ subordinate to an open cover $ \{O(U)\}_{U\in\mathcal U}$ of $m^{-1}(\th_1-\delta,\th_2+\delta)$.
 That is, $\{P_{O(U)}\}_{U\in\mathcal U}$ satisfies properties (i)--(iv) of Definition \ref{sdi} with (v) and (vi) replaced by \eqref{prop3}
and
\begin{equation}\label{uncqq}
\frac 1{2C} ||Q_{\th_1,\th_2} f||_{p}  \le \bigg( \sum_{U \in\mathcal U} ||P_{O(U)} f||_p^p \bigg)^{1/p} \le B_p (C^{3p}+(C^{2}+1)^p)^{1/p} ||Q_{\th_1,\th_2} f||_p
\end{equation}
for all $f\in L^p(M)$,
where $C>0$ is the decomposition constant of $\{P_{U}\}_{U\in\mathcal U}$ in \eqref{unc0}. In addition, if $p=2$ and projections $\{P_{U}\}_{U\in\mathcal U}$ acting on $L^2(J_\th,\psi(\th,\cdot)\nu_{\th})$ are orthogonal, then so are projections $\{P_{O(U)}\}_{U\in\mathcal U}$ on $L^2(M)$.
\end{enumerate}
\end{theorem}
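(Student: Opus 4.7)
Part (i) follows directly from Theorem \ref{prop}(i) applied to each $U \ne U_0$: since such $U$ is contained in $\tilde J_\th$, the global lifting $\Pi^M_U$ is well-defined via Definition \ref{glift}, and Lemma \ref{com} produces an $H$-operator $P_{O(U)} = Q_{\th_1,\th_2} \circ \Pi^M_U$ that is localized on $O(U)$, is a projection on $L^p(M)$, and has operator norm at most $B_p\|P_U\|$. The presence of a critical value inside $[\th_1,\th_2]$ is invisible to these particular operators because $\Pi^M_U$ only uses the regular part of the parametrization $F_\th$ over $\tilde J_\th$.

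The technical heart is part (ii), and the main obstacle is establishing the localization of $P_{O(U_0)}$ on $O(U_0)$. Membership $P_{O(U_0)}\in \mathcal H(M)$ is immediate from \eqref{crit2} as a finite combination of $H$-operators. The plan is to write $P_{O(U_0)} = Q_{\th_1,\th_2}\circ R$ where $R := \mathbf I - \sum_{U\ne U_0} \Pi^M_U$, and exploit the fibrewise identity $\sum_{U \in \mathcal U} P_U = \mathbf I$ on $L^p(J_\th)$ combined with \eqref{lift3} to derive, for $y = F_\th(t,x)$ with $x\in \tilde J_\th$ and $g := f\circ F_\th$, the explicit expression
\begin{equation*}
Rf(y) = P_{U_0}\bigl(g(t,\cdot)(\psi(t,\cdot)/\psi(\th,\cdot))^{1/p}\bigr)(x)\,(\psi(\th,x)/\psi(t,x))^{1/p},
\end{equation*}
together with $Rf(y)=f(y)$ for $y \in J_t\setminus\tilde J_t$ (where each $\Pi^M_U f$ vanishes by Definition \ref{glift}). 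Because $P_{U_0}$ is localized on $U_0 = J_\th\cap V_z$, properties \eqref{cpl1}--\eqref{cpl2} of Theorem \ref{cpl} imply that the compact set
\begin{equation*}
K := \bigcup_{t\in [\th_1-\delta,\th_2+\delta]} \bigl[F_{\th,t}(K(P_{U_0})\cap \tilde J_\th) \cup (J_t\setminus \tilde J_t)\bigr]
\end{equation*}
sits inside $O(U_0)\subset U_z$. Moreover, if $\supp f \cap K = \emptyset$, then $g(t,\cdot)(\psi(t,\cdot)/\psi(\th,\cdot))^{1/p}$ vanishes near $K(P_{U_0})$ for every $t$, so $P_{U_0}$ annihilates it and hence $P_{O(U_0)} f = 0$; Lemma \ref{ilo} then delivers localization on $O(U_0)$. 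The projection property follows by expanding the square of \eqref{crit2} and invoking $P_{O(U)}\circ P_{O(U')} = 0$ for distinct $U, U'\ne U_0$ (inherited from $P_U\circ P_{U'} = 0$ via Proposition \ref{absn}(iii)), the commutation $Q_{\th_1,\th_2}\circ P_{O(U)} = P_{O(U)}$, and $Q_{\th_1,\th_2}^2 = Q_{\th_1,\th_2}$; self-adjointness in the $L^2$ case is then automatic, giving orthogonality. The norm bound comes from $\|P_{O(U_0)}\|_p \le B_p\|R\|_p$ together with a bound on $\|R\|_p$ in terms of the decomposition constant $C$ via Proposition \ref{absn}(iv).

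Part (iii) assembles the pieces: properties (i)--(iv) of Definition \ref{sdi} follow directly from (i) and (ii), and the identity \eqref{prop3} is tautological from \eqref{crit2}. For the two-sided estimate \eqref{uncqq}, I would pull back to $I\times\tilde J_\th$ via the isometry induced by $F_\th$, invoke the decomposition constant $C$ of $\{P_U\}_{U\in\mathcal U}$ fibrewise in $t$ (via Proposition \ref{absn}(iv)) to control $\bigl(\sum_{U\ne U_0} \|P_{O(U)} f\|_p^p\bigr)^{1/p} \le B_p C \|Q_{\th_1,\th_2} f\|_p$ using the commutation $P_{O(U)} = \Pi^M_U\circ Q_{\th_1,\th_2}$, then add the separate upper bound on $\|P_{O(U_0)} f\|_p$ from (ii); the lower bound follows from $\sum_U P_{O(U)} = Q_{\th_1,\th_2}$ together with an estimate such as $(a^p+b^p)^{1/p} \le 2^{1-1/p}(a+b)$. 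The dominant challenge remains the localization argument in (ii), where the geometric conclusions \eqref{cpl1}--\eqref{cpl2} of Theorem \ref{cpl} are essential to confine the defect $Rf$ to a neighborhood of the critical point.
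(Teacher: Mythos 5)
The heart of the theorem is the localization of $P_{O(U_0)}$ on $O(U_0)$, and this is where your argument has a genuine gap. First, your ``explicit expression'' $Rf(F_\th(t,x))=P_{U_0}\bigl(g(t,\cdot)(\psi(t,\cdot)/\psi(\th,\cdot))^{1/p}\bigr)(x)\,(\psi(\th,x)/\psi(t,x))^{1/p}$ is not well defined: both $g(t,\cdot)=f(F_\th(t,\cdot))$ and $\psi(t,\cdot)$ exist only on $\tilde J_\th$, while $P_{U_0}$ is localized on $U_0=J_\th\cap V_z$, which meets $J_\th\setminus\tilde J_\th$; this is exactly the obstruction that prevents lifting $P_{U_0}$ directly (it can be repaired by extending $g(t,\cdot)$ by zero and the weight arbitrarily, and checking independence of the extension, but you do neither). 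Second, Lemma \ref{ilo} requires one compact set $K\subset O(U_0)$ satisfying \emph{both} \eqref{ilo1} and \eqref{ilo2}; you only argue \eqref{ilo2}, and the set $K$ you propose is not even contained in $O(U_0)$, since you let $t$ range over the closed interval $[\th_1-\delta,\th_2+\delta]$ while \eqref{crit3} fibers over the open interval (one must first shrink to $[\th_1-\delta+\ve,\th_2+\delta-\ve]$, using \eqref{supps} and the localization of $Q_{\th_1,\th_2}$). Third, even for \eqref{ilo2}, the claim that $\supp f\cap K=\emptyset$ forces the lifted input to vanish \emph{on a neighborhood of} $K_2(P_{U_0})$ requires a uniform statement about $F_{\th,t}$ near $\partial\tilde J_\th$: points of $\tilde J_\th$ close to $J_\th\setminus\tilde J_\th$ must be carried by $F_{\th,t}$, uniformly in $t\in[\th_1-\delta,\th_2+\delta]$, into a prescribed neighborhood of $\bigcup_t (J_t\setminus\tilde J_t)$. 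This is true, but it is a compactness/ODE-continuity argument that \eqref{cpl1}--\eqref{cpl2} do not by themselves provide, and it is precisely the ``least trivial aspect'' the proof must supply; asserting it is the gap.

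For comparison, the paper's proof never expresses the defect through $P_{U_0}$ at all. It chooses $U_1$ with $J_\th\setminus U_0\subset U_1\subset\overline{U_1}\subset U_2=\bigcup_{U_0\ne U\in\mathcal U}U$ and a cut-off $\vp_\th$ equal to $1$ on $\overline{U_1}$ with $\supp\vp_\th\subset U_2$, and writes $P_{O(U_0)}=T_0+(Q_{\th_1,\th_2}-T_1)$: here $T_0$ is the lifted-and-composed version of $T=T_\th-\sum_{U_0\ne U\in\mathcal U}P_U$ from \eqref{crit12}, which by Lemma \ref{il} is localized on $U_0\cap U_2\subset\tilde J_\th$ and hence liftable with localization inside $O(U_0)$ by Lemma \ref{com}(vi), while $Q_{\th_1,\th_2}-T_1=Q_{\th_1,\th_2}\circ(T_\eta-\bar\Pi^M)$ is a composition with a multiplication operator whose symbol $\eta-\vp^M$ is shown to be supported in $O(U_0)$, so Lemma \ref{lco} applies; this sidesteps both the ill-defined formula and the boundary analysis. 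Your parts (i) and (iii), and the algebraic projection and norm computations in (ii), are essentially in line with the paper (though (i) should be justified by Lemma \ref{com} over the interval $(t_z-\delta_z,t_z+\delta_z)$ rather than by Theorem \ref{prop}, whose hypotheses exclude critical values); but as written the localization of $P_{O(U_0)}$, the core of the theorem, is not proved.
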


\begin{proof} 
We shall proceed as in the proof of Theorem \ref{prop} albeit for the interval $I=(t_z-\delta_z,t_z+\delta_z)$. Then the conclusion (i) follows by applying Lemma \ref{com}. This implies that $\{P_{O(U)}\}_{U\in\mathcal U \setminus \{U_0\}}$ satisfies properties (i)--(iii) of Definition \ref{sdi}. 

Next we claim that
\begin{equation}\label{prop10}
P_{O(U)} \circ P_{O(U')} =0 \qquad\text{for } U \neq U' \in\mathcal U \setminus \{U_0\}.
\end{equation}
Repeating the argument in the proof of Theorem \ref{prop}, the property $P_U \circ P_{U'}=0$ implies the same for local lifting operators $\Pi_U \circ \Pi_{U'}=0$, and consequently for global lifting operators $\Pi^M_U \circ \Pi^M_{U'} = 0$. Using \eqref{prop10} and the property
\[
P_{O(U)} \circ Q_{\th_1,\th_2} = Q_{\th_1,\th_2} \circ P_{O(U)} = P_{O(U)} \qquad\text{for } U  \in\mathcal U \setminus \{U_0\},
\]
we find that 
\[
\begin{aligned}
(P_{O(U_0)})^2 & = P_{O(U_0)},
\\
 P_{O(U)} \circ P_{O(U_0)} & =P_{O(U_0)} \circ P_{O(U)}=0 \qquad\text{for } U  \in\mathcal U \setminus \{U_0\}.
 \end{aligned}
\]
This shows that the operator \eqref{crit4} is a projection. To estimate its norm, observe that $\sum_{U \ne U_0} P_U$ is an $H$-operator localized on $\bigcup_{U \ne U_0} U \subset \tilde J_\th$. Applying \eqref{unc0} for $\sum_{U \ne U_0} P_U f$, where $f\in L^p(J_\th, \psi(\th, \cdot)\nu_\th)$, yields
\begin{equation}\label{crit5}
\frac1C \bigg\| \sum_{U \ne U_0} P_U f \bigg\|_p  \le \bigg(\sum_{U \ne U_0} ||P_U f||^p_p \bigg)^{1/p} \le 
C \bigg\| \sum_{U \ne U_0} P_U f \bigg\|_p.
\end{equation}
Thus,
\begin{equation}\label{crit6}
\frac1C \bigg\| \sum_{U \ne U_0} P_U f \bigg\|_p  \le \bigg(\sum_{U \in\mathcal U} ||P_U f||^p_p \bigg)^{1/p} \le 
C ||f||_p.
\end{equation}
Applying conclusion (i) to the operator $\sum_{U \ne U_0} P_U$, we deduce that $\sum_{U \ne U_0} P_{O(U)}$ is a projection on $L^p(M)$ with the norm bounded by
\begin{equation}\label{crit7a}
\bigg\| \sum_{U \ne U_0} P_{O(U)} \bigg\| \le B_p C^2.
\end{equation}
 Hence, we estimate the operator norm of \eqref{crit4} by
\begin{equation}\label{crit7b}
||P_{O(U_0)}|| \le ||Q_{\th_1,\th_2} || + \bigg\| \sum_{U \ne U_0} P_{O(U)}\bigg\| \le B_p(1+C^2).
\end{equation}

By Theorem \ref{cpl}, $F_\th: I \times \tilde J_\th \to M_I$ is a diffeomorphism. Hence, the set 
\[
W=F_{\th}([\th_1-\delta,\th_2+\delta] \times (\tilde J_\th \setminus U_0))
\]
is closed in $M$. Consequently,
\[
O(U_0) = m^{-1}((\th_1-\delta,\th_2+\delta)) \setminus W 
\]
is open.
To complete the proof of conclusion (ii), we need to show that $P_{O(U_0)} \in \mathcal H(M)$ is localized on $O(U_0)$. This is a highly non-trivial statement, since $P_{O(U_0)}$ is a combination of several $H$-operators, which, in general, are {\bf not} localized on $O(U_0)$. Hence, we need to take an advantage of cancelations occurring in formula \eqref{crit2}. 

To achieve this goal, choose an open set $U_1 \subset J_\th$ such that
\begin{equation}\label{crit10}
J_\th \setminus U_0 \subset U_1 \subset \overline{U_1} \subset U_2:=\bigcup_{U_0 \ne U \in\mathcal U} U.
\end{equation}
Take $\vp_\th \in C^\infty(J_\th)$ such that
\begin{equation}\label{crit11}
\supp \vp_\th \subset U_2
\qquad\text{and}\qquad \vp_\th(x)=1 \quad\text{for all }x\in \overline{U_1}.
\end{equation}
Clearly, the corresponding multiplication operator $T_\th \in \mathcal H(J_\th)$, which is given by $T_\th f=\vp_\th f$, is localized on $U_2$. So is the operator 
\begin{equation}\label{crit12}
T:=T_\th - \sum_{U_0\neq U \in\mathcal U} P_{U}=P_{U_0}-(\mathbf I-T_\th).
\end{equation}
Here, $\mathbf I$ is the identity operator on $C_0(J_\th)$. Observe that $\mathbf I - T_\th$ acts as a multiplication operator by $1-\vp_\th$ and is localized on $U_0$. Indeed,
\[
\vp_\th(x) \ne 1 \implies x\in J_\th \setminus \overline{U_1} \subset J_\th \setminus U_1 \subset U_0.
\]
Thus, the operator $T$ defined by \eqref{crit12} is localized both on $U_2$ and $U_0$. By Lemma \ref{il}, $T$ is localized on the intersection \
\begin{equation}\label{crit14}
U_0 \cap U_2=\bigcup_{U_0 \ne U \in\mathcal U} (U_0 \cap U).
\end{equation}

Let $T_0$ be the composition of the global lifting operator of $T$ with $Q_{\th_1,\th_2}$. By Lemma \ref{com}(vi), $T_0$ is localized on 
\[
\begin{aligned}
\bigcup_{s\in (\th_1-\delta,\th_2+\delta)} F_{\th,s}(U_0 \cap U_2) & = \bigcup_{s\in (\th_1-\delta,\th_2+\delta)} \tilde J_s \setminus F_{\th,s}(\tilde J_\th \setminus (U_0\cap U_2)) \\
&
\subset \bigcup_{s\in (\th_1-\delta,\th_2+\delta)} \tilde J_s \setminus F_{\th,s}(J_\th \setminus U_0) \subset O(U_0).
\end{aligned}
\]

Let $\Pi^M$ be a global lifting operator of $T_\th$ and define
\begin{equation}\label{crit13}
T_1=\Pi^M \circ Q_{\th_1,\th_2}=Q_{\th_1,\th_2} \circ \Pi^M.
\end{equation}
Now, \eqref{crit2} implies that 
\[
P_{O(U_0)}=T_0+ Q_{\th_1,\th_2} - T_1.
\]
 Hence, the proof will be complete if we show that $Q_{\th_1,\th_2} - T_1$ is also localized on $O(U_0)$.

Let $\eta \in C^\infty(M)$ be a smooth cut-off function as in the proof of Lemma \ref{com}(v). Let $T_\eta \in \mathcal H(M)$ be the corresponding multiplication operator by $\eta$. Let $\bar \Pi^M \in \mathcal H(M)$ be given by $\bar \Pi^M f = \eta \Pi^M f$ for $f\in C_0(M)$.
Then, by \eqref{com7} and \eqref{crit13} we have
\[
T_1 =\bar \Pi^M \circ Q_{\th_1,\th_2} =Q_{\th_1,\th_2} \circ \bar \Pi^M.
\]
Combing this with $T_\eta \circ Q_{\th_1,\th_2}= Q_{\th_1,\th_2} \circ T_\eta = Q_{\th_1,\th_2}$ yields
\begin{equation}\label{crit15}
Q_{\th_1,\th_2} - T_1 = Q_{\th_1,\th_2} \circ (T_\eta - \bar \Pi^M)=(T_\eta - \bar \Pi^M) \circ Q_{\th_1,\th_2}.
\end{equation}
By Definition \ref{glift}, $\bar \Pi^M$ is a multiplication operator by a function
\begin{equation}\label{crit16}
\vp^M(y) = 
\begin{cases} \eta(y) \vp_\th(x) & y=F_\th(t,x), \ (t,x)\in I\times \tilde J_\th,\\
0 & y\in M \setminus M_I, \text{where }M_I=\bigcup_{t\in I} \tilde J_t.
\end{cases}
\end{equation}
Thus, $T_\eta - \bar \Pi^M\in \mathcal H(M)$ is a multiplication operator by a function $\eta-\vp^M$,
which satisfies
\begin{equation}\label{crit17}
\eta(y)-\vp^M(y) = \eta(y)(1-\vp_\th(x)) \qquad\text{for }y=F_\th(t,x), \ (t,x)\in (\th_1-\delta,\th_2+\delta) \times \tilde J_\th.
\end{equation}
Recall also that $Q_{\th_1,\th_2}$ is localized on $V=m^{-1}(\th_1-\delta+\ve,\th_2+\delta-\ve)$, where $\ve>0$ is sufficiently small. 

Choose $0<\ve'<\ve$ and define a closed set
\begin{equation}\label{crit18}
\supp \eta \subset W=m^{-1}[\th_1-\delta+\ve',\th_2+\delta-\ve'].
\end{equation}
By \eqref{crit11} and \eqref{crit17},
\[
\{y\in W: \vp^M(y)=\eta(y)\} \supset \bigcup_{s \in [\th_1-\delta+\ve',\th_2+\delta-\ve']} F_{\th,s}(U_1).
\]
Hence, by \eqref{crit10}
\begin{equation}\label{crit19}
\begin{aligned}
\{y\in W: \vp^M(y)\ne \eta(y) \} & \subset
\bigcup_{s \in [\th_1-\delta+\ve',\th_2+\delta-\ve']} J_s \setminus F_{\th,s}(U_1)
\\
&\subset
\bigcup_{s \in [\th_1-\delta+\ve',\th_2+\delta-\ve']} J_s \setminus F_{\th,s}(J_\th \setminus U_0) \subset O(U_0).
\end{aligned}
\end{equation}
Since $\bigcup_{s \in [\th_1-\delta+\ve',\th_2+\delta-\ve']} J_s \setminus F_{\th,s}(U_1)$ is closed, \eqref{crit17}, \eqref{crit18}, and \eqref{crit19} yield
\begin{equation}\label{crit20}
\supp(\eta - \vp^M) \subset O(U_0).
\end{equation}
Using \eqref{crit15}, Lemma \ref{lco} implies that the operator $Q_{\th_1,\th_2} - T_1$ is localized on the intersection of $V$ with an open neighborhood of $\supp(\eta-\vp^M)$. 
Hence, \eqref{crit20} yields the required localization of $P_{O(U_0)}$ on $O(U_0)$.
Finally, the inclusion $O(U_0) \subset U_z$ is a consequence of \eqref{cpl2}.

It remains to prove the conclusion (iii). We have already shown that $\{P_{O(U)}\}_{U\in\mathcal U}$ satisfies properties (i)--(iv) of Definition \ref{sdi}. The property \eqref{prop3} is immediate from the definition \eqref{crit2}. In order to show \eqref{uncqq}, note that Proposition \ref{absn}(iv) and \eqref{lift3} implies an analogue of  \eqref{crit5} for local lifting operators
\[
\frac 1{C} \bigg\| \sum_{U \ne U_0 } \Pi_U h  \bigg\|_p \le \bigg( \sum_{U \ne U_0 } ||\Pi_U h||_p^p \bigg)^{1/p} \le C \bigg\| \sum_{U \ne U_0 } \Pi_U h  \bigg\|_p  \qquad\text{for all }h \in C_0(I \times J_\th).
\]
Using isometric isomorphism in \eqref{com2} yields
\[
\frac 1{C} \bigg\| \sum_{U \ne U_0 } \Pi^M_U f \bigg\|_{p}  \le \bigg( \sum_{U \ne U_0 } ||\Pi^M_U f||_p^p \bigg)^{1/p} \le C  \bigg\| \sum_{U \ne U_0 } \Pi^M_U f \bigg\|_{p} \qquad\text{for all }f \in C_0(M_I).
\]
By the density argument and \eqref{prop1} we have
\begin{equation}\label{crit8}
\frac 1{C} \bigg\| \sum_{U \ne U_0 } P_{O(U)} f \bigg\|_{p}  \le \bigg( \sum_{U \ne U_0 } ||P_{O(U)} f||_p^p \bigg)^{1/p} \le C  \bigg\| \sum_{U \ne U_0 } P_{O(U)} f \bigg\|_{p} \qquad\text{for all }f \in L^p(M).
\end{equation}
Hence,
\[
\begin{aligned}
||Q_{\th_1,\th_2} f||_p \le ||P_{O(U_0)}f||_p + \bigg\| \sum_{U \ne U_0 } P_{O(U)} f \bigg\|_{p}  
& \le ||P_{O(U_0)}f||_p+ C \bigg( \sum_{U \ne U_0 } ||P_{O(U)} f||_p^p \bigg)^{1/p}
\\
&\le 2C \bigg( \sum_{U \in \mathcal U } ||P_{O(U)} f||_p^p \bigg)^{1/p}.
\end{aligned}
\]
Applying first \eqref{crit8} and then \eqref{crit7a}, \eqref{crit7b}, and $P_{O(U)} = P_{O(U)}Q_{\th_1,\th_2}$ yields
\[
\sum_{U \in \mathcal U } ||P_{O(U)} f||_p^p \le  ||P_{O(U_0)}f||_p^p  + C^p  \bigg\| \sum_{U \ne U_0 } P_{O(U)} f \bigg\|_{p}^p \le ( B_p)^p ((1+C^{2})^p+ C^{3p})||Q_{\th_1,\th_2}f ||_p^p.
\]
Combining the last two estimates yields \eqref{uncqq}. 

Finally, consider the case when $p=2$ and $\{ P_U \}_{U \in {\mathcal U}}$ is a smooth orthogonal decomposition of identity in $L^2(J_\th, \psi(\th, \cdot) \nu_\th)$. By part (i), projections  $P_{O(U)}: L^2(M) \to L^2(M)$, $U \neq U_0$, have norm $1$, so they are orthogonal. Hence, by \eqref{crit2} the projection $P_{O(U_0)}: L^2(M) \to L^2(M)$ is orthogonal as well. Therefore, $\{P_{O(U)}\}_{U\in\mathcal U}$ is a smooth orthogonal decomposition of the latitudinal projection $Q_{\th_1,\th_2}: L^2(M) \to L^2(M)$.
\end{proof}

\begin{remark} Theorems \ref{prop} and \ref{crit} also hold for $p=\infty$ when $L^p(J_\th)$ is replaced by $C(J_\th)$ and $L^p(M)$ is replaced by $C_0(M)$. We leave the details to the reader.
\end{remark}

\section{Smooth decomposition of identity in $L^p(M)$}\label{S6}

The goal of this section is to prove Theorem \ref{Main}. This is a consequence of the following two more general theorems about existence of smooth decompositions of identity in $L^p(M)$ spaces for $1\le p<\infty$ or $C_0(M)$ in the case $p=\infty$. Theorem \ref{Main1} shows the existence of smooth decompositions for some particular open and precompact cover of $M$. Then, Theorem \ref{Main2} generalizes this result to arbitrary open and precompact covers of $M$.

\begin{theorem}\label{Main1}
Let $M$ be a smooth connected Riemannian manifold (without boundary) of dimension $d$ and suppose $1\le p \le \infty$. Let $\{K_i\}_{i=1}^\infty$ be a nested sequence of compact subsets of $M$ such that
\begin{equation}\label{sk}
K_1=\emptyset,\qquad K_i\subset \interior(K_{i+1})\quad\text{ for all }\in\N, \qquad\text{and} \qquad M= \bigcup_{i=1}^\infty K_i.
\end{equation}

Then, for  any sequence $\{\varepsilon_i\}_{i=1}^\infty$  of positive real numbers, there is at most countable, open and precompact cover $\mathcal U$ of $M$ such that:
\begin{enumerate}[(i)]
\item for all $U\in \mathcal U$ such that $U \cap K_i= \emptyset$, we have $\diam( U)<\varepsilon_i$, 
\item the cover $\mathcal U$ is locally uniformly finite, i.e., there exists a constant $N=N(d)$ depending only on dimension $d$ such that
\begin{equation}\label{luf}
\forall x\in M \ \exists \text{open } V \ni x \qquad
\# |\{ U \in \mathcal U: U \cap V \neq \emptyset \} |\le N,
\end{equation}
\end{enumerate}
Moreover, for any $1\le p \le \infty$, there exists $\{P_U\}_{U\in \mathcal U}$ a smooth decomposition of identity in $L^p(M)$ if $p<\infty$, or $C_0(M)$ if $p=\infty$, which is subordinate to $\mathcal U$. That is,
\begin{enumerate}[(i)]
\setcounter{enumi}{2}
\item each $P_U \in \mathcal H(M)$ is localized on an open set $U \in\mathcal U$,
\item each $P_U:L^p(M) \to L^p(M)$ if  $1\le p<\infty$ or $P_U: C_0(M) \to C_0(M)$ if $p=\infty$, is a projection ,
\item the projections $\{P_U\}_{U\in \mathcal U}$ satisfy
\begin{equation}\label{compo}
P_U \circ P_{U'} =0, \ U\neq U'\in\mathcal U,
\end{equation}
\item in the case $p<\infty$, there exists a constant $C=C(d)$ depending only on dimension $d$ such that
\begin{equation}\label{unc}
\frac 1{C} ||f||_{p} \le \bigg( \sum_{U \in\mathcal U} ||P_U f||_p^p \bigg)^{1/p} \le C ||f||_p \qquad\text{for all }f\in L^p(M),
\end{equation} 
in the case $p=\infty$ we have for all $f\in C_0(M)$,
\[
(||P_U f||_\infty)_{U\in\mathcal U} \in c_0(\mathcal U) \qquad\text{and}\qquad
\frac 1{C} ||f||_{\infty}  \le  \sup_{U \in\mathcal U} ||P_U f||_\infty \le C ||f||_\infty ,
\]
\item the projections $\{P_U\}_{U\in \mathcal U}$ satisfy
\begin{equation}\label{sumo}
\sum_{U \in \mathcal U} P_U = \mathbf I,
\end{equation}
where the convergence in \eqref{sumo} is unconditional in strong operator topology on $L^p(M)$ if $1\le p<\infty$, or on $C_0(M)$ if $p=\infty$. 
\end{enumerate}
\end{theorem}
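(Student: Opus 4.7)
My plan is to prove Theorem \ref{Main1} by induction on the dimension $d$ of $M$. The base case $d=1$ is immediate: a connected $1$-manifold without boundary is diffeomorphic to $\R$ or $\Sp^1$, and Lemma \ref{sdir} or Lemma \ref{sdis} produces the smooth decomposition for any admissible partition $\{\theta_j\}$. Choosing the partition sufficiently fine on each bounded region gives (i), while \eqref{qj0} yields at most two-fold overlaps, producing $N(1)=2$ and a universal $C(1)$.

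For the inductive step, I fix a Morse function $m:M\to[0,\infty)$ from Theorem \ref{mfact} and select a partition $0=\theta_0<\theta_1<\theta_2<\ldots$ of regular values, together with $\delta_i>0$, such that the closed intervals $[\theta_i-\delta_i,\theta_i+\delta_i]$ are pairwise disjoint and consist of regular values only, and each interval $(\theta_i-\delta_i,\theta_{i+1}+\delta_{i+1})$ contains at most one critical value of $m$. Since $m^{-1}([0,b])$ is compact for every $b>0$ and the $K_j$'s exhaust $M$, I would further refine the partition so that each strip $S_i:=m^{-1}((\theta_i-\delta_i,\theta_{i+1}+\delta_{i+1}))$ satisfies $\diam(S_i)<\varepsilon_j$ whenever $S_i\cap K_j=\emptyset$. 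Corollary \ref{ncq} (or Corollary \ref{cq} in the compact case) then provides latitudinal projections $Q_i:=Q_{\theta_i,\theta_{i+1}}$ that form a smooth decomposition of identity in $L^p(M)$ with a universal decomposition constant.

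Within each strip $S_i$, pick an auxiliary regular value $\tau_i\in(\theta_i,\theta_{i+1})$, distinct from the critical value if one is present. The level submanifold $J_{\tau_i}$ is compact of dimension $d-1$, so by the inductive hypothesis applied componentwise and by Lemma \ref{weight1} to account for the weight $\psi(\tau_i,\cdot)$, I obtain a smooth decomposition of identity $\{P_U\}_{U\in\mathcal U_i}$ in $L^p(J_{\tau_i},\psi(\tau_i,\cdot)\nu_{\tau_i})$, subordinate to a cover $\mathcal U_i$ with $N(d-1)$-fold overlaps and decomposition constant $C(d-1)$. When $S_i$ does contain a critical value $t_z$ with critical point $z$, I arrange that one distinguished element of $\mathcal U_i$ equals $J_{\tau_i}\cap V_z$, where $U_z$ in Theorem \ref{cpl} is taken to be a geodesic ball centered at $z$ of radius smaller than the relevant $\varepsilon_j/2$. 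Applying Theorem \ref{prop} in the regular case, or Theorem \ref{crit} in the critical case, lifts $\{P_U\}_{U\in\mathcal U_i}$ to a family $\{P_{O(U)}\}_{U\in\mathcal U_i}$ that smoothly decomposes the latitudinal projection $Q_i$. Since the $Q_i$ are mutually orthogonal as operators and sum to $\mathbf I$, the cover $\mathcal U:=\bigcup_i\{O(U):U\in\mathcal U_i\}$ together with $\{P_{O(U)}\}$ yields the required smooth decomposition of identity in $L^p(M)$ (or $C_0(M)$ if $p=\infty$).

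The main obstacle is propagating dimensional uniformity of the constants $N(d)$ and $C(d)$ through the induction. A sufficiently small neighborhood of any point meets at most two consecutive strips $S_{i-1},S_i$ by \eqref{qj0}; within each strip the overlaps of the lifted sets $O(U)$ correspond, via the diffeomorphism $F_{\tau_i}$ from Theorem \ref{ril} or \ref{cpl}, to overlaps of the cover $\mathcal U_i$ on $J_{\tau_i}$, so $N(d)\le 2N(d-1)$. The decomposition constant $C(d)$ is assembled by combining the universal constant from Corollary \ref{ncq} with the constants appearing in \eqref{uncq} of Theorem \ref{prop} or \eqref{uncqq} of Theorem \ref{crit}, each expressible in terms of $C(d-1)$ and $B_p$. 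For diameter control, the delicate set $O(U_0)$ near a critical point lies in $U_z$ by Theorem \ref{crit}(ii) and is thus as small as desired; every other $O(U)$ is contained in $S_i$, whose diameter was already controlled. Finally, when $p=2$ the lifting theorems preserve orthogonality of projections, so the resulting decomposition is orthogonal as claimed.
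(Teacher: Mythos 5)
Your overall architecture (induction on $d$, latitudinal projections from Corollary \ref{ncq}, lifting a $(d-1)$-dimensional decomposition on a level submanifold via Theorem \ref{prop} or \ref{crit}) is the same as the paper's, but there is a genuine gap in how you verify the diameter condition (i). You propose to refine the partition so that each strip $S_i=m^{-1}((\theta_i-\delta_i,\theta_{i+1}+\delta_{i+1}))$ satisfies $\diam(S_i)<\varepsilon_j$ whenever $S_i\cap K_j=\emptyset$, and you later justify the smallness of the non-critical pieces by ``$O(U)\subset S_i$, whose diameter was already controlled.'' This refinement is impossible in general: the strip contains entire level submanifolds $J_t$, whose diameters are fixed and typically large far from $K_j$ (think of $M=\R^d$ with $m$ comparable to $|x|^2$, where the level sets outside a compact set are large spheres), and shrinking the interval $(\theta_i-\delta_i,\theta_{i+1}+\delta_{i+1})$ does not shrink them. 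What can, and must, be made small is the displacement along the gradient flow: by compactness and uniform continuity one arranges $\di(x,F_{\theta_i,t}(x))<\ve_k$ for $x\in J_{\theta_i}$ (resp. $x\in J_{\theta_i}\setminus V_{z_k}$ in the critical case) and $t$ ranging over the strip interval, which are precisely conditions \eqref{control1}--\eqref{control2} in the paper. Then $O(U)=\bigcup_s F_{\theta,s}(U)$ is small because $U$ itself has diameter $<\ve_k$ by part (i) of the inductive hypothesis in dimension $d-1$ (a property you never invoke) and the flow moves points by less than $\ve_k$, giving $\diam O(U)<3\ve_k$; the exceptional piece $O(U_0)$ is small because it lies inside $U_z=B(z_k,\ve_k)$ by \eqref{crit3}. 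As written, your verification of conclusion (i) fails, and it is exactly the point where the real work of the construction lies.

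Three further points are glossed over. First, Corollary \ref{ncq} is stated for a single $\delta$ valid at every cut, while your $\delta_i$ vary; the paper reduces to $\inf_i\delta_i>0$ by stretching the Morse function (Theorem \ref{qm}), and you need some such device. Second, in a critical strip you must produce a decomposition of identity on $J_{\tau_i}$ subordinate to a cover that contains the \emph{prescribed} set $U_0=J_{\tau_i}\cap V_{z_k}$ and whose other members lie in $\tilde J_{\tau_i}$; the inductive hypothesis in the form of Theorem \ref{Main1} only gives some cover, so one needs the arbitrary-cover statement (Theorem \ref{Main2}) in dimension $d-1$ here, which is how the induction is actually organized. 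Third, the bottom strip containing the global minimum cannot be subdivided by Theorem \ref{crit} (it requires $t_z\in(0,\sup m)$ and $\th_1<t_z$); the paper instead keeps $Q_{\theta_0,\theta_1}$ undivided and chooses $\theta_1$ so that $m^{-1}([0,\theta_1+\delta))$ itself has diameter $<\ve_1$, which is possible for this one piece, unlike for general strips.
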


In the case $p=2$, the decomposition constant in \eqref{unc} equals 1. Hence, each $P_U:L^2(M)\to L^2(M)$ is an orthogonal projection and
$\{P_U\}_{U\in \mathcal U}$ is a smooth orthogonal decomposition in $L^2(M)$.
As a corollary of Theorem \ref{Main1} we deduce a more general result for arbitrary precompact covers of $M$. 

\begin{theorem}\label{Main2}
Let $M$ be a smooth connected Riemannian manifold (without boundary) and let $1\le p \le \infty$.
Suppose $\mathcal W$ is an open and precompact cover of $M$. Then, there exists $\{Q_W\}_{W\in\mathcal W}$ a smooth decomposition of identity in $L^p(M)$ if $p<\infty$, or $C_0(M)$ if $p=\infty$, which is subordinate to $\mathcal W$. In particular, if $p<\infty$ we have
\begin{equation}\label{mc1}
\frac 1{C^2} ||f||_{p} \le \bigg( \sum_{W \in\mathcal W} ||Q_W f||_p^p \bigg)^{1/p} \le C^2 ||f||_p \qquad\text{for all }f\in L^p(M),
\end{equation} 
where $C$ is the the constant in \eqref{unc}. In case $p=\infty$ we have for all $f\in C_0(M)$,
\[
(||Q_W f||_\infty)_{W\in\mathcal W} \in c_0(\mathcal W) \qquad\text{and}\qquad
\frac 1{C^2} ||f||_{\infty}  \le  \sup_{W \in\mathcal W} ||Q_W f||_\infty \le C^2 ||f||_\infty.
\]

Moreover, the localizing sets $K(Q_W)$ of operators $Q_W$ satisfy
\begin{equation}\label{mc2}
\forall x\in M \ \exists \text{open } V \ni x \qquad
\# |\{ W \in \mathcal W: K(Q_W) \cap V \neq \emptyset \} |\le N,
\end{equation}
where $N$ is the same as in \eqref{luf}.
In the case $p=2$, the decomposition constant $C=1$ and $\{Q_W\}_{W\in\mathcal W}$ is a smooth orthogonal decomposition in $L^2(M)$.
\end{theorem}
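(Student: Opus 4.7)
My plan is to apply Theorem~\ref{Main1} to a judiciously chosen sequence $\{\varepsilon_i\}$ so that the resulting cover $\mathcal U$ refines $\mathcal W$, then to coalesce the decomposition $\{P_U\}$ via an assignment $\tau\colon \mathcal U\to \mathcal W$ satisfying $U\subset \tau(U)$. First, fix an exhaustion $\{K_i\}$ as in \eqref{sk}. Since $K_{i+1}\subset \interior(K_{i+2})$, one can pick $r_i>0$ so that the open $r_i$-neighborhood of $K_{i+1}$ lies in $K_{i+2}$; the Lebesgue number lemma for the open cover $\mathcal W$ on the compact set $K_{i+2}$ then supplies $L_i>0$ such that every subset of $K_{i+2}$ of diameter less than $L_i$ sits in some $W\in\mathcal W$. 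Setting $\varepsilon_i:=\min(r_i,L_i)$ and invoking Theorem~\ref{Main1} produces a cover $\mathcal U$ and a smooth decomposition $\{P_U\}_{U\in\mathcal U}$ with universal constant $C=C(d)$ as in \eqref{unc}. For each nonempty $U\in\mathcal U$, let $i(U)$ be the largest index with $U\cap K_i=\emptyset$; this is well defined (positive and finite) since $K_1=\emptyset$ and $U$ is precompact. Choosing any $p\in U\cap K_{i(U)+1}$, property (i) of Theorem~\ref{Main1} forces $U\subset B(p,r_{i(U)})\subset K_{i(U)+2}$, and combining this with $\diam U<L_{i(U)}$ gives some $W\in \mathcal W$ with $U\subset W$; fix such a choice and call it $\tau(U)$.

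Next, I would define
\begin{equation*}
Q_W:=\sum_{U\in\tau^{-1}(W)} P_U.
\end{equation*}
Local uniform finiteness of $\mathcal U$ together with precompactness of $W$ makes $\tau^{-1}(W)$ finite, so $Q_W\in \mathcal H(M)$ is a Hestenes operator localized on $W$. The identities $P_U\circ P_{U'}=\delta_{U,U'}\,P_U$ coming from \eqref{compo} immediately yield $Q_W^2=Q_W$, $Q_W\circ Q_{W'}=0$ for $W\ne W'$, and $\sum_W Q_W=\sum_U P_U=\mathbf I$. For the local finiteness bound \eqref{mc2} on the localizing sets $K(Q_W)$, note $K(Q_W)\subset \bigcup_{U\in\tau^{-1}(W)} U$ by localization of each $P_U$; since the fibres of $\tau$ are pairwise disjoint, a neighborhood $V\ni x$ meeting $K(Q_W)$ for $k$ distinct values of $W$ must meet at least $k$ distinct members of $\mathcal U$, and so $k\le N$ by \eqref{luf}.

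The norm estimate \eqref{mc1} is the main bookkeeping step. For fixed $W$, \eqref{compo} and idempotence of $P_U$ give $P_U(Q_W f)=P_U f$ when $U\in\tau^{-1}(W)$ and $P_U(Q_W f)=0$ otherwise; applying \eqref{unc} to $Q_W f$ therefore yields
\begin{equation*}
\frac{1}{C^p}\|Q_W f\|_p^p \le \sum_{U\in\tau^{-1}(W)}\|P_U f\|_p^p \le C^p \|Q_W f\|_p^p.
\end{equation*}
Summing over $W$ and applying \eqref{unc} a second time to the full family $\{P_U\}$ produces \eqref{mc1} with constant $C^2$; when $p=2$ one has $C=1$ by Remark~\ref{rsdi}, so each $Q_W$ has norm one on $L^2(M)$ and is consequently an orthogonal projection. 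The case $p=\infty$ is entirely analogous. \textbf{The main obstacle} is Step~1: a set $U\in\mathcal U$ is not localized in $M$ by its diameter alone, so the critical trick is to combine the metric trap $\diam U<r_{i(U)}$ — which forces $U$ into the specific compact set $K_{i(U)+2}$ — with a Lebesgue number for $\mathcal W$ on that same compact set, making the universal diameter threshold $\varepsilon_i$ sufficient to embed every $U$ into some $W\in\mathcal W$.
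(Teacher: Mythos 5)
Your proposal is correct and takes essentially the same route as the paper: one chooses the $\varepsilon_i$ in Theorem \ref{Main1} via Lebesgue numbers of $\mathcal W$ on an enlarged compact exhaustion so that every member of the resulting cover $\mathcal U$ is contained in some $W\in\mathcal W$, then groups the projections $P_U$ along an assignment $U\mapsto W(U)$ and verifies localization, \eqref{mc2}, and the $C^2$ bound by applying \eqref{unc} twice together with $P_U\circ Q_W=P_U$ or $0$. The only difference is cosmetic bookkeeping: you trap each $U$ inside $K_{i(U)+2}$ using the radii $r_i$, whereas the paper uses a $\delta_i$-neighborhood $\tilde K_{i+1}$ of $K_{i+1}$.
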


The scheme of the proofs of Theorems \ref{Main1} and \ref{Main2} is as follows. First, we show that if Theorem \ref{Main1} holds in some dimension $d$, then Theorem \ref{Main2} also holds in the same dimension. Next, we prove Theorem \ref{Main1} by induction with respect to $d$. We start with the base case $d=1$. The inductive argument uses Theorem \ref{Main2} in dimension $d-1$, which is a consequence of the inductive hypothesis that Theorem \ref{Main1} holds in dimension $d-1$.

\begin{proof}[Proof of Theorem \ref{Main2}]
In the proof we shall employ Lebesgue's number lemma. For an open cover of  a compact metric space $(X,\di)$, there exists a number $\eta>0$ such that every subset of $X$ with diameter less than $\eta$ is contained in some member of that cover. In our case, $X$ is a compact subset of $M$ and $\di$ is a geodesic distance on $M$.

Suppose $\mathcal W$ is an open and precompact cover of $M$. Let $\{K_i\}_{i=1}^\infty$ be a sequence of compact sets as in \eqref{sk}. For any $i\ge 1$, we find a sufficiently small $\delta_i>0$ such that the following set is compact
\[
\tilde K_{i+1} := \{x\in M: \di(x, K_{i+1}) \le \delta_i \}.
\]
Let $0<\varepsilon_i<\min(\delta_i,\eta_i)$, where $\eta_i$ is Lebesgue's number for the cover $\mathcal W$ of $\tilde K_{i+1}$. Then, any subset $U \subset M$ with diameter $<\ve_i$ and intersecting $K_{i+1}$ is contained in $\tilde K_{i+1}$. By Lebesgue's number lemma we have $U \subset W$ for some open set $W \in\mathcal W$.

Now let $\mathcal U$ be an open cover and let $\{P_U\}_{U\in \mathcal U}$ be a smooth decomposition of identity as in the conclusions of Theorem \ref{Main1}. In particular, if $U \cap K_i = \emptyset$, then $\diam(U)<\ve_i$. We claim that for any $U\in \mathcal U$, there exists $W \in \mathcal W$ such that $U \subset W$. Indeed, take any $U\in \mathcal U$ and find minimal $i\ge 1$ such that $U \cap K_{i+1} \ne \emptyset$. Since $U \cap K_i =\emptyset$, by Theorem \ref{Main1}(i) we deduce that $\diam(U)<\ve_i$, and hence there exists $W \in \mathcal W$ such that $U \subset W$. For any $U\in\mathcal U$ choose $W=W(U) \in\mathcal W$ such that $U \subset W$. Since $\mathcal U$ is a locally finite cover, only finitely many sets $U\in \mathcal U$ can be assigned to the same (open and precompact) subset $W \in \mathcal W$. 

For any $W \in \mathcal W$, define 
\begin{equation}\label{puq}
Q_W = \sum_{U\in \mathcal U: \ W(U) = W} P_U.
\end{equation}
Since cover $\mathcal U$ is locally uniformly finite and
\[
K(Q_W) \subset \bigcup_{U\in \mathcal U: \ W(U) = W} K(P_U),
\]
hence \eqref{mc2} holds for the same value $N$ as \eqref{luf}. Thus, a sum in \eqref{puq} has only finitely many terms and each $Q_W \in \mathcal H(M)$ is localized on $W$. Observe that
\[
P_U \circ Q_W=Q_W\circ P_U=
\begin{cases}
P_U & \text{if } W=W(U),\\
0& \text{if } W\neq W(U).
\end{cases}
\] Hence applying \eqref{unc} twice we have for all $f\in L^p(M)$,
\[
\bigg( \sum_{W \in\mathcal W} ||Q_W f||_p^p \bigg)^{1/p} \le C \bigg( \sum_{W \in\mathcal W} \sum_{U\in \mathcal U \atop W(U) = W} ||P_U f||_p^p \bigg)^{1/p} = C \bigg( \sum_{U \in\mathcal U} ||P_U f||_p^p \bigg)^{1/p} \le C^2 ||f||_p.
\] 
The converse inequality is shown the same way. A routine verification shows the remaining properties showing that $\{Q_W\}_{W\in \mathcal W}$ is a smooth decomposition of identity in $L^p(M)$. The case $p=\infty$ is an easy modification of the above argument. This completes the proof of Theorem \ref{Main2}.
\end{proof}

We are now ready to prove Theorem \ref{Main1}.

\begin{proof}[Proof of Theorem \ref{Main1}]
Suppose that $m$ is a Morse function on $M$ as in Theorem \ref{mfact}. We claim that it suffices to show Theorem \ref{Main1} for a specific sequence of compact sets $\{K_i\}_{i=1}^\infty$ satisfying \eqref{sk}. Indeed, suppose $\{\tilde K_i\}_{i=1}^\infty$ is another sequence of compact sets of $M$ as in \eqref{sk}. Let $\{\tilde \varepsilon_i\}_{i=1}^\infty$  be any sequence of positive real numbers. Without loss of generality we can assume that $\{\tilde \varepsilon_i\}_{i=1}^\infty$ is nonincreasing. 
By compactness argument for any $i\in \N$, there exists $j=j(i)\in \N$ such that $K_i \subset \tilde  K_j$. Then, choose $j(i)$ such that $j(1)=1$ and $j(i) \le j(i+1)$ for all $i\ge1$. Define a sequence $\{\varepsilon_i\}_{i=1}^\infty$ by $\varepsilon_i=\tilde \varepsilon_{j(i+1)}$. Assuming that the conclusions of Theorem \ref{Main1} hold for  $\{K_i\}_{i=1}^\infty$ and $\{\varepsilon_i\}_{i=1}^\infty$, one can show that the same holds for $\{\tilde K_j\}_{j=1}^\infty$ and $\{\tilde \varepsilon_j\}_{j=1}^\infty$. The only non-trivial is property (i), which follows from the fact that (i) holds for $\{\tilde K_{j(i)}\}_{i=1}^\infty$ and $\{\tilde \varepsilon_{j(i+1)}\}_{i=1}^\infty$

We prove Theorem \ref{Main1} by induction on the dimension $d$ of a manifold $M$. Note that a connected Riemannian manifold $M$ of dimension $d=1$ is a diffeomorphic to a circle if $M$ is compact or a line $\R$ if $M$ is not compact. If $M$ is a circle, the result follows from Lemma \ref{sdis} and Lemmas \ref{weight1} and \ref{weight2}, which enable us to change weights and Riemannian structure. If $M$ is a real line, then we use Lemma \ref{sdir} instead.

Now assume that $M$ is non-compact connected Riemannian manifold with dimension $d\geq 2$. The case of compact $M$ is an easy modification of more complicated non-compact case and is left to the reader. Let $m:M \to [0,\infty)$ be a Morse function as in Theorem \ref{mfact}. In the sequel we shall assume that $m$ has infinitely many critical points; the finite case is again an easy modification. Let $\{z_i\}_{i=1}^\infty$ be the sequence of critical points arranged so that the sequence of critical values $\{t_{z_i}=m(z_i)\}_{i=1}^\infty$ is increasing. The smallest critical value is $t_{z_1}=0$. Define a sequence of compact sets $K_i=m^{-1}([0,t_{z_i}-2018])$, $i\ge 1$. Let $\{\varepsilon_i\}_{i=1}^\infty$ be a decreasing sequence of positive real numbers.

For any $k\ge 2$, consider an open neighborhood $U_{z_k}=B(z_k,\ve_k)$. Let $\delta_{z_k}>0$ be the value and $V_{z_k}$ be the open neighborhood corresponding to $U_{z_k}$ from Theorem \ref{cpl}. Choose an increasing sequence of $\{ \th_i \}_{i=0}^\infty$ and a sequence of positive numbers $\{\delta_i\}_{i=1}^\infty$ such that: 
\begin{itemize}
\item $\th_0=0$ and $\lim_{i\to \infty} \th_i=\infty$.
\item For any $i\ge 1$, intervals $[\th_i-\delta_i,\th_i+\delta_i]$ are disjoint and contain only regular values of $m$. 
\item Interval $[0,\th_1+\delta_1]$ contains only one critical value $\th_0=0$ and $m^{-1}([0,\th_1+\delta_1])$ has diameter less than $\ve_1$.
\item For any $i \ge 1$, each interval $[\th_i-\delta_i,\th_{i+1}+\delta_{i+1}]$ contains at most one critical value of $m$.  If $t_{z_k}$ is such critical value, then we have
\[
t_{z_k}-\delta_{z_k} < \th_i-\delta_i<\th_i+\delta_i<t_{z_k}<\th_{i+1}-\delta_{i+1}<\th_{i+1}+\delta_{i+1}<t_{z_k}+\delta_{z_k}.
\]
and
\begin{equation}\label{control1}
\di(x,F_{\th_i,t}(x))<\ve_k \qquad \text{for all } x\in J_{\th_i}\setminus V_{z_k}, t\in[\th_i-\delta_i,\th_{i+1}+\delta_{i+1}].
\end{equation}
\item If $[\th_i-\delta_i,\th_{i+1}+\delta_{i+1}]$, $i \ge 1$, contains no critical values and $t_{z_k}$ is the largest critical value less than $\th_i$, then
\begin{equation}\label{control2}
\di(x,F_{\th_i,t}(x))<\ve_k,\qquad \text{for all } x\in J_{\th_i}, t\in [\th_i-\delta_i,\th_{i+1}+\delta_{i+1}].
\end{equation}
\end{itemize} 
Such choice of $\{ \th_i \}_{i=1}^\infty$ and $\{\delta_i\}_{i=1}^\infty$ is always possible since the level submanifolds $J_{\th_i}$ are compact. In particular, in the presence of a critical point $z_k$, the set $J_{\th_i} \setminus V_{z_k}$ is also compact. Hence, \eqref{control1} and \eqref{control2} follow by uniform continuity of diffeomorphisms $F_{\th_i,t}$.

In light of Theorem \ref{qm} we can assume that $\delta:=\inf_{i\ge 1} \delta_i>0$. Indeed, if $\inf_{i\ge 1} \delta_i=0$, then we can construct a rapidly increasing $q$ such that a stretched Morse function $\hat m=q\circ m: M \to [0,\infty)$  fulfills the above properties when all $\delta_i$'s are replaced by a single $\delta>0$. This is a consequence of Theorem \ref{qm} which guarantees that level submanifolds $J_t$ and diffeomorphisms $F_{\th,t}$ corresponding to $m$ and $\hat m$ are the same after the change of parameter given by $q$.

Consequently, we can apply Corollary \ref{ncq} which yields a smooth decomposition of the identity by latitudinal projections $\{Q_{\th_i,\th_{i+1}}\}_{i=0}^\infty$. Now it suffices to decompose each projection $Q_{\th_i,\th_{i+1}}$, $i\ge 1$, as a finite sum of projections localized on open sets with small diameters as follows.
The first latitudinal projection $Q_{\th_0,\th_1}$ is already localized on set $W_{z_1}:=m^{-1}([0,\th_1+\delta))$ with diameter $<\ve_1$, and there is no need for further subdivision. Let $P_{W_{z_1}}:=Q_{\th_0,\th_1}$.

First, assume that interval $[\th_i-\delta,\th_{i+1}+\delta]$, $i\ge 1$, contains no critical values. Let $k\in\N$ be such that $t_{z_k}$ is the largest critical value less than $\th=\th_i$. Observe that $J_\th$ is a union of finite number of smooth connected and compact submanifolds of $M$ without boundary and of dimension $d-1$. Using  the induction hypothesis in the form of Theorem \ref{Main1} for each connected component of $J_\th$  separately, we conclude that there is a finite open cover $\mathcal U(J_\th)$ of $J_{\th}$ consisting of sets with diameters $<\ve_k$ and $\{P_U\}_{U\in\mathcal U(J_\th)}$ a smooth decomposition of identity in $L^p(J_\th, \psi(\th, \cdot)\nu_\th)$ subordinate to $\mathcal U(J_\th)$. Then by Theorem \ref{prop}(ii) we obtain $\{P_{O(U)}\}_{U\in \mathcal U(J_\th)}$ a smooth decomposition of the latitudinal projection $Q_{\th_i,\th_{i+1}}$ on $L^p(M)$. In particular, we have
\begin{equation}\label{control3}
\sum_{U\in \mathcal U(J_\th)} P_{O(U)}=Q_{\th_i,\th_{i+1}},
\end{equation}
and
\begin{equation}\label{control4}
\frac 1{C} ||Q_{\th_i,\th_{i+1}} f||_{p}  \le \bigg( \sum_{U \in\mathcal U(J_\th)} ||P_{O(U)} f||_p^p \bigg)^{1/p} \le C ||Q_{\th_i,\th_{i+1}} f||_p \qquad\text{for all }f\in L^p(M),
\end{equation}
where $C=C(d-1)$ is the constant in \eqref{unc} for dimension $d-1$.
Note that by \eqref{prop2} and \eqref{control2} for each $U \in \mathcal U(J_\th)$ we have $\diam O(U)<3\ve_k$. Moreover, by \eqref{prop2} and inductive hypothesis \eqref{luf} we have
\begin{equation}\label{control5}
\forall x\in m^{-1}[\th_i-\delta,\th_{i+1}+\delta] \ \exists \text{open } V \ni x \qquad
\# |\{ U \in \mathcal U(J_\th): O(U) \cap V \neq \emptyset \} |\le N(d-1).
\end{equation}

Next, assume that interval $[\th_i-\delta,\th_{i+1}+\delta]$, $i\ge 1$, contains exactly one critical value $t_{z_k}$. Let $U_0=J_{\th} \cap V_{z_k}$, where $\th=\th_i$. Observe that $\diam(U_0)<\ve_k$. By \eqref{cpl1} we can choose a finite open cover $\mathcal U(J_\th)$ of $J_{\th}$ consisting of sets with diameters $<\ve_k$ such that $U_0 \in \mathcal U(J_\th)$ and $U \subset \tilde J_\th$ for any $U_0 \ne U \in \mathcal U(J_\th)$. By the induction hypothesis, now in the form of Theorem \ref{Main2} in dimension $d-1$, there is a smooth decomposition of identity in $L^p(J_\th, \psi(\th, \cdot)\nu_\th)$ subordinate to $\mathcal U(J_\th)$. Then, by Theorem \ref{crit} we obtain  $\{P_{O(U)}\}_{U\in\mathcal U(J_\th)}$ a smooth decomposition of the latitudinal projection $Q_{\th_i,\th_{i+1}}$ subordinate to an open cover $ \{O(U)\}_{U\in\mathcal U(J_\th)}$. That is, \eqref{control3} and \eqref{control4} hold with the constant 
\begin{equation}\label{cprim}
C'= \max(2C^2, B_p (C^{6p}+(C^{4}+1)^p)^{1/p}),\qquad\text{where }C=C(d-1).
\end{equation}
For each $U_0 \ne U \in \mathcal U(J_\th)$ we have $U \subset J_\th \setminus V_{z_k}$ and hence by \eqref{prop2} and \eqref{control1} we have $\diam O(U)<3\ve_k$. On the other hand,
by \eqref{crit3} we have $\diam O(U_0)<\ve_k$. Moreover, the inductive hypothesis  \eqref{mc2} 
\begin{equation}\label{control6}
\forall x\in J_\th \ \exists \text{open } V \ni x \qquad
\# |\{ U \in \mathcal U(J_\th): K(P_U) \cap V \neq \emptyset \} |\le N(d-1).
\end{equation}
Replacing the sets $U \in \mathcal U(J_\th) \setminus \{U_0\}$ by sufficiently small neighborhoods of $K(P_U)$ we can guarantee that
\begin{equation}\label{control7}
\forall x\in J_\th \ \exists \text{open } V \ni x \qquad
\# |\{ U \in \mathcal U(J_\th) \setminus \{U_0\}: U \cap V \neq \emptyset \} |\le N(d-1).
\end{equation}
Hence, by \eqref{prop2} and \eqref{control7} we have
\begin{equation}\label{control8}
\forall x\in m^{-1}[\th_i-\delta,\th_{i+1}+\delta] \ \exists \text{open } V \ni x \qquad
\# |\{ U \in \mathcal U(J_\th): O(U) \cap V \neq \emptyset \} |\le N(d-1)+1.
\end{equation}

The above procedure defines an open cover $\mathcal U$ of $M$ by
\[
\mathcal U := \{W_{z_1}\} \cup \{ O(U): U \in \mathcal U(J_{\th_i}), \ i \in \N\}.
\]
Then, $\mathcal U$ is a locally uniformly finite cover and (i) holds with $\ve_i$ replaced by $3\ve_i$. Indeed, using  \eqref{control5}, \eqref{control8}, and  the fact that each point $x\in M$ belongs to at most two sets $m^{-1}[\th_i-\delta,\th_{i+1}+\delta]$, $i\in \N$, implies that \eqref{luf} holds with $N(d)=2(N(d-1)+1)$.

The above construction produces a family $\{P_W\}_{W\in\mathcal U}$ forming a smooth decomposition of identity in $L^p(M)$ subordinate to the cover $\mathcal U$.
Using Corollary \ref{ncq}, Theorem \ref{prop}, and Theorem \ref{crit}, we can verify properties (iii)-(vii) in Theorem \ref{Main1}. In particular, by \eqref{ncq2}, \eqref{control4}, and an analogue of \eqref{control4} with constant $C'$ for intervals containing critical values, we can show that \eqref{unc} holds with the constant 
\[
C(d):= \max( 2^{1-1/p}, 2^{1/p} B_p ) C',
\]
where $C'$ is given by \eqref{cprim}. In the case $p=2$, the projections $P_{O(U)}$, $U \in \mathcal U(J_\th)$ are orthogonal by Theorems \ref{prop} and \ref{crit}. Hence, so are projections $P_W$, $W\in\mathcal U$, and the decomposition constant $C(d)=1$ in all dimensions.
Finally, (vii) is a consequence of \eqref{ncq2} and \eqref{control3}.
This proves Theorem \ref{Main1}.
\end{proof}

\section{Smooth decomposition of identity in Sobolev spaces} \label{S7}

In this section we give applications of the main theorem to function spaces on manifolds. We show a decomposition of Sobolev spaces on manifolds extending results of Ciesielski and Figiel \cite{CF1, CF2, CF3} for compact manifolds and the first two authors \cite{BD} for the sphere. In addition, Triebel \cite{Tr1, Tr2} has extended the theory of Triebel-Lizorkin and Besov spaces on complete Riemannian manifolds with bounded geometry, see also \cite{Sk} and \cite[Ch. 7]{Tr}. More recently, Besov spaces on compact manifolds were studied by Geller and  Mayeli \cite{GM}. A survey on a recent progress space-frequency analysis on compact Riemannian manifolds and Riemannian manifolds with bounded geometry can be found in \cite{FHP}. In contrast to these developments, our results we do not require the assumption of bounded geometry.

\subsection{Compact manifolds}

The following result is an extension of a result on the sphere \cite[Theorem 6.1]{BD} to general compact manifolds in the spirit of results of Ciesielski and Figiel, see \cite[\S5]{FW}. Note that their decomposition depends on the choice of the smoothness parameter $r$. In contrast our decomposition using smooth orthogonal projections works for all values of $r\in\N$.

\begin{theorem}\label{cm}
Let $M$ be a smooth compact Riemannian manifold (without boundary).
Let $\mathcal U$ be a finite open cover of $M$. Let $\{P_U\}_{U\in \mathcal U}$ be a smooth decomposition of identity in $L^q(M)$ if $1\le q<\infty$ or on $C(M)$ if $q=\infty$, which is subordinate to $\mathcal U$. Let $\mathcal F(M)$ be either $C^r(M)$ or $W^r_p(M)$, $1\le p<\infty$, $r=0,1,\ldots$.
Then, we have a direct sum decomposition
\[
\mathcal F(M) = \bigoplus_{U \in \mathcal U} P_U( \mathcal F(M)),
\]
with equivalence of norms
\[
||f||_{\mathcal F(M)} \asymp \sum_{U \in\mathcal U} ||P_U f||_{\mathcal F(M)} \qquad\text{for all }f\in \mathcal F(M).
\]
\end{theorem}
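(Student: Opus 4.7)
The plan is to leverage Theorem \ref{crm} together with the algebraic properties of a smooth decomposition of identity from Definition \ref{sdi}, reducing everything to finite-sum manipulations since $\mathcal U$ is finite and $M$ is compact. The heavy lifting has essentially been done in the earlier sections, so the proof is mostly a matter of threading together existing results.

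First, I would observe that every $U \in \mathcal U$ is automatically precompact (as $M$ is compact), so Theorem \ref{crm} yields that each $P_U$ induces a bounded linear operator on $\mathcal F(M)$, both when $\mathcal F(M) = C^r(M)$ and when $\mathcal F(M) = W^r_p(M)$. Consequently the finite sum $T := \sum_{U\in\mathcal U} P_U$ is a bounded operator on $\mathcal F(M)$, and each subspace $P_U(\mathcal F(M))$ is well-defined.

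Next, I would verify the two algebraic identities $\sum_{U\in\mathcal U} P_U = \mathbf I$ and $P_U \circ P_{U'} = 0$ for $U \ne U'$ as operator identities on $\mathcal F(M)$. For $f \in C^\infty(M)$, the pointwise formula from Definition \ref{H}, combined with Lemma \ref{rep}, shows that the action of each $P_U$ is the same whether we view $P_U$ as operating on $L^q(M)$ (or $C(M)$), on $C^\infty(M)$, or on $\mathcal F(M)$. Hence both identities, which hold in $L^q(M)$ or $C(M)$ by hypothesis, hold pointwise on $C^\infty(M)$. Density of $C^\infty(M)$ in $\mathcal F(M)$ together with continuity of the $P_U$ on $\mathcal F(M)$ then promotes these to identities of bounded operators on $\mathcal F(M)$.

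From here the conclusions are standard. The direct sum decomposition $\mathcal F(M) = \bigoplus_{U\in\mathcal U} P_U(\mathcal F(M))$ follows: existence of the decomposition via $f = \sum_U P_U f$, and uniqueness via the usual argument that if $f = \sum_U P_U h_U$, then applying $P_V$ yields $P_V f = P_V h_V$, using $P_V \circ P_U = 0$ for $U \ne V$ together with idempotency $P_V^2 = P_V$. For the norm equivalence, the lower estimate $\|f\|_{\mathcal F(M)} \le \sum_U \|P_U f\|_{\mathcal F(M)}$ is immediate from the triangle inequality applied to $f = \sum_{U\in\mathcal U} P_U f$, while the upper estimate $\sum_U \|P_U f\|_{\mathcal F(M)} \le C \|f\|_{\mathcal F(M)}$ is immediate from boundedness of each $P_U$ and finiteness of $\mathcal U$, with $C = \sum_U \|P_U\|_{\mathcal F(M) \to \mathcal F(M)}$. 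The main obstacle here is minimal: the only point requiring care is confirming that the decomposition identity transfers from the $L^q$ (or $C(M)$) setting to $\mathcal F(M)$, which is handled cleanly by the density of $C^\infty(M)$ and the consistency of $P_U$ across these spaces via Lemma \ref{rep}.
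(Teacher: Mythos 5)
Your proposal is correct and follows essentially the same route as the paper, which likewise rests on Theorem \ref{crm} (boundedness of localized $H$-operators on $C^r(M)$ and $W^r_p(M)$) together with the finiteness of $\mathcal U$, and then runs the standard argument from \cite{BD}: transfer the identities $\sum_{U} P_U=\mathbf I$ and $P_U\circ P_{U'}=0$ to $\mathcal F(M)$ and conclude by the triangle inequality and uniform boundedness. Your extra care in transferring the identities via the pointwise formula (Lemma \ref{rep}) and density of $C^\infty(M)$ simply fills in details the paper leaves to the reader.
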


The proof of Theorem \ref{cm} employs Theorem \ref{crm} and is shown the same way as in \cite{BD}. This is possible due to the fact that the number of projections $P_U$, $U\in\mathcal U$ is finite and hence they are uniformly bounded on $\mathcal F(M)$. For non-compact manifolds $M$, Theorem \ref{Main2} yields the same result for $L^p(M)$ spaces. However, there is no guarantee that projections $P_U$ are uniformly bounded on Sobolev spaces $W^r_p(M)$ if $M$ is non-compact. To deal with non-compact manifolds $M$ we switch to the setting of local Sobolev spaces. 

\subsection{Non-compact manifolds} 

We will need the following decomposition lemma on $L^1_{loc}(M)$, which is made possible thanks to Lemma \ref{skon}.

\begin{lemma}\label{punktowo}
Let $\{P_U\}_{U\in \mathcal U}$
be a smooth decomposition of identity in $L^q(M)$, $1\le q<\infty$, subordinate to an open and precompact cover $\mathcal U$ of $M$. If $f\in L^1_{loc}(M)$, then 
\begin{equation}\label{punkt1}
f=\sum_{U\in \mathcal U} P_U(f)\qquad \text{ a.e. on M}
\end{equation}
and
\begin{equation}\label{punkt2}
P_U\circ P_V(f) =\begin{cases} 0 & U\neq V
\\
P_U(f) & U=V.
\end{cases}
\end{equation}
\end{lemma}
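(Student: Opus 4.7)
The plan is to localize both identities to arbitrary precompact open sets $V \subset M$, where the locally finite property (i) reduces the formally infinite sum in \eqref{punkt1} to a finite one, and then to transfer the $L^q$-identities (iii)--(v) to $L^1_{loc}$ via bounded truncations combined with the $L^1$-continuity of each $P_U$ guaranteed by Theorem \ref{crm}. First I would note that Theorem \ref{crm} extends every $P_U$ to a bounded operator on each $L^p(M)$, $1 \le p < \infty$, and by approximating $L^p$-functions by bounded compactly supported functions (which lie in every $L^r$), the operator identities $P_U \circ P_V = 0$ for $U \ne V$ and $P_U^2 = P_U$ from properties (iii) and (iv) carry over from $L^q(M)$ to every $L^p(M)$.

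To prove \eqref{punkt1}, fix $f \in L^1_{loc}(M)$ and a precompact open $V \subset M$. By (i) only finitely many $U_1, \ldots, U_n \in \mathcal{U}$ meet $\overline{V}$; choose a precompact open $W \supset V \cup U_1 \cup \cdots \cup U_n$. For $U \notin \{U_1, \ldots, U_n\}$ we have $U \cap V = \emptyset$, so \eqref{skon2} yields $P_U f = 0$ a.e. on $V$, while Lemma \ref{skon} identifies $P_{U_i} f = P_{U_i}(f \ch_W)$. The truncations $g_k = f \ch_W \ch_{\{|f| \le k\}}$ lie in $L^\infty(M) \cap L^1(M) \subset L^q(M)$, so property (v) applied to $g_k$ together with the preceding observations gives
\[
g_k = \sum_{i=1}^n P_{U_i} g_k \quad \text{a.e. on } V,
\]
a finite sum. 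Dominated convergence sends $g_k \to f \ch_W$ in $L^1(M)$, and $L^1$-continuity of each $P_{U_i}$ allows passage to the limit along an a.e.-convergent subsequence, yielding \eqref{punkt1} on $V$, and hence on $M$ since $V$ is arbitrary.

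For \eqref{punkt2}, pick a precompact open $W \supset U \cup V$, define $g_k$ as above from $f \ch_W$, and invoke the $L^p$-version of (iii)--(iv) to obtain $P_U P_V g_k = \delta_{U,V}\, P_U g_k$ pointwise a.e.; two applications of $L^1$-continuity as $k \to \infty$ and a final subsequence argument finish the proof. The main obstacle is the mismatch between the assumed $L^q$-setting of the decomposition of identity and the $L^1_{loc}$-setting of the claim: the locally finite property (i) combined with the sharp localization \eqref{skon2} is what resolves this, by reducing to finite sums that are controllable purely in $L^1$.
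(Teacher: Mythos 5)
Your proof is correct and follows essentially the same route as the paper: reduce to a finite sum over the sets meeting a precompact neighborhood via local finiteness and \eqref{skon2}, use Lemma \ref{skon} to identify $P_U(f\ch_W)$ with $P_U(f)$, and transfer the $L^q$-identities (iii)--(v) to the $L^1_{loc}$ setting using the $L^1$-boundedness of the localized $H$-operators from Theorem \ref{crm}. The only cosmetic difference is that you make the paper's ``density argument'' explicit through bounded truncations $g_k=f\ch_W\ch_{\{|f|\le k\}}$, which is the same mechanism.
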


\begin{proof}
Take any compact set $K \subset M$. Define
\[
\mathcal U(K) = \{ U \in\mathcal U: U \cap K  \ne \emptyset \text{ and } P_U \ne 0 \}.
\]
By Definition \ref{sdi}(i) $\mathcal U(K)$ is finite. Hence
\[
W= \bigcup_{U \in \mathcal U(K)} U
\]
is open and precompact.
By properties (i) and (v) in Definition \ref{sdi} and by \eqref{skon2}, we have
\begin{equation}\label{punkt3}
f \ch_W = \sum_{U \in \mathcal U(\ov W)} P_U (f \ch_W) \qquad\text{for all } f \in L^1_{loc}(M).
\end{equation}
Since each $P_U$ is bounded on $L^1(M)$, a density argument shows that the same holds for $f\in L^1_{loc}(M)$.

Now take any $f\in L^1_{loc}(M)$. Define $g= f \ch_W \in L^1(M)$. By \eqref{skon1}
\begin{equation}\label{punkt4}
P_U(g)=P_U(f) \qquad\text{for all }U \in \mathcal U(K).
\end{equation}
Hence, by \eqref{skon2}, \eqref{punkt3}, and \eqref{punkt4} for a.e. $x\in K$ we have
\[
f(x)=g(x)= \sum_{U \in \mathcal U(\ov W)} P_U g(x)= \sum_{U \in \mathcal U(K)} P_U g(x) = \sum_{U \in \mathcal U(K)} P_Uf(x) = \sum_{U \in\mathcal U} P_Uf(x).
\]
Since $K$ was arbitrary, \eqref{punkt1} is shown. Finally, \eqref{punkt2} for $f\in L^1_{loc}(M)$ is a consequence of property (iv) in Definition \ref{sdi} and \eqref{skon1}. The general case follows by a density argument.
\end{proof}

Local Sobolev spaces on open subsets of $\R^d$ were systematically studied by Antoni\'c and Burazin \cite{AB}. We shall adopt the following definition of local Sobolev spaces on a Riemannian manifold $M$. 

 \begin{definition}\label{locSob} For $r\geq 1$ and $1\leq p<\infty$, we define a local Sobolev space
 \[
 W^r_{p,loc}(M)=\{f\in L^1_{loc}(M): \forall{\eta\in C^\infty_c(M)} \quad
 \eta f\in W^r_p(M) \}.
 \]
 This space $W^r_{p,loc}(M)$ is a locally convex space with the topology given by  a  family of seminorms $\{\vr_\eta : \eta\in C^\infty_c \}$, where 
 \[
 \vr_\eta(f)=\|\eta f\|_{W^r_p(M)}.
 \]
  
\end{definition}

Clearly if $M$ is compact then $W^r_{p,loc}(M)= W^r_{p}(M)$, $r\geq 1$ and $1\leq p<\infty$. We will assume in this subsection that $M$ is non-compact.
In the sequel we will need an equivalent definition of $W^r_{p,loc}(M)$ using a specific family of seminorms indexed by $\N$. For this let  $(\Omega_j)_{j\in \N}$  be a locally finite cover  of $M$. 
Recall that a family $(\alpha_j)_{j\in \N}$ is a partition of unity subordinate to $(\Omega_j)_{j\in \N}$ if for all $j\in \N$, $\alpha_j:M\to [0,1]$ is smooth, $\supp \alpha_j \subset \Omega_j$, and
\[
\sum_{j\in \N} \alpha_j(x)=1\qquad\text{for all }x\in M.
\]

\begin{proposition}\label{locSob1} 
Let $(\Omega_j)_{j\in \N}$  be a locally finite cover  of $M$ such that all sets $\Omega_j$ are precompact. Let $(\alpha_j)_{j\in \N}$ be a partition of unity subordinate to $(\Omega_j)_{j\in \N}$. Let $f\in L^1_{loc}(M)$. Then
 $f\in W^r_{p,loc}(M)$ if and only if 
for all $j\in \N$, $\alpha_j f\in W^r_p(M)$. Moreover,
for every $\eta\in C^\infty_c(M)$, there exist a constant $C(\eta)>0$ and a finite set $N(\eta)\subset \N$  such that
 \[
 \|\eta f\|_{W^r_p(M)}\leq
C(\eta)  \sum_{j\in N(\eta)} \| \alpha_j  f\|_{W^r_p(M)}.
\]
Consequently, two families of seminorms $\{\vr_\eta: \eta\in C^\infty_c(M)\}$ and $\{\vr_{\alpha_j}:j\in \N\}$ define the same topology on $W^r_{p,loc}(M)$.
\end{proposition}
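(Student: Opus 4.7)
The plan is to prove both directions simultaneously by establishing the explicit seminorm estimate, since that inequality automatically yields the equivalence of function spaces as well as of topologies. The essential ingredient is Theorem \ref{crm}: multiplication by a function $\eta \in C^\infty_c(M)$ is a simple $H$-operator $H_{\eta,\mathrm{id},M}$ localized on any open precompact neighborhood of $\supp \eta$, hence bounded on $W^r_p(M)$ with some norm $C(\eta)>0$.

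For the forward direction, fix $j\in \N$. Since $\supp \alpha_j \subset \Omega_j$ and $\Omega_j$ is precompact, $\alpha_j \in C^\infty_c(M)$. Thus if $f \in W^r_{p,loc}(M)$, the defining property immediately gives $\alpha_j f \in W^r_p(M)$. For the other direction and the inequality, fix $\eta \in C^\infty_c(M)$ and set
\[
N(\eta) = \{j \in \N : \Omega_j \cap \supp \eta \neq \emptyset\}.
\]
By local finiteness of $(\Omega_j)$ and compactness of $\supp \eta$, the set $N(\eta)$ is finite. Since $\supp \alpha_j \subset \Omega_j$, we have $\alpha_j|_{\supp \eta} \equiv 0$ for $j \notin N(\eta)$, so on $\supp \eta$ the identity $\sum_j \alpha_j = 1$ reduces to $\sum_{j \in N(\eta)} \alpha_j = 1$. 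Consequently
\[
\eta f \;=\; \sum_{j \in N(\eta)} \eta\, (\alpha_j f)
\]
pointwise, and this equality is meaningful because each $\alpha_j f \in L^1_{loc}(M)$. Applying the boundedness of multiplication by $\eta$ from Theorem \ref{crm} to each $\alpha_j f \in W^r_p(M)$ yields
\[
\|\eta f\|_{W^r_p(M)} \;\le\; \sum_{j \in N(\eta)} \|\eta \alpha_j f\|_{W^r_p(M)} \;\le\; C(\eta) \sum_{j\in N(\eta)} \|\alpha_j f\|_{W^r_p(M)},
\]
which is the claimed estimate and simultaneously shows $\eta f \in W^r_p(M)$, so $f \in W^r_{p,loc}(M)$.

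For the equivalence of topologies, the displayed inequality shows that each seminorm $\vr_\eta$ is dominated by a finite sum of seminorms from the family $\{\vr_{\alpha_j}\}_{j\in \N}$, so the topology generated by $\{\vr_{\alpha_j}\}_{j\in\N}$ is finer. Conversely, since $\alpha_j \in C^\infty_c(M)$, each $\vr_{\alpha_j}$ already belongs to the family $\{\vr_\eta : \eta \in C^\infty_c(M)\}$, giving the reverse comparison. There is no serious obstacle in the argument; the only point requiring care is the verification that the pointwise decomposition $\eta f = \sum_{j\in N(\eta)} \eta \alpha_j f$ is valid for $L^1_{loc}$ functions (which it is, since the sum is finite on $\supp \eta$), and that multiplication by $\eta$ is genuinely bounded on $W^r_p(M)$, which is supplied by Theorem \ref{crm}.
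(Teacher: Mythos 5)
Your proof is correct and follows essentially the same route as the paper: decompose $\eta f=\sum_{j\in N(\eta)}\eta(\alpha_j f)$ with $N(\eta)$ finite by local finiteness, and apply the boundedness of multiplication by $\eta$ on $W^r_p(M)$ from Theorem \ref{crm}. The only cosmetic difference is that you apply the triangle inequality before the operator bound (the paper does it in the opposite order) and you define $N(\eta)$ via $\Omega_j$ rather than $\supp\alpha_j$, neither of which changes the argument.
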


\begin{proof} Let
$\eta\in C^\infty_c(M)$.
 Then,
 \[
 \eta=\sum_{j\in \N} \alpha_j \eta=\sum_{j\in N(\eta)} \alpha_j \eta,
 \]
 where $N(\eta)=\{j\in \N: \supp \eta \cap \supp \alpha_j\neq \emptyset \}
 $. Since $(\Omega_j)_{j\in \N}$ is a locally finite cover of $M$, the set $N(\eta)$ is finite. For any $f\in L^1_{loc}(M)$, we have
  \[
 \eta f=\sum_{j\in N(\eta)} \eta(\alpha_j f).
 \]
Hence, if $\alpha_jf\in W^r_p(M)$, then by Theorem \ref{crm}, $\eta\alpha_jf\in W^r_p(M)$. Consequently, $\eta f\in W^r_p(M)$ as well.
Moreover,
  \[
 \|\eta f\|_{W^r_p(M)}= 
\bigg\|\eta \sum_{j\in N(\eta)} \alpha_j  f \bigg\|_{W^r_p(M)}\leq
C(\eta) \bigg\| \sum_{j\in N(\eta)} \alpha_j  f \bigg\|_{W^r_p(M)}\leq
C(\eta)  \sum_{j\in N(\eta)} \| \alpha_j  f\|_{W^r_p(M)}.
\]
This completes the proof of Proposition \ref{locSob1}.
\end{proof}

\begin{theorem}\label{pur}
Let $(\Omega_j)_{j\in \N}$  be a locally finite cover  of $M$ such that all sets $\Omega_j$ are precompact. Let $(\alpha_j)_{j\in \N}$ be a partition of unity subordinate to $(\Omega_j)_{j\in \N}$.
Let  $\{P_U\}_{U\in \mathcal U}$ be a smooth decomposition of identity in $L^q(M)$, $1\leq q<\infty$, subordinate  to an open and precompact cover  $\mathcal U$ of $M$. 
Let $f\in L^1_{loc}(M)$, $1\le p<\infty$, and $r\in \N$. 
Then,  $f\in W^r_{p,loc}(M)$ if and only if
$P_U f\in W^r_p(M)$ for all $U\in \mathcal U.$ Moreover a family of seminorms
\[
\kappa_U(f)=\|P_U f\|_{W^r_p(M)}, \qquad U\in \mathcal U,
\]
defines the same topology on $ W^r_{p,loc}(M)$ as in Definition \ref{locSob}.
\end{theorem}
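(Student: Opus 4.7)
The plan is to establish the equivalence by showing two quantitative inequalities between the seminorm families $\{\kappa_U\}_{U\in\mathcal U}$ and $\{\varrho_\eta\}_{\eta\in C^\infty_c(M)}$ (equivalently $\{\varrho_{\alpha_j}\}_{j\in \N}$ by Proposition \ref{locSob1}). Both inequalities rest on Theorem \ref{crm} together with the localization properties of $H$-operators.

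First I would prove the forward direction. Suppose $f\in W^r_{p,loc}(M)$. Fix $U\in\mathcal U$. Since $P_U\in \mathcal H(M)$ is localized on the precompact set $U$, choose $\eta_U\in C^\infty_c(M)$ with $\eta_U\equiv 1$ on some compact neighborhood of the localizing set $K(P_U)$. By Lemma \ref{skon} (applied with $W=\{\eta_U=1\}$),
\[
P_U f = P_U(\eta_U f).
\]
Since $\eta_U f\in W^r_p(M)$ and $P_U$ is bounded on $W^r_p(M)$ by Theorem \ref{crm}, we conclude $P_U f\in W^r_p(M)$ with an estimate
\[
\kappa_U(f)=\|P_U f\|_{W^r_p(M)}\le C_U \|\eta_U f\|_{W^r_p(M)} = C_U \varrho_{\eta_U}(f).
\]
This shows the topology defined by $\{\kappa_U\}$ is coarser than or equal to the one from Definition \ref{locSob}.

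For the reverse direction, suppose $P_U f\in W^r_p(M)$ for every $U\in\mathcal U$, and take an arbitrary $\alpha_j$ from the partition of unity. By Lemma \ref{punktowo} we have the pointwise a.e.\ identity $f=\sum_{U\in\mathcal U}P_U f$, and therefore
\[
\alpha_j f = \sum_{U\in \mathcal U} \alpha_j P_U f.
\]
Since each $P_U f$ vanishes a.e.\ outside $U$ (by \eqref{skon2}) and the family $\{P_U\}_{U\in\mathcal U}$ is locally finite in the sense of Definition \ref{sdi}(i), the compactness of $\supp \alpha_j$ forces all but finitely many terms in the sum to vanish. Denote the corresponding finite index set by $N(j)\subset\mathcal U$. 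The multiplication operator by $\alpha_j$ is a simple $H$-operator (of the form $H_{\alpha_j, \mathrm{id}, M}$) localized on the precompact set $\Omega_j$, so Theorem \ref{crm} yields
\[
\|\alpha_j P_U f\|_{W^r_p(M)} \le C_j \|P_U f\|_{W^r_p(M)} = C_j \kappa_U(f)
\]
for every $U\in N(j)$. Summing the finitely many terms shows $\alpha_j f\in W^r_p(M)$ with
\[
\varrho_{\alpha_j}(f) \le C_j \sum_{U\in N(j)} \kappa_U(f).
\]
By Proposition \ref{locSob1}, this suffices to conclude $f\in W^r_{p,loc}(M)$ and that the topology generated by the $\kappa_U$'s is finer than the one from Definition \ref{locSob}.

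Combining the two inequalities gives the ``if and only if'' statement as well as the equality of the two topologies. The only delicate point is to ensure that the infinite sum $\sum_U \alpha_j P_U f$ is genuinely a finite sum in $W^r_p(M)$; this is handled by the support condition \eqref{skon2} combined with Definition \ref{sdi}(i), and is the main place where the locally finite character of the decomposition from Theorem \ref{Main2} is used. Everything else reduces to applying the boundedness result Theorem \ref{crm} to localized $H$-operators (namely $P_U$ and multiplication by $\eta_U$ or $\alpha_j$).
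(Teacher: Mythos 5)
Your proof is correct and follows essentially the same route as the paper: both inequalities between the seminorm families reduce to the $W^r_p(M)$-boundedness of localized $H$-operators (Theorem \ref{crm}), the forward one via $P_Uf=P_U(\eta_U f)$ by localization (the paper uses the cutoff $g=\sum_{j\in N(V)}\alpha_j f$ with $\sum_{j\in N(V)}\alpha_j\equiv 1$ on a neighborhood of $\overline U$ in place of your $\eta_U$), and the reverse one via the pointwise decomposition of Lemma \ref{punktowo} together with Proposition \ref{locSob1}. The only nuance is that to quote Lemma \ref{skon} literally you should take $\eta_U\equiv 1$ on a neighborhood of $\overline U$ (possible since $U$ is precompact) rather than merely of $K(P_U)$; with your choice the identity $P_Uf=P_U(\eta_Uf)$ still holds, but it requires the localization property of $K(P_U)$ (Lemma \ref{K(T)}) transferred to $L^p$ via Lemma \ref{rep}.
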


\begin{proof}
Let $(\alpha_j)_{j\in \N}$ be a partition of unity subordinate to $(\Omega_j)_{j\in \N}$. To prove the theorem we need two sets of inequalities. 
First, for all $U\in \mathcal U$ there exists a finite family $N(U)\subset \N$ and a constant $C(U)>0$ such that for all $f\in L^1_{loc}(M)$, if $\alpha_j f\in W^r_p(M)$ for all $j\in N(U)$, then $P_Uf \in W^r_p(M)$ and
\begin{equation}\label{seminorm1}
\kappa_U(f)\leq C(U) \sum_{j\in N(U)} \vr_j(f), \qquad\text{where }
\vr_j=\vr_{\alpha_j}.
\end{equation}
Second, for all $j\in \N$ there are a finite family $\mathcal U(j)\subset \mathcal U$ and a constant $C(j)>0$
such that for all $f\in L^1_{loc}(M)$, if $P_U f\in W^r_p(M)$ for all $U\in \mathcal U(j)$, then $\alpha_j f\in W^r_p(M)$ and
\begin{equation}\label{seminorm2}
\vr_j(f)\leq C(j) \sum_{U\in \mathcal U(j)}  \kappa_U(f).
\end{equation}

 Let $U\in \mathcal U$. Fix $V\subset M$ an open and precompact set such that $\overline{U}\subset V$. Since $V$ is precompact then for 
 \[
 N(V)=\{j\in\N: \supp\alpha_j\cap \overline{V}\neq \emptyset\},
 \]
 we have
\[
\sum_{j\in N(V)} \alpha_j=1\qquad \text{on } \overline{V}.
\] 
If $f\in L^1_{loc}(M)$, then
 \[
f=\sum_{j\in N(V)} f \alpha_j \qquad \text{on } \overline{V}.
\]
Define
\[
g:= 
\sum_{j\in N(V)} f \alpha_j
\] then $f=g$ on some neighborhood of $\overline{U}\subset V$. Since the  operator $P_U$ is localized on $U$, by Lemma \ref{skon} we have
\[
P_U(f)=P_U(g).
\]
Hence, by Theorem \ref{crm} there exists a constant $C(U)>0$ such that
\[
\|P_U(f)\|_{W^r_p(M)} =\|P_U(g)\|_{ W^r_p(M)}
\leq C(U) \|g\|_{ W^r_p(M)}\leq C(U) \sum_{j\in N(U)} \|\alpha_j f\|_{W^r_p(M)}.
\]
This proves the first inequality \eqref{seminorm1}.

To show the second inequality, take $j\in \N$. Define
\[
{\mathcal U(j)}=\{U\in {\mathcal U}: U\cap \supp \alpha_j\neq \emptyset \}.
\]
The set ${\mathcal U(j)}$ is finite since ${\mathcal U}$ is a locally finite cover of $M$.
 If $f\in L^1_{loc}(M)$ then from Lemma \ref{punktowo} and the definition of ${\mathcal U}(j)$
\[
\alpha_j f=\alpha_j \sum_{U\in{\mathcal U}} P_U( f)=\alpha_j \sum_{U\in{\mathcal U(j)}} P_U( f) \qquad \text{a.e. on }M.
\]
The multiplication operator by $\alpha_j$ is a simple $H$-operator. Hence, by Theorem \ref{crm} there exists a constant $C(j)>0$ such that
\[
\|\alpha_j f\|_{W^r_p(M)}= \bigg\| \alpha_j  \sum_{U\in{\mathcal U(j)}} P_U( f) \bigg\|_{W^r_p(M)} 
\leq C(j)  \sum_{U\in{\mathcal U(j)}} \|P_U( f)\|_{W^r_p(M)}.
\]
This proves \eqref{seminorm2} and completes the proof of Theorem \ref{pur}.
\end{proof}

The locally convex space $W^r_{p,loc}(M)$ is metrizable. By Theorem \ref{pur} the following metrics induce the same topology:
 \[
 \di_1(f,g)=\sum_{j\in \N} \frac{1}{2^j} \frac{\vr_j(f-g)}{1+\vr_j(f-g)} \qquad\text{where }\vr_j=\vr_{\alpha_j},
 \]
or arranging $\mathcal U$ into a sequence  $(U_k)_{k\in \N}$ and putting 
\[
 \di_2(f,g)=\sum_{j\in \N} \frac{1}{2^j} \frac{\kappa_j(f-g)}{1+\kappa_j(f-g)}\qquad\text{where }\kappa_k=\kappa_{U_k}.
 \]
 
\begin{theorem}\label{fre}
Let $1\leq p<\infty$ and $r\geq 1$. Then,  $W^r_{p,loc}(M)$ is a Fr\'echet space. Moreover,
$C^\infty_c(M)$ is dense in $W^r_{p,loc}(M)$. 
\end{theorem}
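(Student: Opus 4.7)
The topology on $W^r_{p,loc}(M)$ is already known to be locally convex and metrizable via the countable family of seminorms $\{\vr_{\alpha_j}\}_{j\in\N}$ from Proposition \ref{locSob1}, so the only remaining point for the Fr\'echet property is completeness. Given a Cauchy sequence $(f_n)_{n\in \N}$ in $W^r_{p,loc}(M)$, for each $j\in\N$ the sequence $(\alpha_j f_n)_n$ is Cauchy in $W^r_p(M)$ and hence converges to some $g_j \in W^r_p(M)$ with $\supp g_j \subset \ov{\Omega_j}$. Since $(\Omega_j)_{j\in\N}$ is locally finite, the formal sum $f := \sum_{j\in \N} g_j$ is a locally finite sum and defines an element of $L^1_{loc}(M)$. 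For any test function $\eta \in C^\infty_c(M)$, only the finite set of indices $N(\eta)=\{j : \supp \alpha_j \cap \supp \eta \neq \emptyset\}$ contributes, so that $\eta f_n = \sum_{j\in N(\eta)} \eta \alpha_j f_n$ converges in $W^r_p(M)$ to $\sum_{j\in N(\eta)} \eta g_j = \eta f$, where we use boundedness of multiplication by $\eta$ on $W^r_p(M)$ from Theorem \ref{crm}. Hence $f\in W^r_{p,loc}(M)$ and $f_n\to f$ in the topology of $W^r_{p,loc}(M)$, proving completeness.

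For density of $C^\infty_c(M)$, fix $f \in W^r_{p,loc}(M)$, finitely many test functions $\eta_1,\ldots,\eta_k \in C^\infty_c(M)$, and $\ve>0$. The plan is first to replace $f$ by a compactly supported function agreeing with $f$ in the relevant seminorms, and then approximate this compactly supported function by $C^\infty_c(M)$ in the global $W^r_p(M)$ norm. For the first step, let $J \subset \N$ be the finite set of indices $j$ for which $\supp \alpha_j \cap \bigcup_{i=1}^k \supp \eta_i \neq \emptyset$, and put $\tilde f := \sum_{j\in J} \alpha_j f$. Then $\tilde f \in W^r_p(M)$ is compactly supported, and since $\sum_{j\in \N} \alpha_j = 1$, for each $i$ we have $\eta_i(f-\tilde f) = \eta_i \sum_{j\notin J}\alpha_j f$, which vanishes on $\supp \eta_i$ by the choice of $J$; hence $\vr_{\eta_i}(f-\tilde f)=0$.

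For the second step, cover $\supp \tilde f$ by finitely many precompact coordinate charts $(U_\ell, \psi_\ell)$ and fix a smooth partition of unity $(\beta_\ell)$ subordinate to this cover, so $\tilde f = \sum_\ell \beta_\ell \tilde f$ with each $\beta_\ell \tilde f \in W^r_p(M)$ of compact support in $U_\ell$. Via Lemma \ref{technical}(ii), the pullback $(\beta_\ell \tilde f)\circ \psi_\ell^{-1}$ is a compactly supported element of $W^r_p(\R^d)$, which can be approximated in $W^r_p(\R^d)$-norm by a $C^\infty_c(\R^d)$ function (by standard mollification). Pushing forward the mollifier approximations through $\psi_\ell$, summing over $\ell$, and using Lemma \ref{technical}(ii) once more to pass back to the manifold norm, yields $g \in C^\infty_c(M)$ with $\|\tilde f - g\|_{W^r_p(M)} < \ve/C$, where $C = \max_i \|\eta_i \cdot\|_{W^r_p(M)\to W^r_p(M)}$, finite by Theorem \ref{crm}. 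Consequently $\vr_{\eta_i}(f-g) = \vr_{\eta_i}(\tilde f - g) < \ve$ for each $i$, which proves density.

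The key technical ingredient is Lemma \ref{technical}(ii), without which one could not transfer the classical $\R^d$-mollification density to the manifold setting in a norm-controlled way; the rest is bookkeeping using partitions of unity and the boundedness of multiplication operators established in Theorem \ref{crm}.
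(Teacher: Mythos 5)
Your proof is correct, and it deviates from the paper's argument in two places worth noting. For completeness, the paper glues the limits $g_j$ of $\alpha_j f_n$ by setting $G=g_j/\alpha_j$ on $\{\alpha_j\neq 0\}$ and checks consistency of this definition via a diagonal subsequence and a.e.\ convergence; you instead set $f=\sum_j g_j$ and exploit the locally finite supports, which avoids passing to subsequences altogether and is arguably cleaner. (One small imprecision: to discard the terms with $j\notin N(\eta)$ you need $\supp g_j\subset\supp\alpha_j$, not just $\supp g_j\subset\ov{\Omega_j}$, since $N(\eta)$ is defined through $\supp\alpha_j$; this sharper inclusion does hold because each $\alpha_j f_n$ is supported in $\supp\alpha_j$, so the fix is trivial.) For density, the paper takes a single cutoff $\eta$ equal to $1$ on $\supp\alpha_1\cup\dots\cup\supp\alpha_N$, invokes the fact that $W^r_p(M)$ is \emph{defined} as the completion of smooth functions of finite norm to get a smooth $g$ with $\|\eta f-g\|_{W^r_p(M)}$ small, and sets $G=\eta g$; you instead first localize to $\tilde f=\sum_{j\in J}\alpha_j f$ with exactly zero error in the relevant seminorms (a nice touch, slightly stronger than the paper's estimate) and then re-derive the smooth compactly supported approximation of $\tilde f$ by charts, Lemma \ref{technical}(ii), and Euclidean mollification. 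This second step is more work than the paper needs, but it is sound (the support of the mollified pullbacks stays in the chart for small parameter) and has the advantage of not leaning on the completion definition of $W^r_p(M)$; the paper's route is shorter precisely because that density is built into Definition \ref{sob}. Both arguments use Theorem \ref{crm} for the boundedness of multiplication operators in the same way.
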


\begin{proof}
Note that $W^r_{p,loc}(M)$ is a locally convex metrizable space. To show that $W^r_{p,loc}(M)$ is  Fr\'echet space it remaining to prove that 
$(W^r_{p,loc}(M),\di_1)$ is complete. 
Let $(f_n)$ be a Cauchy sequence  in  $W^r_{p,loc}(M)$. Since $\alpha_j$ has compact support for all $j\in \N$, there is $g_j \in W^r_p(M)$ such that $\alpha_j f_n \to g_j$ in $W^r_p(M)$ as $n\to \infty$. Passing to a subsequence by a diagonal argument  we can assume that for all $j\in \N$
\[
\alpha_j f_n \to g_j \quad \text{ a.e. on $M$ as } n\to \infty.
\]

We claim that there is a function $G\in L^1_{loc}(M)$ such that $g_j=\alpha_j G$ for all $j\in \N$. Indeed, define
for all $j\in \N$
\[
G(x)=\frac{g_j(x)}{\alpha_j(x)} \quad \text{if} \quad \alpha_j(x)\neq 0.
\]
The function $G$ is well defined since for  all $i,j \in \N$ and almost all
$x\in M$ such that $\alpha_j(x)\neq 0$ and $\alpha_i(x)\neq 0$, we have
\[
f_n(x) \to \frac{g_j(x)}{\alpha_j(x)}\qquad \text {as } n\to \infty,
\]
and simultaneously
\[
f_n(x) \to \frac{g_i(x)}{\alpha_i(x)}\qquad \text {as } n\to \infty.
\]
Note that $G\in W^r_{p,loc}(M)$ and for all $j\in \N$  
\[
\vr_j(f_n-G)=\|\alpha_j f_n - g_j\|_{W^r_p(M)}\to 0 \qquad \text {as } n\to \infty.
\]
This implies that $f_n\to G$ in $W^r_{p,loc}(M)$.

Next, we show that $C^\infty_c(M)$ is dense in $W^r_{p,loc}(M)$, i.e.,
 for any $f\in W^r_{p,loc}(M)$ and $\ve>0$ there is a function $G\in C^\infty_c(M)$ such that all $1\leq j\leq N:=-\log_2 \ve$
\[
\vr_j(f- G)=\|\alpha_j(f-G)\|_{W^r_p(M)}\leq \ve.
\]
Take $f\in W^r_{p,loc}(M)$
and a function $\eta\in C^\infty_c(M)$ such that
$\eta(x)=1$ for all $x\in \bigcup_{j=1}^N \supp \alpha_j$. Hence,
\begin{equation}\label{fre4}
 \eta \alpha_j=\alpha_j \qquad\text{for all }1 \le j \le N.
\end{equation}
By Proposition \ref{locSob1}, $\eta f\in W^r_p(M)$. Since $H_j(f)=\alpha_j f$ is a simple $H$-operator, it is bounded on $W^r_p(M)$ by Theorem \ref{crm}. Let
\begin{equation}\label{fre5}
C=\max\{\|H_j\|_{W^r_p(M)\to W^r_p(M)}: 1\leq j\leq N\}.
\end{equation}
 From the definition of Sobolev space $W^r_p(M)$ we can find a function $g\in C^\infty$ such that
 \begin{equation}\label{fre6}
 \|\eta f-g\|_{W^r_p(M)}<\ve/C.
 \end{equation}
 Define the function $G:=\eta g$. Now for  $1\leq j\leq N$, by \eqref{fre4}, \eqref{fre5}, and \eqref{fre6} we have
\[
\vr_j(f- G)=\|\alpha_j(f-\eta g)\|_{W^r_p(M)}=\|\alpha_j(\eta f- g)\|_{W^r_p(M)}\leq C \|\eta f- g\|_{W^r_p(M)}<\ve.
\]
This finishes the proof of Theorem \ref{fre}.
\end{proof}



\bibliographystyle{amsplain}

\end{document}